\newtheorem{remark}{Remark}
\newtheorem{definition}{Definition}
\newtheorem{proposition}{Proposition}
\newtheorem{lemma}{Lemma}
\newtheorem{corollary}{Corollary}
\newtheorem{theorem}{Theorem}
\newcommand{\Z}{\mathbb{Z}}
\def\rest{\hskip 1pt{\hbox to 10.8pt{\hfill
\vrule height 7pt width 0.4pt depth 0pt\hbox{\vrule height 0.4pt
width 7.6pt depth 0pt}\hfill}}}
\newcommand{\eps}{\varepsilon}
\def\B{{\mathbb B}}
\def\D{{\mathbb D}}
\def\C{{\mathbb C}}
\def\R{{\mathbb R}}
\def \rd {{\rm d}}
\def \mM {{\mathcal M}}
\def \mC {{\mathcal C}}
\def \pont{{\rm {\bf P}}^{\rm ont}} 
\def \tpont{\check{\rm {\bf P}}^{\rm ont}} 
\def \hscal{{\rm h}_{\rm scal}}
\def \fL{ {\mathfrak L}}
\def \fLk{\mathfrak L^k}
\def \fLkperp{ {\mathfrak L}^{k, \perp}}
\def \mL{{\mathcal L}}
\def\N{{\mathbb N}}
\def \Ext {\mathfrak E{\rm xt}}
\def \nablaq{\nabla_{_4}}
\def \nablatrois {\nabla_{_3}}
\def \ov {\overrightarrow}
\def \mN {{\mathcal N}}
\def\nuN {{\upnu_{_{\mathcal N}}}}
\def\nutrois {{\upnu_{3}}}
\def\nudeux {{\upnu_{2}}}
\def\S{{\mathbb S}}
\def \deg {{\rm deg \, } }
\def \rU {{\rm U}}
\def \ra {{\rm a}}
\def \rQ {{\rm Q }}
\def \rh {{\rm h}}
\def \Rm {{\rm M}}
\def \rh {{\rm h }}
\def \rL{{\rm L}}
\def \rC {{\rm C}}
\def \rCspag {{\rm {\bf  C}}_{\rm  spg}}
\def \rKspag {{\rm {\bf  K}}_{\rm  spg}}
\def \Spagk{ {{\rm \bf  S}_{\rm \bf pag}^k}}
\def\Gordk {{ {\rm \bf G}_{\rm \bf ord}^k}}
\def\tGordk {{ {\rm \bf  \tilde G}_{\rm \bf ord}^k}}
\def \DefLkperp{\mathcal D_{\rm ef}\mathfrak {L}^{k, \perp}}
  \def \Ocrosstar {{\mathcal O_{\rm cross, \star}^h} }
    \def \tOcrosstar {{\check{\mathcal O}_{\rm cross, \star}^h} }
\def \rq {{\rm q}}
\def \rI {{\rm I}}
\def \fp {{\mathfrak p}}
\def \fq {{\mathfrak q}}
\def \fP {{\mathfrak P}}
\def \rbJ{{\rm \bf J}}
\def \fM {{\rm  M}}
\def \Gopt{ G_{\rm opt} ^{^\upalpha }  }
\def \Thread { \mathbbmss T_{\rm hread}}
\def \Nbre {  {\rm N_{\rm ber}}}
\def \Nel{  {\rm N_{\rm el}}}
\def \Icyl { \mathbbmtt I_{\rm cyl}}
\def \Tr {{\mathbbm T^m_{\mathbbmtt r, \theta}}}
\def \charge {{\rm Ch_g}}
\def  \Vcharged{{V_{\rm chg}}}
\def  \Vbd{{V_{\rm bd}}}
\def \Grp{{\rm G}_{\rm rp} }
\def \uvee { {\overset{\star}{\curlyvee}}}
\def \sousgraphe {\overset{\star}{\Subset}}
\def\rbW { {\rm \bf W} }
\def \rul {{\rm u}_\ell}
\def \rE {{\rm E}}
\def \wedgetrois {\vee_{_3}}
\def \wedgequatre {\vee_{_4}}
\def \frameref {\mathfrak e^\perp_{\rm ref}}
\def \Up {\pont_\varrho[\mathcal C, \mathfrak e_{\rm ref}^\perp ]}
\def \tk {{\rm T}^k}
\def \dist {{\rm dist \, }}
\def \rbA{{\rm {\bf  A}}}
\def \rba{{\rm {\bf  a}}}
\def \rbx{{\rm {\bf  x}}}
\def \rby{{\rm {\bf  y}}}
\def \rbP {{\rm {\bf P}}}
\def \rbQ {{\rm {\bf Q}}}
\def\rH { {\mathbbm H}}
\def \nP{{\mathbb P}_{\rm north}}
\def\sP {{\mathbb P}_{\rm south}}
\def \Using {\mathcal U_{\rm sing}}
\def\ometrois {\omega_{_{\S^3}}}
\def\omedeux {\omega_{_{\S^2}}}
\def \Lbr {{\rm L_{\rm branch} }}
\def \Lbra {{\rm L^\upalpha_{\rm branch} }}
\def \Lbrafour {{\rm L^{\upalpha_4}_{\rm branch}}}
\def \Lbda {{\Uplambda_m^\upalpha}}
\def \Lbdam {{\Uplambda_m^{\upalpha_m}}}
\def \Upxia {\Uplambda^{m,\upalpha}_{\rm norm}}
\def \Upxiam {\Uplambda^{m, \upalpha_m}_{\rm norm}}
\def \rk {{\rm k}}
\def \rW{{ \rm W}}
\def \mg {{\mathfrak g}}
\def \mq {{\mathfrak q}}
\def \mi {{\mathfrak i}}
\def \mj {{\mathfrak j}}
\def \mr {{\mathfrak r}}
\def \mv {{\mathfrak v}}
\def \be {\vec{{\bf e}}}
\def \Singp {\Sigma_{{\star}, +}}
\def \Singm {\Sigma_{{\star}, -}}
\def \Sing {\Sigma_{{\star}}}
\def \Pom {{\rm P}_{\Omega}}
\def \Grp{{\rm G}_{\rm rp}}
\def\QED{\hbox{${\vcenter{\vbox{
   \hrule height 0.4pt\hbox{\vrule width 0.4pt height 6pt
   \kern5pt\vrule width 0.4pt}\hrule height 0.4pt}}}$}\vspace{7pt}}
\begin{document}
\author{Fabrice BETHUEL
\thanks{Sorbonne Universit\'es, UPMC Univ Paris 06, UMR 7598, Laboratoire Jacques-Louis Lions, F-75005, Paris, France } \
 \thanks{ CNRS, UMR 7598, Laboratoire Jacques-Louis Lions, F-75005, Paris, France} 
 }
%UPMC-Paris6, UMR 7598 LJLL, Paris, F-75005 France, 
 
%\, and  Didier SMETS \thanks{UPMC-Paris6, UMR 7598 LJLL, Paris, F-75005 France} 
% and Giandomenico ORLANDI\thanks{Universit\`a di Verona, Dip. Informatica, 15 strada le Grazie, I-37134 Verona}
%}
\title{A counterexample  to  the weak density of smooth maps  between manifolds in Sobolev spaces}
\date{}
\maketitle

\begin{abstract} 
The present paper presents a  counterexample to the  sequential weak density of smooth maps between two manifolds $\mM$ and $\mN$ in the Sobolev space $W^{1, p} (\mM, \mN)$,  in the case $p$ is an integer. It has been  shown  (see e.g. \cite{Be2}) that, if $p<\dim \mM $ is not an integer and the $[p]$-th homotopy group $\pi_{[p]}(\mN)$ of $\mN$  is not trivial,  $[p]$ denoting the largest integer less then $p$, then smooth maps are not  sequentially weakly dense in $W^{1, p} (\mM, \mN)$. On the other hand, in the case $p< \dim \mM$ is an integer, examples of specific manifolds  
$\mM$ and $\mN$ have been provided where  
smooth maps are actually  sequentially weakly dense in $W^{1, p} (\mM, \mN)$  with   $\pi_{p}(\mN)\not = 0$, although they are not dense for the \emph{strong convergence}.  This is the case  for instance for $\mM=\B^m$, the standard ball in $\R^m$, and $\mN=\S^p$ the standard sphere of dimension $p$, for which
$\pi_{p}(\mN) =\Z$.  The main result of this paper  shows however that  such a property does not  holds for  arbitrary manifolds $\mN$ and integers $p$.

Our  counterexample deals with      the case  $p=3$,   $\dim \mM\geq 4$
 and $\mN=\S^2$, for which  the homotopy group  $\pi_3(\S^2)=\Z$ is related to the Hopf fibration. We explicitly construct  a map which is not  weakly approximable in $W^{1,3}(\mM, \S^2)$ by maps in $C^\infty(\mM, \S^2)$. One of the central ingredients in our argument is related to issues in  branched transportation  and irrigation theory in the critical exponent case, which are  possibly  of independent interest.  As a byproduct of our method, we also address some questions concerning the $\S^3$-lifting problem  for $\S^2$-valued Sobolev maps. 
  \end{abstract}

\bigskip
\noindent
%%%%%%%%%%%%%%%%%%%%%%%%%%%%%%%%%%%%%%%%%%%%%%%%%%%%%%%%%%%%%%%%
\section{Introduction}
\label{introd}
%$\textswab{L}$, $\textgoth{L}$ $  \bf \textfrak{L}$, $\mathfrak  L$, $ \bf \mathscr{ L}$, $\mathcal L$
%%%%%%%%%%%%%%%%%%%%%%%%%%%%%%%%%%%%%%%%%%%%%%%%%%%%%%%%%%%%%%%%
 \subsection{Setting and statements}
%%%%%%%%%%%%%%%%%%%%%%%%%%%%%%%%%%%%%%%%%%%%%%%%%%%%%%%%%%%%%%%%%% 
 Let $\mM$ and  $\mN$ be two manifolds, with $\mN$ isometrically embedded in some euclidean space $\R^\ell$, $\mM$ possibly having  a nonempty boundary.  For given  numbers  $0<s< \infty $ and $1\leq  p <\infty$, we consider the Sobolev space  $W^{s, p} (\mM, \mN)$ of maps between $\mM$ and $\mN$ defined by
 \begin{equation*}
 W^{s, p} (\mM, \mN)=\{ u \in W^{s, p} (\mM, \R^\ell), \ u(x) \in \mN  {\rm \ for \ almost \ every \ }  x \in \mM \}. 
\end{equation*}
The study of these spaces is  motivated in particular by various problems in physics, as liquid crystal theory, Yang-Mills-Higgs or Ginzburg-Landau models, where singularities of topological nature appear, yielding maps which are hence not continuous but belong to suitable Sobolev spaces, built up in view of the corresponding variational frameworks. Starting with the seminal  works of Schoen and Uhlenbeck (\cite {SU}), this field of research has grown quite fast in the last decades. A central issue is  the approximation of maps in  $W^{s, p} (\mM, \mN)$ by smooth maps (or maps with singularities of prescribed type)  between $\mM$ and  $\mN$.   Restricting ourselves to the case $s=1$,    as we will do  in the rest of the paper, it is easily seen that, if  $p>m\equiv \dim \mM$,   then smooth maps are indeed   dense in $W^{1, p} (\mM, \mN)$, with no restriction on the target manifold $\mN$. Indeed, if $p>m$, then maps in $W^{1, p} (\mM, \mN)$ are   H\"older continuous due to  Sobolev embedding:  Standard arguments based on convolution by mollifiers and reprojections allow to conclude.  The result and  the argument extend to the limiting case $p=\dim \mM$.
It turns out that,  when $1\leq p<\dim \mM$,   the  answer to the approximation problem is  strongly related to the $[p]$-th homotopy group $\pi_{[p]}(\mN)$ of the target manifold $\mN$, where $[p]$ denotes the largest integer less  or equal to $p$. Indeed,  if $\pi_{[p]}(\mN)\not =0$, then as we will recall below, one may construct maps in $W^{1,p}(\mM, \mN)$ which cannot be approximated by smooth maps   between $\mM$ and $\mN$ for the \emph{strong topology} (see \cite{Be2}), whereas the condition $\pi_{[p]}(\mN) =0$   yields approximability be smooth maps   when the domain has a simple topology, for instance a ball (see Sections I to IV in   \cite {Be2}). When the domain $\mM$ has a more sophisticated topology, it was
shown in \cite{HL1, HL2} that the topology of $\mM$  might induce some  additional  obstructions to the approximation problem, obstructions which have  actually been missed in \cite{Be2}.  \footnote {The argument in Section V \cite{Be2}, which is aimed to extend the case of a cube to an arbitrary manifolds being erroneous.}

\smallskip

Approximation  by sequences of  smooth maps at the level of the \emph{weak convergence}  is the focus of the present  paper. In order to avoid problems with the topology of $\mM$ we restrict ourselves \emph{first}  to the case $\mM=\B^m$, the standard unit ball of $\R^m$ and, motivated 
by  the 	above discussion, we
 assume  that
\begin{equation}
\label{condition}
1\leq p <m {\rm \ and \ }  \pi_{[p]}(\mN) \not =0.  
\end{equation}
Indeed if one of the conditions in \eqref{condition} is not met, then we already now that $C^\infty (\B^m, \mN)$ is dense for the strong topology in $W^{1,p} (\B^m, \mN)$, hence \emph{also}  sequentially weakly dense. As a matter of fact, we may  even restrict ourselves to the case $p$ is an integer, since the following observation made in \cite{Be2} settles the case $p$ is not:

\begin{theorem}
\label{pasjojo}
Assume that \eqref{condition} holds and that $p$ is \emph{not} an integer. Then $C^\infty (\B^m, \mN)$ is not sequentially weakly  dense  in $W^{1,p} (\B^m, \mN)$. 
\end{theorem}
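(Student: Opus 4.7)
The plan is to exhibit a specific $u_* \in W^{1,p}(\B^m, \mN)$ carrying a topological defect that is detected by a distributional invariant which (i) vanishes identically on smooth maps and (ii) is weakly continuous on $W^{1,p}$ precisely because $p$ is not an integer. Throughout I set $k=[p]$, so that the hypothesis $p\notin\N$ translates into the strict inequality $k<p<k+1$.

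I would first fix a smooth $\varphi:\S^k\to\mN$ representing a nontrivial class in $\pi_k(\mN)$ and a closed $k$-form $\omega$ on $\mN$ with $c:=\int_{\S^k}\varphi^*\omega\neq 0$ (produced via the Hurewicz map and de Rham duality; if the selected class is $\R$-homologically trivial one first passes to a finite cover or argues with $\Z/q$-coefficients, both standard reductions). Splitting $\B^m=\B^{k+1}\times \B^{m-k-1}$ with coordinates $(y,z)$, I would then consider the radial extension
\[
u_*(y,z)=\varphi\!\left(\tfrac{y}{|y|}\right).
\]
Since $|\nabla u_*|\lesssim |y|^{-1}$ and $p<k+1$, integration gives $u_*\in W^{1,p}(\B^m,\mN)$. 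Away from the singular plane $\{0\}\times\B^{m-k-1}$, $u_*$ is smooth and $d(u_*^*\omega)=u_*^*(d\omega)=0$; applying Stokes on tubular neighbourhoods of this plane and using the scale invariance of $\varphi(y/|y|)$ shows that $d(u_*^*\omega)$ is a nonzero $(k+1)$-current concentrated on the singular plane, with transverse mass $c$.

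The heart of the argument lies in the weak continuity of the map $u\mapsto u^*\omega$. Its coefficients are sums of $k$-th order minors of $\nabla u$ multiplied by values of $\omega\circ u$, so the classical Reshetnyak--Ball weak continuity theorem for minors applies whenever the exponent strictly exceeds the order of the minor, that is, whenever $p>k$ --- which is exactly the content of $p\notin\N$. Combining this with the strong $L^q$-convergence of $\omega\circ u_n$ furnished by Rellich compactness, one obtains $u_n^*\omega\rightharpoonup u^*\omega$ in $L^{p/k}_{\mathrm{loc}}$, and hence $d(u_n^*\omega)\to d(u^*\omega)$ in $\mathcal D'(\B^m)$ for every weakly convergent sequence $u_n\rightharpoonup u$ in $W^{1,p}(\B^m,\mN)$.

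Should $u_n\in C^\infty(\B^m,\mN)$ now converge weakly to $u_*$, we would have $d(u_n^*\omega)=u_n^*(d\omega)=0$ pointwise, and the distributional limit would give $d(u_*^*\omega)=0$, contradicting the second paragraph. Hence $u_*$ cannot be a sequentially weak $W^{1,p}$-limit of smooth maps, proving the theorem. The critical step, and the reason the non-integer hypothesis cannot be dropped, is the weak continuity assertion: the strict inequality $p>k$ is exactly what allows weakly convergent $k$-minors to pass to the limit. If $p$ were the integer $k+1$, minors of order $k+1$ would appear, weak continuity would break down, and an extra defect could arise in the weak limit of smooth maps --- the very phenomenon the rest of the paper must confront.
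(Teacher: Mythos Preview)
Your argument contains a genuine gap at the very first step: the existence of a closed $k$-form $\omega$ on $\mN$ with $\int_{\S^k}\varphi^*\omega\neq 0$ is \emph{not} guaranteed by the hypothesis $\pi_k(\mN)\neq 0$. Such an $\omega$ exists precisely when the Hurewicz image of $[\varphi]$ in $H_k(\mN;\R)$ is nonzero, and this fails in important cases. The most relevant one for this very paper is $\mN=\S^2$, $k=3$: here $\pi_3(\S^2)=\Z$ is generated by the Hopf map, but $H^3(\S^2;\R)=0$, so no closed $3$-form can detect it. Your parenthetical escape routes do not work here: $\S^2$ is simply connected, so there is no finite cover to pass to, and $H^3(\S^2;\Z/q)=0$ for every $q$, so $\Z/q$-coefficients are equally useless. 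The same obstruction arises whenever the relevant homotopy class is torsion (e.g.\ $\mN=\R P^2$, $k=1$) or lies in the kernel of the rational Hurewicz map. The cohomological approach you sketch is essentially that of \cite{BCDH}, and it is known to give only a \emph{sufficient} criterion for non-density, not the full theorem.

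The paper's proof avoids cohomology entirely and works for arbitrary $\mN$ with $\pi_k(\mN)\neq 0$. In the model case $m-1<p<m$, one takes $\Using(x)=\varphi(x/|x|)$ with $\varphi$ not null-homotopic, assumes a weakly convergent approximating sequence $u_n$, and uses Fubini to find a radius $r$ on which the restrictions $u_n|_{\S^{m-1}_r}$ are bounded in $W^{1,p}(\S^{m-1}_r)$. Since $p>m-1$, the compact Sobolev embedding into $C^0(\S^{m-1}_r)$ forces uniform convergence on that sphere, so for large $n$ the map $u_n|_{\S^{m-1}_r}$ is homotopic to $\varphi$---yet it extends smoothly to the enclosed ball, contradicting $[\varphi]\neq 0$. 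The strict inequality $p>[p]$ enters through the compact embedding rather than through weak continuity of minors. When your cohomological detection \emph{is} available, your argument is a legitimate alternative and arguably more quantitative (it identifies the defect current explicitly), but it does not cover the general statement.
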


\noindent
{\it Sketch of  the proof of Theorem \ref{pasjojo} in the case $m-1<p<m$}.   The proof relies on a dimension reduction argument together with the fact that homotopy classes are preserved under weak convergence in $W^{1, p}(\S^{m-1}, \mN)$ for $p>m-1$. First, 
 since we assume in view of \eqref{condition} that $\pi_{m-1}(\mN)\not =0$, there exists some smooth map $\varphi: \S^{m-1} \to \mN$ such that $\varphi$ is not homotopic to a constant map and hence cannot be extended continuously  to the whole  ball $\B^{m}$.   Consider next the  map $\Using$ defined by 
\begin{equation}
\label{singular}
\mathcal U_{\rm sing} (x)=\varphi \left(\frac{x}{\vert x\vert}\right), {\rm \ for \ } x\in \B^m\setminus \{0\}, 
\end{equation}
which is smooth, except at the origin. Introducing  the $p$-Dirichlet energy $\rE_p$ defined by 
\begin{equation*}
\label{penergy}
 \rE_p(v, \mM)=\int_\mM \vert \nabla v \vert^p dx,  {\rm \ for \ } v: \mM \to \R^\ell, 
\end{equation*}
we observe that 
   $$\displaystyle{\rE_p(\Using, \B^m)=\int_0^1 r^{m-1-p}\left(\rE_p (\varphi, \S^{m-1} )\right) dr<+\infty,}$$
    so that $\Using$ belongs to 
   $W^{1,p}(\B^m, \mN)$, since $p<m$.   
Next  assume by contradiction that there exists a sequence $(u_n)_{n \in \N}$ of maps in $C^\infty (\B^m, \mN)$ converging weakly to $\Using$ in $W^{1,p} (\B^m, \R^\ell)$. Then there exists
  \footnote{Similar arguments, based an Fubini's theorem combined with an averaging argument,  will be detailed in Section \ref{proofmain}.}
   some radius $0<r<1$ such that the restriction of $(u_n)_{n \in \N}$  to the sphere $\S_r^{m-1}$ of radius $r$ and centered at $0$ weakly  converges, up to a subsequence, to the restriction of $\Using$ to $\S_r^{m-1}$ in $W^{1,p}(\S_r^{m-1})$. Since $p>m-1$,  the convergence is \emph{uniform} on $\S_r^{m-1}$, by compact  Sobolev  embedding,  and hence $\Using$ and $u_n$ restricted to $\S_r^{m-1}$ are in the  same homotopy class for $n$ large. This  is a contradiction,  since  the map  $u_n$ can be extended inside the sphere $\S_r^{m-1}$  to a continuous map with values into $\mN$,  whereas the restriction of $\Using$ to $\S_r^{m-1}$ does not possess this property.  This  contradiction  establishes the theorem.
   \qed

\medskip
When $p=m-1$ is an integer,  the previous arguments \emph{cannot be extended},  since weak  convergence in  $W^{1,m-1}(\S_r^{m-1})$
 does not  imply uniform convergence.  As a matter of  fact, we have in this case: 
 
 \begin{proposition} 
 \label{segment}
  There exists a sequence of maps $(U_n)_{n \in \N}$ in $C^{\infty} (\B^{m}, \mN)$ converging 
 to $\Using$ weakly  in $W^{1,m-1} (\B^m, \R^\ell)$. Moreover, the sequence $(U_n)_{n \in \N}$ has the following properties:
 \begin{itemize}
 \item The sequence $(U_n)_{n \in \N}$ converges uniformly on every compact set of $\displaystyle{\B^m\setminus \mathcal I_m}$ to $\Using$, where $\mathcal I_m$ denotes the segment 
 $\mathcal I_m=[0, \nP]$ where
$\nP$ denotes the north pole $\nP=(0, \ldots, 0, 1)\in \R^m$.

 \item We have the convergence 
\begin{equation}
\label{sergent}
\vert  \nabla U_n \vert ^{m-1} \rightharpoonup \vert  \nabla \Using  \vert ^{m-1}+ \nu \mathcal{H}^{1}\rest
[0,\nP]
{\rm \ 
in \ the \ sense \ of \ measures \  on \ }  \B^{m}, 
% {\rm \ where \ }
  \end{equation}
  where $\nu$ represents the constant
$$\displaystyle{ \nu=\upnu_{_{\mN, m-1}}(\llbracket \varphi \rrbracket )=\inf \{ \rE_{m-1} (w),  w \in C^1(\S^{m-1}, \mN) \ {\rm homotopic  \ to  \ } \varphi \}>0. 
}$$
\end{itemize}
 \end{proposition}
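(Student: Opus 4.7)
The plan is to build $U_n\in C^\infty(\B^m,\mN)$ by a surgery which only modifies $\Using$ in a thin tubular neighbourhood $T_n$ of $\mathcal I_m=[0,P]$ (of radius $\eps_n\to 0$). Inside $T_n$ we insert a smooth ``pipe'' that carries the homotopy class $\llbracket\varphi\rrbracket$ of $\Using$ from the origin along $\mathcal I_m$ and ejects it through the boundary point $P\in\partial\B^m$. Outside a slightly thicker tube $T_n'\supset T_n$, we keep $U_n=\Using$. The conformal (i.e. critical) scaling invariance of $\rE_{m-1}$ in dimension $m-1$ is what allows the pipe to remain energetically cheap as $\eps_n\to 0$, and accounts for the limit measure $\nu\,\mathcal H^1\rest\mathcal I_m$.

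\medskip
\textbf{Building block.} For $\eta>0$, choose $w_\eta\in C^1(\S^{m-1},\mN)$ homotopic to $\varphi$ with $\rE_{m-1}(w_\eta)\le\nu+\eta$. By the Möbius invariance of $\rE_{m-1}$ on $\S^{m-1}$, precompose with a suitable Möbius transformation so that $w_\eta$ equals a fixed base point $a_*=\varphi(P)$ outside an arbitrarily small spherical cap. Via stereographic projection from this cap, $w_\eta$ becomes a smooth $\beta_\eta:\B^{m-1}\to\mN$ of class $\llbracket\varphi\rrbracket$ with $\beta_\eta\equiv a_*$ on $\partial\B^{m-1}$ and $\rE_{m-1}(\beta_\eta,\B^{m-1})\le\nu+\eta$. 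This \emph{concentration} property is what will permit the smooth gluing below.

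\medskip
\textbf{Tube insertion, energy, and convergence.} Parametrise $T_n$ by $(t,y)\in[0,1]\times B^{m-1}(\eps_n)$ with $t$ the arc-length on $\mathcal I_m$ and $y$ perpendicular; inside $T_n$ set $U_n(t,y):=\beta_\eta(y/\eps_n)$, which is smooth and constant equal to $a_*$ on the lateral face $\{|y|=\eps_n\}$. In the shell $T_n'\setminus T_n$ one performs a smooth interpolation between the constant $a_*$ on $\partial T_n$ and $\Using$ on $\partial T_n'$, routing the residual $\llbracket\varphi\rrbracket$-topology of $\Using$ near the origin into the interior of the tube, and absorbing the counter-charge as a small bubble supported in a cap of $\partial\B^m$ around $P$; each of these two ``end matchings'' contributes only $o_n(1)$ of $(m-1)$-energy thanks to the concentration of $\beta_\eta$. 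By the critical scaling identity combined with Fubini in $t$,
\[
\int_{T_n}|\nabla U_n|^{m-1}\,dx=\int_0^1\!\int_{B^{m-1}(\eps_n)}|\nabla_y\beta_\eta(y/\eps_n)|^{m-1}\,dy\,dt=\rE_{m-1}(\beta_\eta)\le\nu+\eta,
\]
uniformly in $\eps_n$, and by the $t$-independence of $U_n|_{T_n}$ this energy is uniformly distributed as $(\nu+\eta)\,\mathcal H^1\rest\mathcal I_m$ in the weak-$*$ limit. Outside $T_n'$, $U_n\equiv\Using$, so $\rE_{m-1}(U_n,\B^m\setminus T_n')\to\rE_{m-1}(\Using,\B^m)$. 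The sequence is then uniformly bounded in $W^{1,m-1}$; since $|T_n'|\to 0$ and $\mN$ is compact, $U_n\to\Using$ in $L^{m-1}$, which upgrades to weak convergence in $W^{1,m-1}$. Uniform convergence on compact subsets of $\B^m\setminus\mathcal I_m$ is immediate (such sets are disjoint from $T_n'$ for $n$ large). A diagonal extraction as $\eta\downarrow 0$ yields \eqref{sergent}.

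\medskip
\textbf{Positivity $\nu>0$ and main obstacle.} If $\nu=0$, a sequence $w_k\in\llbracket\varphi\rrbracket$ with $\rE_{m-1}(w_k)\to 0$ would, by the critical embedding $W^{1,m-1}(\S^{m-1})\hookrightarrow \mathrm{VMO}$ and the Brezis--Nirenberg stability of the homotopy class under small BMO oscillation, be null-homotopic for large $k$, contradicting $\llbracket\varphi\rrbracket\neq 0$. The most delicate step is the end-matching described above: globally, the trace $U_n|_{\partial\B^m}$ must be null-homotopic (since $U_n$ extends smoothly to the contractible domain $\B^m$) while $\Using|_{\partial\B^m}=\varphi$ is not; this forces the construction to deposit a small counter-bubble near $P$, and the interplay with the tube must keep the total extra energy at the precise asymptotic level $\nu$. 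Here the conformal concentration of $\beta_\eta$ plays a decisive role, ensuring that all non-constant transitions occur through almost-constant maps.
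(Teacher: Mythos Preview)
Your construction is sound, and you correctly identify the end-matching near the origin as the crux; but the paper sidesteps that difficulty entirely by working radially rather than cylindrically. The paper deduces Proposition~\ref{segment} in one stroke from Lemma~\ref{bulle}: with $\varphi_n$ the null-homotopic sequence provided there (equal to $\varphi$ outside a shrinking cap at $P$, with the excess energy $\nu$ concentrating at $P$), set $U_n(x)=\varphi_n(x/|x|)$ for $1/n\le|x|\le 1$ and fill in $B(0,1/n)$ smoothly at cost $o(1)$, which is possible precisely because $\varphi_n$ is null-homotopic. The bubble of $\varphi_n$ at $P$, transported radially, sweeps out the segment $[0,P]$, and \eqref{sergent} follows directly from \eqref{bubble} by Fubini in the radial variable.

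This is in effect your tube construction, but with a \emph{conical} tube (the cone over the cap around $P$) instead of a cylindrical one. The conical geometry makes both endpoints trivial: the origin end is just the cone apex, handled by the cheap fill-in on $B(0,1/n)$; at the boundary the trace is simply $\varphi_n$, which already carries the counter-bubble, so no separate surgery is needed. Your cylindrical tube, by contrast, forces a genuine transition near $0$ between the singular radial map $\Using$ and the $t$-independent pipe $\beta_\eta(y/\eps_n)$. This can be done---introduce an intermediate scale $\delta_n$ with $\eps_n\ll\delta_n\to 0$ and extend from $\partial B(0,\delta_n)$, whose trace is null-homotopic and has bounded $(m-1)$-energy, so the extension costs $O(\delta_n)=o(1)$---but carrying it out cleanly essentially reproduces the paper's radial picture locally. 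Your VMO/Brezis--Nirenberg justification of $\nu>0$ is correct; the paper simply asserts it.
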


Since this type of results  is central in the whole discussion, we briefly sketch the argument.  The  proof of Proposition  \ref{segment} combines  a dimension reduction argument similar to the one we used for Theorem \ref{pasjojo},   together with the \emph{bubbling phenomenon} occurring in dimension $m-1$,  for which the $\rE_{m-1}$ energy is scale invariant. We discuss this property first. 

\medskip
\noindent
{\it The bubbling phenomenon}. We recall first the  scaling properties of the functional $\rE_{p}$.  Consider   an arbitrary  integer ${\rm q} \in \N$, $p>0$  and an  arbitrary map $u: \B^\rq \to \mN$. The scaling transformations yields the formula, for $r>0$,
\begin{equation}
\label{scalingprop}
 \rE_p(u_r, \B_r^{\rm q})=r^{{\rm q}-p} \rE_p(u, \B^{\rm q}), {\rm \ where \ } u_r(x)=u (\frac{x}{r}) \ {\rm \ for \ } x \in \B_r^{\rm q}\equiv \B^{\rm q}(0,r).
 \end{equation}
  In the critical case where the exponent is equal to the dimension, i.e. when we have   $p=\rm q$, then   the energy is \emph{scale invariant},  namely $ \rE_p(u_r, \B_r^{\rm q})= \rE_p(u, \B^{\rm q})$. Choosing small values for $r$, this invariance  allows for concentration  of $\rE_q$-energy  at \emph{isolated  points} for  weakly converging sequences. 
We next  replace   the domain $\B^\rq$ by the sphere  $\S^\rq$ of same dimension and consider now regular maps  from $\S^\rq$ to $\mN$ assuming that $\pi_\rq(\mN)$ is not trivial.  Given  $\varphi \in C^\infty (\S^\rq, \mN)$ we denote by $\llbracket \varphi \rrbracket $  its homotopy class. Homotopy classes  are not  preserved in $W^{1, \rq}$ under weak convergence as the next result shows. 

\begin{lemma}
\label{bulle}
 Let $\varphi : \S^{\rq } \to\mN$ be a  given smooth map. Then there exists  a sequence of smooth maps $(\varphi_n)_{n \in \N}$ from $\S^{\rq }$
to $\mN$  such that  the following holds:
\begin{itemize}
\item  $\varphi_n$ is homotopic to a constant map for any $n \in \N$
\item $\varphi_n(x)=\varphi (x) $, for any $n \in \N^\star$, 
  for any $x\in \S^{\rq }\setminus \B^{\rq+1} (\nP, (n+1)^{-1})$  where
$\nP$ denotes the north pole $\nP=(0, \ldots, 0, 1)$
\item $\vert  \nabla \varphi_n \vert ^{\rq }\rightharpoonup\vert  \nabla \varphi \vert ^{\rq }+ \nu_\rq \delta_{_P} $
in the sense of measures on  $\S^{\rq } {\rm \ as \ } n \to +\infty, $
 where we have set
 \begin{equation}
 \label{infmini}
  \nu_{\rm q}=\upnu_{_{\mN, \rm q}}(\llbracket \varphi \rrbracket )=\inf \{ E_{\rq} (w),  w \in C^1(\S^{\rq}, \mN) \ {\rm homotopic  \ to  \ } \varphi \}>0.
  \end{equation}
\end{itemize}
 \end{lemma}
 The idea of the proof of Lemma \ref{bulle} is to glue a scaled copes  minimizers or almost minimizers for \eqref{infmini} at the north pole $\nP$. 

\begin{remark}
\label{reverse}
{\rm   There is  a kind of converse to Lemma \ref{bulle}. Indeed, given any sequence $(\psi_n)_{n \in \N}$ of smooth maps from $\S^{\rq }$ to $\mN$ converging weakly to $\varphi$, there exists a subsequence still denoted $(\psi_n)_{n \in \N}$, points $a_1,\ldots, a_s$ on $\S^\rq$, positive  numbers 
$\mu_1, \ldots, \mu_s$  and a positive measure $\omega_\star$ such that 
\begin{equation}
\label{oups}
 \vert  \nabla \varphi_n \vert ^{\rq }\rightharpoonup\vert  \nabla \varphi \vert ^{\rq }+ 
 \underset{i=1}{\overset{s}\sum}\mu_i\delta_{a_i} +\omega_\star  {\rm \ in \ the \ sense \ of \ measures \ on \ } \S^\rq \ 	as \ n \to +\infty, 
 \end{equation}
 with $\sum \mu_i\geq \nu_{\rm q}$.  We consider next the minimal energy of weakly approximating sequences of smooth maps,  namely the number $\uptau_\star(\varphi)$ given by 
 \begin{equation}
 \label{defaut}
\uptau_\star(\varphi)\equiv \inf  \left \{
\begin{aligned} &\underset{n \to +\infty} \liminf \,  \rE_\rq (w_n), {\rm where \ }  (w_n)_{n \in \N} {\rm \ is \ s. t.   \ } 
w_n \in C^\infty(\S^\rq, \mN), \forall n \in \N,  \\
& {\rm \ and \ such  \ that  \ }\,  \llbracket w_n\rrbracket =0 {\rm \ and  \ }  w_n \underset{n \to +\infty} \rightharpoonup \varphi. 
\end{aligned}
 \right \}.
 \end{equation}
  We may write $\uptau_\star (\varphi)=\rE_\rq(\varphi) + \epsilon_\star(\varphi)$.  In view of Banach-Steinhaus theorem, we have $\epsilon_\star(\varphi) \geq 0$: The number $\epsilon_\star(\varphi)$ will be called \emph{the defect energy} for approximating sequences. 
 If the sequence  $(\psi_n)_{n \in \N}$  fulfills the optimality condition
 $$
\underset{n \to +\infty} \liminf  \,  \rE_\rq(\psi_n) =\rE_\rq(\varphi) + \epsilon_\star(\varphi), $$
then we may show that  we have $\omega_\star=0$ and $\sum \mu_i= \nu_{\rm q}$. Hence,  the defect energy is given by 
\begin{equation}
\label{deftopo}
\epsilon_\star (\varphi)=\nu_{\rm q}, 
\end{equation}
a number which depends only on  the  homotopy class of $\varphi$.  
}
\end{remark}

\medskip
\noindent
{\it Sketch of the proof of Proposition \ref{bulle}}.
Proposition \ref{segment} is deduced from Lemma \ref{bulle} for the choice $\rq=p=m-1$,  constructing   the  sequence $(U_n)_{n \in \N}$ as
\begin{equation}
\label{extra}
U_n (x)=\varphi_n \left(\frac{x}{\vert x\vert}\right), {\rm \ for \ } \frac 1 n \leq  \vert x \vert \leq 1,  
\end{equation}
 and extending $U_n$ inside the small ball $\B(\frac 1 n)$ in a smooth way with \emph{vanishing energy control}\footnote{A similar or related construction is given in Proposition \ref{trivialextend}  of Subsection \ref{trivial}.}: This  is possible since the map $\varphi_n$ belongs to the trivial homotopy class, and with an energetical cost  tending to $0$ and $n$ goes to $+ \infty$.   Since the energy of the map $\varphi_n$ concentrates at the North Pole $\nP$ in view of Lemma \ref{bulle}, it follows from the construction \eqref{extra}
that  the $(m-1)$-energy   of the sequence $(U_n)_{n \in \N}$ concentrates on the radial extension of the North Pole, that is, on   the segment $[0,\nP]$. 

\medskip
  After this digression, we come  back to the   general problem  of sequential weak density of smooth maps. In view of the previous discussion,   the main problem to consider is  the case  
\begin{equation}
\label{condition0}
p {\rm \ is \  an  \  integer, } \  1\leq p <m {\rm \ and \ }  \pi_{[p]}(\mN) \not =0.  
\end{equation}
  So far,  several   results establishing  \emph{sequentially weak density} of smooth maps between $\B^m$ and $\mN$ have been obtained\footnote{In several of these results, an additional boundary condition is imposed.}. For instance, when $\mN=\S^{p}$ for which  $\pi_p(\mN)=\Z$, we have:
  
 \begin{theorem}[\cite{BeZ, Be1, Be2}]
 \label{vieux}
   Let $p$ be an integer.  Then given any manifold $\mM$, $C^\infty (\mM, \S^p)$ is sequentially  weakly dense in $W^{1,p} (\mM, \S^p)$.
\end{theorem}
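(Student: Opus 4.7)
The plan is to reduce, via a strong-density step, to maps whose singular set has an explicit model form, and then to weakly approximate each model by smooth maps using the construction of Proposition \ref{segment}, performed fiberwise along the singular set.

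First I would invoke the density of the class $R^{1,p}(\mM,\S^p)$ of maps that are smooth outside a polyhedral singular set $\Sigma$ of dimension $m-p-1$; this subclass is strongly dense in $W^{1,p}(\mM,\S^p)$ for any target. A standard diagonal extraction then reduces matters to producing, for each fixed $u\in R^{1,p}(\mM,\S^p)$, a sequence of maps $u_n\in C^\infty(\mM,\S^p)$ converging to $u$ weakly in $W^{1,p}$.

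Near each top-dimensional stratum of $\Sigma$, $u$ is asymptotically of the form $\varphi(x/|x|)$ on transverse $(p+1)$-balls, with $\varphi:\S^p\to\S^p$ smooth of some degree $d\in\pi_p(\S^p)=\Z$. When $m=p+1$ this is exactly the setting of Proposition \ref{segment}, which provides smooth maps $U_n$ converging weakly to $\Using$ with $p$-energy concentrating on the segment $[0,P]$. For $m>p+1$ I would perform the same construction fiberwise along $\Sigma$: fix a smooth transverse frame and a smooth choice of ``north pole'' on a tubular neighborhood of $\Sigma$, then inside that tube replace $u$ by the fiberwise bubble construction while leaving $u$ unchanged outside. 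The scaling identity \eqref{scalingprop}, together with the finiteness of the minimal class-energy $\nu<+\infty$, yields a uniform bound on $\|\nabla u_n\|_{L^p(\mM)}$. Since the resulting $u_n$ is smooth on $\mM$, equals $u$ on $\mM\setminus(\Sigma\cup\mathcal T)$ for $\mathcal T$ a codimension-$p$ set swept by the bubble segments, and satisfies $|u_n|\le 1$, we obtain uniform convergence on compact subsets of $\mM\setminus(\Sigma\cup\mathcal T)$, hence the weak convergence $u_n\rightharpoonup u$ in $W^{1,p}$.

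The main obstacle is to carry out the fiberwise bubble construction coherently across $\Sigma$: one needs a globally smooth normal frame and a smooth choice of north pole along each top stratum, plus a rule for gluing these choices at the lower-dimensional strata without creating spurious topological charge along $\mathcal T$. For $\mN=\S^p$ this can always be arranged because the bubble energy and the model $\varphi$ depend only on the integer degree $d$, and any local obstruction lies in $\pi_p(\S^p)=\Z$, already matching the stratum degree; the traces $\mathcal T$ can then be taken as disjoint line segments paired to cancel charges. This last feature is precisely what will fail for more subtle targets, such as $\mN=\S^2$ with $p=3$, where the $\pi_3(\S^2)$ Hopf invariant forces the traces $\mathcal T$ to carry a branched structure of their own—the phenomenon exploited in the remainder of the paper to build the counterexample.
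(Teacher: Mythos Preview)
Your overall architecture---reduce to maps in $\mathcal R$ via strong density, then remove each singular stratum by a bubble/dipole construction---matches the route sketched in the paper. But there is a genuine gap in your argument, and it is exactly the step that distinguishes $\S^p$ from $\S^2$.

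The sentence ``a standard diagonal extraction then reduces matters to producing, for each fixed $u\in R^{1,p}$, a sequence of smooth maps converging weakly to $u$'' is not correct as stated. A diagonal argument for \emph{weak} convergence requires a uniform $W^{1,p}$ bound on the smooth approximants. If $u_k\to u$ strongly in $W^{1,p}$ with $u_k\in\mathcal R$, your bubble construction yields smooth maps whose $p$-energy is roughly $\rE_p(u_k)+\nu\cdot\mathcal H^{m-p}(\mathcal T_k)$, where $\mathcal T_k$ is the trace set connecting the singularities of $u_k$. Nothing in ``the scaling identity \eqref{scalingprop} together with the finiteness of $\nu$'' controls $\mathcal H^{m-p}(\mathcal T_k)$ as $k\to\infty$; the singular sets of the $u_k$ may become arbitrarily complicated under strong approximation. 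Without such control the diagonal sequence need not be bounded in $W^{1,p}$, and weak convergence fails.

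What makes the argument work for $\mN=\S^p$ is the Brezis--Coron--Lieb inequality \eqref{crux} (and its higher-codimensional analogues via the coarea formula), which gives $\rE_p(v)\geq |\S^p|\,\rL(v)$ and hence $\epsilon_*(v)\leq \rE_p(v)$ as in \eqref{theborne}. This bounds the defect energy of the approximants of $u_k$ by $2\rE_p(u_k)$, which is uniformly close to $2\rE_p(u)$, and the diagonal then goes through. Your closing paragraph misidentifies the special feature of $\S^p$: it is not that local obstructions ``match the stratum degree'' or that traces can be chosen disjoint, but rather that the linear growth $\nutrois(d)=|\S^p|\,|d|$ together with the integral formula for the degree yields a pointwise lower bound on $|\nabla v|^p$ in terms of the Jacobian, from which \eqref{crux} follows. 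For $\mN=\S^2$, $p=3$, the sublinear growth \eqref{tr} of $\nudeux$ destroys exactly this estimate, which is the mechanism behind the counterexample in the rest of the paper.
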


 In a  related  direction, a positive answer  was  given in  \cite{H, PR} for $(p-1)$-connected manifolds $\mN$ and in    \cite{PR}  in the case $p=2$, whatever  manifold $\mN$, similar results involving the $H^2$ energy are given in \cite{HR4}.  
 % In the present  paper, we first provide a positive answers for another special  case. 
 %\begin{theorem} Assume that $p=m-1$ and that $\pi_p(N)=\Z_2$. Then $C^\infty (\B^m,N)$ is sequentially weakly dense in  $W^{1,m-1} (\B^m, N)$.   
 %\end{theorem}
 % The  previous result possibly suggests that, if $p$ is an integer and if $\pi_p(N)$ is finite, then    $C^\infty(\B^m, N)$ is sequentially weakly dense in $W^{1,p} (\B^m, N)$.  When $\pi_p(N)$ is infinite the picture could  however be quite different: h
The  main result of this paper presents an \emph{obstruction to sequential weak density of smooth maps} when \eqref{condition0} holds and deals with the special  case  $\mN=\S^2$ and $p=3$, for which  $\pi_3(\S^2)=\Z$. 
More precisely,  the main result of this paper is the following: 

 \begin{theorem}
 \label{bis}
  Given any manifold $\mM$ of dimension larger or equal to $4$, $C^\infty (\mM, \S^2)$ is  \emph{not sequentially weakly dense} in $W^{1,3} (\mM, \S^2)$.
 \end{theorem}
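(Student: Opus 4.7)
\medskip
\noindent\textbf{Proof plan.} The aim is to exhibit an explicit map $\uu \in W^{1,3}(\B^4,\S^2)$ with no weakly $W^{1,3}$-convergent sequence of smooth approximations; the case of general $\mM$ with $\dim \mM \geq 4$ then follows by a slicing reduction. On a product domain $\B^4 \times \B^{m-4} \subset \mM$, define $\tilde\uu(x,y) := \uu(x)$: Fubini applied to any hypothetical weakly convergent sequence $u_n \rightharpoonup \tilde\uu$ in $W^{1,3}$, together with a diagonal extraction, produces for almost every $y$ a smooth weakly convergent approximation of $\uu$ in $W^{1,3}(\B^4,\S^2)$, contradicting the four-dimensional non-approximability.

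The construction in $\B^4$ clusters Hopf-type point singularities at scales adapted to the critical exponent $p = m - 1 = 3$. Pick disjoint balls $B_k = B(y_k,\rho_k) \subset \B^4$, $k\geq 1$. Inside each $B_k$ choose $N_k$ well-separated points $\{a_{i,k}\}_{i=1}^{N_k}$ at which $\uu$ has a Hopf singularity of unit charge, each of influence radius $\sigma_k = \rho_k/N_k$; outside the singular regions $\uu$ is set to a constant. By the critical scaling identity~\eqref{scalingprop} with $p = 3$, each Hopf singularity of radius $\sigma$ costs $O(\sigma)$ in $\rE_3$, so $\rE_3(\uu, B_k) \simeq N_k\sigma_k = \rho_k$, and the choice $\sum_k \rho_k < \infty$ guarantees $\uu \in W^{1,3}(\B^4,\S^2)$.

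Suppose by contradiction that there exist $u_n \in C^\infty(\B^4,\S^2)$ with $u_n \rightharpoonup \uu$ weakly in $W^{1,3}$ and $\rE_3(u_n, \B^4) \leq C_0$. Smoothness of $u_n$ on all of $\B^4$ forces the Hopf invariant of the restriction $u_n|_{\partial B_k}$ to vanish, whereas $\uu|_{\partial B_k}$ carries Hopf invariant $+N_k$ (the algebraic sum of the enclosed unit Hopf charges). The missing topological degree must then be absorbed inside $B_k$ by a $1$-rectifiable ``branched bubble'' network $\Sigma_n^k$ along which the excess of $|\nabla u_n|^3\, d\mathcal{L}^4$ concentrates in the weak limit, by direct analogy with the bubbling phenomenon of Proposition~\ref{segment} and Lemma~\ref{bulle} adapted to the Hopf class. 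An averaged slicing argument over concentric $3$-spheres centered at each $a_{i,k}$, combined with a concentration-compactness analysis, yields a quantitative lower bound
\begin{equation*}
\rE_3(u_n, B_k) \;\geq\; \rE_3(\uu, B_k) \;+\; c\,\nu_H^{(N_k)}\, \rho_k \;-\; o_n(1),
\end{equation*}
where $\nu_H^{(m)} := \inf\bigl\{\rE_3(w,\S^3)\,:\,w \in C^1(\S^3,\S^2) \text{ of Hopf invariant } m\bigr\}$ and the factor $\rho_k$ is the minimal length a branched network must travel to exit $B_k$.

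The heart of the argument, and main technical obstacle, is the critical branched-transport input: the sharp Hopf--$L^3$ inequality (of Kronheimer type) gives $\nu_H^{(m)} \gtrsim m^\alpha$ with an exponent $\alpha \in (0,1)$ that places the problem precisely at the critical Xia regime for irrigation in $\R^4$, so that merging $N_k$ same-sign branches into a single heavier one saves cost but cannot suppress its divergence in $N_k$. Summing the lower bounds,
\begin{equation*}
C_0 \;\geq\; \rE_3(u_n, \B^4) \;\geq\; \sum_k \rE_3(\uu, B_k) \;+\; c\sum_k \nu_H^{(N_k)}\, \rho_k \;-\; o_n(1),
\end{equation*}
and a judicious choice of $(N_k,\rho_k)$ with $\sum_k \rho_k$ convergent but $\sum_k N_k^\alpha \rho_k$ divergent (for instance $\rho_k = k^{-2}$ and $N_k$ growing polynomially fast enough) contradicts the uniform bound, ruling out any weakly convergent smooth approximation. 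The hardest single step is the verification of this critical irrigation lower bound, since at the critical exponent the branched-transport problem is genuinely borderline between finite and infinite cost --- and this is the independent-interest ingredient advertised in the abstract.
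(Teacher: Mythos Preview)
Your proposal has a genuine gap at the construction step that cascades through the entire argument.

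\textbf{The construction of $\uu$ is incoherent.} You write that inside each $B_k$ the map $\uu$ has $N_k$ point singularities of unit Hopf charge with influence radius $\sigma_k$, and is constant outside those small regions. But then $\uu$ restricted to the boundary of each tiny ball $B(a_{i,k},\sigma_k)$ is constant, hence has Hopf invariant~$0$, contradicting the claim that $a_{i,k}$ carries charge~$+1$. You cannot manufacture isolated Hopf singularities with a constant background; the map must carry nontrivial structure between them. This is precisely why the paper builds the elaborate \emph{spaghetton} $\Spagk$ and \emph{Gordian cut} $\Gordk$: these produce $k^4$ genuine $+2$ singularities with energy $\sim k^3$, but at the unavoidable cost that the singularities are spread over a \emph{regular grid}, not clustered.

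\textbf{The energy estimate is wrong, and this kills the numerology.} Suppose you did have $N_k$ singularities all of charge $+1$ inside $B(y_k,\rho_k/2)$. Then on every sphere $\partial B(y_k,r)$ with $r\in[\rho_k/2,\rho_k]$ the restriction has Hopf invariant $N_k$, and the pointwise bound \eqref{lowers3} forces $\rE_3(\uu,\partial B(y_k,r))\gtrsim N_k^{3/4}$. Integrating in $r$ gives $\rE_3(\uu,B_k)\gtrsim N_k^{3/4}\rho_k$, not $\rho_k$. Thus finite total energy already forces $\sum_k N_k^{3/4}\rho_k<\infty$, and your intended divergence $\sum_k N_k^{\alpha}\rho_k=\infty$ with $\alpha=3/4$ is impossible. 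The two series you need to separate are in fact the \emph{same} series.

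\textbf{What the paper does instead.} The resolution is that the gap between energy and defect cost is only \emph{logarithmic}, and exploiting it requires the singularities to be well separated. The Gordian cut $\Gordk$ places $\sim k^4$ same-sign singularities on a uniform grid in a fixed box with total energy $\sim k^3=(k^4)^{3/4}$. The lower bound for weak approximation is the branched connection $\Lbrbd$ (inequality \eqref{truebranched}), and the crucial Appendix result (Theorem~\ref{droppy}) shows that at the critical exponent $\upalpha_4=3/4$ the branched connection of a uniform grid of $k^4$ points to the boundary is $\gtrsim k^3\log k$, not merely $k^3$. One then glues scaled copies $\mv_{\rk_\mi}$ into disjoint balls of radii $\mr_\mi$ with $\sum \mr_\mi \rk_\mi^3<\infty$ but $\sum \mr_\mi \rk_\mi^3\log\rk_\mi=\infty$; this separation is possible precisely because of the extra $\log$. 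Your sketch correctly senses that critical irrigation is the bottleneck, but the mechanism is the non-irrigability of the \emph{spread-out} Lebesgue-like configuration, not a power-law comparison of $\nu_H^{(N_k)}$ against energy.

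Your slicing reduction from general $\mM$ to $\B^4$ is essentially the paper's argument in Subsections~\ref{additude}--\ref{subsectionitude}.
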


As a matter of  fact, the topology and the nature of the manifold $\mM$ is of little importance  in the proof. We rely indeed on the construction of a counterexample in the special case $\mM=\B^4$, imposing  however an additional condition on the boundary $\partial \B^4$.

\begin{theorem} 
\label{maintheo}
There exists a map $\mathcal U $ in $W^{1,3} (\B^4, \S^2)$ which is not the weak limit in $W^{1,3}(\B^4, \R^3)$   of smooth maps between $\B^4$ and $\S^2$. Moreover the restriction of $\mathcal U$ to the boundary $\partial \B^4=\S^3$ is a constant map.
\end{theorem}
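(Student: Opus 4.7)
The plan is to build $\mathcal U$ as a countable assembly of small-scale Hopf dipoles in $\B^4$ in a geometric pattern such that any ``bubble graph'' carried by a weakly-convergent sequence of smooth approximations would have to be an integer-multiplicity $1$-current of infinite mass, contradicting the uniform energy bound inherited from weak convergence. This is the critical-exponent branched-transportation / irrigation obstruction announced in the abstract.

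\textbf{Construction of $\mathcal U$.} Pick pairwise disjoint closed cubes $(Q_i)_{i\geq 1}\subset\B^4$ with side $r_i=1/i$ packed so that $\mathrm{dist}(Q_i,Q_j)\geq r_i+r_j$ for all $i\neq j$; such a packing exists since $\sum r_i^4<+\infty$. Inside each $Q_i$ fix two centers $a_i^+,a_i^-$ with $|a_i^+-a_i^-|=r_i/3$, and at scale $\varepsilon_i=i^{-2}\ll r_i$ paste the two radial local maps $x\mapsto \varphi((x-a_i^\pm)/\varepsilon_i)$ where $\varphi:\S^3\to\S^2$ is the Hopf fibration, interpolated smoothly to a fixed base-point $Q\in\S^2$ on $Q_i\setminus(B(a_i^+,\varepsilon_i)\cup B(a_i^-,\varepsilon_i))$ and extended by $Q$ outside $\bigcup_iQ_i$. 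By the scaling identity \eqref{scalingprop} at the critical exponent $p=m-1=3$, each such building block carries $3$-energy $\lesssim \varepsilon_i\,\rE_3(\varphi,\S^3)$; since $\sum\varepsilon_i<+\infty$ this gives $\mathcal U\in W^{1,3}(\B^4,\S^2)$ with $\mathcal U|_{\partial\B^4}\equiv Q$.

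\textbf{Bubble-current extraction.} Assume for contradiction that $u_n\in C^\infty(\B^4,\S^2)$ converges weakly in $W^{1,3}$ to $\mathcal U$ and set $M:=\sup_n\rE_3(u_n)<+\infty$. For each $i$ and a generic radius $\rho\in(\varepsilon_i,r_i/10)$, Fubini and weak convergence bound $u_n$ in $W^{1,3}(\S^3_\rho(a_i^\pm))$ along a subsequence; at the critical exponent this does not yield a uniform limit, but the Hopf-degree mismatch between $\mathcal U|_{\S^3_\rho(a_i^\pm)}$ (of class $\pm 1$) and the trivially extendable $u_n|_{\B^4_\rho(a_i^\pm)}$ forces, through the bubbling picture of Lemma~\ref{bulle}, the defect measure $\lim_n(|\nabla u_n|^3) -|\nabla\mathcal U|^3$ to concentrate on a $1$-rectifiable subset of $\B^4_\rho(a_i^\pm)$ emanating from $a_i^\pm$, carrying at least $\nu:=\upnu_{_{\S^2,3}}(\llbracket\varphi\rrbracket)>0$ units of energy per unit length. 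Gluing these local pieces over all $i$ yields a limiting integer-multiplicity rectifiable $1$-current $T\subset\B^4$ with $\partial T=\sum_i(\delta_{a_i^-}-\delta_{a_i^+})$ and total mass $\nu\,\mathbf{M}(T)\leq M$.

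\textbf{Critical branched-transportation obstruction and main difficulty.} By the geometric separation of the cubes $(Q_i)$, the Kantorovich-Rubinstein distance $W_1\bigl(\sum_i\delta_{a_i^+},\sum_i\delta_{a_i^-}\bigr)$ equals $\sum_i r_i/3$: the intra-cube matching achieves this value, and any swap routing $a_i^+$ to $a_j^-$ with $j\neq i$ costs at least $\mathrm{dist}(Q_i,Q_j)\geq r_i+r_j>r_i/3$, so the identity matching is optimal. Therefore any integer-rectifiable $1$-current with boundary $\sum_i(\delta_{a_i^-}-\delta_{a_i^+})$ has mass at least $\sum r_i/3=\sum 1/(3i)=+\infty$, contradicting $\mathbf M(T)\leq M/\nu<+\infty$. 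The delicate step, and the main obstacle, is the bubble-current extraction of the previous paragraph: rigorously proving that the weak defect measure of $(|\nabla u_n|^3)_n$ is $1$-rectifiable with integer-valued Hopf multiplicities whose boundary matches the singular Hopf distribution of $\mathcal U$, while ruling out more diffuse or higher-dimensional concentration and controlling the gauge ambiguity of the Hopf invariant at the Sobolev threshold $W^{1,3}$. It is exactly this rectifiability-and-integrality step that ties the proof to critical-exponent irrigation theory and to the $\S^3$-lifting problem for $\S^2$-valued Sobolev maps.
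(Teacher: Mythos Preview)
Your construction fails at the very first step: the map $\mathcal U$ you describe is \emph{not} in $W^{1,3}(\B^4,\S^2)$. The claimed bound $\rE_3(\text{block}_i)\lesssim\varepsilon_i$ accounts only for the radial pieces on the tiny balls $B(a_i^\pm,\varepsilon_i)$, but ignores the ``smooth interpolation to $Q$'' on $Q_i\setminus\bigl(B(a_i^+,\varepsilon_i)\cup B(a_i^-,\varepsilon_i)\bigr)$. That interpolation cannot be cheap: on \emph{every} $3$-sphere $\S^3_\rho(a_i^+)$ with $\varepsilon_i<\rho<r_i/6$ the restriction has Hopf invariant $+1$, hence tangential $3$-energy at least $\nudeux(1)>0$ by \eqref{lowers3} (and scale invariance in dimension $3$). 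Integrating in $\rho$ gives
\[
\rE_3\bigl(\mathcal U,\ Q_i\bigr)\ \geq\ \int_{\varepsilon_i}^{r_i/6}\nudeux(1)\,d\rho\ \gtrsim\ r_i\ =\ \frac{1}{i},
\]
so $\rE_3(\mathcal U,\B^4)\gtrsim\sum_i 1/i=+\infty$. (If instead you literally set $\mathcal U\equiv Q$ outside the small balls, the map is not $\S^2$-valued in $W^{1,3}$ since the traces on $\partial B(a_i^\pm,\varepsilon_i)$ do not match.)

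This is not a technical slip but reflects a structural obstacle: for a configuration of well-separated charge-$\pm1$ dipoles, the energy of the map and the branched defect length $\Lbr$ scale the \emph{same} way (both $\sim\sum_i r_i$), so one cannot arrange finite energy together with infinite defect. The paper's mechanism is entirely different and hinges on the sublinearity $\nudeux(d)\sim|d|^{3/4}$ (Rivi\`ere's estimate \eqref{tr}). One first builds the spaghetton map $\Spagk$ of Hopf charge $2k^4$ with $\rE_3\sim k^3$, then the ``Gordian cut'' $\Gordk$ which unties it in $W^{1,3}$ at the price of $k^4$ point singularities of charge $+2$ sitting on a uniform grid of mesh $1/k$. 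For such a grid the \emph{branched} connection with exponent $\upalpha_4=3/4$ is bounded below by $Ck^3\log k$ (non-irrigability of Lebesgue measure at the critical exponent, Theorem~\ref{droppy}), giving $\Lbr(\mathfrak v_k)\gtrsim k^3\log k$ while $\rE_3(\mathfrak v_k)\lesssim k^3$. It is precisely this logarithmic gap that allows one to glue rescaled copies with $\sum \mr_\mi\rk_\mi^3<\infty$ but $\sum\mr_\mi\rk_\mi^3\log\rk_\mi=+\infty$, and then conclude via the Hardt--Rivi\`ere lower bound \eqref{truebranched}. Your linear Kantorovich/$W_1$ argument, and the corresponding integer-multiplicity rectifiable-current picture, are appropriate for $\mN=\S^3$ (where $\nutrois(d)=|\S^3||d|$) but not for $\mN=\S^2$; there the defect functional is genuinely a branched transportation cost, and simple dipoles cannot produce the required energy/defect separation.
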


   As far as we are aware of, this is the first  case where \emph{an obstruction to sequential weak density}  of smooth maps  between manifolds has been established  when $p$ is an integer.  Theorem \ref{bis} also answers a question explicitly raised in \cite{HR1, HR2, HR3}.
   Let us emphasize that the map $\mathcal U$ constructed in theorem \ref{maintheo}  \emph{necessarily  must have a infinite number of singularities}, and is hence very different from 
the example $\Using$ provided in \eqref{singular}. Indeed, let us recall that,  for $m-1\leq p<m$,   the set of maps with  a finite number of isolated interior  singularities 
\begin{equation}
\label{RP}
\begin{aligned}
\mathcal R^p(\B^m, \mN)=\{ & u \in W^{1,p}(\overline{\B^m}, \mN), {\rm  \ s. t  \ \ } 
 u {\rm \ is \ Lipschitz \ on \ very \ compact \ subset  \ of  \ }  \\
&\overline{ \B^m}\setminus A {\rm \ for \ a \ finite \ set \ } A \subset \B^m\}
\end{aligned}
\end{equation}
  is not only  dense in $W^{1,p} (\B^m, \mN)$  \emph{for the strong topology}, but, in the case $p=m-1$,   is also  contained in the sequential weak closure of smooth maps with values into $\mN$. The proof of this latest  fact, given in \cite{Be1, Be2, BeZ} and which will be sketched in a moment,  is   inspired   by a method introduced in the seminal work of Brezis, Coron and Lieb \cite {BCL}. It is  and along the same line   the singularity of $\Using$ was removed using  concentration of energy along lines connecting the singularity to the boundary, or possibly to other singularities with opposite topological charges.  In view of   \eqref{sergent}, the energy of the  constructed  approximating maps are controlled in the limit by  a term which is of the order of  the length of the connecting lines, multiplied by a  number depending on the topological charge. This number,  which  corresponds to a defect energy,   is obviously bounded when the number of singularities is finite, yielding hence the mentioned weak approximability of maps in $\mathcal R (\B^4, \mN)\equiv\mathcal R^3(\B^4, \mN)$ by smooth maps. We  may  however not a priori exclude the fact  that, when approximating a map in $W^{1,3}(\B^4, \mN)$ by  maps with a finite number of singularities,  the defect energy grows as the number of singularities grows. Our strategy in the proof of Theorem \ref{maintheo} is precisely   to produce  a map $\mathcal U$ for which this phenomenon occurs. 
  
  \smallskip

   \smallskip
   As  this stage, it is  worthwhile to compare, when the exponent $p$ is equal to $3$,  the results obtained for the respective  cases the target manifolds are $\S^2$ or $\S^3$. In both cases the  homotopy groups are similar, since  $\pi_3(\mN)=\Z$, for $\mN=\S^2$ or $\mN=\S^3$.  However, we obtain, provided $\dim \mM\geq 4$,  
sequentially weak density of smooth maps in the  case $\mN=\S^3$  thanks to  Theorem \ref{vieux}, whereas in the case $\mN=\S^2$, we obtain exactly the opposite result, since there are  obstructions to weak density of smooth maps in view of Theorem
\ref{maintheo}. Hence ultimately, not only the nature of the homotopy group matters, but also more subtle issues related to the way its elements behave according to the Sobolev norms and the $\rE_3$ energy.

\medskip

In the next subsection, we review with more details the constructions mentioned above and emphasize their   connections with optimal transportation theory. 
%%%%%%%%%%%%%%%%%%%%%%%%%%%%%%%%%%%%%%%%%%%%%%%%%%%%%%%%%%%%%%%%%%
\subsection{Defect measures and optimal transportation of topological charges}
\label{defectsm}
%%%%%%%%%%%%%%%%%%%%%%%%%%%%%%%%%%%%%%%%%%%%%%%%%%%%%%%%%%%%%%%%%
 As in Remark \ref{reverse}, but now in a higher dimension,   given  $u \in W^{1, 3}(\B^4, \mN)$, we introduce the \emph{ defect energy $\epsilon_\star(u)$} related to its weak approximability by smooth maps  defined by
\begin{equation}
\label{defoto}
 \rE_3(u)+\epsilon_\star (u) \equiv \inf  \left \{ \underset{n \to +\infty} \liminf \,  \rE_3 (w_n), (w_n)_{n \in \N} {\rm \ s. t.   \ }
  w_n \in C^\infty(\B^4, \mN)  {\rm \ and  \ }  w_n \underset{n \to +\infty} \rightharpoonup u 
 \right \},
 \end{equation}
  with the convention that $\epsilon_\star (u)=+\infty$ if $u$  cannot be approximated weakly  by smooth maps.  In this subsection, we  focus on maps $u$ with a finite number of singularities and describe briefly how  one may approximate maps in $\mathcal R(\B^4, \mN)$, a set which is dense in $W^{1,3}(\B^4, \mN)$,   weakly by smooth maps in $W^{1, 3}$-norm and how this leads to upper bounds  for the defect energy $\epsilon_\star$. 
As for identity \eqref{deftopo} in Remark \ref{reverse}, the numbers $\upnu_{_{\mN, 3}}$ enter  directly in these estimates.  We  describe first some relevant properties of these numbers in the special case $\pi_3(\mN)=\Z$, emphasizing thereafter asymptotic properties in the  cases $\mN=\S^3$ or $\mN=\S^2$. 

\medskip
\noindent
{\it Infimum of energy in homotopy classes when $\pi_3(\mN)=\Z$}.   When $\pi_3(\mN)=\Z$, each homotopy class in $C^0(\S^3, \mN)$ is  labelled by an integer
    which  will be termed  \emph{the topological charge} of the homotopy class or of its elements.
  %  and we will sometimes use the identification 
  %$\varphi$. 
   Setting in this case, for $d$ given in $\Z$  
   $$
   \upnu_{_\mN}(d)\equiv \upnu_{_{\mN, 3}} (\llbracket  \varphi \rrbracket),   {\rm  \ with \ } 
   \llbracket \varphi \rrbracket=d,
   %\varphi  {\rm with \ }
   $$
  We verify  that 
$\nuN(-d)=\nuN(d)$.
Concentrating  bubbles of topological charge $\pm 1$ at $\vert d \vert $ distinct  points, we are led,  for  $d \in \Z$, to the upper bound 
\begin{equation}
\label{upperside}
\nuN (d) \leq \vert d \vert \nuN (1)  
 {\rm  \ \,  and \  more  \  generally \ }
\nuN (kd) \leq k \nuN (d)  \ {\rm \ for \ } k \in \N.
 \end{equation}
 A natural question is  to determine whether this upper bound on $\nuN (d) $ \emph{is sharp or not}. It turns out that the answer to this question strongly depends on  the  target manifold $\mN$.

% \emph{the degree of the map} $\varphi$  and denoted $\deg(\varphi)$ in case $\mathcal N=\S^3$ and the \emph{Hopf invariant}, denoted $\H(\varphi)$ in case $\mN=\S^2$.

\medskip
\noindent
{\it Asymptotic behavior of  $\upnu_{_\mN}(d)$ as $\vert d \vert \to + \infty$ when $\mN=\S^3$ and $\mN=\S^2$}. When $\mN=\S^3$  the topological charge is  called the \emph{degree} and denoted  $\deg(\varphi)$. It can be  proved (see Section \ref{topology}, inequality  \eqref{degreenergy}) that, for any $\varphi: \S^3 \to \S^3$, one has 
$$\int _{\S^3} \vert  \nabla \varphi   \vert ^3 dx \geq  3^{\frac 32}\vert   \S^3\vert \vert \deg (\varphi)\vert, $$
 so that, setting   $\nutrois (d) = \upnu_{\S^3} (d)$ we are led to the identity
 \begin{equation}
 \label{upmud3}
  \nutrois (d) = 3^{\frac 32}  \vert \S^3\vert \vert d \vert=2 \sqrt{27}\pi^2  \vert d \vert.
  \end{equation}
When $\mN=\S^2$,    the topological charge is usually called the \emph{Hopf} invariant  and denoted  in this paper
 $\rH(\varphi)$. As  we will recall in Section \ref{topology} (see \eqref{riodeja}), one verifies  that  for any map 
$u: \S^3 \to \S^2$,  we have the lower bound
\begin{equation}
\label{lowers3}
\int _{\S^3} \vert  \nabla u  \vert ^3 dx \geq {\rm C}_{\nu} \vert d \vert ^{\frac 3 4}, d=\rH(u),  
\end{equation}
so that $\nudeux (d) \geq {\rm C}_{\nu} \vert d \vert ^{\frac 3 4}$, where ${\rm C}_{\nu}>0$ is some universal constant, and  where  we have  set  $  \nudeux (d) = \upnu_{\S^2} (d)$.
In \cite {Ri}, Rivi\`ere made the remarkable observation that the bound on the left hand side is \emph{optimal}, that is, there exist a universal constant ${\rm K}_\nu>0$ such that, for any $d\in \Z$, we have the upper bound
\begin{equation}
\label{tr}
\nudeux (d) \leq {\rm K}_{\nu} \vert d \vert^{\frac 3 4}, \,  {\rm \ with } \  \nudeux (d) = \upnu_{\S^2} (d).
\end{equation}
 It follows  that  the function $\nudeux$ is actually sublinear on $\N$.
 In other words, the minimal energy necessary for creating a map of charge $d$ is no longer proportional to $\vert d \vert $, but grows in fact sublinearily as $\vert d \vert^{\frac 3 4}$.  This   fact has  important consequences on the  way to connect optimally defect  for maps from $\B^4$ to $\S^2$ having a finite number of singularities  and the definition of  the corresponding defect measures\footnote{In \cite{HR1}, the authors extend this discussion to several other targets.}. 
 
\medskip
 \noindent
 {\it Topological charges  of singularities of maps in $\mathcal R (\B^4, \mN)$}.   Consider  an arbitrary map $v \in \mathcal R(\B^4, \mN)$,  so that $v$ is continuous in a neighborhood of the boundary $\partial \B^4$.  Given a singularity $a$ of $v$, the homotopy class of the restriction  of $v$ to any small sphere centered at $a$ does not depend on the chosen radius, provided the later is sufficiently small. We will denote 
$ \llbracket a \rrbracket$ this element in $\pi_3(\mN)$ and in the case $\pi_3(\mN)=\Z$,   the number $d$ labelling the homotopy class $ \llbracket a \rrbracket$ will be referred to as  a the \emph{topological charge of the singularity $a$}.   The homotopy classes of the singularities are related to the homotopy class of the restriction of $v$ to the boundary by the simple relation\footnote{The proof relies on the fact that the homotopy group $\pi_q(N)$  is  commutative, for $q\geq 2$. In the case $q=1$, the group might be non commutative, so that formula \eqref{sommebord} no longer holds, see. e.g. \cite{betjac} for related results.}
\begin{equation}
\label{sommebord}
\underset{ a {\rm \ singularity \ of \  } v}\sum \llbracket a \rrbracket=\llbracket u_{\vert_{\partial \B^4}} \rrbracket.
\end{equation}

 \medskip
 \noindent
 {\it Removing singularities of maps in $\mathcal R (\B^4, \mN)$ when $\pi_3(\mN)=\Z$.}   We assume  here  that   $\pi_3(\mN)=\Z$ and, 
for sake of simplicity, that  the  singularities of $v$  have either  topological charges $+1$ or $-1$\footnote{This is not an important  restriction, since the class of maps having this property is also strongly dense.}. %Since we assume that the map $v$ is constant on the boundary, so that the homotopy class of the restriction  of $v$ to the boundary is $0$, it follows from relation \eqref{sommebord} that  the  number of positive singularities equals the number   of negative  singularities. 
We may also assume without loss of generality that the degree of $v$ restricted to $\partial \B^4$ is non-negative, so that there are, in view of \eqref{sommebord}, at least as much positive singularities as negative ones. 
 We  denote by $\{P_i\}_{i\in I}$ the set of  singularities of charge  $+1$ and $\{Q_j\}_{j \in J}$ the set of the singularities of charge $-1$, with $\ell = \sharp (J)\leq \sharp(I)=r$. In order to approximate weakly $v$ by smooth maps, we adapt the idea of the proof of Proposition \ref{segment}.   We consider  a set of  bounded curves $\{ \mathcal L_\fp\}_{\fp\in  \fP}$  connecting singularities between themselves or to the boundary as follows: First, each  singularity is connected only to one point. A positive  singularity $P_i$  is connected to a point $A_i \in \{Q_j\}_{j\in J}  \cup \partial \B^4$, that is, $A_i$   is either a negative  charge or a point on the boundary $\partial \B^4$. A negative singularity $Q_j$  which is not yet connected to a positive singularity,  is connected  to a point on the boundary.   
 %Similarly, if a negative charge $Q_j$ is not  yet connected to  a positive charge, then it has to be connected to the boundary. 
 This yields a sequence   $(\varphi_n)_{n \in \N}$ of maps in ${\rm Lip }(\B^4, \S^3)$, such that
  \begin{equation}
\label{major}
\vert  \nabla \varphi_n \vert ^{3}\rightharpoonup \vert  \nabla v \vert ^{3}+ \upmu_*  {\rm \ as  \ } n \to +\infty \   {\rm \ where \ }
\upmu_*=\nuN  (1)\mathcal{H}^{1}\rest
\left(
\underset {\fp \in \fP} {\cup}  \mathcal L_\fp\right), 
\end{equation}
  so that
 \begin{equation}
 \label{defaut}
\underset {n \to \infty}  \lim \rE_3 (\varphi_n)=\rE_3 (v)+\vert \upmu_* \vert {\rm \ with \ } \vert \upmu_* \vert =  \nuN (1)\left( \underset{\fp \in \fP} {\sum} \mathcal H^1( \mathcal L_\fp)\right).
 \end{equation}
The measure $\upmu_*$ represents a defect  energy measure for the above convergence. It follows from the definition of the minimal defect  energy  $\epsilon_\star(u)$   given in \eqref{defoto} that
\begin{equation}
\label{upup}
\epsilon_\star(u)\leq \vert \upmu_\star\vert, 
\end{equation}
 so that a good estimate for  $\vert \upmu_\star\vert $ yields an estimate of the minimal  defect energy  $\epsilon_\star(u)$. The formula for $\upmu_\star$  given in \eqref{defaut}  depends  not only on the position  of the singularities but  also on  the way we choose to connect them.  In order to obtain general  weak approximation results,  we choose therefore optimal   connections of the singularities, with the hope that the upper bound \eqref{upup} can be turned into a related lower bound. This program can be completed in the case $\mN=\S^3$.

\medskip
\noindent
{\it Minimal connections for $\mN=\S^3$}. 
Consider as above   $v$ in $\mathcal R (\B^4, \S^3)$, with topological charges $\pm 1$.   In order to have the value of the energy defect as small as possible, we  connect  singularities with straight segments and choose the configuration with the smallest total length.  This leads to  the notion of  length of a   minimal connection between the points $\{P_i\}_{i \in J}$, $\{Q_i\}_{i\in J}$ and the boundary $\partial \B^4$,  a notion  introduced in the present context in \cite{BCL}.  Set ${\rm \bf P}=\{P_i\}_{i\in I}$ and ${\rm \bf Q}=\{Q_j \}_{j\in J}$. We introduce the set  $\mathbb T$ of all mappings $\displaystyle{{\rm Tr} :  \rbA\equiv { \rm \bf  P} \cup { \rm \bf Q} \to \rbA\cup \partial \B^4}$ such that  
${\rm Tr}({\bf P}) \subset {\bf Q} \cap \partial \B^4$,  any  $Q_j \in {\rm \bf Q}$  has at most one pre-image  belonging to ${\rm \bf P}$,
% { Tr}$ to $\displaystyle{\{P_i\}_{i\in I}}$ is injective into 
%$\{Q\}_{j\in J} \cup \partial \B^4$, 
and such that, if  $Q_j\not \in {\rm Tr} ({\rm \bf P})$ then ${\rm Tr} (Q_j) \in \partial \B^4$.  The   length of a   minimal connection
  is given by 
\begin{equation}
\label{minimal}
\rL\left(\{P_i\}, \{Q_i\}, \partial \B^4\right)= \inf  \left \{ {\underset {i\in I}\sum} \vert P_i- {\rm  Tr} (P_i) \vert  +{\underset {Q_j\not \in {\rm Tr} ({\rm \bf P}) }\sum}\vert Q_i- {\rm Tr} (Q_i) \vert, \  {\rm for \ } {\rm Tr} \in  \mathbb T  \right \}.  
\end{equation}
   A \emph{minimal connection}, corresponds to a connection related to a minimizer for \eqref{minimal}.
% where $J=\{1, \ldots, r\}$ and $\mathfrak S$ denotes the set of permutations  of $J$. In the language of optimal transportation, this can be rephrased as the optimal  transportation of the measure $\displaystyle{\sum \delta_{_{P_i}}}$ to the measure $\displaystyle{\sum \delta_{_{Q_i}}}$ with cost functional given by  the distance function. 
Going back to \eqref{defaut} we obtain hence, for a minimal connection  
 \begin{equation}
 \label{epsilonstar}
  \vert \upmu_\star \vert=3^{\frac 32}\vert \S^3 \vert \,  \rL (v),  {\rm \ where \  } \rL(v) \equiv \rL(\{P_i\}, \{Q_i\}, \partial \B^4),  {\rm  \ since \ } \nuN (1)=3^{\frac 32}\vert \S^3 \vert. 
  \end{equation}
 The important observation made in \cite{BCL} (see also \cite{ABL} for a different proof) is that, if we assume moreover that $v$ is \emph{constant on the boundary}, that is $v \in R_{\rm ct} (\B^4, \S^3)$, where
 $$
\mathcal R_{\rm ct} (\B^4, \mN)=\{u\in \mathcal R(\B^4, \mN), u { \rm \ is \ constant \ on \ } \partial \B^4\},
$$
 then the length of a minimal connection  can be estimated by  the energy of the map  as 
 %and its restriction to the boundary as fo{\rm ct}llows
 \begin{equation}
 \label{crux}
 \rE_3(v) \geq  3^{\frac 32}\vert \S^3 \vert \, \rL(\{P_i\}, \{Q_i\}, \partial \B^4).
 \end{equation}
%where $C>0$ denotes some universal constant, and  where $\Vert v _{\vert_{\partial \B^4}} \Vert_{ {\overset{\bullet}W}^{\frac23, 3}(\partial \B^4) }$ corresponds to the homogeneous  trace semi-norme on the boundary.  If $v\in \mathcal R_{\rm ct} (\B^4, \mN) $,  the subset of $\mathcal R(\B^4, \mN)$  defined by 
%$$
%
%\mathcal R_{\rm ct} (\B^4, \mN)=\{u\in \mathcal R(\B^4, \mN), u { \rm \ is \ constant \ on \ } \partial \B^4\}.
%$$
% then $\displaystyle{\Vert v _{\vert_{\partial \B^4}} \Vert_{ {\overset{\bullet}W}^{\frac23, 3}(\partial \B^4) }=0}$, so that, 
   In view of \eqref{upup},    the defect energy $\epsilon_*(v)$ is hence bounded  above  by the energy, that is 
 \begin{equation}
 \label{theborne}
\epsilon_*(v) \leq    \rE_3(v). 
 \end{equation}
 %where $C>0$ denotes some universal constant.  
%More generally, it follows from \eqref{crux} that $\epsilon_*(v) \leq    C\rE_3(v)$, for some universal constant $C>0$.   
Using the fact that $\mathcal R (\B^4, \S^3)$ is dense in $W^{1,3 }_{\rm ct} (\B^4, \S^3)$, where
$$\displaystyle{ W_{\rm ct}^{1,p } (\B^4, \S^3)=  \{ v \in \mathcal R^p(\B^4, \S^3), {\rm s. t } \  v {\rm \ is \ constant \ on \ } \partial \B^4 \},
 }$$
   we  deduce from \eqref{theborne}  that maps  in $W^{1,3}_{\rm ct} (\B^4, \S^3)$ can be approximated weakly in $W^{1,3}(\R^4, \S^3)$ be sequences of maps in $C^\infty(\B^4, \S^3)$. 
   It can even be shown that 
   $\epsilon_\star(v)=\vert \upmu_\star \vert=3^{\frac 32}\vert \S^3 \vert \,  \rL (v),  $
    so that our previous construct is  optimal (see \cite {BBC, giamosou1, giamosou2}). 
%  sequences for which equality holds. 
Morever,  it can be proved
  (see e.g \cite{giamosou1}) that  any sequence corresponding to a minimizer  in \eqref{defoto} behaves according to \eqref{major}. 
  % Both  functionals $\rL$ and ${\rm F}$, which is termed the relaxed energy of the problem (see \cite{BBC}),  are  continuous  in the space $W^{1,3}(\B^4, \S^3)$, ${\rm F}$ being lower-semicontinuous for the weak convergence. 
  
  \begin{remark}{\rm
 We  have  assumed   that     all singularities have only   topological charges of  values $\pm1$: This is indeed not a restriction since  the subset  of $\mathcal R_{\rm ct} (\B^4, \S^2)$ maps with  topological charges $\pm 1$ is also dense.    When $\mN=\S^3$, multiplicities  do not really affect the property of the $\rL$, it suffices to repeat each singularity in the collection according to its multiplicity.  
 }
 \end{remark}

\noindent
{\it Removing singularities of  maps in  $\mathcal R (\B^4, \S^2)$: Branched transportation}.   The approximation  proposed in \eqref{major} is   not optimal when   $  \nuN$ grows sublinearily,   the defect energy $\epsilon_\star$ might indeed be   much smaller than $\vert \upmu_\star \vert$ as constructed above. We illustrate this on the case $\mN=\S^2$.

\smallskip
Given $u$ in $\mathcal R_{\rm ct} (\B^4, \S^2)$ and  assuming as before that all topological charges are equal to  $\pm 1$,  we approximate weakly $u$ by smooth maps from $\B^4$ to $\S^2$ connecting  again the positive charges $(P_i)_{i \in I}$ either  to  the set of  negative charges $(Q_j)_{j \in J}$ or to the boundary, the negative charges which are not yet connected to a positive charge being connected to the boundary as above in the case of a minimal connection. In contrast with  the case $\mN=\S^3$ however,  straight lines joining  points  of opposite charges  or to the boundary  may  not represent  the optimal solution. Indeed,  it may be  energetically more favorable, in view of the subadditivity property \eqref{tr},   that some parts of the connection carry a higher topological charge and we need therefore  to introduce  \emph{branching points}. Such  connections with branching points have  been modelled by Q. Xia  in his pioneering work  \cite{xia} with the notion of \emph{transport path}.  We  adapt  this notion and  term  it in our setting \emph{branched connection}, a notion depending only on  the distribution of the charges. A  detailed presentation  is provided in Appendix A.
  A branched connection  associated to the distribution of   charged points $\displaystyle{{\bf A}=\{P_i\}_{i \in I}\cup \{Q_j\}_{j\in J}}$ in $\B^4$  is   given as a directed graph $G$ in $\B^4$ with corresponding source points\footnote{ In our context, we call equally source points positively charges or negatively charged points; so that we will not distinguish sources and sinks.}. It involves:
\begin{itemize}
\item  {a finite  vertex set $V(G)\subset \overline {\B^4}$}, such that the collection of source points  belongs to $V(G)$, that is 
${ {\bf A} \subset V(G)}$. There may also be other points, called \emph{branching points}. 
\item A set {$E(G)$}  of \emph{oriented  segments}  joining the {vertices}, possibly with \emph{multiplicity} or \emph{density} $d(\be)$: For { ${\be} \in E(G)$}, we denote by {$e^-$} and {$e^+$} the \emph{endpoints} of {$e$}, so that 
$$\be=[e^-, e^+]  {\rm \ with \ } e^-, e^+ \in V(G).$$
\end{itemize}
For   a source point $a\in V(G)$, set 
 $E^\pm (a, G)= \{  e\in E(G), e^\pm=a  \}.$  We impose 
for    $a \in V(G)\setminus   \partial \Omega$  the \emph{Kirchhoff law}
\begin{equation}
\label{balance}
\left\{
\begin{aligned}
\sharp \left(E^-(a, G)\right)&=
 \sharp \left( E^+ (a, G)\right)+ 1 {\rm \ \,  if  \ } a \in {\rm \bf P}\equiv  \{P_i\}_{i\in I} \\
  \sharp \left(E^-(a, G)\right)&= \sharp \left(E^+ (a, G)\right)- 1 {\rm \ \,  if  \ } a \in {\rm \bf Q}\equiv \{Q_j\}_{j\in J} \\
\sharp \left(E^-(a, G)\right)&=\sharp \left( E^+ (a, G)\right)
  {\rm \ \,  if   \ }  a {\rm  \ is \ a \ branching \ point \ i.e. \ } a \not \in {\bf A}  \cup \partial \B^4. 
\end{aligned} 
\right.
%\leqno{(Kirchhoff)}
\end{equation}
  The  multiplicity  $d(\be)$   represents the  charge carried through the segment $\be$ and  relation \eqref{balance} expresses   conservation of this charge at the vertex points, with  signed source  provided by  the  charges at the points in $\bf A$.  We denote by $\mathcal G ( {\rm \bf P}\, {\rm \bf Q}, \partial \B^4 )$ the set of all graphs having the previous properties and introduce  the quantity
\begin{equation}
\label{Lbranch}
\Lbr(  {\rm \bf P}\,  {\rm \bf Q}, \partial \B^4)= \inf \{\rbW(G), G \in \mathcal G ( {\rm \bf P}\, {\rm \bf Q}, \partial \B^4 )\}, 
\end{equation}
 where the functional $\rbW(G)$ is the weighted length of the graph connection  defined by
 \begin{equation}
 \label{w2}
\rbW(G)= \underset { \be\in E(G)}\sum [ (d(\be)) ]^{ 3\slash 4} \mathcal H^1 (\be),  \ {\rm \ for \ }  G \in  \mathcal G ({\rm \bf P}\, {\rm \bf Q}, \partial \B^4). 
 \end{equation}
 The functional $\Lbr$ plays  a similar role   for $\S^2$-valued maps as does  the length of a minimal connection for $\S^3$-valued maps:   It yields  the defect energy when approximating maps in $\mathcal R(\B^4, \S^2)$ by sequences of smooth maps between $\B^4$ and $\S^2$. Indeed,  let $u \in \mathcal R  (\B^4, \S^2)$ as above, let  $\{P_i\}_{i\in I}$ denote the set of $+1$ singularities and    $\{Q_j\}_{j\in J}$  the set of $-1$ singularities. Let   $G$  be an arbitrary  graph in $\displaystyle{\mathcal G (\{P_i\}_{i \in I},\{Q_j\}_{j\in J}, \partial \B^4 )}$. Using concentration of maps along the segments of $G$ with corresponding multiplicity $d(\be)$ on each segment $\be$,  we   construct a sequence $(\varphi_n)_{n \in \N}$  of maps in $C^\infty (\B^4, \S^2)$ converging weakly to $u$ 
such that
\begin{equation*}
%\label{maj}
\vert  \nabla \varphi_n \vert ^{3}\rightharpoonup \vert  \nabla u \vert ^{3}+ \upmu_*  {\rm \ as  \ } n \to +\infty,  \   {\rm \ where \ }
\upmu_*=\mathcal{H}^{1}\rest
\left(
\underset {\be\in E(G)} {\cup}\upnu_2(d(\be)) \be \right). 
\end{equation*}
It follows that 
\begin{equation*}
\left\{
\begin{aligned}
&{\underset {n\to + \infty}\lim \rE_3}(\varphi_n)=\rE_3(u)+\vert \upmu_\star\vert {\rm \ where \ } \\
 &\vert \upmu_\star \vert = \underset { \be\in E(G)}\sum \upnu_2( (d(\be)) ]) \mathcal H^1 (\be) 
 \leq {\rm K}_{\upnu}  \underset { \be\in E(G)}\sum [ (d(\be)) ]^{ 3\slash 4} \mathcal H^1 (\be)= {\rm K}_{\upnu}\rbW(G),
\end{aligned}
\right. 
\end{equation*}
Choosing   $G$ as a minimizer for $\rbW(G)$ we obtain  
$$\vert \upmu_\star\vert \leq  {K}_{\upnu}\Lbr({\rm \bf P}, {\rm \bf Q}, \partial \B^4), 
{\rm \  so \ that \ }  
\epsilon_\star (u)\leq  {\rm K}_{\upnu} \Lbr( {\rm \bf P}, {\rm \bf Q}, \partial \B^4).
$$
An important point is that   the \emph{reverse inequality is also valid}:  This  has been proved in \cite{HR2}, Theorem  1.1 (see  also Theorem 6.1 and 7.2 in \cite{HR1}). As  a direct consequence of these results (see Subsection \ref{hardtriviere}), we have:

\begin{proposition}
\label{unpoco}
Let $u \in \mathcal R(\B^4, \S^2)$ be such that  $u(\rbx)=\sP$ for $\vert x \vert \geq 3 \slash 4$. Let  $(u_n)_{n \in \N}$ be  a sequence of maps  in $C^\infty (\B^4, \S^2)$  such that 
$\displaystyle{
 u_n \rightharpoonup  u {\rm \ in }  \ W^{1,3}(\B^4, \S^2)  
}.$
  Then, we have  
  \begin{equation}
  \label{leprecieux}
  \underset {n \to \infty} \liminf \, \rE_3(u_n) \geq {\rm C}_{\rm conv} \Lbr(u), 
  \end{equation}
  where  $\Lbr(u)$ is defined by $\equiv \Lbr( \{P_i\}_{i \in I}, \{Q_j\}_{j\in J}, \partial \B^4)$  for $u \in  \mathcal R_{\rm ct} (\B^4, \S^2)$, and  where   $0< {\rm C}_{\rm conv  }\leq 1$ denotes an absolute constant.
  % Hence,  for every  $u \in \mathcal R(\B^4, \S^2)$, we have 
  %\begin{equation}
  %\label{binalors}
%\epsilon_\star (u)\geq  {\rm C}_{\rm conv  }\Lbr( \{P_i\}_{i \in I}, \{Q_j\}_{j\in J}, \partial \B^4)-\left(1- {\rm C}_{\rm conv  }\rE_3(u)\right).
%\end{equation}
\end{proposition}
 
  The defect energy is hence again related to  a quantity involving only the location of the singularities and the sign of their topological charge.  
%\begin{remark}
%{\rm One may conjecture that, for $u \in  \mathcal R_{\rm ct} (\B^4, \S^2)$, one has the lower bound
 %$$
 %\epsilon_\star (u)=\inf \left \{ \underset { \be\in E(G)}\sum \upnu_2( (d(\be)) ]) \mathcal H^1 (\be) \ G\in \mathcal G ({\rm \bf P},{\rm \bf Q}, \partial \B^4 )\right\}
 %\geq   {
 %\rm K}_{\upnu}\Lbr({\rm \bf P},{\rm \bf Q}, \partial \B^4), 

% $$
%the last inequality being a direct consequence of \eqref{tr}. }
%\end{remark}
%%%%%%%%%%%%%%%%%%%%%%%%%%%%%%%%%%%%%%%%%%%%%%%%%%%%%%%%%%%%%%%%%%%%%%%%%%%%%%%%%%%%%%%%%%%%%%%%%
  \subsection{How  to produce  counterexamples}
   We have seen in the case of $\S^3$-valued that we may bound the defect energy of a map in $u \in  \mathcal R_{\rm ct}(\B^4, \S^3)$ by the 3-energy of the map itself (see inequality \eqref{theborne}) and that this upper bound, combined with the strong  density of $\mathcal R_{\rm ct}(\B^4, \S^3)$  directly leads  to the weak density of smooth maps. If an estimate similar to \eqref{theborne} would exist for $\S^2$-valued maps,  then the same line of thoughts would yield weak approximability as well. Our next result states precisely that there is \emph{no analog}  of  \eqref{theborne} for $\S^2$-valued maps.
  
   \begin{proposition}
  \label{deformons}
  Given any $k \in \N^*$, there exists a map $\mathfrak v_k \in \mathcal R_{\rm ct} (\B^4, \S^2)$  such that 
  \begin{equation}
\label{averell0}
 \rE_3 (\mathfrak v_k) \leq \rC_1 k^3 
 %  {\rm \ \,  where \ } C_\upsilon >0  {\rm \ is\  some \ constant } 
 \end{equation} 
 and
 \begin{equation}
 \label{debranche}
     \Lbr(\mathfrak v_k) \geq  \rC_2 k^3  \log k, 
   \end{equation}
      where $\rC_1>0$ and  $ \rC_2>0$ are universal constants.
%Moreover, for any $k \in \N$, the map $\mathfrak v_k$ is smooth on $\B^4\setminus \B^4(\frac{1}{10})$.  
  \end{proposition}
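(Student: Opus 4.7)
The plan is to build $\mathfrak v_{k}$ with $N=k^{4}$ point singularities of charge $+1$ placed on a regular $k^{-1}$-grid inside one ball $B^{+}\subset\B^{4}$ and $N$ singularities of charge $-1$ on an analogous grid in a disjoint ball $B^{-}\subset\B^{4}$ at distance of order $1$ from $B^{+}$, and to obtain the two inequalities \eqref{averell0} and \eqref{debranche} by combining Rivi\`ere's optimal construction for high Hopf invariant with an irrigation-type lower bound at the critical exponent $3/4=1-1/4$ in dimension~$4$.

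For the energy bound $\rE_{3}(\mathfrak v_{k})\leq \rC_{1}k^{3}$, I would start from a smooth Rivi\`ere-type bubble $\Psi_{N}:\B^{4}\to\S^{2}$, constant outside a thin tube joining $B^{+}$ to $B^{-}$, whose restrictions to $\partial B^{+}$ and $\partial B^{-}$ carry Hopf invariants $+N$ and $-N$ respectively, and whose total energy is controlled by $\nudeux(N)\leq \rC\, N^{3/4}= \rC\, k^{3}$, see \eqref{tr}. Inside each ball $B^{\pm}$ I would then replace the single concentrated Hopf charge by $N$ elementary singularities of charge $\pm 1$ sitting at the grid points, adapting Rivi\`ere's packing strategy at scale $k^{-1}$ so that the extra energy remains of the same order $\rC\, N^{3/4}=\rC\, k^{3}$. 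The key tools here are the scale invariance of $\rE_{3}$ in dimension four and the subadditivity noted in \eqref{upperside}, which together allow one to split a charge of size $N$ into $N$ individual charges of size $1$, confined in a ball of radius of order one, at an $\rE_{3}$-cost of order $N^{3/4}$ rather than $N$. Gluing the modifications to $\Psi_{N}$ outside small neighborhoods of the grid points, and extending by a constant on $\partial\B^{4}$, yields the desired $\mathfrak v_{k}\in\mathcal R_{\rm Ct}(\B^{4},\S^{2})$.

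For the lower bound $\Lbr(\mathfrak v_{k})\geq \rC_{2}\log(k)\,k^{3}$, I would argue directly from the definition of $W_{2}$. Rivi\`ere's inequality \eqref{lowers3} gives $\nudeux(m)\geq c_{0}\, m^{3/4}$ for every $m\in\N$, hence $W_{2}(G)\geq c_{0}\sum_{e\in E(G)}\rH(e)^{3/4}\,\mathcal H^{1}(e)$ for every admissible graph $G\in\mathcal G(P_{i},Q_{i})$. Partition $B^{+}$ into dyadic sub-cubes; for each integer $j$ with $0\leq j\leq \log_{2}k$ there are $2^{4j}$ cubes of side $\sim 2^{-j}$, each containing about $N\cdot 2^{-4j}=k^{4}\cdot 2^{-4j}$ sources. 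The balance relation \eqref{balance} forces $G$ to transport a net flux of at least $c\,N\cdot 2^{-4j}$ across the boundary of each such cube, along edges of total length at least $c\cdot 2^{-j}$. Concavity of $t\mapsto t^{3/4}$ then bounds the contribution of each cube from below by $c_{0}(N2^{-4j})^{3/4}\cdot 2^{-j}$, and summing over the $2^{4j}$ cubes gives a contribution at scale $j$ of at least $c_{0}\,N^{3/4}\cdot 2^{4j(1-3/4)-j}=c_{0}\,k^{3}$, independent of $j$. Summing over the $\sim \log k$ relevant dyadic scales produces $W_{2}(G)\geq c\, k^{3}\log k$, which, by arbitrariness of $G$, yields \eqref{debranche}.

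The main obstacle is the crossing argument in the third paragraph: to sum the per-cube lower bounds without double counting, one must extract from \eqref{balance}, at each dyadic scale, mutually disjoint portions of edges of $G$ whose weights realize the required flux across the cube boundaries. This is precisely the content of the critical-exponent lower bound in Xia's irrigation theory, which must be either invoked or reproved here for $\N$-weighted directed graphs with the weight function $\nudeux$. A secondary but nontrivial technical point is to verify that the splitting of the concentrated Hopf charge $N$ of $\Psi_{N}$ into $N$ well-separated elementary singularities $\pm 1$ can genuinely be performed at an $\rE_{3}$-cost of order $N^{3/4}$ rather than $N$; this is where the quantitative information from Rivi\`ere's construction, together with the scale invariance of $\rE_{3}$, is used in an essential way.
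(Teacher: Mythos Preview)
Your lower-bound strategy for \eqref{debranche} is essentially the paper's: the dyadic flux argument you sketch is the content of Theorem~\ref{droppy} in the Appendix, the ``no double counting'' issue is handled there via the concavity estimate of Proposition~\ref{decompp} iterated across scales (Lemma~\ref{crazy}), and the passage from the abstract $\upalpha_4$-functional to $\Lbr$ is \eqref{ofthelake} together with Lemma~\ref{toufou}. So that half of your plan is sound.

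The genuine gap is the construction of $\mathfrak v_k$, and it is not the ``secondary'' point you label it. The assertion that a concentrated Hopf charge $N=k^4$ can be split into $N$ well-separated grid singularities at $\rE_3$-cost $O(N^{3/4})$ does \emph{not} follow from Rivi\`ere's bound \eqref{tr} together with scaling. Rivi\`ere's construction produces a smooth map on $\S^3$ with large Hopf invariant and small $3$-energy; it says nothing about how to unfold that map inside $\B^4$ into many separated point defects. If you attempt the splitting by any tube-and-branching mechanism (a multiplicity-$N$ tube from $B^+$ to $B^-$, then dyadic branching inside $B^\pm$), the cost of the branching is exactly of the order of the branched-transport functional at exponent $3/4$, hence $\gtrsim k^3\log k$ by the very lower bound you are proving; so \eqref{averell0} would fail by a factor $\log k$. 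Placing $N$ scaled radial Hopf singularities on the grid does not help either: each is non-constant on its small boundary sphere, and the interstitial extension is again the hard part. Also, $\rE_3$ is not scale invariant in dimension four; it scales like the radius.

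The paper's resolution is the Gordian cut $\Gordk$ (Proposition~\ref{deform}, Section~\ref{unfolding}), and it is the technical heart of the whole argument. One starts from the spaghetton $\Spagk$, whose Hopf invariant $2k^4$ is carried by two \emph{linked sheaves} of $k^2$ straight parallel fibers each, and slides one sheaf transversally through the other along the $x_4$-direction. Each of the $k^2\times k^2=k^4$ fiber crossings is resolved by a local cubic extension on a cube of side $h=k^{-1}$, producing a point singularity of charge $+2$; the energy per crossing is $O(k^{-1})$ by \eqref{engamma}, and the crossings lie on a regular $h$-grid by construction. The translation of the sheaf between crossings also costs $O(k^3)$ in total. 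This is a genuinely four-dimensional mechanism, strictly cheaper than any one-dimensional branched graph realizing the same defect configuration, and your proposal does not supply it or a substitute for it.
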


 Notice that  inequality \eqref{debranche} shows that 
  $\displaystyle{ \Lbr (\mv_k) \geq C  \log k \, \, \rE_3 (\mathfrak v_k)}$,  so that
  % {\rm \ so \  that \ } 
$$\frac{ \Lbr (\mv_k)}{\rE_3 (\mathfrak v_k)} \to +\infty {\rm \ as \ } n \to +\infty.
  $$
  The functional $\Lbr$, which,  as seen,  is related   to the defect energy $\epsilon_\star$ by \eqref{leprecieux},  is  therefore \emph{not controlled} by the Dirichlet energy  $\rE_3$, in contrast with  inequality \eqref{theborne} for $\mN=\S^3$.
  This property  is  at the heart of the paper. Indeed, not only  it shows that the argument for 
  $\S^2$-valued maps cannot be transposed, it also provides a way to construct  counterexamples.  Indeed,  the map $\mathcal  U$  in Theorem \ref{maintheo}  is obtained by  gluing together an infinite countable number of copies of  scaled and translated versions of the maps $\mathfrak v_k$, for suitable choices of the integers $k$ and of  the scaling factors.  We choose  and tune these parameters in such a way the total sum of the energies is finite, whereas the sum of the defect energies diverges.  
  
%\begin{remark}{\rm   It has already been noticed in \cite{HL3} that  an at most linear   growth of the defect energy with respect to  the dirichlet  provides  a sufficient condition order for  weak approximability by smooth maps.
%} \end{remark}
 %\smallskip 
 \smallskip
  The next paragraphs present the main steps of the construction of the sequence $(\mv_k)_{k\in \N}$. 
  
%%%%%%%%%%%%%%%%%%%%%%%%%%%%%%%%%%%%%%%%%%%%%%%%%%%%%%%%%%%%
\subsection{  On the construction of $\mathfrak v_k$} 

The construction of the maps $\mathfrak v_k$ faces two,  in principle opposite,  constraints:
\begin{itemize}
\item having the   functional  $\Lbr (\mv_k)$ as large as possible.   Since this functional is related to the configuration of singularities, this  task requires to have a large number of singularities, and branched transportation teaches us that  the best way to increase the functional is to have singularities  well-separated (at least if they have  the same sign).
\item having an energy as small as possible. An intuitive idea suggest that increasing the number of singularities will increases the energy. 
\end{itemize}
As we will see at the end of the construction, the number of singularities of $\mathfrak v_k$ is  of order of $k^4$, consisting of two   well-separated clouds of singularities of the same sign,  whereas the energy is  of order  $k^3$. 

\smallskip
 Related to the energy constraint, the  starting point of the construction   is to step one dimension below and  consider maps from $\S^3$ (or actually  $\R^3$ through compactification at infinity, see details in subsection \ref{compactification}) to $\S^2$ which are nearly  optimal for  the energy inequality  \eqref{tr}.  Such maps have been constructed in \cite{Ri}. These maps from $\R^3$ to $\S^2$,  denoted  $\Spagk$  and  termed in this paper  $k$-\emph{Spaghettons}, carry a topological charge of order $k^4$, with an energy of order $k^3$. For the definition  of  $\Spagk$,  
we modify somewhat  the original construction  given in   \cite{Ri}, and recast it into a more  general framework known as the  Pontryagin construction \cite{ponte}, see also \cite{kosinski} for a detailed presentation.  In order to describe briefly $\Spagk$, let us mention that these maps are constant outside   $2 k^2$  closed thin tubes of section  of order $h=k^{-1}$, of length of order $1$. The thin tubes are gathered in two distinct regular bundles which are\emph{ linked}: This linking provides the non trivial topology.

The next  step is to go to dimension 4. We construct  a  deformation denoted $\Gordk$  of $\Spagk$ on the strip $\Lambda= \R^3\times [0, 50]$, which is such  that:
\begin{equation}
\label{strip}
\left\{
\begin{aligned}
&{\rm   \ the  \ restriction \  of \  } \Gordk {\rm \ to  \ the \ slice \ }  \R^3 \times \{0\} {\rm \ is \  equal \ to \ } \Spagk \\
 &{\rm \ its \  restriction \ to \ the  \  slice \ }  \R^3 \times \{50\} {\rm \ is \  a  \  constant \ function}. 
\end{aligned}
\right.
\end{equation}
Such a deformation is of course not possible in the continuous class, since the maps on the top and on the bottom belong to different homotopy classes. In contrast, it is allowed in the Sobolev class $W^{1,3}$, with  an energy  of the same order than the energy of the map restricted to the bottom, that is  the energy of the Spaghetton $\Spagk$. In  particular, one is able to untie  the thin linked tubes thanks to crossings.
We will term  therefore this map $\Gordk$ the Gordian cut of order $k$.   Each of the \emph{cuts}   creates a singularity of the map $\Gordk$.

The construction  of the map $\mv_k$ is completed deforming  the map $\Gordk$ into a map on $\B^4$ with the desired properties, a step which is more elementary and standard than the previous ones.  

%\begin{remark}{\rm
%We have gathered some  of these elementary operations  in a separate section entitled the\emph{ Sobolev deformation and surgery toolbox}. }
%\end{remark}
\smallskip
We next  go a little further in our description of the maps $\mv_k$.

\subsubsection{ The Pontryagin construction and the $k$-spaghetton map}
The Pontryagin construction we present next  provides  a beautiful  way to produce maps from  $\R^{m+\ell}$    to $\S^\ell$ with non trivial topology. 
This construction, introduced first in \cite{ponte},  relates   to a framed  smooth $m$-dimensional submanifold in $\R^{m+\ell}$   a map from $\R^{m+\ell}$  to $\S^\ell$. By framed submanifold, we mean  here  that for each point $a$ of the submanifold, we are given   an  orthonormal basis $\mathfrak e^\perp \equiv (\vec \tau_1(a),\vec  \tau_2(a), \ldots, \vec \tau_\ell(a))$ of  the $\ell$-dimensional  cotangent  hyperplane at the point $a$, which varies   smoothly with the point $a$.

\smallskip 
We describe   the  Pontryagin construction in the case $m=1$ and $\ell=2$,   which is  the only  situation of interest for us. The framed manifold we consider  is therefore a framed closed curve $\mathcal C$ in $\R^3$, for which we are given an orthonormal basis of its orthogonal plane $\mathfrak e^\perp(\cdot) \equiv(\vec \tau_1(\cdot), \vec \tau_2(\cdot))$. This frame in turn induces  a natural  \emph{orientation} of the curve, choosing 
$$\vec \tau_3(a)=  \vec \tau_{\rm tan}(a)\equiv \vec \tau_1(a) \times \vec  \tau_2 (a)$$
 as a unit tangent vector to the curve at the point $a$, so that \emph{any framed curve is oriented}.   Our next task is to map a small annular neighborhood of   the curve onto the sphere $\S^2$.   To that aim,  we step again one dimension below and  present  as a  preliminary    ingredient   the construction of a map from  the unit   disk onto the sphere $\S^2$ mapping the boundary of the disk to the  south pole. 

\medskip
\noindent
{\it Mapping the unit  disk to the standard sphere $\S^2$}.  We consider in the plane $\R^2$   the unit disk   
$$\D=\{(x_1, x_2) \in \R^2, x_1^2+x_2^2< 1\},$$
 and define a map $\chi$ from  the  closure of the unit  disk $\overline{\D}$  onto the  standard two-sphere $\S^2$ by setting, for $(x_1, x_2) \in \R^2$,   with $r=\sqrt{x_1^2+ x_2^2}$, 
\begin{equation}
\label{defchi1}
\chi(x_1, x_2)= (x_1f(r), x_2f(r), g(r))\ \ {\rm with \ }   \ r^2 f^2(r) +g^2(r)=1, 
\end{equation}
 where $f$ and $g$ are smooth   given real functions on $[0,1]$ such that 
 \begin{equation}
 \label{defchi1bis}
 \left\{
 \begin{aligned}
 &f(0)=f(1)=0, \  0\leq rf(r)\leq 1{\rm \ for \ any \ } r \in [0,1]\\
& -1\leq g \leq 1 {\rm  \  and  \ } g  {\rm \ decreases \  from  \ } g(0)=1 {\rm \ to \ } g(1)=-1 \\
& \, g'(r)<0 {\rm \ for \ any \ } r \in [0, 1].
 \end{aligned}
 \right.
 \end{equation}
  It follows from this 
  definition what $\chi$  maps one to one  of the open disk $\D$ to the set  $\S^2\setminus \{\sP \}$,  where $\sP=(0,0-1)$.  Thanks to the last condition in \eqref{defchi1bis}, $\chi$ is actually  a \emph{diffeomorphism} of $\D$ onto $\S^2\setminus \{\sP\}$. Moreover the boundary $\partial \D$ is mapped  onto the south pole $\sP$, whereas the origin $0$ is mapped to the North pole $\nP=(0,0,1)$. 

  \smallskip
  We  introduce   a scaled version of the previous construction: Given $\varrho>0$,  we  define  the scaled function $\chi_\varrho$ on $\R^2$  by setting
 $$\displaystyle{\chi_\varrho(x_1, x_2)=\chi( \frac{x_1} {\varrho}, \frac{x_2} {\varrho}),
 {\rm \ for \ } (x_1, x_2) \in \D_\varrho, \ \chi_\varrho(x_1, x_2)=\sP {\rm \ otherwise,  \ }
   }$$
 so that  $\chi_\rho$ is Lipschitz.  We have moreover  the gradient estimate
 \begin{equation}
 \label{blablanabla}
 \Vert \nabla \chi_\varrho \Vert_{L^\infty (\D_\varrho)} \leq C\varrho^{-1}. 
 \end{equation}

\bigskip
\noindent
{\it The annular neighborhood of the curve $\mathcal C$}. Let $\mathcal C$ be a smooth  curve in $\R^3$. For $a \in \mathcal C$,   
%let $\vec \tau_a$  be a  unit tangent vector at the point $a$ pointing in the orientation choosen on $\mathcal C$,
 let    $P_a^\perp$ denote the affine  plane orthogonal to the tangent vector $\vec \tau_{\rm tan}(a)$ and containing the point $a$, and denote  $\D^\perp_a (\varrho)$ the disk in $P^\perp _a$   centered at $a$ of radius $\varrho>0$. We consider the tubular neighborhood ${\rm T}_\varrho (\mathcal C)$ of $\mathcal C$ defined by
\begin{equation}
\label{tubular}
 {\rm T}_\varrho (\mathcal C)=\underset {a \in \mathcal C} \cup \D^\perp_a (\varrho). 
 \end{equation}
Since $\mathcal C$ is smooth,  there exists some number $\varrho_0=\varrho_0(\mathcal C)>0$ depending only on $\mathcal C$, such that, if $0<\varrho \leq \varrho_0$,  then all disks  $\D^\perp_a(\varrho)$, $a\in \mathcal C$
are mutually disjoint.  In particular, if we assume moreover that $\mathcal C$ is framed, for any $x\in {\rm T}_\varrho (\mathcal C)$, there exists a unique point $a \in \mathcal C$, and a unique point $(x_1, x_2)\in \D_\varrho$ such that $x$ may be decomposed as
\begin{equation}
\label{formix}
x= a+ x_1\vec \tau_1(a)+x_2  \vec  \tau_2(a).
\end{equation}
It follows from these definitions that the map $\mathcal C \times \D_\varrho \to  {\rm T}_\varrho (\mathcal C)$ given by $(a, x_1, x_2)\mapsto x$ where $x$ is defined in \eqref{formix}  is a diffeomorphism.
% or, if we define it as map  from $\mathcal C \times \D_\varrho$ to $\R^3$,  an immersion.

\bigskip
\noindent
{\it Definition of the  Pontryagin map related to the framed curve $\mathcal C$}.
 For given  $0<\varrho<\varrho_0$,  we construct   a  smooth map 
$\pont_\varrho[\mathcal C, \mathfrak e^\perp]: {\rm T}_\varrho (\mathcal C)  \to \S^2$ as follows: For given  $x \in {\rm T}_\varrho (\mathcal C)$, we decompose $x$  in  the form \eqref{formix} and  set 
\begin{equation}
\pont_\varrho[\mathcal C, \mathfrak e^\perp ](x)= \chi_\varrho(x_1, x_2).  
\end{equation}
%This  defines a smooth map  $\pont_\varrho[\mathcal C, \mathfrak e^\perp]$ from ${\rm T}_\varrho (\mathcal C)$ to $\S^2$. 
 Since $\pont_\varrho[\mathcal C, \mathfrak e^\perp ]$ is equal to $\sP$ on $\partial  {\rm T}_r (\mathcal C)$ we extend this map to the whole of $\R^3$ setting
 $$
 \pont_\varrho[\mathcal C, \mathfrak e^\perp](x)=\sP, {\rm \ for \ } x \in \Omega_\varrho (\mC)\equiv \R^3 \setminus {\rm T}_\varrho (\mathcal C),  
 $$
so  that    $\pont_\varrho[\mathcal C, \mathfrak e^\perp ]$ is now a Lipschitz  map from $\R^3$ to $\S^2$.  The map  $\pont_\varrho[\mathcal C, \mathfrak e^\perp ]$ is called the Pontryagin map related to the framed curve $\mathcal C$ of order $\varrho$. 
 Since  $\pont_\varrho[\mathcal C, \mathfrak e^\perp ]$ is equal to $\sP$ outside a bounded region and in view of \eqref{blablanabla}, we have hence shown:

\begin{lemma}
\label{tropfacile}  If $0<\varrho \leq \varrho_0 (\mathcal C)$ then 
the  map $\pont_\varrho[\mathcal C, \mathfrak e^\perp]$ belongs to ${\rm Lip}  \cap C^0_0( \R^3, \S^2)$, where
\begin{equation}
 C^0_0( \R^3, \S^2)=\{ u \in C^0( \R^3, \S^2) {\rm \ such \ that \ } \underset {\vert x \vert  \to + \infty} \lim  u(x) {\rm \ exists}\}.
\end{equation}
Moreover, we have
\begin{equation}
\label{blablanabla2}
\vert \nabla \pont_\varrho[\mathcal C, \mathfrak e^\perp ] (x) \vert \leq C\varrho^{-1},  {\rm \ for \ every \ } x \in \R^3, 
\end{equation}
where $C>0$ is some constant depending possibly on  the curve $\mathcal C$ as well as  on the choice of frame $\mathfrak e^\perp$ of the orthonormal plane.
\end{lemma}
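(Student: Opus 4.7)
The plan is to verify each of the three claims in turn, with the main work being a clean check that the tubular-neighborhood parametrization is bilipschitz with constants independent of $\varrho$ (only depending on $\mathcal{C}$ and $\mathfrak{e}^\perp$), so that the gradient bound for $\chi_\varrho$ transfers to $\Uppsi_\varrho[\mathcal{C}, \mathfrak{e}^\perp]$.

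First I would check the single-disk map $\chi$. From the explicit formula \eqref{defchi1} and the hypotheses on $f,g$ one sees that $\chi \in C^\infty(\D,\S^2)$ with $\chi \equiv \sP$ on $\partial \D$; extending by $\sP$ outside $\D$ yields a globally Lipschitz map $\tilde\chi : \R^2 \to \S^2$, because the boundary value already matches the extension. A direct scaling then gives
\begin{equation*}
\Vert \nabla \chi_\varrho \Vert_{L^\infty(\R^2)} = \varrho^{-1}\Vert \nabla \tilde\chi \Vert_{L^\infty(\R^2)} \leq C\varrho^{-1},
\end{equation*}
which is \eqref{blablanabla} upgraded to a global bound on $\R^2$, and, crucially, $\chi_\varrho$ is then a globally Lipschitz map with constant $C\varrho^{-1}$.

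Next I would set up the tubular-neighborhood parametrization. For $\varrho \leq \varrho_0(\mathcal{C})$ the map
\begin{equation*}
\Phi : \mathcal{C} \times \D_\varrho \longrightarrow {\rm T}_\varrho(\mathcal{C}), \qquad \Phi(a,x_1,x_2) = a + x_1\vec\tau_1(a) + x_2\vec\tau_2(a),
\end{equation*}
is a smooth diffeomorphism (by the choice of $\varrho_0$ and the smoothness of the frame), and its differential $d\Phi$ together with its inverse $d\Phi^{-1}$ are uniformly bounded in terms of $\mathcal{C}$ and the frame $\mathfrak{e}^\perp$ only; in particular the bounds do not deteriorate as $\varrho \to 0$, since as $\varrho$ shrinks the region only becomes flatter. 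Writing $\Uppsi_\varrho[\mathcal{C},\mathfrak{e}^\perp] \circ \Phi(a,x_1,x_2) = \chi_\varrho(x_1,x_2)$, which is independent of $a$, and applying the chain rule with the uniform bound on $d\Phi^{-1}$, one obtains
\begin{equation*}
|\nabla \Uppsi_\varrho[\mathcal{C},\mathfrak{e}^\perp](x)| \leq \Vert d\Phi^{-1}\Vert_\infty \, \Vert \nabla \chi_\varrho \Vert_{L^\infty} \leq C\varrho^{-1} \qquad \text{for every } x \in {\rm T}_\varrho(\mathcal{C}),
\end{equation*}
which is \eqref{blablanabla2} inside the tube; outside the tube $\Uppsi_\varrho[\mathcal{C},\mathfrak{e}^\perp] \equiv \sP$, so the gradient vanishes there.

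Finally, to get Lipschitz continuity across $\partial{\rm T}_\varrho(\mathcal{C})$ one uses that on this boundary $\chi_\varrho(\partial \D_\varrho) = \sP$, so the interior and exterior definitions match; since both sides are Lipschitz and the common value on the separating hypersurface agrees, the glued map is Lipschitz on $\R^3$ with constant at most $C\varrho^{-1}$. The $C^0_0$ condition is immediate: $\mathcal{C}$ is compact, hence ${\rm T}_\varrho(\mathcal{C})$ is bounded, so $\Uppsi_\varrho[\mathcal{C},\mathfrak{e}^\perp](x) = \sP$ for all $|x|$ large enough. The only non-routine point is the uniform-in-$\varrho$ bilipschitz estimate on $\Phi$, but this reduces to observing that $d\Phi$ is a small perturbation of the isometry $(a,x_1,x_2)\mapsto (a,x_1\vec\tau_1(a)+x_2\vec\tau_2(a))$ as $\varrho \leq \varrho_0$, with error controlled by the curvature of $\mathcal{C}$ and the derivative of the frame.
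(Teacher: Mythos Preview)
Your proof is correct and follows essentially the same approach as the paper. The paper's own argument is just the single sentence preceding the lemma: ``Since $\Uppsi_\varrho[\mathcal C, \mathfrak e^\perp]$ is equal to $\sP$ outside a bounded region and in view of \eqref{blablanabla}, we have hence shown''; you have simply made explicit the bilipschitz estimate on the tubular parametrization that the paper leaves implicit, which is a legitimate and useful elaboration.
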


\begin{remark}
\label{reguvalpont}
{\rm We claim that, as a consequence  of      \eqref{defchi1} and \eqref{defchi1bis}  of $\chi$,    any point on $\S^2\setminus \{\sP\}$ is a  \emph{regular value} of 
 $\pont_\varrho[\mathcal C, \mathfrak e^\perp]$. Consider indeed an arbitrary point $M \in \S^2\setminus \{\sP \}$,  set $L_M=\pont_\varrho[\mathcal C, \mathfrak e^\perp]^{-1}(M)$  and let $a\in L_M$, so that $\pont_\varrho[\mathcal C, \mathfrak e^\perp](a)=M$. Since $\pont_\varrho[\mathcal C, \mathfrak e^\perp](x)=\sP$  for $x\in \R^3\setminus \overline{\Omega_\varrho (C)}$, we deduce that 
 $a \in  \overset{\circ}{ {\rm T}}_\varrho (\mathcal C)$.
 If we restrict the map $\pont_\varrho[\mathcal C, \mathfrak e^\perp](x)=\sP$  for $x\in \R^3\setminus \overline{\Omega_\varrho (C)}$  to the affine plane $P_a^\perp$,  this restriction is, near $a$, a diffeomorphism, since $\chi$ is a diffeomeorphism. Consequently, the tangent map of the restriction  $\pont_\varrho[\mathcal C, \mathfrak e^\perp]_{\vert  P_a^\perp}(a)$ is onto and the same holds for $D (\pont_\varrho[\mathcal C, \mathfrak e^\perp])(a)$. Since the property holds for any $a \in L_M$, any point in $L_M$ is a regular point, which  establishes the claim. 
}
\end{remark}

\medskip
\noindent
{\it The case of planar curves. } All curves $\mC$ that enter through the Pontryagin construction  in our later  definition of the Spaghetton map $\Spagk$  are either  planar or an  union of planar curves. Moreover,  they  lie in planes either parallel to the plane $P_{1, 2}$ or to the plane $P_{2, 3}$ where
\begin{equation}
\label{ps}
\left\{
\begin{aligned}
P_{1, 2}&=(\R\be_3)^\perp=P_{1, 2}(0), { \rm \ where \ } P_{1,2}(s)\equiv \{(x_1, x_2, x_3)\in \R^3 {\rm \ s. t  \  } x_3=s\}, \forall s\in \R,\\
P_{2, 3}&=(\R\be_1)^\perp=P_{2,3 }(0), { \rm \ where \ } P_{2,3}(s)\equiv \{(x_1, x_2, x_3)\in \R^3{ \rm \ s.t.  \   } x_1=s\}, \forall s\in \R, 
\end{aligned}
\right.
\end{equation}
 and where we have  set $\be_1=(1,0,0)$, $\be_2=(0,1,0)$ and $\be_3=(0,0,1)$.  For such planar curves,  we  define  a  \emph{canonical reference framing} as follows.  We first choose the orientation of the curves:   Curves in $P_{1,2}$ and $P_{2,3}$    are orientated   counter-clockwise  according to  the orthonormal  bases $(\be_1, \be_2)$ and $(\be_2, \be_3)$  of  $P_{1,2}$ and $P_{2,3}$  respectively. With this convention, we  denote by   $\vec \tau_{\rm tan}(a)$  the  unit tangent vector   at the point $a$ of the curve oriented accordingly.  Second, we choose the first orthonormal vector $\vec \tau_1$ as
\begin{equation}
\left\{
\begin{aligned}
\vec \tau_1(a)&=\be_3, {\rm \ for \ curves \ in \ } P_{1,2}, {\rm \  and\ }  \\
 \vec \tau_1(a)&=\be_1, {\rm \ for \ curves \ in \ } P_{2,3}. 
\end{aligned}
\right.
\end{equation} 
Finally, we set
$\vec \tau_2(a)=\vec \tau_{\rm tan}(a) \times \vec \tau_1(a)$, so that $\vec \tau_2(a)$ is a unit vector orthogonal to the vector   $\tau_{\rm tan}(a)$,  included in the plane $P_{1,2}$  or $P_{2,3}$ respectively,  and \emph{exterior to the curve} (see Figure \ref{repereref}). We  consider the frame of the orthonormal  plane given by 
\begin{equation}
\label{refframe}
\frameref(a)=(\vec \tau_1(a), \vec \tau_2(a)), \, {\rm \ for \ } a \in \mC. 
\end{equation} 
In particular,   $(\vec \tau_1(a), \vec  \tau_2(a), \vec \tau_{\rm tan}(a))$ is a direct orthonormal basis of $\R^3$. In the case $\mathcal C$ is  a planar curve  in affine planes parallel to $P_{1,2}$ or $P_{2,3}$ or a infinite union of such curves.  
we will often use the notation
$$\pont_\varrho [\mathcal C]=\Up. $$

 \begin{figure}[h]
\centering
\includegraphics[height=4.5cm]{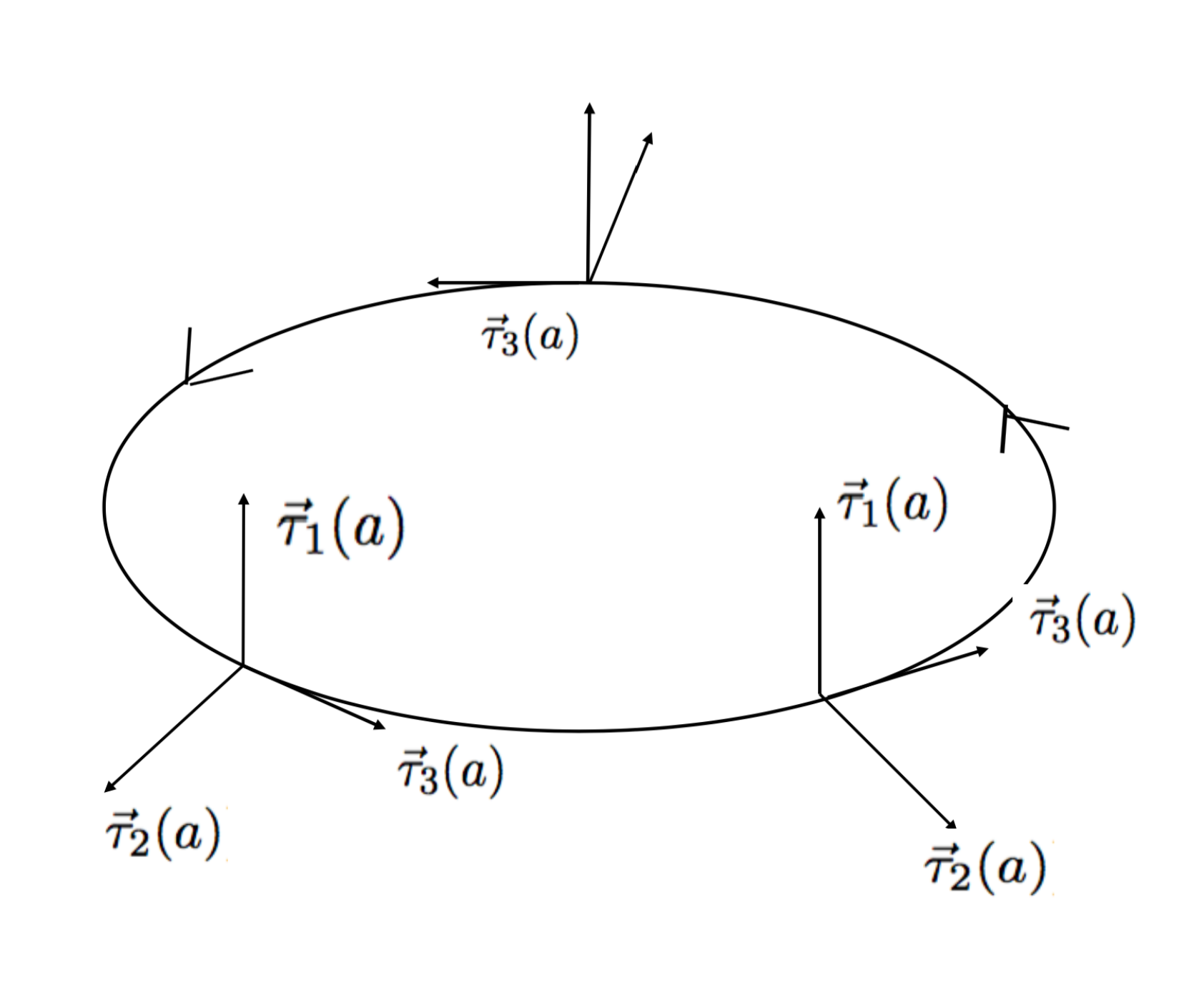}
\caption{  {\it The reference frame for a planar curve.}}
\label{repereref}
\end{figure}

\noindent
{\it Homotopy classes of Pontryagin maps}. If $\mathcal C$ is a planar curve in $P_{1,2}$ or $P_{2,3}$  \emph{framed with the reference frame}  $\frameref$ defined above,  then it turns out that the homotopy class of $\Up$ is \emph{trivial}. 
There are   two  simple ways to make non trivial homotopy classes emerge from the Pontryagin construction:
\begin{itemize}
 \item Twisting  the frame of the orthogonal plane to the curve, \emph{a method which we  will not use in this  paper}. 
 \item  Considering    \emph{planar curves as above which are linked}. 
\end{itemize}
 The idea of the construction of the  Spaghetton  map relies on this latest idea.

\bigskip
\noindent
{\it   On the construction of the $k$-Spaghetton.}  The construction   of the $k$-Spaghetton maps  involves two sheaves of planar curves  which will be denoted $\fLk$ and  $\fLkperp$ respectivement. Each of the sheaves contains   exactly $k^2$ \emph{stadium shaped} connected curves, included in $k$ parallel planes, each of  the planes containing $k$ such curves, which are concentric, so that the general idea is that each of these sheaves consist of parallel \footnote{The word "parallel" has to be taken here in an intuitive meaning and not in a rigorous mathematical sense.} planar curve.  Our construction yields actually
\begin{equation}
\label{presentflk1}
\fLk\subset \underset {q=1} {\overset{k} \cup}  P_{1,2}(qh) {\rm \ and \ }
\fLkperp \subset \underset {q=1} {\overset{k} \cup}  P_{2,3}(qh), 
{\rm \ where \ } h=k^{-1}, 
\end{equation}
where for $s\in \R$, the hyperplanes  $P_{1, 2}(s)$ and $P_{2,3}(s)$ are defined in \eqref{ps}. It follows  that the distance between neighboring parallel planes is exactly $h=k^{-1}$.   The curves in each of these planes are deduced from the other by translation that is
 \begin{equation}
 \label{presentflk2}
 \left\{
 \begin{aligned}
 \fLk\cap P_{1,2}(qh)&=\fLk\cap P_{1,2}(h)+ (q-1)h\, \be_3, {\rm \ for \ } q=1, \ldots, k, \\
 \fLkperp\cap P_{2,3}(qh)&=\fLkperp\cap P_{2,3}(h)+ (q-1)h\, \be_1, {\rm \ for \ } q=1, \ldots, k.
 \end{aligned}
 \right.
\end{equation}
The curves in each sheaf are organized in a  regular way. For instance,   the intersection  of $\fLk$ with the plane $P_{2, 3}$ is given by  a set of $2k^2$ points organized in two two-dimensional grids, namely
\begin{equation}
\label{guidon}
\fLk\cap P_{2, 3}=\{0\} \times \left(\left({\boxplus}^2_k(h)\right)\cup \left((13, 0)+ \boxplus^2_k(h)\right)\right),
%left( h\left\{-1, \ldots, -k\right\}\times h\left\{1, \ldots, k\right\}  \cup  (5, 0)+   h\{1, \ldots, k\}\times h\{1, \ldots, k\}\right)
\end{equation}
where  the symbol $\boxplus^2_k(h)$ represents  the   discrete sets of points located on the  regular two dimensional grid given, for  $k\in \N^*$  and $h >0$ by
\begin{equation*}
\label{boxplus2} 
%\begin{aligned}
 \boxplus^2_k (h)=h \boxplus^2_k=\{ hI, I \in \{1, \ldots, k\}^2.
%\tilde {\boxplus}^2_k(h)&=\{h, \ldots, hk=1\} \times \{-h, \ldots, -hk=-1\}.
%\end{aligned}
\end{equation*}
Similarly, we have, for $P_{1, 3}=(\R\be_2)^\perp$
$$
\fLkperp\cap P_{1, 3}=\left\{
(x_1,0,  x_3),{\rm  \  with \ } (x_1, x_3) \in 
\left((0, -7)+\tilde{\boxplus}^2_k(h)\right)\cup \left((0, 6)+ \boxplus^2_k(h)\right)
\right\}.
$$
The two sheaves do not intersect and   each   curve is linked with  all curves of the other sheaf, but with  none of its own. 
On each of the planar curves, we choose the reference frame, set
 $  \mathcal S^k=\fL^k \cup\fLkperp$ and define the $k$-Spaghetton map $\mathfrak S_k$ as the Pontryagin map related to $\mathcal S^k$
\begin{equation}
\label{dececco2}
\mathfrak S_k=\pont_{\varrho_k}[\mathcal S^k, \frameref],
\end{equation}
where the parameter
parameter $\varrho_k$  is chosen suitably of order $h$.
 Details are provided in   
Section \ref{dececco}. 
 We summarize some  its main properties in the  next proposition.
 
\begin{proposition} 
\label{grammage}
The $k$-Spaghetton   $\Spagk$ is a Lipschitz map from $\R^3$ to  $\S^2$  with the following properties:
\begin{itemize}
\item $\Spagk (x)=\sP$,   if $\vert x \vert \geq 17$.
\item $\vert  \nabla  \Spagk  (x)\vert\leq \rCspag k$, for any  $x\in \R^3$, where $\rCspag>0$ is a universal constant.    
\item The Hopf  invariant of $\Spagk$ is 
 $ \rH(\Spagk)=2 k^4$.
 \item  The 3-energy verifies  the energy  bound  $ \rE_3 (\Spagk) \leq \rKspag k^3$, where $\rKspag>0$ is a universal constant.
 \end{itemize}
\end{proposition}

\subsubsection{The Gordian cut} 
 This map  represents the second step of the construction of $\mv_k$.  It is  a map from   a subset $\Lambda \subset \R^4 \to \S^2$, where  $\Lambda$ is the strip  of $\R^4$ given by
$$\Lambda= \R^3\times [0, 50]=\{ \rbx=(x,  x_4), \  x \in \R^3, \  0\leq x_4 \leq 50\}.  $$
  The gordian cut $\Gordk$ corresponds actually to a deformation of the $k$-Spaghetton to a constant map which belongs to the Sobolev class $W^{1,3}$, the fourth coordinate standing for the deformation coordinate, similar to  the time variable in usual deformations. The map  $\Gordk$ belongs to the class of maps  $w: \Lambda  \to \S^2$ such that  the following four conditions are met:
  \begin{equation}
  \label{kontiki}
\left \{
\begin{aligned}
 &w \in \mathcal R(\Lambda, \S^2)  {\rm \ and \ } \rE_3(w, \Lambda)\equiv \int_{\Lambda} \vert \nabla w \vert^3  < \infty,\\
 & w (x, 0)=\Spagk(x, 0),  \ {\rm \ for \ almost  \ every  \ } x \in \R^3, \\
 &w(x, 50)=\sP,   \ {\rm \ for \  almost  every  \ } x \in \R^3, \\
 &w (x, s)=\sP,   {\rm  \  for   \ } x \in \R^3 {\rm \ such \ that \ } \vert x \vert \geq 40 {\rm \ and  \ }   0\leq s \leq 50. 
  \end{aligned}
  \right.
  \end{equation}
% The second and third conditions  in \eqref{kontiki}  have already been encountered in a slightly weaker form in  \eqref{strip}. They make sense in view of the trace theorem and the boundedness of the energy stated in the first condition. 

\begin{proposition}
\label{deform}
 There exists a map $\Gordk:\Lambda \to \S^2$ verifying \eqref{kontiki}  such that    $\Gordk$  has exactly $k^4$  topological singularities of charge  $+2$ and such that 
\begin{equation}
\label{averell}
 \rE_3(\Gordk) \leq   \, { \rm {\bf K}}_{\rm Gord} \, k^3, 
 \end{equation}
where $ {\rm \bf  K}_{\rm Gord} >0$ is some universal constant.  Let   $\mathbb A_{\star}^k$ denotes the set of singularities of $\Gordk$.   We have 
 %and $\mathcal O_0\equiv A_0+  [0,1]\times [-1,0] \times [0,\frac 12]\times [-\frac 12, \frac 12]$
\begin{equation}
\label{joedalton}  
\mathbb A_{\star}^k=\left(12+\frac h 4 \right)\be_4+
\boxplus_k^2(h) \times \mathbbmss T \left(\boxplus_k^2(h)\right), 
\end{equation}
where $\mathbbmss T$ is   linear and onto   from $\R^2$ into itself  given by 
%ùù\begin{equation}
%\label{watson}
$\mathbbmss T(x_3, x_4)=(x_3, -2x_3+2x_4)$.
%\end{equation}
\end{proposition}

Introducing the sets of points on a uniform grid of dimension $4$ given by
\begin{equation}
\label{boxplus} 
\boxplus^4_k=\{1, \ldots, k\}^4  {\rm \ and \ }  \boxplus^4_k (h)=h \boxplus^4_k=\{ hI, I \in \{1, \ldots, k\}^4\}, 
\end{equation}
 we observe that, as a consequence of \eqref{joedalton}, we have 
 \begin{equation}
  \label{bboxplus}
 \mathbb A_{\star}^k=\Upphi_k\left( \boxplus^4_k (h)\right),  {\rm \ with \ } \Upphi_k(\rbx)=\left(12+\frac h 4 \right)\be_4
 +(x_1, x_2, \mathbbmss T(x_3, x_4)),
   \end{equation}
 for  $\rbx=(x_1, x_2, x_3, x_4) \in \R^4$, so that $\Upphi_k$ is an affine one to one mapping on $\R^4$. 
 It follows that $\mathbb A_{\star}^k$  is a regular grid of singularities  (see Figure \ref{singlet}): This observation is crucial, in particular in relation to the minimal branched connection and the result described in Appendix A.
 
\smallskip 
  Although the detailed  argument of the proof of Proposition \ref{deform} involves  some technicalities, 
 the heuristic idea is  simple: We consider $x_4$ as a time variable, and  push down along the $x_3$-axis the sheaf $\fLkperp$, keeping however its shape  unchanged, whereas the sheaf $\fLk$ does not move. This process  presents no major  difficulty as long as the sheaf $\fLkperp$ does not encounter the sheaf  $\fLk$. When some fibers touch, we are no longer able to define the corresponding Pontryagin map $\pont_{\varrho_k}$.  To overcome this difficulty, we take advantage of  the fact that we are working in a Sobolev class where singularities are allowed: Using such singularities, the fiber in contact are able to cross, that is the  sheaf $\fL^k_\perp$ is able to pass through  the fibers of $\fLk$. Each time fibers cross, a singularity of topological charge $2$ is created at the intersection point.  The set of  singularities forms a  cloud of uniformly distributed points as stated in  
 Proposition \ref{deform}.

 \subsubsection{Construction of the sequences of  map $(\mathfrak v_k)_{n \in \N}$}
The construction of the sequences of  map $(\mathfrak v_k)_{k \in \N}$ described in Proposition \ref{deformons} is then deduced  rather directly modifying  the maps $\Gordk$ constructed in Proposition \ref{deform} using some  elementary transformations as affine mappings  or reflections. We proceed  so that, at the end of the process,  we have $\mathfrak v_k\in C_{\rm ct}^0 (\B^4\setminus \Sigma_{\rm sing}^k, \S^2)$, where the set $\Sigma_{\rm sing}^k$  is given  by 
\begin{equation}
\label{sigmasing}
\Sigma_{\rm sing}^k=\boxplus_k^4(\hscal) \cup \mathbbmss S_{\rm ym}\left( \boxplus_k^4(\hscal) \right)  {\rm \ where \ \,  } 
\hscal=\frac{h}{100}=\frac {1}{100k}. 
\end{equation}
 Here,   $\mathbbmss S_{\rm ym}$ stands for the reflection  symmetry through  the hyperplane ${x_4=0}$, i.e. is given by 
\begin{equation}
\label{bbmsym}
\mathbbmss S_{\rm ym}(x_1, x_2, x_3, x_4)=(x_1, x_2, x_3,-x_4), {\rm \ for \ any \ } (x_1, x_2, x_3, x_4) \in \R^4. 
\end{equation}
The singularities in $ \boxplus_k^4(h)$ have Hopf invariant $+2$ whereas the singularities in  $\mathbbmss S_{\rm ym}\left( \boxplus_k^4(h) \right)$ have Hopf invariant $-2$, the total charge being equal to $0$. 
The energy  estimate \eqref{averell0} for the map $\mathfrak v_k$ follows from the corresponding energy  estimate for $\Gordk$.
%%%%%%%%%%%%%%%%%%%%%%%%%%%%%%%%%%%%%%%%%%%%%%%%%%%%%%%%%%%%%%%%%%%
\subsubsection{Irrigability  of a cloud of points} 
To complete the  proof of Proposition \ref{deformons}, it remains  to establish estimate \eqref{debranche} for  the branched transportation of the map $\mathfrak v_k$, whose value  involves only the location of singularities of $\mathfrak v_k$.   The main property which we will use in the proof of \eqref{debranche}  is expressed in property  \eqref{sigmasing}, which shows that  the singularities are  located on a regular grid.  It follows from \eqref{sigmasing} that
\begin{equation}
\label{tagada}
\frac{1}{k^4} \left( \underset{ a \in \Sigma_{\rm sing}}  \sum  \rH(a) \delta_a\right) \to  f\, {\rm d}x
{\rm \ as  \   } k \to + \infty, {\rm \ where\ }  f=2\left[ \mathbbm 1_{[0, a]^4} -\mathbbm 1_{[0, a]^3\times [-a, 0]}\right], 
\end{equation}
 and where $a=1\slash 100$. It turns out, in view of \eqref{tagada}, that  the behavior of  the functional $\Lbr$  as $k$ grows is related to the irrigation problem  for  the Lebesgue measure, a central  question in the theory of branched transportation.  It has   been proven in \cite{devisolo} (see also Devillanova's thesis or the general description in \cite{bermocas}, in particular Chap 10) that the Lebesgue measure \emph{is not irrigable} for the critical exponent $\alpha_c=\frac 3 4$.  This result can be interpreted directly as the fact that  the functional $\Lbr$ 
 grows more rapidly that  the number of points at the power $\alpha_c$, hence more rapidly then $k^3$.   A lower bound for this  divergence   yields  
 \eqref{debranche}, completing  the proof of Proposition \ref{deformons}. As a matter of fact,  we will rely on a precise  lower bound  of   logarithmic  form for this divergence   which is established in a separate Appendix. 
 
 %%%%%%%%%%%%%%%%%%%%%%%%%%%%%%%%%%%%%%%%%%%%%%%%%%%%%%%%%%%%%%%%%%
\subsection{On the proof of  the main theorems}
%%%%%%%%%%%%%%%%%%%%%%%%%%%%%%%%%%%%%%%%%%%%%%%%%%%%%%%%%%%%
%The map $\mathcal  U$ described in Theorem \ref{maintheo}  is obtained gluing together a infinite countable number of copies of  scaled and translated versions of the maps $\mathfrak v_k$, for suitable choices of the integer $k$ and the scaling factors. This gluing is performed  in such a way that the energies sum up to provide a finite total energy whereas the  values for the respective  functional $\Lbr$ do not: this is  made possible thanks to \eqref{debranche}, since the two quantities  behave differently as $k$ grows.  The conclusion then immediately follows from the convergence \eqref{leprecieux} proved in \cite{HR1, HR3}. 

Concerning Theorem \ref{bis},  the proof consist in adding   additional dimensions to the previous constructing and is rather standard. 

%%%%%%%%
%%%%%%%%%%%%%%%%%%%%%%%%%%%%%%%%%%%%%%%%%%%%%%%%%%%%%%%%%%%%%%%%%%%%%%%%%%%%%%%%%%%%%%%%%%%%%%%%%%

%%%%%%%
\subsection{The  lifting problem}
%%%%%%%%%%%%%%%%%%%%%%%%%%%%%%%%%%%%%%%%%%%%%%%%%%%%%%%%%%%%%%%%%%%%
As a by product of our method, in particular the construction of the Spaghetton maps, we are able to address  some questions related to the lifting problem of $\S^2$-valued maps within the Sobolev context. Such question have already been raised and partially solved in \cite{BeChi, HR1, HR2}.  The main  additional remark we wish to provide in the present paper is that the question is \emph{not related in an essential way} to topological singularities, since our counterexamples  do  not have such singularities. 

\smallskip
Recall that  maps into $\S^2$  and maps into $\S^3$ are  connected through a projection map $\Pi: \S^3 \to \S^2$ termed  the Hopf map and which  we describe briefly.
To start with an  intuitive picture (but as we will see below, this picture is  not completely correct)  the sphere $\S^3$ is very close, at least from the point of view of topology, to the    group of   rotations $SO(3)$ of the space $\R^3$, 
%This group is indeed a three dimensional manifold (as a matter of fact  its Lie algebra, the tangent plane near unit, is composed of skew symetric matrices)
the sphere $\S^3$  may be  in fact  identified with  its universal cover.  Any rotation $R$ in $SO(3)$  yields an element on $\S^2$  considering the image by $R$ of an arbitrary fixed point of the sphere, for instance the North pole $\nP=(0,0,1)$. We  obtain a projection from  $SO(3)$  to $\S^2$ considering the map  $R \mapsto R(P)$,  for $R\in SO(3)$.  The construction  of the projection $\Pi$ from $\S^3$ onto $\S^2$  is in the same spirit, but requires to introduce some preliminary objects.

      \medskip 
 \noindent
 {\it Identifying $\S^3$ with $SU(2)$}. Here  $SU(2)$ denotes the Lie  group of two-dimensional complex unitary matrices of determinant one, i.e. 
$SU(2)=\{ U\in \mathbb M_2(\C),\ U U^* =I_2\ {\rm and}\ \det(U)=1\}$ or 
   \begin{equation}
   SU(2)=\left \{
   \begin{pmatrix}
   a & -\bar b \\
   b  & \bar a \\
   \end{pmatrix}, \ \  a\in \C, b \in \C  {\rm  \ with  \ }   
   \vert a \vert^2+\vert b \vert ^2=1
   \right \} \simeq \S^3.
   \end{equation}
The Lie algebra 
$$su(2)=\{ X\in \mathcal M_2(\C),\ X+X^*=0 \ {\rm and} \ {\rm tr}(X)=0 \}$$
of $SU(2)$ consists of  traceless anti-Hermitian matrices. A canonical 
basis of this $3$-dimensional space,   orthonormal for the Euclidean 
norm $|X|^2\equiv \det(X)$, 
is provided  by the Pauli matrices
$$ \upsigma_1\equiv \left(\begin{array}{rc}
i & 0  \\
0 & -i\\
\end{array}\right), \indent 
\upsigma_2\equiv \left(\begin{array}{rc}
0  & 1  \\
-1 & 0\\
\end{array}\right), \indent
\upsigma_3\equiv \left(\begin{array}{rc}
0 & i  \\
i & 0\\
\end{array}\right).$$
%Notice also that
%\begin{equation}
%\label{commut}
%[\sigma_1,\sigma_2]=2\sigma_3,\indent
%[\sigma_2,\sigma_3]=2\sigma_1,\indent
%[\sigma_3,\sigma_1]=2\sigma_2.
%\end{equation}
We identify the $2$-sphere $\S^2$ with the unit sphere of $su(2)$ 
for the previous scalar product: %, {\it i.e.}
$$\S^2 \simeq \{ X \in su(2),\ |X|^2=\det(X)=1\}.$$

      \smallskip 
 \noindent
 {\it The Hopf map}.
The group $SU(2)$   acts naturally on $su(2)$ by conjugation: If $g\in SU(2)$, 
then
$$su(2) \ni X \mapsto Ad_g(X) \equiv gXg^{-1} \in su(2) \
{\rm \ is \ an \ isometry \  of \  determinant \ } 1.$$
%Note that $Ad_g(X)=Ad_g(Y)$ if and only if there exists some $t \in [0,2\pi]$ such that 
% In particular, $\pm g$ is completely 
%determined once $Ad_g(\sigma_1)$ and $Ad_g(\sigma_2)$ are determined.
%%%%%%%%%%%%%
\begin{definition}
The map $\Pi : SU(2)\simeq \S^3 \to \S^2\subset su(2)$ defined by
$$\Pi(g)\equiv Ad_g(\upsigma_1)=g\, \upsigma_1g^{-1} {\rm \ for \ } g \in SU(2)$$  
 is called the Hopf map.
\end{definition}
Notice  that $\Pi(I_2)=\upsigma_1$ and that  $\Pi(g)=\upsigma_1$ if and only if $g$ is of the form 
$$g=\exp (\upsigma_1 t)=\begin{pmatrix}
\exp it &0 \\
0& \exp- it \\
\end{pmatrix}, {\rm \ for \ some \ } t \in [-\pi, \pi].
$$
 More generally, if $g$ and $g'$ are such that $\Pi(g)=\Pi(g')$, then  there exists some $t\in [-\pi,\pi]$  such that  $g'=g\exp \upsigma_1 t$.  The fiber $\Pi^{-1}(u)$ is hence  diffeomorphic to  the circle
$\S^1$ for every $u\in \S^2$. By the Hopf map, $SU(2)$ appears 
as a fiber bundle with base space $\S^2$ and fiber $\S^1$.
This bundle is not trivial, but twisted since  
$Id_{\S^2}$  \emph{does not} admit a continuous lifting 
$\Phi : \S^2 \to \S^3$ such that $Id_{\S^2}=\Pi \circ \Phi$. Indeed, 
$\Phi$ is homotopic to a constant map, but  $Id_{\S^2}$ is not.

\medskip
\noindent
{\it Projecting maps onto $\S^2$}. Given a domain $\mM$ and a map $\rU: \mM \to \S^3$, we may associate  to $\rU$   the map $u:\mM  \to \S^2$ obtained through  to the composition with $\Pi$, that is setting  
 $u=\Pi \circ \rU$. This construction works for a rather general class of maps, with mild regularity assumptions, for instance measurability. In particular,  since $\Pi$ is smooth, if $\rU$ belongs to $W^{1,3}(\mM, \S^3)$, then the same Sobolev regularity holds for $u=\Pi \circ \rU$. The correspondence 
 $U\mapsto u$ is of course not one to one. Indeed,  given any scalar function $\Theta: \mM\to \R$,  we have the identity
 $$
 u=\Pi \circ \rU= \Pi \circ \left (\rU \exp (\upsigma_1 \Theta (\cdot))\right).
 $$
 Conversely, given two maps $\rU_1$ and $\rU_2$ such that $u= \Pi \circ \rU_1=\Pi \circ \rU_2$ then there exists  a map 
 $\Theta : \mM \to \R$  such that  $\rU_2=\rU_1  \exp (\upsigma_1 \Theta (\cdot))$. The map $\Theta$ is referred to as 
the gauge freedom.

\medskip
\noindent
{\it  Lifting maps  to $\S^2$ as maps to $\S^3$.} The lifting problem corresponds to   invert the projection $\Pi$, which means that,  given  a map $u$ from $\mM$ to $\S^2$ in a prescribed regularity class, one seeks  for a map  $\rU$ from $\mM$  to $\S^3$,  if possible in the same regularity class,  such that $u=\Pi \circ \rU$. The map $\rU$ is then called a \emph{lifting}  of $u$. As seen  in the previous paragraph, if a lifting exists, then there is no-uniqueness, since if $\rU$ is a solution, then the same holds for the map $U\exp (\upsigma_1 \Theta)$, where $\Theta$ is  arbitrary scalar functions  $\Theta:\mM \to \R$ in the appropriate regularity class.

\smallskip
If   $\mM$ is   simply connected with $\pi_2(\mM)=\{0\}$, it can be shown that the lifting problem has always a solution in the continuous class, i.e.  for any continuous maps $u$ from $\mM$ to $\S^2$  there exists a continuous lifting  ${\rU}$ from $\mM$ to $\S^3$ of  $u$ such that $u=\Pi \circ {\rm  U}$.  As an example in the case $\mM=\S^3$, the identity from $\S^3$ into itself is a lifting of the Hopf map.
The fact that the lifting property holds in the continuous class allows to provide a one to one correspondance between homotopy classes in $C^0(\mM, \S^3)$ and $C^0(\mM, \S^2)$. Indeed,  
two maps $u_1$ and $u_2$   from $\mM$ to $\S^2$ are homotopic if and only if their respective liftings ${\rm U_1}$ and ${\rm U_2}$ are in the same homotopy class.  Specifying  this property to the case  $\mM=\S^3$, we obtain as  mentioned an identification of $\pi_3(\S^2)$ and $\pi_3(\S^3)$. 
On the level of Sobolev regularity,  the picture is quite different.  We will prove in this paper:
\begin{theorem}
\label{repulpage}
Let $\mM$ be a  smooth compact manifold.  For any $2\leq p < m=\dim \mM$ there exist a map $\mathcal V$ in $W^{1,p}(\mM, \S^2)$ such there exist    no map ${\rm V} \in W^{1,p} (\mM, \S^3)$ satisfying $\mathcal V= \Pi \circ {\rm V}$. Moreover $\mathcal V$ belong to the strong closure of smooth maps in $W^{1,p}(\mM, \S^2)$. 
\end{theorem}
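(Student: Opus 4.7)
The plan is to produce, for each $p\in[2,m)$, a map $\mathcal V\in W^{1,p}(\mM,\S^2)$ lying in the strong $W^{1,p}$-closure of $C^\infty(\mM,\S^2)$ which admits no $W^{1,p}$-lifting to $\S^3$. By composing with a chart diffeomorphism and extending by a constant map, I reduce to $\mM=\B^m$ and build $\mathcal V$ supported in a small interior ball.

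The building block is the $k$-spaghetton map $\mathfrak S_k$ of Proposition~\ref{grammage}: a smooth compactly supported map $\R^3\to\S^2$ with $\rE_p(\mathfrak S_k)\lesssim k^p$ and Hopf invariant $\rH(\mathfrak S_k)=2k^4$. In coordinates $(x',x'')\in\R^3\times\R^{m-3}$ I fix pairwise disjoint boxes $R_n\subset\B^m$ of $3$-radius $\mu_n$ and $(m-3)$-radius $\nu_n$ accumulating at an interior point $x_0$, together with integers $k_n\to+\infty$, and set
\begin{equation*}
\mathcal V(x',x'')\;\equiv\;\mathfrak S_{k_n}\!\bigl(\mu_n^{-1}(x'-x'_n)\bigr)\quad\text{on }R_n,
\end{equation*}
and $\mathcal V\equiv\sP$ outside $\cup_n R_n$. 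Each rescaled spaghetton is smooth with bounded support, so $\mathcal V$ is smooth away from the single accumulation point $x_0$; in particular $\mathcal V$ carries no topological singularity at any individual point. A scaling calculation gives
\begin{equation*}
\rE_p(\mathcal V,R_n)\;\le\;C\,\nu_n^{m-3}\mu_n^{3-p}k_n^{p},
\end{equation*}
so $\rE_p(\mathcal V)<+\infty$ once $(\mu_n,\nu_n,k_n)$ make the corresponding series converge. Strong approximability is immediate by truncation: for small $\delta$ avoiding the locations of the $R_n$, let $\varphi_\delta$ agree with $\mathcal V$ outside $\B(x_0,\delta)$ and equal $\sP$ inside. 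Since $\mathcal V\equiv\sP$ in a neighbourhood of $\partial\B(x_0,\delta)$, $\varphi_\delta$ is smooth on $\mM$, and
\begin{equation*}
\|\varphi_\delta-\mathcal V\|_{W^{1,p}}^p\;\le\;C\!\!\sum_{n>N_\delta}\!\nu_n^{m-3}\mu_n^{3-p}k_n^p\;+\;o(1)\;\xrightarrow[\delta\to 0]{}\;0.
\end{equation*}

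For non-liftability I prove a slicewise lower bound on the $p$-energy of any putative lift. On the $3$-dimensional slice of $R_n$ through the plane supporting $\mathfrak S_{k_n}$, a lift $\Phi$ of the rescaled spaghetton takes values in the fiber $\Pi^{-1}(\sP)\cong\S^1$ outside the compact spaghetton support; since $\pi_2(\S^1)=0$ these boundary values can be homotoped in the fiber to a constant without increasing the energy, producing a map $\S^3\to\S^3$ whose degree equals the Hopf invariant by the composition formula $\rH(\Pi\circ\Phi)=\deg(\Phi)\cdot\rH(\Pi)=\deg(\Phi)$. The Jacobian inequality $\int_{\S^3}|\nabla\Phi|^3\ge|\S^3|\,|\deg\Phi|$ gives $\rE_3(\Phi,\text{slice})\ge Ck_n^4$, and H\"older's inequality combined with the $\mu_n$-scaling upgrades this to $\rE_p(\Phi,\text{slice})\ge C\mu_n^{3-p}k_n^{4p/3}$ for $p\ge 3$. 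Fubini in the transverse $(m-3)$ directions yields
\begin{equation*}
\rE_p(\mathrm V,R_n)\;\ge\;C'\,\nu_n^{m-3}\mu_n^{3-p}k_n^{4p/3}
\end{equation*}
for any $\mathrm V\in W^{1,p}(\mM,\S^3)$ with $\Pi\circ\mathrm V=\mathcal V$. The ratio $k_n^{p/3}$ between this lower bound and the upper bound on $\rE_p(\mathcal V,R_n)$ tends to $+\infty$, so parameters can be tuned (for instance $k_n$ growing geometrically and $\nu_n$ chosen so that $\rE_p(\mathcal V,R_n)\simeq 2^{-n}$) to make the lifting sum diverge while the projection sum converges, contradicting $\rE_p(\mathrm V)<+\infty$.

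The principal difficulty I expect is the exponent range $p\in[2,3)$, where the direction of H\"older's inequality no longer converts the Jacobian--degree estimate $\rE_3\ge C|\deg|$ into the desired $\rE_p\ge Ck^{4p/3}$ lower bound; there one has to argue directly via the Hopf bundle curvature $F=\mathfrak S_k^*\omega_{\S^2}$ and a Hodge-theoretic lower bound on the gauge optimization, showing that the coexact part of the $\sigma_1$-component of the connection in any lift contributes an irreducible term that grows faster than $k^p$, and propagating this slicewise estimate to the full piece by Fubini as before.
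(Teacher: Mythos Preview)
Your argument for $p\ge 3$ is essentially correct and proceeds along a somewhat different route than the paper. The phrase ``homotoped in the fiber to a constant without increasing the energy'' is imprecise; what you actually need is that on any slice the pull-back $\Phi^*(\omega_{\S^3})$ vanishes wherever $\Phi$ is $\S^1$-valued (rank $\le 1$), so its integral over the slice is supported in the compact spaghetton core and equals $\vert\S^3\vert\,\rH(\mathfrak S_{k_n})$ after closing up via $\pi_2(\S^1)=0$. This, together with $\vert\det\nabla\Phi\vert\le C\vert\nabla\Phi\vert^3$, gives $\rE_3(\Phi,\text{slice})\ge C k_n^{4}$, and then H\"older on the slice of volume $\sim\mu_n^3$ yields your bound $\rE_p\ge C\mu_n^{3-p}k_n^{4p/3}$ for $p\ge 3$. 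The paper instead obtains a stronger lower bound $k_n^{2p}$ valid for all $p\ge 2$, but for $p\ge 3$ your weaker estimate already produces the required divergence.

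The genuine gap is the range $2\le p<3$, and your last paragraph does not close it. The degree route fails there not only because H\"older points the wrong way, but because for a slice restriction that is merely in $W^{1,p}$ with $p<3$ the Jacobian $\det\nabla\Phi$ need not be integrable, so the degree identity $\int\Phi^*(\omega_{\S^3})=\vert\S^3\vert\,\rH(u)$ is not available. The paper's argument bypasses this entirely: writing $A_1$ for the $\sigma_1$-component of the connection of any lift $\rU_k$, the curvature equation $2\,dA_1=\mathfrak S_k^*(\omega_{\S^2})$ and Stokes' theorem on a planar curve $\mathcal C$ bounding a region pierced by all $k^2$ fibers of the transverse sheaf $\fL^{k,\perp}$ give
\[
\Bigl\vert\int_{\mathcal C} A_1\Bigr\vert \;=\;\frac12\Bigl\vert\int_\Omega \mathfrak S_k^*(\omega_{\S^2})\Bigr\vert\;\ge\; C\,k^2,
\qquad\text{hence}\qquad \int_{\mathcal C}\vert\nabla\rU_k\vert\;\ge\; C\,k^2.
\]
The key geometric point is that the level curves of the Pontryagin map on the tubular neighbourhood of $\fL^k$ are precisely such linking curves; integrating over this family (coarea) and using that their total length is bounded independently of $k$ gives $\int\vert\nabla\rU_k\vert\ge Ck^2$ on a set of bounded volume, and then H\"older (now in the favorable direction, from $L^1$ to $L^p$) yields $\int\vert\nabla\rU_k\vert^p\ge C\,k^{2p}$ for every $p\ge 1$. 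The restriction $p\ge 2$ enters only when passing from Lipschitz lifts to Sobolev lifts by approximation. This is exactly the ``curvature/gauge'' mechanism you gesture at, but its content is the concrete Stokes computation on the linking curves of the spaghetton, not a Hodge estimate.
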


 This results supplements earlier results obtained  in \cite {BeChi, HR1}. It is proved in \cite {BeChi} that, if $1\leq p< 2\leq \dim \mM$,  $ p \geq \dim \mM \geq 3$  or $p>\dim \mM=2$, then  any map $\mathcal V$  in $W^{1,p}(\mM, \S^2)$  admits a lifting ${\rm V}$ in $ W^{1,p} (\mM, \S^3)$, whereas  a  map was produced there in the cases $2\leq p <3\leq \dim \mM$ or $p=3< \dim \mM$, which possesses no lifting in  $W^{1,p}(\mM, \S^2)$. In the later case, however, the example produced in \cite{BeChi} is not in the strong  closure of smooth maps, in contrast with the map constructed in Theorem \ref{repulpage}.  Notice that  Theorem \ref{repulpage} gives a negative answer to  Open Question 4 in \cite{BeChi}. The only case left open  for the lifting problem in the Sobolev class $W^{1,p}$ is the case $p=\dim \mM=2$ corresponding to the Open Question 3 in \cite{BeChi}.
 
  %%%%%%%%%%%%%%%%%%%%%%%%%%%%%%%%%%%%%%%%%%%%%%%%%%%%%%%%%%%%%%%%

%%%%%%%%%%%%%%%%%%%%%%%%%%%%%%%%%%%%%%%%%%%%%%%%%%%%%%%%%%%%%%%%%%%%%%%%%%%%%%%%%%%%%%%%%%
\subsection{Concluding remarks and open questions}
%%%%%%%%%%%%%%%%%%%%%%%%%%%%%%%%%%%%%%%%%%%%%%%%%%%%%%%%%%%%
As perhaps the previous presentation shows, the construction of our  counterexample  relies on  several specific properties of the Hopf invariant, a homotopical  invariant   which combines  in an appealing way various aspects of topology in the three dimensional space. 
Our proof  is built  on the fact that the  related branched transportation  involves precisely the critical exponent, yielding a divergence in some estimates which are   crucial.  An analog for this exponent  for more general target  manifolds with infinite homotopy group $\pi_p(\mN)$  has been provided  and worked out in \cite{HR3}, based on more sophisticated  notions in topology.  It is likely that this exponent plays an important role in  issues  related to weak density of smooth maps. In the case the exponent provided   in \cite{HR3} is larger then the critical exponent of the related branched transportation, as described above, one may reasonably conjecture that there should exist some obstruction  the sequential weak density of smooth maps. 
However the  \emph{effective}   constructions of such obstructions, perhaps similar to the ones proposed in this work,  remain  unclear.  In particular the Pontryagin construction  used here seems  at first sight somehow restricted to  the case the target is a sphere.

In another direction, we  notice that most if not all   results related to the weak closure of smooth maps  between manifolds 
for integer exponents deal with manifolds having infinite homotopy group $\pi_p(\mN)$.  The case  when $\pi_p(\mN)$ is finite seems widely open and raises interesting questions also on the level of the  related notions of minimal connections. As a first  example, one may start  with $\mM=\S^2$, $p=4$, for with we have $\pi_4(\S^2)=\Z^2$. In this case  a   description of the homotopy classes in terms of the Pontryagin construction is also available.

\medskip
This paper is organized as follows. In the next section, we recall some  notion of topology. Section 3 is devoted to the construction of the $k$-Spaghetton map. In Section \ref{unfolding}, we describe a number of elementary deformations used in the construction of  the Gordian cut $\Gordk$. These tools as used in Section \ref{laforme} where the proof to Proposition \ref{deform}, which is the central part of the paper, is provided. In Section 6, we provide the proofs of the main results, relying also on some results provided in Appendix A, in particular  Theorem \ref{droppy}, which, beside Proposition \ref{decompp}, is the main result there.

\medskip
\noindent
{\bf Acknowledgements}. The author wishes to thank the referees for their careful reading of  the first  versions of this paper, pointing out several mistakes and indicating several important lines of improvements. 
%%%%%%%

%%%%%%%%%%%%%%%%%%%%%%%%%%%%%%%%%%%%%%%%%%%%%%%%%%%%%%%%%%%%%%%%%%%

\numberwithin{theorem}{section} \numberwithin{lemma}{section}
\numberwithin{proposition}{section} \numberwithin{remark}{section}
\numberwithin{corollary}{section}
\numberwithin{equation}{section}

%%%%%%%%%%%%%%%%%%%%%%%%%%%%%%%%%%%%%%%%%%%%%%%%%%%%%%%%%%%%%%%%%%%%%%%%%%%

%%%%%%%%%%%%%%%%%%%%%%%%%%%%%%%%%%%%%%%%%%%%%%%%%%%%%%%%%%%%%%%%%%%%
\section{Some topological background}
\label{topology}
%%%%%%%%%%%%%%%%%%%%%%%%%%%%%%%%%%%%%%%%%%%%%%%%%%%%%%%%%%%%%%%%%%
We review in this section some basic properties of maps  from $\S^3$ into $\S^2$ or $\S^3$.
 %%%%%%%%%%%%%%%%%%%%%%%%%%%%%%%%%%%%%%%%%%%%%%%%%%%%%%%%%%%%%%%%%%%%%
 \subsection{Compactification at infinity of maps from $\R^3$ into  $\mN$}
 \label{compactification}
 %%%%%%%%%%%%%%%%%%%%%%%%%%%%%%%%%%%%%%%%%%%%%%%%%%%%%%%%%%%%%%%%
 Whereas  the emphasis was put   so far  in several places   on  maps defined on  the 3-sphere $\S^3$,  it is sometimes easier  to work on the space $\R^3$ instead of $\S^3$. Since our maps will have some limits at infinity or even are constant outside a large ball,  we  introduce the    space
 $$ {C^0_0} (\R^3, \R^\ell) =\{ u \in C^0 (\R^3, \R^\ell)  \  {\rm s. t } \ \underset{ \vert x \vert \to \infty} \lim u  \  {\rm exists} \}$$
 and define accordingly the space  ${C^0_0} (\R^3, \mN)$. The space $C^0_0(\R^3, \R^\ell)$ may be put in  one to one correspondence with the space  $C^0(\S^3,  \R^\ell)$ thanks to the stereographic projection ${\rm St}_3$ which is  a smooth map  from $\S^3 \setminus \{\sP\}$ onto   $\R^3$ and is   defined by 
 $$
 {\rm St}_3 (x_1, x_2, x_3, x_4)= \left( \frac{x_1}{1+x_4}, \frac{x_2}{1+x_4}, \frac{x_3}{1+x_4} \right), {\rm \ for \ } (x_1, x_2, x_3, x_4) \in \R^4  {\rm \ s.t \ } \ \underset{i=1}{\overset{4} \sum} x_i^2=1.
 $$
 For any map $u \in C^0(\S^3, \mN)$ we may define $u \circ {\rm St}_3 \in C^0_0 (\R^3, \mN) $ and conversely  given any map $v$  in $C^0_0 (\R^3, \mN)$  the map $v \circ  {\rm St}_3^{-1}$ belongs to $C^0(\S^3, \mN)$. This allows to handle maps in   $C_0^0 (\R^3, \mN)$ as maps in $C^0(\S^3, \mN)$  and yields a one to one correspondence of homotopy classes. In particular, when $\mN=\S^3$ or $\mN=\S^2$ we may define the degree in the first case or the Hopf invariant in the second for maps in $C_0^0 (\R^3, \mN)$. 
 
%%%%%%%%%%%

\subsection{Degree theory}
Degree theory yields a topological invariant which  classifies homotopy classes for maps  from $\S^3$ to $\S^3$.  For a smooth $\rU$ from $\S^3$ to $\S^3$, its analytical definition is given by 
\begin{equation}
\label{degree}
\deg \rU= \frac{1}{\vert \S^3 \vert}\int_{\S^3} \rU^\star (\omega_{_{\S^3}})=\frac{1}{(2 \pi^2)} \int_{\S^3} \det (\nabla \rU)\,  {\rm d}x, 
\end{equation}
where $\ometrois$  stands for  a standard  volume form on $\S^3$ and $\rU^*$ denotes  pullback by $\rU$.  It turns out that $\deg \rU$ is an integer  which is a homotopical invariant, that is,  two maps  in $C^1(\S^3, \S^3)$ which  are homotopic have  same degrees and conversely,  two maps with the same degree are homotopic, leading as mentioned to  a  complete  classification of homotopy classes. Notice that the degree of   the identity map of $\S^3$ whose homotopy class is the generator of $\pi_3(\S^3)$ is  $1$. The area formula yields a more geometrical interpretation, namely
\begin{equation}
\label{sardine}
\deg u=\underset{ a \in u^{-1}(z_0)} \sum {\rm sign }  (\det (\nabla u)), 
\end{equation}
 where $z_0 \in \S^3$ is any regular point, so that $u^{-1}(z_0)$ is a finite set.  Finally, an important property, is that the $\rE_3$-energy of a map 
 $\rU$ provides a bounded on its degree. Indeed,  integrating the point-wise bound\footnote{Inequality \eqref{nounours} is deduced  from the elementary inequalities  for three dimensional vectors 
 $$  \vert \det (\vec a, \vec b, \vec c )  \vert  \leq \vert \vec a  \vert \cdot \vert \vec b  \vert \cdot \vert \vec c \vert \leq 
 3^{-\frac 32}\left(\vert \vec a \vert^2+\vert \vec b\vert^2 +\vert \vec c \vert^2\right)^{\frac 32},  \forall \vec a , \vec b, \vec c {\rm \ in \ } \R^3, 
 $$
 equality holding if and only if the three vectors $\vec a, \vec b, \vec c$ are orthogonal and have the same norm.}
 \begin{equation}
 \label{nounours}
 \vert \det (\nabla
 \rU)  \vert (x) \leq 3^{-\frac32} \vert \nabla \rU\vert^3, \forall x \in \R^3, 
 \end{equation}
 we obtain the lower   lower bound
  \begin{equation}
  \label{degreenergy}
\rE_3(\rU)\equiv   \int_{\S^3} \vert \nabla \rU \vert^3 \geq   3^{\frac32}  \left \vert  \int_{\S^3} \det (\nabla \rU)\,  {\rm d}x   \right  \vert
\geq 3^{\frac32} \vert \S^3 \vert \vert  \deg \rU \vert. 
  \end{equation}
 On may check that this lower  bound is actually optimal.  Indeed, for degree one maps, equality is achieved in \eqref{degreenergy} by the identity map. For more general integers $d \in \Z$, one may invoke  the  scale invariance of the energy $\rE_3$ in  dimension $3$ and  a gluing  procedure  of  $\vert d \vert $ copies of degree one maps   to show that 
   \begin{equation}
 \label{optideg}
\nutrois (d)\equiv  \inf \left\{ E_3(u), u\in W^{1,3} (\S^3, \S^3), \deg (u)=d\right\} = 3^{\frac32} \vert \S^3  \vert \vert d \vert=2 \sqrt{27}\pi^2 \vert d \vert. 
 \end{equation}
   %Notice that there is no uniqueness for the equation  $u^\star( \omedeux)=d \alpha.$  As  a matter of fact, one may  %supplement the problem with the addition  gauge condition $d^\star a=0$, hence of the form  $\alpha=d^\star \Theta$. %Since $\Delta=dd^\star+d^\star d$, one is led to the equation
%   $$ \Delta \alpha =u^\star( \omedeux). $$
   %%%%%%%%%%%%%%%%%%%%%%%%%%%%%%%%%%%%%%%%%%%%%%%%%%%%%%%%%%%%%%%%
   \subsubsection{The Hopf invariant}
   \label{grouicguinec}
   %%%%%%%%%%%%%%%%%%%%%%%%%%%%%%%%%%%%%%%%%%%%%%%%%%%%%%%%%%%%%%%%%
     We next  turn to maps $u$  from $\S^3$ into $\S^2$  which are assumed to have sufficient regularity. Since in  this case there exists a lifting $\rU: \S^3 \to \S^3$ such that $u=\Pi \circ \rU$,  the degree theory  for  $\S^3$ valued maps allows to classify also the homotopy classes of maps from $\S^3$ to $\S^2$. Set
   $$ \rH(u)=\deg (\rU).$$
   This number is called the Hopf invariant of $u$ and  as seen before classifies homotopy classes in $C^0(\S^3, \S^2)$.   
  Notice that,   since $\Pi=\Pi\circ {\rm Id}_{\S^3}$, the Hopf invariant of the Hopf map $\Pi$ is  
  $\rH(\Pi)=1,$
   so that  its homotopy class $[\Pi]$ is a generator of $\pi_3(\S^2)$.

   %%%%%%%%%%%%%%%%%%%%%%%%%%%%%%%%%%%%%%%%%%%%%%%%%%%%%%%%%%%%%%%%%%

   \medskip
   \noindent
   {\it Integral formulations.}
Let $\mM$ be a simply connected manifold, $ \rU : \mM \to SU(2)$ sufficiently smooth and  set 
$u\equiv \Pi \circ {\rm U}$. We construct a  $1$-form $A$  with values into the Lie algebra $su(2)$ 
 setting $A\equiv \rU^{-1}d\rU$. Conversely, given any sufficiently smooth   $su(2)$ valued 1-form $A$ on $\mM$, on object also called a \emph{connection}, one may find a map $ \rU : \mM \to SU(2)$ such that $A =\rU^{-1}d\rU$, provided the zero curvature equation for connections holds, that is, provided
\begin{equation}
\label{curvature}
dA+\frac 12[A, A]=0.
\end{equation}
%Then, one checks that 
%$\displaystyle{du = U [ A,\sigma_1] U^{-1}.}$
Decomposing $A$ on the canonical basis of $su(2)$ as  $A= A_1\sigma_1 + A_2\sigma_2 +A_3\sigma_3$, where $A_1, A_2$ and $A_3$ denote scalar $1$-forms on $\mM$, we 
are led to the relations 
$$du=U [ A,\upsigma_1] U^{-1}=A_3 \upsigma_2-A_2 \upsigma_3.$$
The components $A_2$ and $A_3$ of $A$ are therefore completely determined by    the projected  map $u=\Pi \circ {\rm U}$. On  the other hand, $A_1$ is not,  a consequence  of  the \emph{gauge freedom}  mentioned before. Indeed, for any sufficiently smooth  function  $\Theta: \mM \to \R$,  let
$\rU_\Theta(x) \equiv \exp(\Theta(x)\sigma_1)\rU(x)$, so that 
$u=\Pi \circ \rU_\Theta$ and $\rU_\Theta^{-1}d\rU_\Theta= \rU^{-1}d\rU +(d\Theta) \sigma_1=A+d\Theta \upsigma_1$. The values of $A_2$ and $A_3$ are left unchanged by the gauge transformation, and $A_1$ is changed into  $A_1^\Theta=A_1+ d\Theta$. We notice also the relations
\begin{equation}
\label{eqlabousse}
\left\{
\begin{aligned}
&u^\star (\omedeux)=A_2\wedge A_3,    \  \rU^*(\ometrois)=A_1\wedge A_2\wedge A_3, \\
&|dU|^2= |A_1|^2 + |A_2|^2 + |A_3|^2
{\rm \ and  \ } 
|du|^2 =  |A_2|^2 + |A_3|^2, 
\end{aligned}
\right.  
\end{equation}
 where $\omedeux$ stands for the standard volume form on $\S^2$. 
The curvature equation \eqref{curvature}  yields 
\begin{equation}
\label{glode}
2dA_1=A_2\wedge A_3=u^\star (\omedeux), 
\end{equation}
so that $dA_1$ is also completely determined by the projected map $u$.  Going back  to \eqref{eqlabousse} we may write
$$\rU^*(\ometrois)=A_1 \wedge u^\star (\omedeux). $$
Specifying the discussion to the case $\mM=\S^3$,  the integral formula for the degree yields an integral formula for the Hopf invariant namely, for any map $u: \S^3 \to \S^2$, we have
 \begin{equation}
 \label{hinvariant}
 \rH(u)=\frac{1}{4\pi^2}\int_{\S^3} \alpha   \wedge u^\star (\omedeux), {\rm  \  with  \ } d\alpha=u^\star (\omedeux), 
 \end{equation}
where actually $\alpha$ corresponds to the  1-form $\alpha =2A_1^\Theta$, whatever choice of gauge  $\Theta$.  
% so that the 1-form $\alpha$ introduced in the definition \eqref{hopf} actually corresponds to $A_1$, and \eqref{hopf} might then be seen as a consequence of  the previous discussion and the identification \eqref{identification}. 
% Notice also that $|du|^2$ is independent of $A_1$, which corresponds  to the  gauge freedom.

\medskip
\noindent
{\it  Choosing a good gauge.} Recall that at this stage $d\alpha=dA_1^\Theta$ is completely determined
by \eqref{glode}.
 To remove the gauge freedom,   we may  supplement \eqref{glode} imposing another constraint  in order to obtain an elliptic system. Hence are led a impose a condition on $d^\star \alpha$, for instance
  \begin{equation}
\label{glodu}
 d^\star \alpha=0, {\rm \ and \  hence\ } \alpha =d^\star \Phi, 
\end{equation}
where $\Phi$ is  some   2-form verifying  $d\Phi=0$. In view of  \eqref{glode},  \eqref{glodu} and the definition $\Delta=dd^\star+d^\star d$ of the Laplacian, we have   the identity
\begin{equation}
\label{laplace}
\Delta_{\S^3} \Phi= u^\star (\omedeux).
\end{equation}
 Hence $\Phi$  is determined up to some additive constant form. 
 
\medskip
\noindent
{\it Energy estimates and the Hopf invariant}. By standard elliptic theory, we obtain the estimates
\begin{equation}
\label{riodeja}
 \Vert \alpha \Vert_{L^3 (\S^3)} \leq  C \Vert \nabla \Phi\Vert_{L^3 (\S^3)} \leq C \Vert \nabla u \Vert_{L^3 (\S^3)}^2, 
 \end{equation}
 so that, going back to formula \eqref{hinvariant}, we deduce that
$
\rH (u) \leq  C\Vert \nabla u \Vert_3^4
$
and hence, as mentioned the lower bound \eqref{lowers3} is readily an immediat consequence of the integral formula for the Hopf invariant.  The  fact that this lower bound is optimal   is    proved \cite{Ri}  and stated here as \eqref{tr}(see also \cite{AK} for related ideas). The proof of the  bound  \eqref{tr} is   subtle  and relies on the  identity
\begin{equation}
\label{triste}
\rH(\omega \circ u) = (\deg \omega)^2 \rH(u), 
\end{equation}
for any $\omega: \S^2 \to \S^2$. Since this fact  is somewhat  central in our later arguments, we briefly indicate how \eqref{triste} may lead to the lower bound \eqref{tr}. A  first elementary  observation is that, given any integer $\ell \in \Z$,  one may construct a smooth map $\omega_\ell: \S^2  \to \S^2$ such that 
\begin {equation}
\label{grouinec}
\deg (\omega_\ell)=\ell \ {\rm \ and \ }  \vert \nabla \omega_\ell \vert_{L^\infty (\S^2)} \leq C \sqrt{\vert \ell \vert}, 
\end{equation}
 the idea being to glue together $\vert \ell \vert $ copies of  degree $\pm 1$ maps scaled down to cover disks of radii of order  $\sqrt{\vert \ell \vert}$. Set $\rul=\omega_\ell \circ \Pi$, so that $\rul$ is a map from $\S^2$  onto $\S^2$. It follows from \eqref{triste} and \eqref{grouinec} that 
$$ \rH(\rul)=\ell^2 {\rm \  and \ }   \vert \nabla \rul \vert_{L^\infty (\S^2)} \leq C \sqrt{\vert \ell \vert}.$$
 Integrating the gradient  bound, we obtain
 $$\rE_3(\rul) \leq  C \vert \ell \vert^{\frac 3 2} \ \leq  C \vert \rH (\rul) \vert^{\frac 3 4}, $$
yielding hence the proof of \eqref{tr}, at least when  the  Hopf invariant $d=\ell^2$ is a square.  The Spaghetton map which we will construct later corresponds to  a  modification of the map $\rul$ and enjoys essentially the same properties, as it  will be seen in  the light of the next paragraph.

%\medskip
%\noindent
\subsection{Linking numbers for pre-images and the Pontryagin construction}
\label{deframe}
   Properties  of the pre-images of regular points yield another,   very appealing,  geometrical interpretation of the Hopf invariant which is  parallel to \eqref{sardine} for the degree. Given  a smooth map $u:\R^3 \to \S^2$  in $C^0_0(\R^3, \S^2)$ and  a \emph{regular value}    $M$ of $u$ on the target    $\S^2$,   its pre-image  $L_M\equiv u^{-1} (M)$ is a smooth  bounded curve in $\S^3$.  The curve   $L_M$ inherits from  
 the original map  $u$ a normal framing and hence an orientation. Indeed, consider an arbitrary  point $a \in L_M$, that is such that $u(a)=M$. Since $M$ is a regular value of $u$, any point on $L_M$ is a regular point of $u$ and the differential $Du(a)$ induces an isomorphism  
of the normal plane $P(a)\equiv (\R\vec \tau_{\rm tan} (a))^\perp$ onto the tangent  space  $T_M(\S^2)$.
  If $(\ov{W}_{1, M}, \ov{W}_{2,M})$  is an orthonormal basis of $T_M(\S^2)$ such that $(\ov{W}_{1, M}, \ov{W}_{2,M}, \ov{OM})$ is a direct orthonormal basis of $\R^3$,  then  its image $\mathfrak f^\perp$ by the inverse  $T=(Du(a)_{\vert P(a)})^{-1}$  is a frame of  $P(a)$ which is however not necessarily orthonormal.   We define  a framing on $L_M$, choosing  the first vector $\vec \tau_1(a)$ of the frame as
  $$\vec \tau_1(a)=\frac{T(\ov {W}_{1, M} )}{\vert T(\ov{W}_{1, M}) \vert} $$
and then  $\vec \tau_2(a)$ as  the unique  unit vector orthogonal to $\vec \tau_1(a)$  such that $\mathfrak e_u^\perp\equiv (\vec \tau_1(a), \vec\tau_2(a))$ has the same orientation   as $\mathfrak f^\perp$. 

A first remarkable  observation (see \cite {ponte} and \cite{kosinski}, chapter XI, section 3) is that  $(L_M, \mathfrak e_u^\perp)$ completely determines the homotopy class of  $u$: Indeed, if $\varrho>0$ is sufficiently small, then 
$$ 
\rH(u)=\rH\left ( \pont_\varrho [L_M, \mathfrak e_u^\perp]\right).
$$    
  A second important property is that the linking number $\mathfrak m (L_{M_1}, L_{M_2})$ of the preimages of any two regular points   $M_1$ and $M_2$ on $\S^2$ is independent of the choice of  the two points and is  equal to the Hopf invariant, that is 
  \begin{equation}
  \label{linkhopf}
   \mathfrak m (L_{M_1}, L_{M_2})=\rH(u).
   \end{equation}
  Recall that the linking number  of two \emph{ oriented curves} $\mC_1$ and $\mC_2$  in $\R^3$ is given by the Gauss integral formula
 \begin{equation}
 \label{gauss}
 \mathfrak m (\mathcal C_1, \mathcal C_2)= \frac{1}{4\pi} \oint_{\mC_1} \oint_{\mC_2}
 \frac{\ov{a_1-a_2}}{ \vert a_1-a_2\vert^3 }\, .\,  \ov{da}_1 \times \ov{da}_2.
\end{equation}
  Notice in particular  that the linking number is always an integer, that  it is symmetric, i.e.  
  \begin{equation}
  \label{lepuc}
  \mathfrak m (\mathcal C_1, \mathcal C_2)=\mathfrak m (\mathcal C_2, \mathcal C_1),
  \end{equation}
     that  its sign changes when the orientation of one of the curves is reversed and that $\mathfrak m (\mathcal C_2, \mathcal C_1)=0$ if the two curves are not linked. In case of several connected components, we have the rule
  \begin{equation}
  \label{linksom}
  \mathfrak m (\mathcal C_{1, 1}\cup \mathcal C_{1,2}, \mathcal C_2)=\mathfrak m (\mathcal C_{1, 1}, \mathcal C_2)+
  \mathfrak m (\mathcal C_{1, 2}, \mathcal C_2).
  \end{equation}
  In practice, as we will do,  the linking number of two given curves can be computed as the half sum of the \emph{signed  crossing number} of a projection on a two dimensional plane.

\begin{remark}{\rm  Chapter IX of \cite{kosinski} offers a good general background to the topics in this section and their extensions. 
The book \cite{mona} offers a more  elementary and intuitive presentation.
}  
\end{remark} 
\subsection{ The Hopf invariant of an elementary spaghetto}

 We go back  to the Pontryagin construction and  consider    the case the curve $\mathcal C$ is planar and connected. We may assume without loss of generally that $\mathcal C$ is included in the plane $P_{1,2}$. We assume moreover that it is framed with the reference frame $\frameref$.  In that  case, the map $\pont_\varrho[\mathcal C]$ will be called an \emph{elementary spaghetto}.  We  first observe: 

\begin{lemma} 
\label{trifou}
We have   $\rH(\pont_\varrho[\mathcal C, \frameref])=0$, for any $0<\varrho<\varrho_0(\mathcal C)$.
\end{lemma}

\begin{proof}  The most direct proof is to use formula \eqref{linkhopf}  and to consider  the linking number of pre-images of any two regular points. For the Pontryagin construction, all points are regular points (see Remark \ref{reguvalpont}), except the south pole $\sP$ whose pre-image is the  set $\partial {\rm T}_\varrho (\mathcal C) \cup\Omega_\varrho (\mathcal C)=(\R^3\setminus {\rm T}_\varrho(\mathcal C)) \cup \partial {\rm T}_\varrho (\mathcal C) $.We  may hence  consider  as regular points the North pole $\nP$ and   the point  $M$ on the equator given by $M=(1,0,0)$. We have
$$ L_{(\nP)}=\mathcal C,  {\rm  \,  \ whereas  \  \, } L_M= \mathcal C +g^{-1}(0) \varrho \be_3,  $$
 where  $g$ is defined in \eqref{defchi1}-\eqref{defchi1bis}. The two curves are parallel  and hence not linked,  so that  
 $$\mathfrak m\,  (L_{ \nP}, L_M)=0.$$
 The conclusion then follows  directly from  \eqref{linkhopf}. 
\end{proof}
  \begin{remark}
  \label{loidebiot}
  {\rm An alternative,  perhaps more direct  and more illuminating though also longer  proof would be to construct 
  \emph{explicitely} a continuous deformation  with values into $\S^2$ of $\pont_\varrho[\mathcal C, \frameref]$  to a constant map. The main step in this construction is  to show that there exists a \emph{continuous}  map $\Phi$ from the exterior domain  $ \R^3 \setminus\mathcal C$ to the circle $\S^1$ such that
  \begin{equation}
  \label{biot}
   \Phi\left( a+x_1 \vec \tau_1(a) + x_2 \vec \tau_2 (a)\right)=\frac{ (x_1, x_2)}{\sqrt{x_1^2+x_2^2}} {\rm \ for \ any \ } a \in  \mC {\rm \ and \ } 
0<  x_1^2+x_2^2\leq\varrho^2.
    \end{equation}
   Assume for the moment that   $\Phi$ is constructed and let us define the deformation. We set 
$$ 
F(x, t)=\left((1-t) \frac{\mr(x)}{\varrho} f \left(\frac{\mr(x)}{\varrho}(1-t)\right) \Phi (x), \, g\left(\frac{\mr(x)}{\varrho}(1-t)\right)\right) {\rm \ for \ } x \in \R^3 { \rm \ and \ } t \in [0,1],
 $$
 where the functions $f$ and $g$ have been defined in \eqref{defchi1}-\eqref{defchi1bis}  and where  the function $\mr$ is defined as
 \begin{equation*}
 \left\{
 \begin{aligned}
 &\mr(x)=\sqrt{x_1^2+x_2^2}  {\rm \ for  \  any  \  } x= a+x_1 \vec \tau_1(a) + x_2 \vec \tau_2 (a) {\rm \ with \ } 
 a \in  \mC {\rm \ and \ } 
0<  x_1^2+x_2^2\leq\varrho^2,  \\
&\mr(x)= \varrho {\rm \ otherwise. } 
\end{aligned}
\right.
\end{equation*}
  It follows from the properties of $f$ and $g$ that 
 $F$ is continuous from  $\R^3 \times [0,1]$  to $\S^2$, that $F(\cdot, t)$ belongs to $C^0_{\rm ct}(\R^3, \S^2)$   for any $t \in [0,1]$ and that 
$$ F(\cdot, 0)=\pont_\varrho[\mathcal C, \frameref]  {\rm \ whereas \ } F(\cdot, 1)=\nP, $$
 yielding hence the desired deformation. The construction of the map $\Phi$ is obtained adapting the Biot and Savart formula, as done for instance in \cite{ABO}. 
} \end{remark}

\begin{remark}
\label{weekend}
  {\rm
A first possible way to obtain not trivial  homotopy classes  through the Pontryagin constructing with planar curves, is to twist the frame. Consider a map 
$\gamma : \mC \to SO(2) \simeq \S^1,$  and consider the twisted frame 
$$e_\gamma^\perp=\gamma (\frameref)\equiv
\left(\gamma (\cdot)(\vec \tau_1(\cdot)),\gamma (\cdot)( \vec \tau_2 (\cdot))\right),$$
 where, for $a \in \mC$, the map $\gamma(a)$ is considered as a rotation of the plane $(\vec{\tau}_{\tan }(a))^\perp$. Since $\mC$ is topologically equivalent to a circle, one may define a winding number  of $\gamma$ and  prove, for instance using the crossing numbers,  that
$$
\rH \left(\pont_\varrho \left[\mC, e_\gamma^\perp\right]\right)=\deg (\gamma).
$$
In some places, we will denote, for given $d \in \Z$,   by $e_{{\rm twist}=d}^\perp$  a framing which corresponds to a planer curves whose reference framing is twisted by a degree $d$ map. As an exercise, the reader may construct a  deformation showing that if $\mC_1$ and $\mC_2$ are two planar curves which  do not intersect and which are  not linked then we  may merge them into a single curve with a frame twisted by the sum of the twists so that 
\begin{equation}
\label{defexo}
\rH \left(\pont_\varrho \left[ (\mC_1, e_{{\rm twist}=d_1}^\perp)\cup (\mC_2, e_{{\rm twist}=d_2}^\perp)\right]\right)=d_1+d_2.
\end{equation}
}
\end{remark}
%%%%%%%%%%%%%%%%%%%%%%%%%%%%%%%%%%%%%%%%%%%%%%%%%%%%%%%%%%%%%%%%%%
\subsection{The Hopf invariant of two linked spaghetti}
\label{linkedspag}
Another simple way to obtain non trivial homotopy classes is to consider two linked planar curves, yielding what is often called a \emph{Hopf link}. 
Consider  two planar curves without self-intersection, a curve $\mathcal C_1$ included in the plane $P_{1,2}$ of equation  $x_3=0$ and a curve $\mathcal C_2$ included in the plane $P_{2,3}$ of equation  $x_1=0$. To fix ideas,  one may take for $\mathcal C_1$ and $\mathcal C_2$ the circles
$$\mathcal C_1=\{(x_1, x_2, 0) \in \R^3, x_1^1+ x_2^2=1\} {\rm  \  and \ }  \mathcal C_2=\{(0, x_2, x_3) \in \R^3, (x_2+1)^2+ x^3=1\}.$$
The center of $\mathcal C_1$ is the origin ${\rm O}=(0,0,0)$, the center of $\mathcal C_2$ is  ${\rm O}_2=(0, -1, 0)$, both circles having radius $1$. 
We choose  for  both  circles the reference  frames $\frameref$, yielding  corresponding orientations. They are obviously linked, and  using the crossing numbers, we verify that
$$\mathfrak m (\mC_1, \mC_2)=1.$$
We   then set
$\mathcal C=\mathcal C_1 \cup \mathcal C_2.$

 \begin{lemma}
 \label{hopfion2}
 We have, for sufficiently small $\varrho>0$,  $\rH (\pont_\varrho [\mathcal C, \frameref])=2$.
 \end{lemma}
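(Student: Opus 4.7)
The plan is to apply formula \eqref{linkhopf} from subsection \ref{deframe}: the Hopf invariant equals the linking number of the preimages of any two regular points on $\S^2$. We pick the same regular points as in the proof of Lemma \ref{trifou}, namely the north pole $\nP$ and the equator point $M=(1,0,0)$, which are regular for $\Uppsi_\varrho[\mathcal C, \frameref]$ provided $0<\varrho<\min(\varrho_0(\mC_1),\varrho_0(\mC_2),\mathrm{dist}(\mC_1,\mC_2)/2)$.

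First I would describe the preimages. Since $\mC_1$ and $\mC_2$ are disjoint and $\varrho$ is small, the tubular neighborhoods $\rT_\varrho(\mC_1)$ and $\rT_\varrho(\mC_2)$ are disjoint, and $\Uppsi_\varrho[\mC,\frameref]$ restricted to each of them coincides with the elementary spaghetton built on the corresponding component. Hence for any regular point $N\in\S^2\setminus\{\sP\}$ the preimage decomposes as
$$L_N=L_N^1\cup L_N^2, \qquad L_N^i = \mC_i + \chi^{-1}(N)\cdot \varrho,$$
where the offset within each disk $\D^\perp_a(\varrho)$ is given by the framing $\frameref$, exactly as in the proof of Lemma \ref{trifou}. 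In particular $L_N^i$ is a smooth closed curve, a graph over $\mC_i$ lying in a plane parallel to the plane containing $\mC_i$, and inherits an orientation from the framing which, by construction of $\frameref$ (since $(\vec\tau_1,\vec\tau_2,\vec\tau_{\rm tan})$ is a direct basis), agrees with the reference orientation of $\mC_i$.

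Next I would apply the bilinearity relation \eqref{linksom} to split the linking number into four terms:
$$\mathfrak m(L_\nP,L_M)=\mathfrak m(L_\nP^1,L_M^1)+\mathfrak m(L_\nP^1,L_M^2)+\mathfrak m(L_\nP^2,L_M^1)+\mathfrak m(L_\nP^2,L_M^2).$$
The two diagonal terms $\mathfrak m(L_\nP^i,L_M^i)$ vanish by exactly the argument of Lemma \ref{trifou}: $L_\nP^i$ and $L_M^i$ are two parallel copies of $\mC_i$ lying in parallel planes (translated only by a vertical vector $g^{-1}(0)\varrho\,\vec\tau_1$), and hence are unlinked. For the two cross terms, $L_\nP^1$ is a $C^1$-small perturbation of $\mC_1$ and $L_M^2$ is a $C^1$-small perturbation of $\mC_2$, so by continuity of the Gauss integral \eqref{gauss} under small perturbations of disjoint curves,
$$\mathfrak m(L_\nP^1,L_M^2)=\mathfrak m(\mC_1,\mC_2)=1, \qquad \mathfrak m(L_\nP^2,L_M^1)=\mathfrak m(\mC_2,\mC_1)=1,$$
where we also used symmetry of the linking number and the fact that the orientations inherited from $\frameref$ match the reference orientations of the $\mC_i$. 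Adding the four contributions and using \eqref{linkhopf} gives $\rH(\Uppsi_\varrho[\mC,\frameref])=0+1+1+0=2$, as claimed.

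The main subtlety is the sign bookkeeping on the two cross terms: one must check that the orientation which $\frameref$ induces on $L_N^i$ via the Pontryagin construction matches the trigonometric orientation used to compute $\mathfrak m(\mC_1,\mC_2)=1$, so that the two cross contributions add rather than cancel. This is an orientation verification rather than a computational difficulty, and it follows from the fact that the framing $\frameref$ was defined precisely so that $\vec\tau_3=\vec\tau_1\times\vec\tau_2$ is the tangent vector corresponding to the chosen trigonometric orientation of each planar curve. An alternative, more pedestrian verification would compute the signed crossing number on the projection to the plane $P_{1,2}$, checking by hand that both cross links produce a $+1$ crossing.
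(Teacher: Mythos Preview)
Your proof is correct and follows essentially the same approach as the paper: compute the preimages of $\nP$ and $M$, split $\mathfrak m(L_{\nP},L_M)$ into four terms via \eqref{linksom}, observe the diagonal terms vanish by the parallel-curves argument of Lemma~\ref{trifou}, and identify each cross term with $\mathfrak m(\mC_1,\mC_2)=1$. The paper is slightly terser (it writes $L_{\nP}=\mC_1\cup\mC_2$ exactly and asserts $\mathfrak m(\mC_1,\mC_2')=\mathfrak m(\mC_1,\mC_2)$ directly), while you invoke continuity of the Gauss integral and discuss the orientation bookkeeping more explicitly; these are differences of presentation, not of method.
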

\begin{proof}  We argue as in the proof of Lemma \ref{trifou} and consider the preimages $L_{\nP}=\mC=\mC_1\cup\,  \mC_2$  and   $L_M=\mC_1'\cup \mC_2'$ of the North pole and   $M=(1,0,0)$  respectively, where we have set 
$\displaystyle{
\mC'_1=  \mathcal C_1 +g^{-1}(0) \varrho \be_3 {\rm \ and \ } \mC'_2=  \mathcal C_2 +g^{-1}(0) \varrho \be_1.
}$
It follows that 
\begin{equation}
\label{frais}
\begin{aligned}
\mathfrak m (L_{\nP}, L_M)&=\mathfrak m  (\mC_1\cup\,  \mC_2, \mC'_1\cup\,  \mC_2') \\ 
&=\mathfrak m (\mC_1, \mC'_1)+\mathfrak m (\mC_1, \mC'_2)+ \mathfrak m (\mC_2, \mC'_1)+\mathfrak m (\mC_2, \mC'_2).  
\end{aligned}
\end{equation}
 Since the curves $\mC_1$ and $\mC'_1$ are parallel and hence not linked $\mathfrak m (\mC_1, \mC'_1)=0$ and likewise $\mathfrak m (\mC_2, \mC'_2)=0$. On the other hand $\mathfrak m (\mC_1, \mC'_2)=\mathfrak m (\mC'_1, \mC_2)=\mathfrak m (\mC_1, \mC_2)=1$ so that we obtain  $\mathfrak m (L_{\nP}, L_M)=2$. Invoking \eqref{linkhopf} the conclusion follows. 
\end{proof}
%\begin{remark}
%\label{lunedi}

%{\rm  One may convince oneself of the previous result  constructing directly a deformation of $\pont_\varrho[\mathcal C, \frameref])$.  In that direction, one may first prove, using again the Biot and Savart law and arguing as in Remark \ref{loidebiot} that
%\begin{equation*}
%\label{defautt}
%\rH \left(\pont_\varrho \left[ (\mC_1, e_{{\rm twist}=1}^\perp)\cup (\mC_2, e_{{\rm twist}=1}^\perp)\right]\right)=0.
%\end{equation*}
% and then deform the Pontryagin map related to the  union  of these two framed curves into the original one with the reference frame, together with  the Pontryagin map associated to two planar curves, each having a $+1$ twist. 
%}\end{remark}

%%%%%%%%%%%%%%%%%%%%%%%%%%%%%%%%%%%%%%%%%%%%%%%%%%%%%%%%%%%%%%%%%%%%%%%%%%%%%%%%%%%%%%%%%%%%%%%%%%%%%%%%%%%%%%%%%%%%%%%%%%%%%

%%%%%%%%%%%%%%%%%%%%%%%%%%%%%%%%%%%%%%%%%%%%%%%%%%%%%%%%%%%%%%%%%
\section {Linked  $k$-spaghetton map}
\label{dececco}
%%%%%%%%%%%%%%%%%%%%%%%%%%%%%%%%%%%%%%%%%%%%%%%%%%%%%%%%%%
We provide in this section  a precise definition of the Spaghetton map $\Spagk$, which is roughly described  in the introduction. The general idea is to  extend the construction performed in Subsection \ref{linkedspag}  when each planar curve is replaced by a sheaf of such curves. The spaghetton is  obtained by the Pontryagin construction with  the corresponding reference frame. 

Each of the curves  with which we will perform the Pontragyin construction is  stadium shaped. Let us recall that a stadium is a closed curve whose interior  consists of the interior of a rectangle,  with two parallel ends capped off with semi-disks. 
   Given an integer $k \in \N^\star$, the total number of curves  will be $k^2$ in each of the  two sheaves  $\fLk$ and  $\fLkperp$ of  our construction. Each of the sheaves   $\fLk$ and $\fLkperp$  is composed of   parallel segments  on the straight part on the stadium in the direction of $\be_1$ and $\be_2$ respectively, and nearly parallel on the round parts.

%%%%%%%%%%%%%%%%%%%%%%%%%%%%%%%%%%%%%%%%%%%%%%%%%%%%%%%%%%%%%%%%%%%%%%%%%%%%%%%%%%%%%%%%%%%%%%%%%%%%%%%%%%%%%%%%%%%%%%%%%%%%%%%%%%%%%%%%%%%%%%%%%%%%%%%%%%%%%%%%%%%%%%%%%%%%%%%%%%%%%%%%
% \subsection{ Sheaves of  stadium shaped curves}
\subsection {The sheaf $\fLk$ of  the  $k^2$   stadium shaped curves $\fL^k_{j, q}$}
  The curves $\fL^k_{j, q}$, $j, q=1, \ldots, k$ composing the sheave $\fLk$, are modeled on  a 
 standard stadium $\mathbb L_0 \subset \R^2$,  centered at the origin $O=(0, 0)$   that we present first.

 \begin{figure}[h]
\centering
\includegraphics[height=5cm]{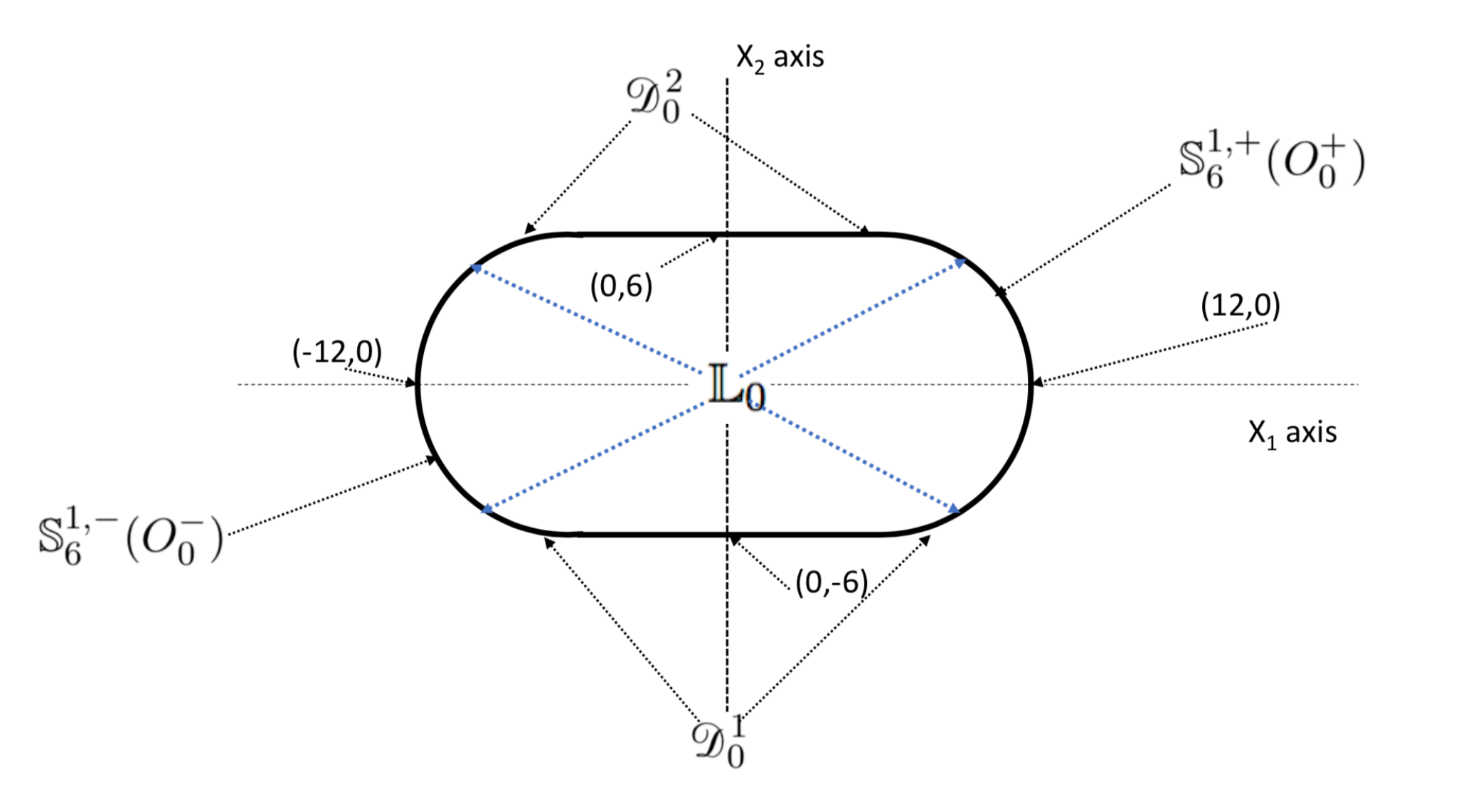}
\caption{  {\it The reference stadium $\mathbb L_0.$}}
\label{staderef0}
\end{figure}

 \medskip
 \noindent
 {\it Construction of the reference stadium $\mathbb L_0$ in the plane $\R^2$}.   Working here on the plane $\R^2$,   we consider first two  segments   $\mathcal D_0^1$  and $\mathcal D_0^2$  parallel to $ \be_1=(1, 0)$  each of  length $12$,  given by
  \begin{equation*}
 \mathcal D_{0}^1=[-6,6] \times \{ -6\}  \ {\rm \ and \ }    \mathcal D_0^2=[-6,6] \times \{ 6\}. 
  \end{equation*}
 We obtain a   stadium  $\mathbb L_0$  included in the plane $\R^2$,   supplementing  the two parallel segment with   two half circles  so that (see Figure \ref{staderef0})
$$
\mathbb L_0=\mathcal D_0^1 \cup   \mathcal D_0^2\cup \S_6^{1,+}(O_0^+) \cup \S_6^{1,-}(O_0^-)\subset \R^2,
 $$
 where $\S_6^{1,+}(O_0^+)$ and  $\S_6^{1,-}(O_0^-)$ are   half circles of  radius $r=6$    in  the plane $\R^2$  of centers $O_0^+\equiv ((6, 0)$ and  $O_0^-\equiv(-6,0)$ respectively. Here we have set, for given  $r>0$ and $A=(a_1, a_2) \in \R^2$ 
   \begin{equation*}
   \left\{
   \begin{aligned}
 & \S_r^{1, +}(A)= \{(x_1, x_2)\in \R^2, (x_1-a_1)^2+(x_2-a_2)^2=r^2, x_1\geq a_1\} \\
   &\S_r^{1,- }(A)=\{ (x_1, x_2) \in \R^2, (x_1-a_1)^2+(x_2-a_2)^2=r^2, x_1\leq a_1\}.
   \end{aligned}
   \right.
   \end{equation*}
As a result of the construction,  
$\mathbb L_0\subset [-12, 12]\times [6, 6].$

\medskip
\noindent
{\it Construction  of  concentric stadia $\mathbb L^k_{j, 0}$}.   Given $k \in \N^*$, we  construct a family of concentric stadia  which are deduced from the reference stadium by homothety as  (see Figure \ref{quadristade11})
$$
\mathbb L^k_{j, 0}= \left(1+\frac{h(k-j)}{6}\right)\, \mathbb L_0 ,  {\rm \ for \  } \ell=0, \ldots, k,  {\rm where \ } h=k^{-1}.
$$
It follows from   this definition that  $\mathbb L^k_{k, 0}=\mathbb L_0$, that $\mathbb L^k_{0,0}=\frac 76\mathbb L_0$,  and that the domains of $\R^2$ bounded by the curves $\mathbb L^k_{\ell, 0}$ are decreasing as $\ell$ increases. Moreover, one may verify that
\begin{equation*}
{\rm dist  } \left(\mathbb L^k_{j, 0}, \mathbb L^k_{j+1, 0} \right)=h, {\rm \ for \ }  j=0, \ldots, k-1.
\end{equation*}
We set 
$$\mathbb L^k=\underset{j=1} {\overset {k} \cup} \mathbb L^k_{j, 0}.$$
The straight parts  of $\mathbb L^k_{j, 0}$ are  parallel to $ \be_1$, of lengths  between $12$ and $14$. We have
\begin{equation}
\label{carun}
\left(\underset{j=1} {\overset {k} \cup} \mathbb L^k_{j, 0}\right) \cap  \left( [-6, 6]\times \R\right)=[-6, 6]\times 
\left[ \{-7+hj, j=1, \ldots, k\}\cup\{7-hj, j=1, \ldots, k\}
\right]
\end{equation}
 and 
$\displaystyle{\mathbb L_{j, 0}^k\subset [-14, 14]\times [7, 7]}$, for any $j=0, \ldots, k.$
 \begin{figure}[h]
\centering
\includegraphics[height=8.5cm]{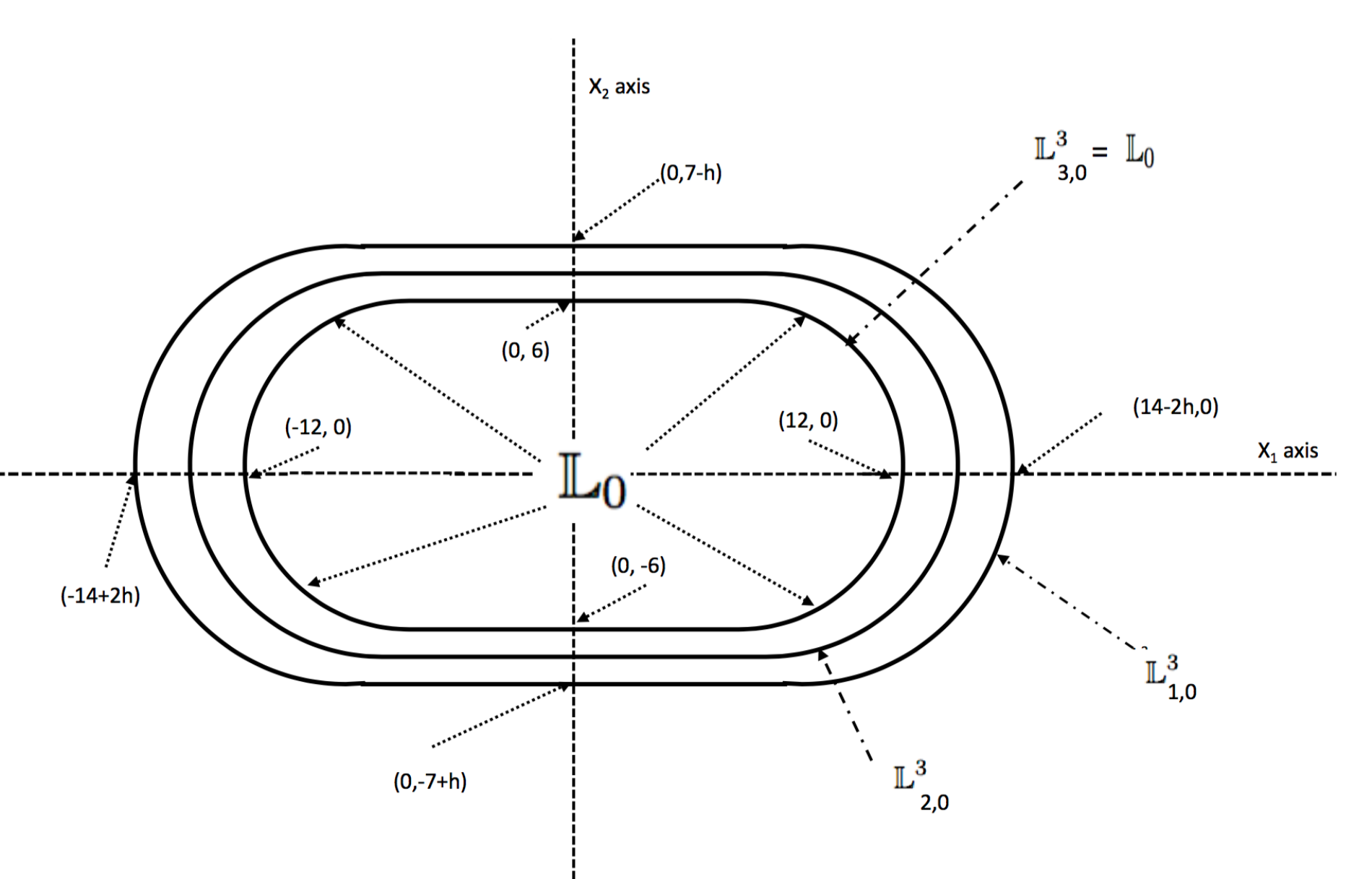}
\caption{  {\it The 3 stadia  $\mathbb L^3_{j,  0}$, $j=1,2,3$ and the reference stadium $\mathbb L_0.$}}
\label{quadristade11}
\end{figure}

\begin{figure}[h]
\centering
\includegraphics[height=9cm]{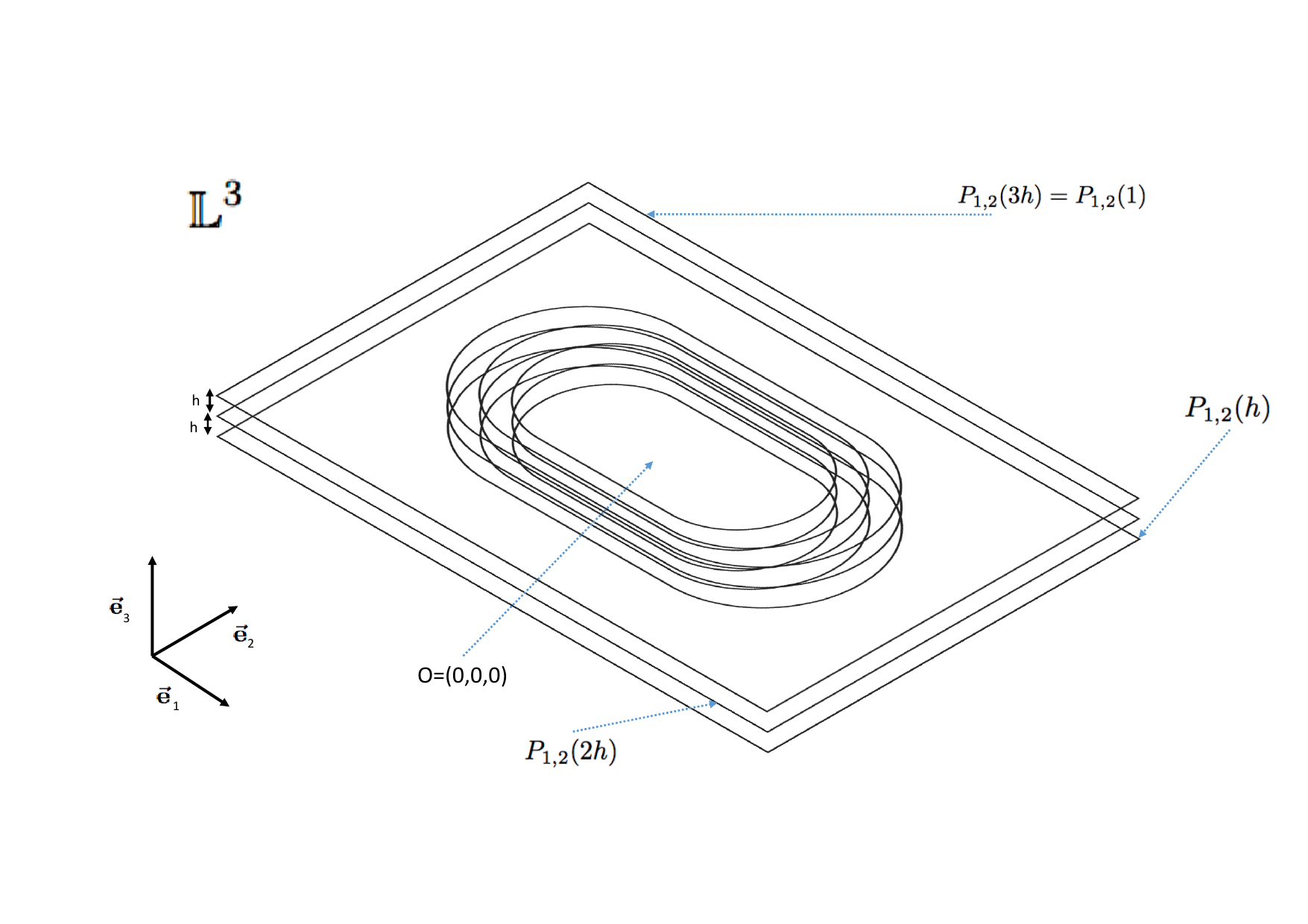}
\caption{  {\it The set  $\mathbb L^3$ and the three planes $P_{1, 2}(h)$, $P_{1, 2}(2h)$ and $P_{1, 2}(3h)=P_{1, 2}(1)$  containing each three connected curves of translates of the sheave $\mathbb L^3$}.}
\label{quadristade}
\end{figure}

\bigskip
\noindent
{\it Construction of the stadia $\mathbb L^k_{j, q}$}. For $q=1, \ldots, k$,  we consider the $k$ parallel planes  $P_{1, 2}(hk)$: Identifying these planes with $\R^2$,  we construct  in each of them the  curves $\mathbb L^k_{j,q}$ corresponding to the stadia 
$\mathbb L^k_{j, 0}$  
setting, for $j=0, \ldots, k$,
$$
\mathbb L^k_{j,q}= \mathbb L_{j, 0}^k+ q h \be_3,  {\rm \ for \  }  q=1,  \ldots, k.
$$
Curves with distinct sets of indices do not intersect. We finally  consider the union of the  $k^2$  curves $\mathbb L^k_{j, q}$, each contained  in    planes  orthogonal to $\be_3$, yielding the sheave  (see Figure \ref{quadristade11})
\begin{equation}
\label{mathbbLk}
 \mathbb L^k=\underset{j, q=1}{\overset {k}\cup}  \mathbb L^k_{j,q} \subset [-14, 14]\times [-7, 7]\times [0, 1]. 
\end{equation}

\noindent
\bigskip
{\it Construction of the curves $\fLk_{j, q}$ and of $\fLk$}.  They are deduced from $\mathbb L^k_{j, q}$ and $\mathbb L^k$ by  a simple \emph{translation} in the direction of $\be_2$. We set, for $j=1, \ldots, k$ and $q=1, \ldots, k$,  
\begin{equation}
\label{mathfrakLk}
\fL^k_{j, q}=\mathbb L^k_{j,q} +   7\,  \be_2  {\rm \ and \ } \fLk= \mathbb L^k+ 7 \be_2,
\end{equation}
 so that  
 $$\fL^k= \underset {j, q} { \overset {k} \cup}  \fL^k_{j, q}  \subset [-14, 14]\times [h, 14]\times [0, 1]\subset \R^2\times [0, 1].$$
 Property \eqref{guidon} presented in the introduction then  follows from \eqref{carun}  and the above constructions (see in particular figure \ref{above0}).  
   The mutual distant between the individual spaghetti is bounded  below by
 \begin{equation}
 \label{mutual}
  {\rm dist} (\fL^k_{j, q}, \fL^k_{j', q'}) \geq  h= \frac {1}{k},  {\rm \ for \ } (j, q)\not = (j', q'). 
 \end{equation}
 Going back to \eqref{tubular} we  may observe also that, at least for large $k$ enough,  we have
 \begin{equation}
 \label{rc}
 \varrho_0 (\fL^k) \geq \frac{1}{3k}. 
 \end{equation}
 The total linking number of $\fL^k$ is equal to zero. In order to produce topology, we  define  a second sheaf.
%%%%%%
%%%%%%%%%%%%%%%%%%%%%%%%%%%
\subsection{The sheaf $\fLkperp$}
 \begin{figure}[h]
\centering
\includegraphics[height=9cm]{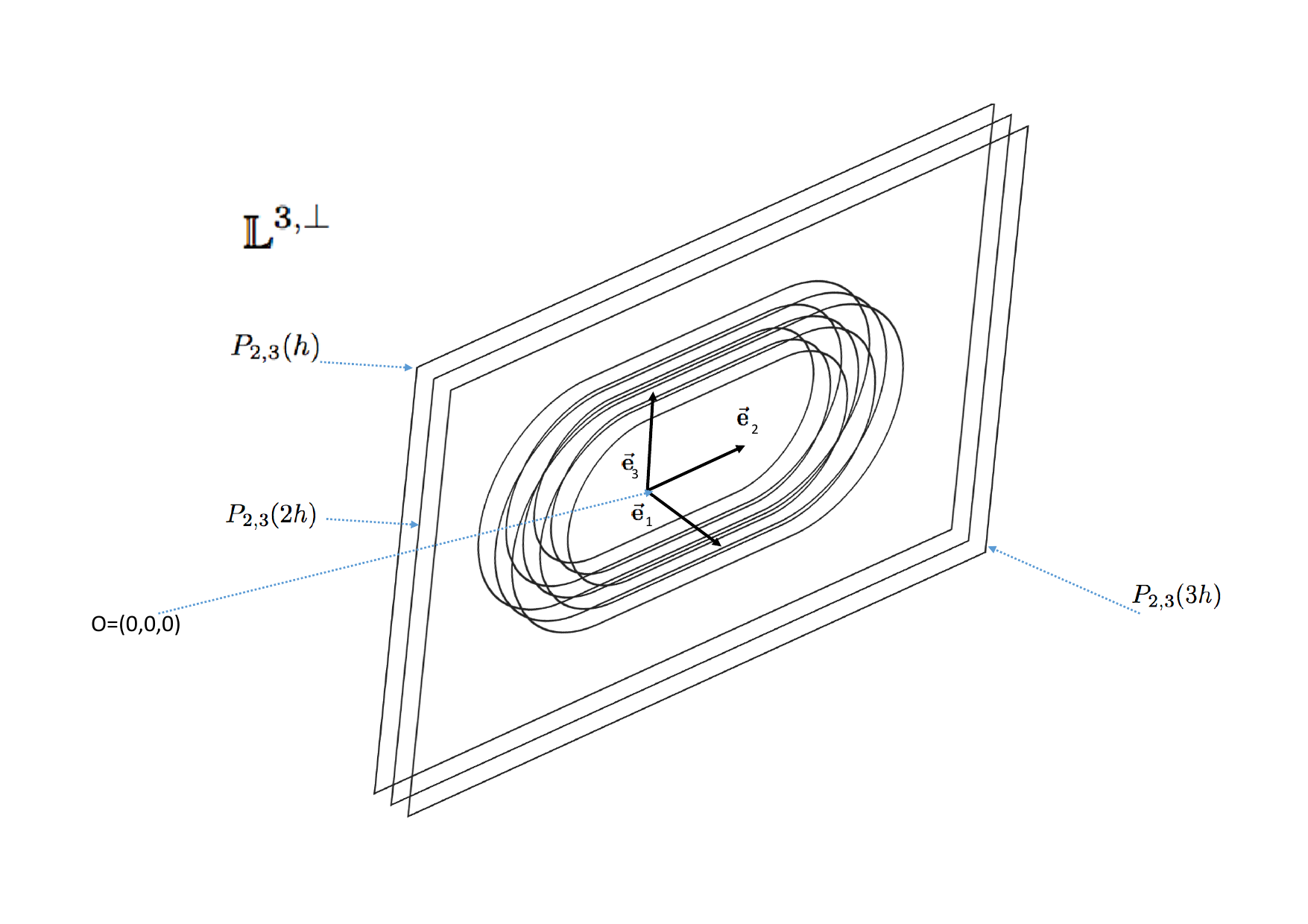}
\caption{  {\it The  set $\mathbb L^{3, \perp}$, $k=3$ and the  hyperplanes $P_{2,3}(h), P_{2,3}(2h)$ and $P_{2,3}(3h)$ with $h=\frac1 3$.}}
\label{spaghettonperp}
\end{figure}
 
We first  construct as above a sheave $\mathbb L^{k, \perp}$ deduced from     the sheave $\mathbb L^k$                                                                                                                                                                                                                                                                                                                                                                 as  
 $$ 
 \mathbb L^{k, \perp}=\left\{(x_1, x_2, x_3)\in \R^3 {\rm  \ s.t. \ } \ ( x_2,  x_3 ,x_1 ) \in \mathbb L^k\right\}.
 $$
Alternatively, we may define  $\mathbb L^{k, \perp}$ as the image of $\mathbb L^k$ by the rotations $\mathfrak R_0$ of $\R^3$ which sends 
$\be_1$ onto $\be_2$, $\be_2$ onto $\be_3$ and $\be_3$ onto $\be_1$ (see Figure \ref{spaghettonperp}). We decompose $ \mathbb L^{k, \perp}$  as
\begin{equation}
\label{chevre}
\left\{
\begin{aligned}
 \mathbb L^{k, \perp}&=\underset{i, q=1}{\overset {k} \cup} \mathbb L^{k, \perp}_{i, q}, \, {\rm \ where \ } \\
 \mathbb L^{k, \perp}_{i, q}&=\mathcal R_0 (\mathbb L^{k}_{k-q+1,i})=\left\{(x_1, x_2, x_3)\in \R^3 {\rm s.t. } \ ( x_2,  x_3 ,x_1 ) \in \mathbb L^k_{q-k+1, i}\right\}.
\end{aligned}
\right.
\end{equation}
 For $q=1, \ldots, k$,   the connected curves $ \mathbb L^{k, \perp}_{i, q}$ are included in the same affine  plane $P_{2, 3}(ih)$.  Notice an \emph{important difference} between the way we label the curves $\mathbb L_{i, q}^{k, \perp}$ and the way we label the curves  $\mathbb L_{i, q}^{j}$: The domain included in the plane  $P_{2, 3}(ih)$ bounded by the curves $\mathbb L_{i, q}^{k, \perp}$ are increasing with  $q$, for fixed $i$. As a matter of fact, we may also write
 \begin{equation}
 \label{chevre2}
  \mathbb L^{k, \perp}_{i, q}= \left(1+\frac{h q}{6}\right)\, \left(\mathbb L_0^\perp+ ih \be_1\right), 
  {\rm \ for \  } i, q=0, \ldots, k,  {\rm where \ } \mathbb L_0^\perp=\mathcal R_0 (\mathbb L_0).
 \end{equation}
 We notice that 
  \begin{equation}
  \label{gameof}
     \mathbb L^{k, \perp} \subset\left( [0, 1]\times [-14, 14] \times [-7,7] \right),
 \end{equation}
  and that $\mathbb L^{k, \perp}$ is composed of segments in the direction $\be_2$ in its central part, of lengths between $12$ and $14$. More precisely, we have
  $$
   \mathbb L^{k, \perp} \cap \left( \R \times [-6, 6] \times \R\right)
   = h\mathbb I_k \times [-6, 6] \times \left[
    \left(h\mathbb I_k-\{7\}\right) \cup \left( h\mathbb I_k+\{6-h\}
   \right) \right],
  $$
  where we have set $\mathbb I_k=\{1, \ldots, k\}$.

 The set $\fLkperp$ is deduced from the set $\mathbb L^{k, \perp}$ by a translation in the direction of $\be_2$. We set
 \begin{equation*}
 \fLkperp= \mathbb L^{k, \perp} - 3 \, \be_2  {\rm \ and \ }
 \mathfrak L^{k, \perp}_{i, q}= \mathbb L^{k, \perp}_{i, q}-3 \, \be_2 {\rm \ for \ } i, q=1, \ldots, k.
 \end{equation*}
Inclusion \eqref{gameof} then yields 
\begin{equation}
\label{thrones}
\left\{
\begin{aligned}
&\fLkperp \subset [0, 1]\times [-17, 11] \times [-7,7] \\
&\fLkperp \cap \left(\R \times[-2,2] \times \R\right)= h\mathbb I_k \times [-2, 2] \times \left[
    \left(h\mathbb I_k-\{7\} \cup h\mathbb I_k+\{6-h\}\right)
    \right].
\end{aligned}
\right.
\end{equation}

\begin{remark}
\label{chouchou}
{\rm The	labelling  \eqref{chevre} and \eqref{chevre2} of the curves $\fLkperp_{i, q}$ which is different from the labeling of the curves $\fLk_{j, q}$  is motivated by the fact that 
	\begin{equation}
\label{chouperche}
\fLkperp_{i, q}\cap \R\times [-2,2]\times \R^+= \{ih\} \times [-2,2]\times \{6+qh\}
	\end{equation}
 so that that the $q$ index labels the  upper straight part of the fibers with increasing height $x_3$.		
		}
\end{remark}
%%%%%%%%%%%%%%%%%%%%%%%%%%%%%%%%%%%%%%%%%%%%%%%%%%%%%%%%%%%%%%%%%%%%%%%%%%%%%%%%%%%%%%%%%%%%%%%%%%%%%%%%%%%%%%%%%%%%%%%%%%%%%%%%
\subsection{ First properties of the sheaves $\fLk$ and $\fLkperp$}
  \begin{figure}[h]
\centering
\includegraphics[height=8cm]{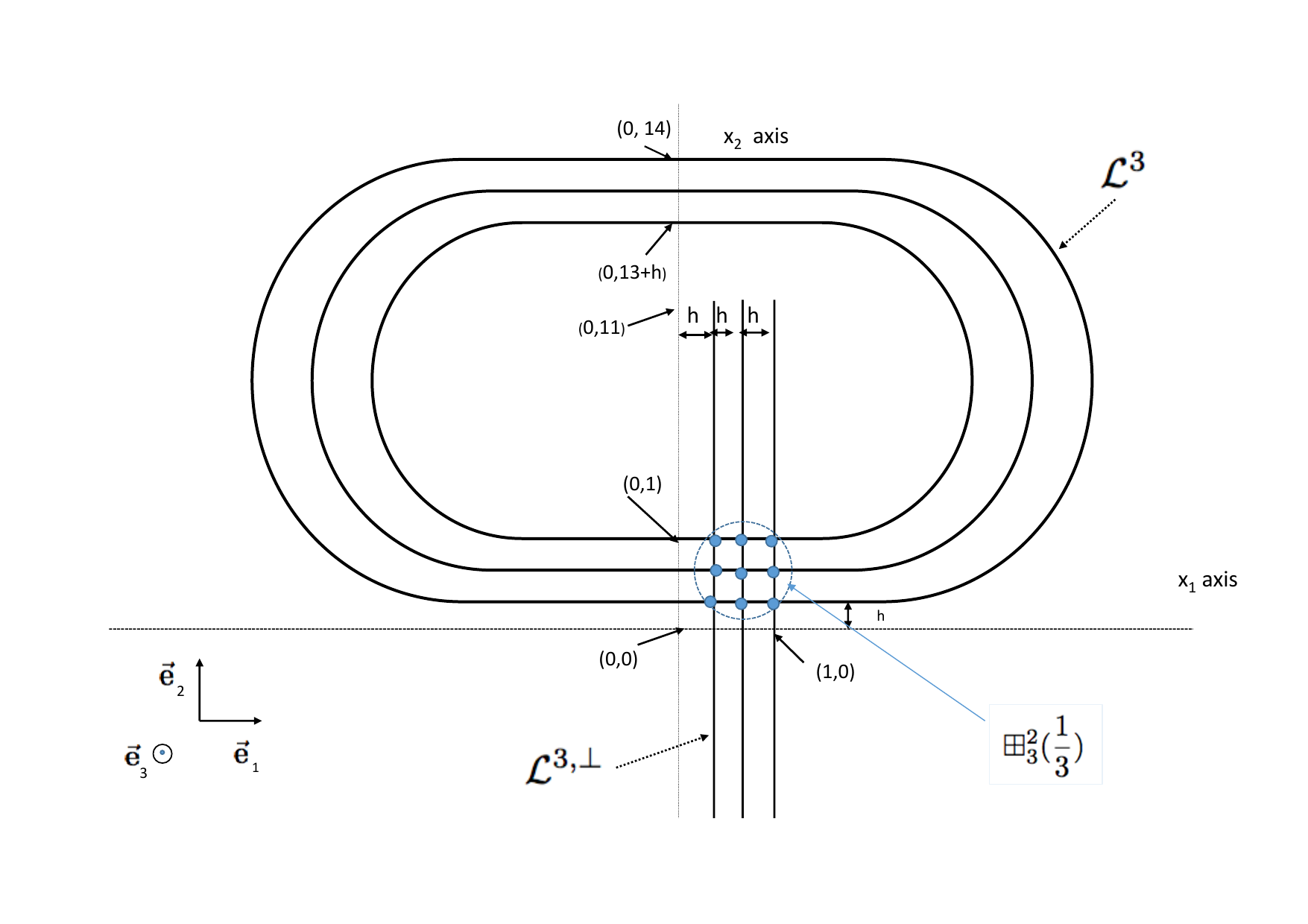}
\caption{  {\it The  set $\mathcal S^3$ seen from above.
The intersection of the orthogonal projection  onto $P_{1,2}$ of $\mathfrak L^{3, \perp}$ with
$\mathfrak L^3$ is the grid $\displaystyle{ \boxplus^2_3 (\frac 13)}$.}}
\label{above0}
\end{figure}
  \begin{figure}[h]
\centering
\includegraphics[height=10cm]{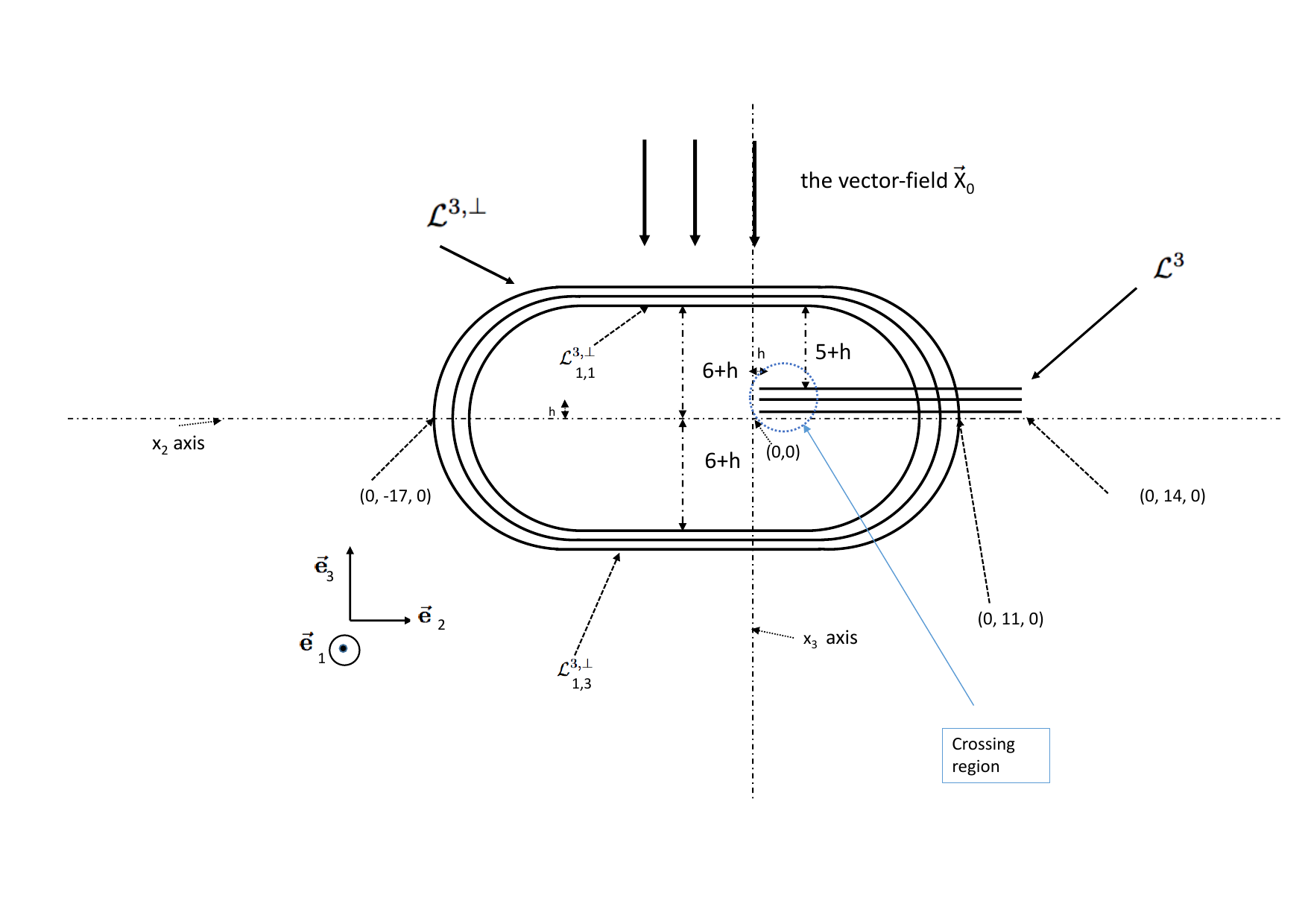}
\caption{  {\it The  set $\mathcal S^3$ seen from the $\be_1$ direction. 
The intersection of the orthogonal projection  onto $P_{1,2}$ of $\mathfrak L^{3, \perp}$ with
$\mathfrak L^3$ is the grid $\displaystyle{ \boxplus^2_3 (\frac 13)}$.  The vector field $\vec X_0$ pushes $\mathfrak L^{3, \perp}$ onto $\mathfrak L^3$ until they meet.}}
\label{side0}
\end{figure}

 Notice first (see figure \ref{above0} and \ref{side0}) that the intersection of  the  two sheaves $\fL^k$ and $\fLkperp$ is empty and that moreover
 \begin{equation}
 \label{rc2}
 {\rm dist} (\fLk, \fLkperp)=2. 
 \end{equation}  
 Since  each of  the  curves $\fL_{j, q}^k$ and $\fL_{i, q}^{k, \perp}$ are   planar curves which are either included in  affine planes  parallel to  $P_{1,2}$ or to  $P_{2,3}$ we may frame them with  the reference frames $\frameref$ which have  been defined in  Subsection \ref{deframe}. This yields,  as  we have already seen,  a natural orientation of the curves. For instance, the curves $\fL_{i, j}^k$ 
 are oriented counter-clockwise with respect to the frame $(\be_1, \be_2)$  and similarly the curves   
 $\fL_{i, q}^{k, \perp}$ are oriented counter-clockwise with respect to the frame $(\be_2, \be_3)$. 
 
 Concerning \emph{topological properties}, each  curve  $\fLk_{i_0, j_0}$ is linked to the $k^2$ curves  $\fLkperp_{i, q}$  with linking number $1$, but is linked with none of the curves of its own sheaf. Similarly,  each curve 
  $\fLkperp_{i_0, q_0}$ is linked 
   with the $k^2$ curves  $\fLk_{i, j}$ with linking number $1$, but is linked with none of the  curves of its own sheaf (see Figure \ref{quadristadelink}).   Hence, we obtain for the total linking number:
\begin{lemma}
\label{linkspagh} We have 
$\mathfrak m (\fLk, \fLkperp)=k^4$ for any $k \in \N$. 
\end{lemma}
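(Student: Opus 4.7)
The strategy is to reduce the total linking number to a sum of pairwise linking numbers between individual stadia, and to show that each such pair behaves exactly like the model Hopf link $(\fL_0,\fL_0^\perp)$ treated in Lemma \ref{hopfion2}.

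The first step is to exploit the additivity property \eqref{linksom} repeatedly. Writing $\fL^k=\bigcup_{j,q=1}^{k}\fL^k_{j,q}$ and $\fL^{\perp,k}=\bigcup_{i,q'=1}^{k}\fL^{k,\perp}_{i,q'}$, additivity over connected components gives
\begin{equation*}
\mathfrak m\bigl(\fL^k,\fL^{\perp,k}\bigr)=\sum_{j,q=1}^{k}\sum_{i,q'=1}^{k}\mathfrak m\bigl(\fL^k_{j,q},\fL^{k,\perp}_{i,q'}\bigr),
\end{equation*}
so it suffices to evaluate each of the $k^4$ pairwise linking numbers.

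The second step is to show that every individual pair satisfies $\mathfrak m(\fL^k_{j,q},\fL^{k,\perp}_{i,q'})=1$. For this I would proceed exactly as in the proof of Lemma \ref{hopfion2}, using the fact that $\fL^{k,\perp}_{i,q'}$ is a planar stadium lying in the affine plane $\{x_1=i/k\}$ and therefore bounds an embedded planar disk $\mathfrak D_{i,q'}^{k}$ in that plane; the linking number with any oriented curve transverse to this plane equals the signed count of intersections with $\mathfrak D_{i,q'}^k$. The curve $\fL^k_{j,q}$, lying in the plane $\{x_3=q/k\}$, intersects the plane $\{x_1=i/k\}$ at exactly two points, one lying on the segment $\mathcal D_{j,q}^{k,1}$ at $(i/k,-j/k,q/k)$ and one lying on $\mathcal D_{j,q}^{k,2}$ at $(i/k,10+j/k,q/k)$; a direct inspection of the stadium $\fL^{k,\perp}_{i,q'}$ shows that the first point lies inside $\mathfrak D_{i,q'}^{k}$ (since $-j/k\in[-7,3]$ and $|q/k|<5+q'/k$) while the second point lies outside it (since $10+j/k>3+(5+q'/k)$ for all admissible indices). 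Consequently there is a single transverse intersection, so $\mathfrak m(\fL^k_{j,q},\fL^{k,\perp}_{i,q'})=\pm 1$.

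The third step is to fix the sign. Since both sheaves are oriented via the reference framing $\frameref$ introduced in Subsection \ref{deframe}, and since each pair $(\fL^k_{j,q},\fL^{k,\perp}_{i,q'})$ differs from the model pair $(\fL_0,\fL_0^\perp)$ only by translations (plus a small adjustment of the radii of the half-circles) that leave the reference orientation conventions untouched, continuity of the linking number under ambient isotopies that avoid crossings forces the common sign to be the one already computed in \eqref{lepuc}, namely $+1$. Summing $+1$ over the $k^2\cdot k^2=k^4$ pairs yields $\mathfrak m(\fL^k,\fL^{\perp,k})=k^4$, as required.

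The only delicate point is verifying the sign assertion in the last step: one must check that the orientations induced by $\frameref$ on the perturbed stadia are consistent with those on the model pair, so that the signed crossings do not cancel when summed. This can be done either by tracking the tangent vector $\vec\tau_{\rm tan}$ explicitly through the translations, or equivalently by noting that no crossing with the diagonal $\{\fL^k_{j,q}=\fL^{k,\perp}_{i,q'}\}$ ever occurs during the family of isotopies parametrised by $(j,q,i,q')$, in view of \eqref{rc2}.
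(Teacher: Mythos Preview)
Your proposal is correct and follows essentially the same route as the paper: decompose via additivity \eqref{linksom} into $k^4$ pairwise linking numbers and then identify each one with $\mathfrak m(\fL_0,\fL_0^\perp)=1$ from \eqref{lepuc}. The paper's proof is a one-line version of this, simply asserting the last equality without further justification; your spanning-disk count and isotopy argument supply the geometric detail the paper omits.
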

\begin{proof}  We have 
$$
\mathfrak m (\fLk, \fLkperp)=\mathfrak m \left( \underset {j, q} \cup  \fL^k_{j, q}, \underset {i, q'} \cup  \fL^{k, \perp}_{i, q'}\right)
=\underset{j, q} \sum\underset{j, q'} \sum \mathfrak m \left( \fL_{j, q}^k, \fL_{i, q'}^{k,\perp}\right)
=k^4  \mathfrak m \left( \fL_0,  \fL_0^\perp \right), $$
and the conclusion follows from  the identities \eqref{lepuc} and \eqref{linksom}.
\end{proof}
\begin{figure}[h]
\centering
\includegraphics[height=8cm]{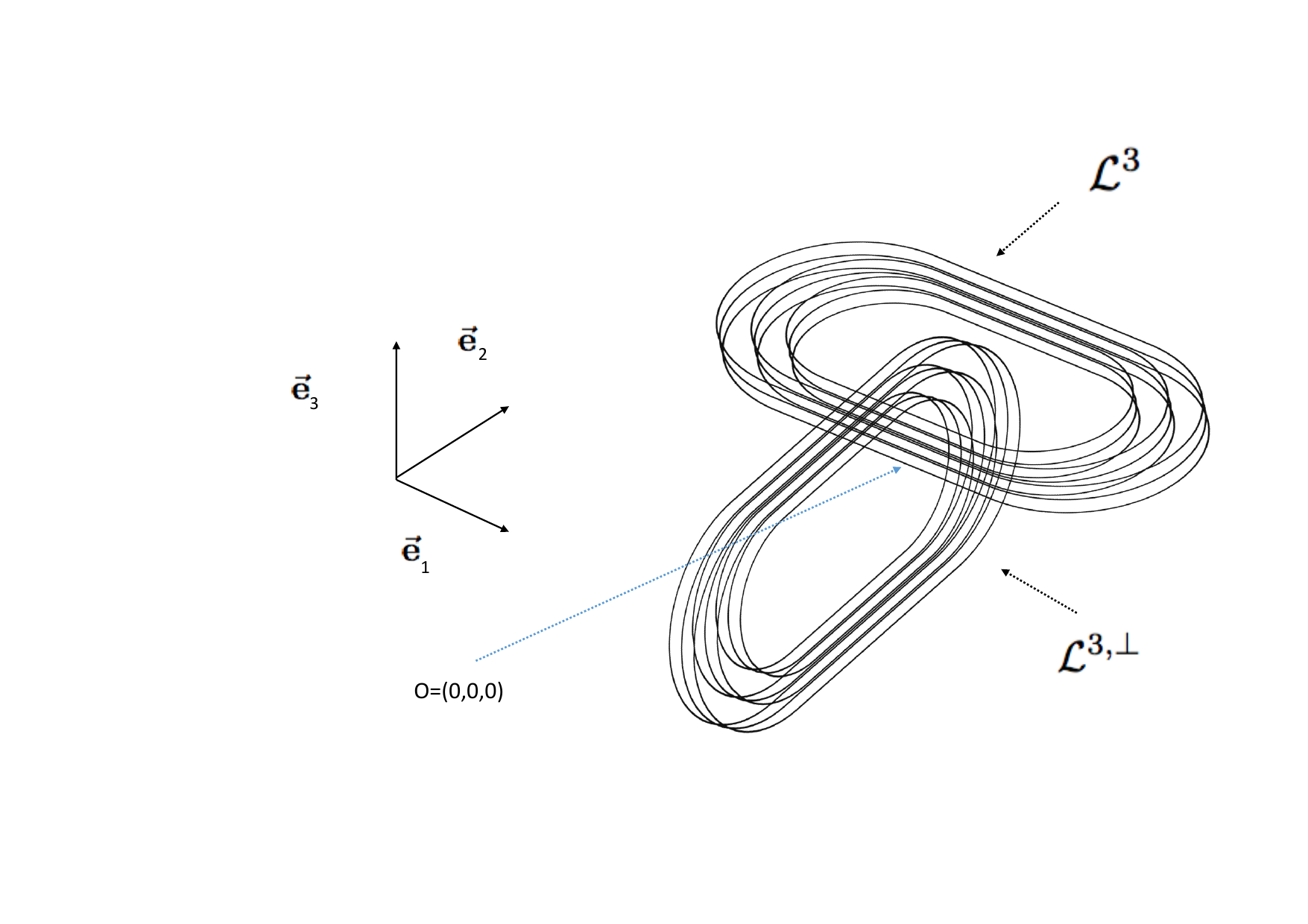}
\caption{  {\it The sets $\fLk$ and $\fLkperp$ are linked.}}
\label{quadristadelink}
\end{figure}

 Finally, as mentioned in the introduction,   we consider the one-dimensional set
  \begin{equation}
  \mathcal S^k=\fL^k \cup \fLkperp.
  \end{equation}
It follows from \eqref{rc} 	and  \eqref{rc2}  that
\begin{equation}
\label{rc3}
\varrho_0 (\mathcal S^k) \geq \frac{1}{3k}  {\rm \ and \ } \mathcal S^k \subset \B(17).
\end{equation}

%%%%%%%%%%%%%%%%%%%%%%%%%%%%%%%%%%%%%%%%%%%%%%%%%%%%%%%%%%%%%%%%%%%%
\subsection {The $k$-spaghetton map $\Spagk$ and its properties}
%%%%%%%%%%%%%%%%%%%%%%%%%%%%%%%%%%%%%%%%%%%%%%%%%%%%%%%%%%%%%%%%%%%%%
Choosing $\varrho_k=10^{-3}\varrho_0 (\mathcal S^k)$,  we define the $k$-spaghetton map $\mathfrak S_k$ as
\begin{equation}
\label{dececco2}
\Spagk=\pont_{\varrho_k}[\mathcal S^k, \frameref].
\end{equation}
Some of its most  relevant properties are  summarized in Proposition \ref{grammage} of the introduction, that we prove next.

\begin{proof}[Proof of Proposition \ref{grammage}]
  The first assertion concerning the support of $\Spagk$  follows from the inclusion given  in \eqref{rc3},  whereas the second, the bound on the gradient, is an immediate consequence of \eqref{blablanabla}.  Since all fibers have  the same shape, which does not depend on $k$, the  constant $\rCspag$ involved in the gradient estimate  does not depend on $k$ either. Turning to  the third assertion, the computation of $\rH(\Spagk)$ follows the same lines  as the proof of Lemma \ref{hopfion2},   considering the pre-images 
  $$L_{\nP}=\mathcal S^k=\fLk \cup  \fLkperp  {\rm \  and  \ }   L_M=(\fL^k+ g^{-1}(0) \varrho_k \be_3) \cup  (\fL^{k,  \perp}+g^{-1}(0) \varrho_k \be_1 )$$
   of the North pole $\nP$ and  the point $M=(1,0,0)$ of the equator respectively. Arguing as in \eqref{frais}, we obtain
\begin{equation*}
\begin{aligned}
\mathfrak m (L_{\nP}, L_M)&=\mathfrak m 
( \fL_k \cup  \fL^{k,  \perp} , \fLk+ g^{-1}(0) \varrho \be_3) \cup  (\fL^{k,  \perp}+g^{-1}(0) \varrho \be_1) \\
&=2 \mathfrak m (\fLk, \fLkperp)=2k^4,
\end{aligned}
\end{equation*}
where the last identity follows from Lemma \ref{linkspagh}. For the
  estimate  on the energy in    the statement of Proposition \ref{grammage}, we observe  that, since  the support of $\vert \nabla \Spagk \vert $  is included in the ball $\B(17)$ independently of $k$, it suffices to integrate  the uniform bound of the gradient, which is of order $k$  to obtain  the result for the 3-energy.  In particular, we may choose the constant as
  $\displaystyle{\rKspag=17^3 \rCspag^3}$.\end{proof}
%%%%%%%%%%%%%%%%%%%%%%%%%%%%%%%%%%%%%%%%%%%%%%%%%%%%%%%%%%%%%%%%%
%%%%%%%%%%%%%%%%%%%%%%%%%%%%%%%%%%%%%%%%%%%%%%%%%%%%%%%%%%%%%%%%%%%%%%%%%%%%%%%%%%%%%%%%%%%%%%%%%%%%%%%%%%%%%%%%%%%%%%%
\section{Toolbox for the construction of   the gordian cut $\Gordk$}
\label{unfolding}
%%%%%%%%%%%%%%%%%%%%%%%%%%%%%%%%%%%%%%%%%%%%%%%%%%%%%%%%%%%%%%%%%%%%
The proof of Proposition \ref{deform} is somewhat technical, its completion will be given in Section \ref{laforme}. The heuristic idea is however rather simple: We  push down along the $x_3$-axis the sheaf $\fLkperp$, keeping its shape essentially unchanged, whereas the sheaf $\fLk$ does not move. This presents no major  difficulty, pushing along a constant vector-field,   \emph{as long as the sheaf $\fLkperp$ does not encounter the sheaf  $\fLk$}. When the two sheafs touch, we take advantage of  the fact that we are working in a Sobolev class were singularities are allowed: Using such singularities, the sheaf $\fLkperp$ is enabled to follow his way down and to pass through  the fibers of $\fL^k$, creating on the way point singularities. 

\smallskip
In  order to provide a  sound  mathematical meaning to the previous ideas, in particular the crossing of fibers,  we    single out  some  elementary constructions   which are used extensively and iteratively in  the proof of Proposition \ref{deform}. In a first subsection, entitled   \emph {Sobolev deformation and surgery toolbox}, we present rather general  tools.  Then, we focus (see Subsection \ref{creahopf}) on a more specific construction accounting  for the creation of topological singularities of $\Gordk$.    
\subsection{Sobolev  deformation   and surgery toolbox}
\label{sobol}
%%%%%%%%%%%%%%%%%%%%%%%%%%%%%%%%%%%%%%%%%%%%%%%%%%%%%%%%%%%%%%%%%%%%%%%%%%%%%%%%%%%%%%%%%%%%%%%%%%%%%%%%%%%%%%%%%%%%%%%%%%%%%%%%%%%%%%%%%%%%%%%%%%%%%%%%%%%%%%%%
\subsubsection {Gluing maps }  
\label{glue}
 This is the most elementary operation. Assume  that we are given two subdomains $\Omega_1$ and  $\Omega_2$  of a domain $\Omega_0$ of  $\R^3$ such that $\bar \Omega_1 \cap \bar \Omega_2=\emptyset$ and let $v_1$ and $v_2$ be two functions with values into $\S^2$  such that $v_1(x)=\sP$ for  $x\in \Omega_0\setminus \Omega_1$ and $v_2 (x)=\sP$ for  $ x \in \Omega_0\setminus \Omega_2$. Then we   define the function $v_1 \wedgetrois v_2$ on $\Omega_0$ by
 \begin{equation}
 \label{riritou}
 \left\{
 \begin{aligned}
& v_1 \wedgetrois v_2 (x) =v_1(x), {\rm \ for \ } x \in \Omega_1, \  
 v_1 \wedgetrois v_2 (x) =v_2(x), {\rm \ for \ } x \in \Omega_2,\\
&v_1 \wedgetrois v_2 (x) =\sP, {\rm \ for \ } x \in \Omega_0\setminus (\Omega_1\cup \Omega_2). 
 \end{aligned}
 \right.
 \end{equation}
 An alternative and even simpler  definition is
 $$v_1 \wedgetrois v_2 (x)=v_1(x)+v_2(x)-\sP, \  {\rm \ for \ any  \ } x \in \R^3.$$
  In the case both $v_1$ and $v_2$ have bounded $\rE_3$ energy, then the same holds for $v_1\wedgetrois v_2$ with
 \begin{equation}
 \label{somenergy}
 \rE_3(v_1 \wedgetrois v_2) =\rE_3 (v_1)+\rE_3(v_2).
\end{equation}
A related situation is encountered in the case $\Omega_2=\Omega_0\setminus \Omega_1$, when both $v_1$ and $v_2$ have bounded $\rE_3$ energy. If the domains are sufficiently smooth, then one may define  thanks to the trace Theorem the restrictions ${v_i}_{\vert_{\partial \Omega_i}}$  for $i=1, 2$. If moreover 
$$
v_1(x)=v_2 (x),  {\rm \ for  \ } x \in  \partial \Omega_1 \subset \partial \Omega_2=\partial \Omega_1 \cup \partial \Omega_0,
 $$
then we may define again $v_1 \wedgetrois v_2$ according to \eqref{riritou} and relation \eqref{somenergy} still holds.
Given two  disjoint oriented compact  framed curves in $\R^3$, we have,  provided   $\varrho>0$  is sufficiently small
$$ 
\pont_\varrho [(\mC_1, \mathfrak e^\perp) \cup  ( \mC_2, \mathfrak e^\perp)] =\pont_\varrho [\mC_1, \mathfrak e^\perp] \wedgetrois \pont_\varrho[\mC_2, \mathfrak e^\perp].
$$
  so that in particular
$$\Spagk=\left(\underset{j, q=1} {\overset {k} \wedgetrois}\pont_{\varrho_k} [\fLk_{j, q}, \frameref] \right) \,  \wedgetrois \,\left( \underset{i, q=1} {\overset {k} \wedgetrois}\pont_{\varrho_k} [\fLkperp_{i, q}, \frameref ]\right)
%\equiv \Spagk^\star  \wedgetrois \Spagk^\perp.
$$
Finally, we refer to similar gluing  in $\R^4$,  replacing the symbol $\wedgetrois$ by  the symbol $\wedgequatre$. 
%%%%%%%%%%%%%%%%%%%
\subsubsection{Deformations of the domain}
\label{domain}
%%%%%%%%%%%%%%%%%%%%%%%%%%%%%%%%%%%%%%%%%%%%%%%%%%%%%%%%%%%%%%%%%%%%%
 {\bf Definitions}. We consider  deformations of \emph{maps}   generated by deformations of the \emph{domain  $\R^3$} induced by the integration of a vector field.
 % the  $x_4$  variable playing   the role of a \emph{time variable}.
  Given a smooth vector field $\vec X$ on $\R^3$,  we consider the flow $\Phi$ generated by the vector field   $\vec X$ defined by   
 \begin{equation}
 \label{flow}
 \frac {d} {dt} \Phi (\cdot, t)= \vec X [\Phi(\cdot, t) ], {\rm \, \ with \ } \ \Phi (\cdot,0)= {\rm Id}_{\R^3},
 \end{equation}
 so that, for each  fixed time $t \geq 0$,  the map $\Phi(\cdot, t):\R^3 \to \R^3$ is  a diffeomorphism of  $\R^3$. We denote by $\Phi^{-1} (\cdot, t)$ its inverse at time $t$,  so that   $\Phi^{-1} (\cdot, t)=\Phi (\cdot, -t)$. 
 %Any subset of $\R^3$, and in particular  curves are transported by such deformation of $\R^3$ . 
 % and  hence the flow yield also a deformation of the corresponding function generated by the Pontragyin construction.
  The deformation of the domain  gives rise  to   corresponding deformations of general  functions: To each function $v$ defined  on $\R^3$ and given $t \geq 0$, we associate a function $v_t (\cdot)$  defined by 
  $$v_t(x)=v\left(\Phi^{-1} \left(x, t\right)\right), {\rm  \ for \ } x \in \R^3. $$
   The curve $t\mapsto  v_t$ is now a continuous deformation of the initial function $v$, since $v_0=v$. 
 We  will also   consider  the transportation of subsets of $\R^3$ by the flow $\Phi$. We  set accordingly for a subset $A \subset \R^3$ and $t \geq 0$
\begin{equation}
\label{pushset}
 \Phi(A, t)=\{ x \in \R^3, \Phi^{-1}(x, t) \in A \}.  
\end{equation}
We will apply this construction to  the curves composing the sheaves $\fLk$ and $\fLkperp$.
  Notice that, if $\mathcal C$ is a framed closed  curve of $\R^3$, then in general 
  \begin{equation}
  \label{pushcte}
   \left(\pont_\varrho\left[\mathcal C, \frameref\right] \right)_t(x)\not = \pont_\varrho \left[\Phi (\mathcal C, t), \frameref\right] (x), {\rm \ for \ } x \in \R^3 {\rm \ and \ } t \geq 0, 
 \end{equation}
where the frame has been transported accordingly.  However equality holds in case $\vec X$  is a constant function, since in that case $\Phi (\mathcal C, t)$ is a translate of $\mathcal C$.  This observation leads us to introduce a variant of the Pontryagin construction for non-constant vector fields.

   \medskip
   \noindent
   {\bf  Vertical  vector  fields}.
  We implement the previous construction with a very specific choice of vector fields $\vec X$. Since our aim is to push the sheaf
  $\fLkperp$  down according to  $x_3$-direction, 
  % with an intensity depending only on the $x_1$ and $x_2$ variables
  we   restrict ourselves to vector fields $\vec X$ of the form
  \begin{equation}
  \label{filido}
  \vec X (x_1, x_2, x_3)=-\zeta (x_1, x_2, x_3)\, \be_3,
  \end{equation}
 where $\zeta: \R^3 \to \R$ is a given non-negative  function on $\R^3$.
 %  The integral curves are therefore vertical lines and the flow  satisfied the integral relation, for $t>0$
 %\begin{equation}
 %\label{integrato}
% \Phi (x_1,x_2, x_3,  t)=(x_1, x_2, x_3)-\left(\int_0^t\zeta (x_1, x_2, x_3)+\Phi_3(x_1, x_2, x_3,  s))ds \right) \be_3,  
 % \end{equation}
% with a  similar identity for $\Phi^{-1} (\cdot, t)$, changing $t$ into $-t$.
The related flow $\Phi$ can then be integrated as 
\begin{equation}
\label{porcului}
%\left\{
%\begin{aligned}
\Phi(x_1, x_2, x_3, t)=(x_1,x_2, \Psi(x_1,x_2,x_3, t)),
\end{equation}
where the scalar function $\Psi$ solves the ODE with respect to $t$
 %{\rm \ solves \ the \ ODE \ with \  respect \  to \ } t\\
\begin{equation}
\label{porculuibus}
\frac{d}{dt} \Psi(x_1, x_2, x_3, t)=-\zeta(x_1,x_2,\Psi(x_1,x_2, x_3,t)).
\end{equation}  
 It follows directly  from \eqref{flow}  that
\begin{equation}
\label{differentio0}
\left \vert \frac{\partial \Phi} {\partial t} \right \vert \leq \Vert \vec X \Vert_{L^\infty (\R^3)}= \Vert \zeta \Vert_{L^\infty (\R^3)}.
 \end{equation}
 Differentiating \eqref{flow} with respect  to the variables  $x_i$, for $i=1,2,3$ we are led to 
 \begin{equation}
 \label{differentio}
 \left \vert \frac{\partial } {\partial t} [\vert   \nablatrois \Phi \vert^2 ]   \right  \vert \leq  C\Vert \nablatrois \zeta_3 \Vert_{L^\infty (\R^3)} 
 \left( \vert   \nablatrois \Phi \vert^2+1\right),  
 \end{equation}
 where $C>0$ denotes some universal constant, and where $\nablatrois$ represents the gradient with respect to the first three spatial coordinates, that is 
 \begin{equation}
 \nablatrois \Phi =(\frac{\partial \Phi}{\partial x_1}, \frac{\partial \Phi}{\partial x_2}, \frac{\partial \Phi}{\partial x_3}), {\rm \ whereas \ }
  \nabla_{_4}  \Phi =(\frac{\partial \Phi}{\partial x_1}, \frac{\partial \Phi}{\partial x_2}, \frac{\partial \Phi}{\partial x_3}, \frac{\partial \Phi}{\partial t}). 
 \end{equation}
 Integrating the differential inequality \eqref{differentio},   we obtain  the exponential bound 
   \begin{equation}
 \label{differentio2}
  \vert   \nablatrois \Phi \vert^2 (\cdot, t)    \leq  C \exp  \left (C\Vert \nablatrois \zeta \Vert_{L^\infty (\R^3)} t \right),
  {\rm \ for \ }  t\geq 0.
  \end{equation}
    In the case $\zeta$ does not depend on $x_3$,    the integration of the vector field  $\vec X$ given by \eqref{filido} is straightforward and yields
 \begin{equation}
 \label{simplet}
  \Phi (x_1,x_2, x_3,  t)=(x_1, x_2, x_3-\zeta(x_1, x_2) t). 
\end{equation}   

\bigskip
We  introduce a deformation  operator $\mathcal P_\zeta$ which relates to 
 an  arbitrary  map $v: \R^3 \to \R^\ell$ and $t\geq 0$ the map $\mathcal P_\zeta (v)(t)$ defined on $\R^3$ by the formula, for   
 $(x_1, x_2, x_3) \in \R^3$ and $t \in \R$
  \begin{equation}
 \label{defazeta}
% \left \{
 %\begin{aligned}
 \mathcal P_\zeta (v)(t) (x_1, x_2, x_3)= v(\Phi^{-1} (x_1, x_2,x_3,  t))= v(\Phi (x_1, x_2,x_3, - t)).
%   &=v \left(x_1, x_2,  \left(x_3+\int_0^{t}\zeta (x_1, x_2,  x_3,  \Phi_3(x_1, x_2, x_3, s)ds\right)\right).
 %\end{aligned}
 %\right.
 \end{equation}
 % $$ \mathcal P_\zeta (v)(t) (x_1, x_2, x_3)= 
 % v \left(x_1, x_2,  \left(x_3+\int_0^{t}\zeta (x_1, x_2,  x_3,  \Phi_3(x_1, x_2, x_3, s)ds\right)\right).
%$$
In some places, we  use the simpler notation 
 $v_t(\cdot)=\mathcal P_\zeta (v)(t)(\cdot),$
 when this is not ambiguous. 
  In the  special case  the function $\zeta$ does not depend on $x_3$,  we   have, in view of \eqref{simplet},  
  $$v_t=v(x+\zeta (x_1, x_2) t \, \be_3).$$ 
 As a  direct consequence of the chain rule and estimates \eqref{differentio0} and   \eqref{differentio2}, we obtain: 
 
\begin{lemma} 
\label{difference}
 Assume that $v$ and $\zeta$ are differentiable.   Then we have for $x \in \R^3$ and $t \geq 0$
  \begin{equation*}
  \label{zrc}
  \left\{
  \begin{aligned}
 \vert \frac{\partial}{\partial t}  \mathcal P_\zeta (v)(t) (x)\vert 
&\leq
  C\Vert \nablatrois  v   \Vert_{L^\infty (\R^3)}   \Vert \zeta \Vert_{L^\infty (\R^3)}  \\
 \vert \nablatrois \mathcal P_\zeta (v)(t)(x) \vert 
 & \leq C\Vert \nablatrois  v   \Vert_{L^\infty (\R^3)} \exp  \left (C\Vert \nablatrois \zeta \Vert_{L^\infty (\R^3)} t \right),
 \end{aligned}
 \right.
  \end{equation*}
  where $C>0$ is some  universal constant.
 \end{lemma}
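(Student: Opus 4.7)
The plan is to reduce both estimates to the chain rule applied to the formula $\mathcal{P}_\zeta(v)(t)(x) = v(\Phi(x,-t))$, invoking the identities \eqref{flow} and \eqref{differentio2} already established for the flow $\Phi$.

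For the first estimate, I would differentiate $t \mapsto v(\Phi(x,-t))$ using the chain rule, which gives
\[
\frac{\partial}{\partial t}\mathcal{P}_\zeta(v)(t)(x) = -\,\nabla v\bigl(\Phi(x,-t)\bigr)\cdot\frac{\partial \Phi}{\partial s}(x,-t).
\]
By the defining equation \eqref{flow} of the flow, $\partial_s \Phi(\cdot,s) = \vec X(\Phi(\cdot,s)) = -\zeta(\Phi(\cdot,s))\vec e_3$, so
\[
\frac{\partial}{\partial t}\mathcal{P}_\zeta(v)(t)(x) = \zeta\bigl(\Phi(x,-t)\bigr)\,\partial_{x_3}v\bigl(\Phi(x,-t)\bigr),
\]
from which the $L^\infty$-$L^\infty$ bound $\|\nabla v\|_{L^\infty}\|\zeta\|_{L^\infty}$ is immediate.

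For the second estimate, I would apply the spatial chain rule to obtain
\[
\nabla_{\R^3}\mathcal{P}_\zeta(v)(t)(x) = \bigl(\nabla v\bigr)\bigl(\Phi(x,-t)\bigr)\cdot\nabla_x\Phi(x,-t),
\]
so that $|\nabla_{\R^3}\mathcal{P}_\zeta(v)(t)(x)| \le \|\nabla v\|_{L^\infty}\,|\nabla_x \Phi(x,-t)|$. It then remains to control $|\nabla_x\Phi(x,-t)|$, and this is exactly the content of \eqref{differentio2}: differentiating \eqref{flow} with respect to the spatial variables gives a linear ODE for $\nabla_x\Phi$ whose coefficient is bounded by $\|\nabla\zeta\|_{L^\infty}$, and Grönwall's inequality yields $|\nabla_x\Phi(\cdot,s)|^2 \le C\exp(C\|\nabla\zeta\|_{L^\infty}|s|)$ for all $s\in\R$. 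Applied at $s=-t$ this gives the claimed exponential bound.

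There is no genuine obstacle here since all the work has been done in the passage from \eqref{flow} to \eqref{differentio} and \eqref{differentio2}; the only mild subtlety is to notice that \eqref{differentio2} must be applied to the backward flow $\Phi(\cdot,-t)$, but since $-\vec X$ has the same Lipschitz constant as $\vec X$ the same Grönwall argument works verbatim for $s\le 0$ (with $|s|$ in the exponent), so the stated estimate for $t\ge 0$ follows immediately. The constant $C$ appearing in the statement can then be taken equal to the universal constant coming from \eqref{differentio2}.
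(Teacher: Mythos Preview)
Your proposal is correct and follows exactly the approach the paper indicates: the paper states the lemma as ``a direct consequence of the chain rule and \eqref{differentio2}'', and you have simply written out those chain-rule computations in detail. Your observation that \eqref{differentio2} must be applied to the backward flow $\Phi(\cdot,-t)$, and that the Gr\"onwall argument behind \eqref{differentio} and \eqref{differentio2} is symmetric in time, is the one point worth making explicit and you have handled it correctly.
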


  %In several places, we will use the notation 
%for a  subset $A \subset \R^3$ and $t \geq 0$
%\begin{equation}
%\label{pushset}
%\mathcal P_\zeta (A)( t)= \Phi(A, t)=\{ x \in \R^3, \Phi^{-1}(x, t) \in A \}.  
%\end{equation}

 We will next be even more specific and describe  the two different kinds of vertical vector fields which are used in the construction of  the Gordian cut.
  
  \smallskip
  \noindent
  {\bf  Constant vertical vector-fields}. We consider the constant vector field  $\vec X_0=-\be_3$ related by \eqref{filido} to 
\begin{equation}
\label{zeta0}
\zeta_{_0}(x_1, x_2, x_3)=1,\ \,  \forall (x_1, x_2, x_3) \in \R^3.
  \end{equation}
  Since $\vert \nabla \zeta_{_0} \vert=0$, we obtain, if $\Phi_0$ is the flow related to $\vec X_0$, 
  $\displaystyle{\Phi_0(x_1, x_2, x_3,t)=(x_1, x_2, x_3-t)}$ so that
  $$
 \vert  \partial_t \Phi_0  \vert \leq 1{\rm \ and \ }   \vert \nablatrois \Phi_0(\cdot, t) \vert \leq {\rm C}_{0}, 
  $$
   where ${\rm C}_{0}$ is some constant. In this case, the map $\mathcal P_{\zeta_0} (v)(t)$ has a simple form, since 
   $$\mathcal P_{\zeta_0} (v)(t)(x)= v(x_1, x_2, x_3+t), \forall x=(x_1, x_2, x_3)\in \R^3.$$
   It follows from Lemma \ref{difference} or computing directly using the chain rule  that, for any $t\geq 0$,
   \begin{equation}
   \label{differo2}
  \Vert \frac{\partial}{\partial t}\mathcal P_{\zeta_0} (v)(t)  \Vert_{L^\infty(\R^3)}+   
  \Vert \nablatrois  \mathcal P_{\zeta_0} (v)(t) \Vert_{L^\infty(\R^3)} 
  \leq {\rm K}_{0}\Vert \nablatrois  v   \Vert_{L^\infty (\R^3)}. 
\end{equation}
where ${\rm K}_{0}$ is some  absolute constant.  We will in some places also consider the vector field $\frac 12   \vec X_0$.  Obviously, all estimates remain valid for this  vector field. 

\begin{remark}
\label{constantspag}
{\rm   In the course of the proof of Proposition \ref{deform}, we will be led to transport the Pontryagin maps of the fibers $\fLkperp_{i, q}$. We have, 
for $t\geq 0$ and $x \in \R^3$,
$$ \mathcal P_{\zeta_0}\left(\pont_\varrho \left[\fLkperp_{i, q}, \frameref\right] \right)(t)(x)=\pont_\varrho [\fLkperp_{i, q}-t \be_3, \frameref](x)  {\rm \ and \ hence \ }
$$
\begin{equation}
\label{constantgord}
\Vert \frac{\partial}{\partial s} \pont_\varrho \left[\fLkperp_{i, q}-s \be_3, \frameref \right]  \Vert_{_{L^\infty(\R^3)}}+   
  \Vert \nablatrois \pont_\varrho \left[\fLkperp_{i, q}-s \be_3, \frameref \right] \Vert_{_{L^\infty(\R^3)}} 
  \leq {\rm C}_{\rm flow}^0 k, 
\end{equation}
where ${\rm C}_{\rm flow}^0$ is some universal constant. In particular, considering the map  on $\R^4$ defined by 
 $(x, s)\mapsto \pont_\varrho \left[\fLkperp_{i, q}-s\be_3\right](x)$, we obtain the gradient estimate 
\begin{equation}
\label{constantgord2}
\vert \nabla_{_4} \left(\pont_\varrho\left[\fLkperp_{i, q}-s\be_3\right] (x)\right)\vert \leq  {\rm C}_{\rm flow}^0 k , \,  {\rm \ for \ } (x, s)   \in \R^4.
\end{equation}

}
\end{remark}

\medskip
\noindent
{\bf The  "rounding"  vector-field $\vec X_1^k$}. We will use this vector-field  when we want to push down the fibers of $\fLkperp$ and avoid at the same time that they cross  the fibers of $\fLk$ which are on their way if the vector field were kept constant. To model the points to be avoided,  we consider, for  $k \in \N^*$,   the numbers 
$x_2^\ell$ defined by 
$$
x_2^\ell=\ell h {\rm \ \ for \ } \ell=1, \ldots, k, {\rm \ so \ that \ } 0<x_2^1=h <\ldots <x_2^k=1,  {\rm \ with \ }  h=k^{-1}.
$$
 The collection of  segments $[-6, 6]\times \{x_2^\ell\}$, $\ell=1, \ldots, k$ corresponds indeed   to projections onto the plane $P_{1,2}$ of  the straight segments  of the sheave $\fLk$  which lie below $\fLkperp$ (see Figures \ref{above0} and \ref{side0}). 
 We construct a vector field $\vec X_1^k$ associated by \eqref{filido} to a push function $\zeta_1^k$  depending only on the last two variables, that is 
 % \begin{equation}
  %\label{deuxvar}
  $\zeta_1^k(x_1, x_2, x_3)=\zeta_1^k(x_2, x_3)$, $\forall (x_1, x_2, x_3)\in \R^3$.
 % \end{equation}
     We  impose the following conditions on $\zeta_1^k$:
   \begin{equation}
   \label{raffinerie}
\left\{
\begin{aligned}
&\zeta_1^k\left (x_2,  x_3\right)=\frac 1 4  {\rm \ for  \  } x_2 \in \underset{\ell=1} {\overset {k} \cup}[x_2^\ell-\frac{h}{8}, x_2^\ell+\frac{h}{8}]    {\rm \ and  \ for \  } x_3 \geq 0,   \\
&  \zeta_1^k  ( x_2, x_3)=1
 {\rm \ for \ }  \,   x_2  \not \in \underset {\ell=1} {\overset {k}\cup} [x_2^\ell-\frac{h}{4}, x_2^\ell+\frac{h}{4}]  {\rm \ and \ } x_3\geq 0,\\
 &  \zeta_1^k  ( x_2, x_3)=1
 {\rm \ for \ }  \,  x_3 \leq -\frac{h}{2}, \\
 &  \frac 14 \leq  \zeta_1^k  ( x_2, x_3) \leq 1 {\rm \ for \  }  x_3\geq 0  {\rm \ and \   } \Vert \nablatrois   {\zeta_1^k} \Vert_\infty \leq   16h^{-1} . 
\end{aligned}
\right.
\end{equation}
It follows in particular  from  conditions  \eqref{raffinerie}  that 
\begin{equation}
\label{oh}
\zeta_1^k(x)=1, \  \  {\rm except \ possibly \ if \ } x  \in   \mathcal O_h \equiv \R\times [0, 1+\frac h 4]\times [-\frac h 2, +\infty).
\end{equation}
To construct the function $\zeta_1^k$, we  proceed as follows: We   choose $\zeta_1^k$ of the form
\begin{equation}
\label{defzeta1}
\zeta_1^k (x_2, x_3)=1 -\mathfrak f^k(x_2)g_3(x_3) {\rm \  with \ }
\mathfrak f^k(x_2)\equiv\underset{\ell=1} {\overset {k}\sum} g_2 \left(k\left(x_2-x_2^\ell\right)\right), 
\end{equation}
where  $g_2: \R \to \R$ denotes a given smooth non-negative function on $\R$  such that 
%$0 \leq g_1 \leq 1$ 
\begin{equation}
\label{defh2}
%\left\{
%\begin{aligned}
g_2(s)=0 {\rm \ for  \ } s \in \R\setminus  [-\frac 14, \frac 14], \, 
g_2(s) =\frac 3 4 {\rm \ for  \ } s \in [-\frac 1 8, \frac 1 8], \,   0\leq  g_2(s) \leq \frac 3 4\  {\rm \ otherwise, }  
%\\ 
%\end{aligned}
%\right.
\end{equation}
 and   where the function $g_{3}: \R \to \R$ denotes a  smooth non-negative function   such that  $0\leq g_{3} \leq 1$ and 
 \begin{equation}
 \label{adebattre}
 %\left\{
 %\begin{aligned}
 g_{3}(s)=1 {\rm  \ for \ }  s\geq 0 {\rm \ and \ }  g_{3}(s)=0 {\rm  \ for \ } s \leq -\frac 12. 
  %\end{aligned}
 %\right.
 \end{equation}
We may assume additionally that $\Vert g_2' \Vert_\infty \leq 10 h^{-1}$ and $\Vert g_3' \Vert_\infty \leq 4$. In view of \eqref{defh2}, we have
\begin{equation*}
\left\{
\begin{aligned}
 \mathfrak f^k(s)&=\underset{\ell=1} {\overset {k}\sum} g_2 \left(k\left(s-x_2^\ell\right)\right)=0,  {\rm  \ for \  } s \in 
\R\setminus      \underset {\ell=1} {\overset {k}\cup} [x_2^\ell-\frac h4, x_2^\ell+\frac h4], \\
 \mathfrak f^k(s)&=\frac 34, {\rm \ for \ } s \in \underset {\ell=1} {\overset {k}\cup} [x_2^\ell-\frac h8, x_2^\ell+\frac h8], 
\end{aligned}
\right.
\end{equation*}
 so that the conclusion \eqref{raffinerie} follows. Definition \eqref{defzeta1} yields the estimate 
\begin{equation*}
\label{lepou}
\Vert \zeta_1^k \Vert _\infty \leq C {\rm \ and \ } \Vert \nabla \zeta_1^k \Vert_\infty \leq Ck,  
\end{equation*}
 for some universal constant $C>0$. We have therefore
   $$  \exp  \left (  s \,\Vert \nabla \zeta_1^k \Vert_{L^\infty (\R^3)}  \right) \leq K,  {\rm \ for  \  }  s\in [0, h], $$ 
where  $K>0$ is some universal constant. It  follows  from  Lemma \ref{difference} that 
 \begin{equation}
  \label{zrcz}
  \vert \frac{\partial}{\partial t}  \mathcal P_{\zeta_1^k} (v)(s) (x)\vert +  \vert \nablatrois \mathcal P_{\zeta_1^k} (v)(s) (x)  \vert 
     \leq {\rm K_1} \Vert \nabla  v  \Vert_{L^\infty (\R^3)}, {\rm \ for \ } x \in \R^3 {\rm \ and \ } s \in [0, h], 
      \end{equation}
where ${\rm K}_1\geq {\rm K_0}>0$ is some universal constant. In view of  the simple form \eqref{filido}-\eqref{defzeta1} of the vector field $\vec X_1^k$, its integration reduced to the integration of the scalar differential equation \eqref{porculuibus},  which can be solved by  \emph{separation of variables}.  Going back to \eqref{porcului} and writing  $\Psi(x_1,x_2,x_3,s)=\Psi_1^k(x_2,x_3, s)$ for our specific choice \eqref{raffinerie} of vector-field, 
%More precisely, we have
%\begin{equation}
%\label{integrozeta}
%\Phi_1^k(x_1, x_2, x_3, s)=(x_1, x_2, \Psi(x_2, x_3, s)), \forall (x_1, x_2, x_3) \in \R^3, s\geq 0, 
%\end{equation}
we verify that the function $\Psi_1^k$ is given as the solution of the integral equation 
$$\int_{\Psi_1^k(x_2, x_3,s)}^{x_3} \frac{{\rm d}u}{1-\mathfrak f^k_1(x_2)g_3(u)}=s.$$
It follows from this formula that $\Psi_k(x_2, x_3) \leq x_3$, and that 
\begin{equation}
\label{integrozeta2}
\Psi_1^k (x_2, x_3, s)=x_3-\left[s- s\, \mathfrak f^k_1(x_2)\right],  {\rm \ provided \ }
  0\leq s \leq x_3. 
\end{equation}

\medskip
\noindent
{\it Transportation of curves by  the flow of $\vec X_1^k$}. We   next  look  at the fate of a  curve when transported by the flow $\Phi_1^k$ of the vector field $\vec X_1^k$. Of special interest is the  fate of the fibers of the sheaf 
$\fLkperp$.  In view \eqref{oh}, all part of the  fibers which are not in $\mathcal O_h$ (defined in  \eqref{oh}) are transported downwards along  $\be_3$ with constant speed $1$.  Since the restrictions of the fibers of $\fLkperp$ to $\mathcal O_h$ are segments parallel to $\be_2$, we  first consider  a line of the form  $D=M+\R\be_2$, where $M=(m_1, 0, m_3)$ is given, with $m_3\geq 0$. Thanks to \eqref{porcului} and \eqref{integrozeta2}, we obtain
\begin{equation}
\label{phika}
\Phi_1^k (D, s)= \{(m_1, x_2, m_3-\left[s- s\, \mathfrak f^k_1(x_2)\right]) {\rm \ for \ } x_2\in \R\}, {\rm \ for \ } 0\leq s\leq m_3.
\end{equation}
Assuming  $m_1=0$ and    restricting ourselves to the plane $P_{2,3}$, the curve $\Phi_1^k (D, s)$ corresponds to the graph of the function $x_2 \mapsto m_3-\left[s- s\, \mathfrak f^k_1(x_2)\right]$ (see figure \ref{oscillo0}).   In the course of the proof of Proposition \ref{deform}, we will use formula \eqref{phika} for the special choice $\displaystyle{m_3\geq s=h}$, so that 
$$
\Phi_1^k (D, h)= \{(m_1, x_2, m_3-\left[h-  h \, \mathfrak f^k_1(x_2)\right] ) {\rm \ for \ } x_2\in \R\}, 
$$
and hence 
\begin{equation}
\left\{
\begin{aligned}
\Phi_1^k (m_1, x_2, m_3,  h )&=(m_1, x_2, m_3-\frac{h}{4}) {\rm \ for \  } x_2\in \underset{\ell=1} {\overset{k} \cup }
 [x_2^\ell-\frac h 8, x_2^\ell+\frac h 8],  \\
\Phi_1^k (m_1, x_2, m_3,  h )&=(m_1, x_2, m_3-h)  {\rm \ for \  } x_2\in  \R \setminus  
\underset{\ell=1} {\overset{k} \cup }[x_2^\ell-\frac h 4, x_2^\ell+\frac h 4].
\end{aligned}
\right.
\end{equation}
\begin{figure}[h]
\centering
\includegraphics[height=9cm]{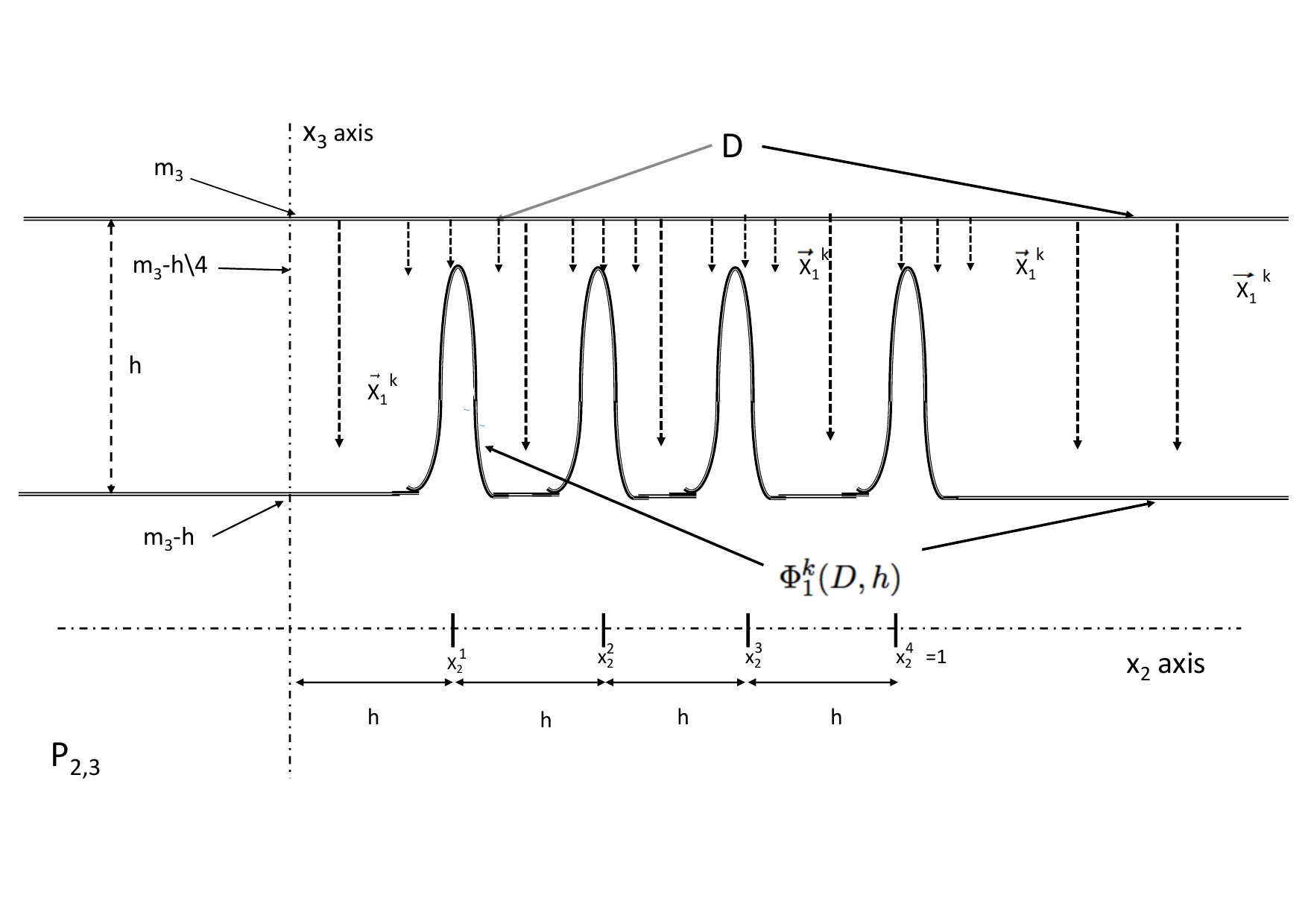}
\caption{  {\it The deformation of a line parallel to the $x_2$ axis by the flow generated by $X_1^k$, $k=4$ at time $h$.
 }}
\label{oscillo0}
\end{figure}

\smallskip
\noindent
{\it Transportation of translates of stadia $\fLkperp_{i, q}$  by  the flow of $\vec X_1^k$}.   As mentioned, the vector field $\vec X_1^k$ will be used to transport vertical translates of the stadia $\fLkperp_{i, q}$, so that, for arbitrary $i, q=1, \ldots, k$, and ${\rm c}>0$, we consider the  curve $\fLkperp_{i, q}-{\rm c} \, \be_3$  and  its deformation  $\DefLkperp_{i, q} (c, s)$ by the flow $\Phi_1^k$, given by
\begin{equation}
\label{deformlkperp}
\DefLkperp_{i, q} (c, s) \equiv \Phi_1^k (\fLkperp_{i, q}-{\rm c} \, \be_3, s), {\rm \ for \ } s\in [0, h].
\end{equation}
 \begin{figure}[h]
\centering
\includegraphics[height=8.5cm]{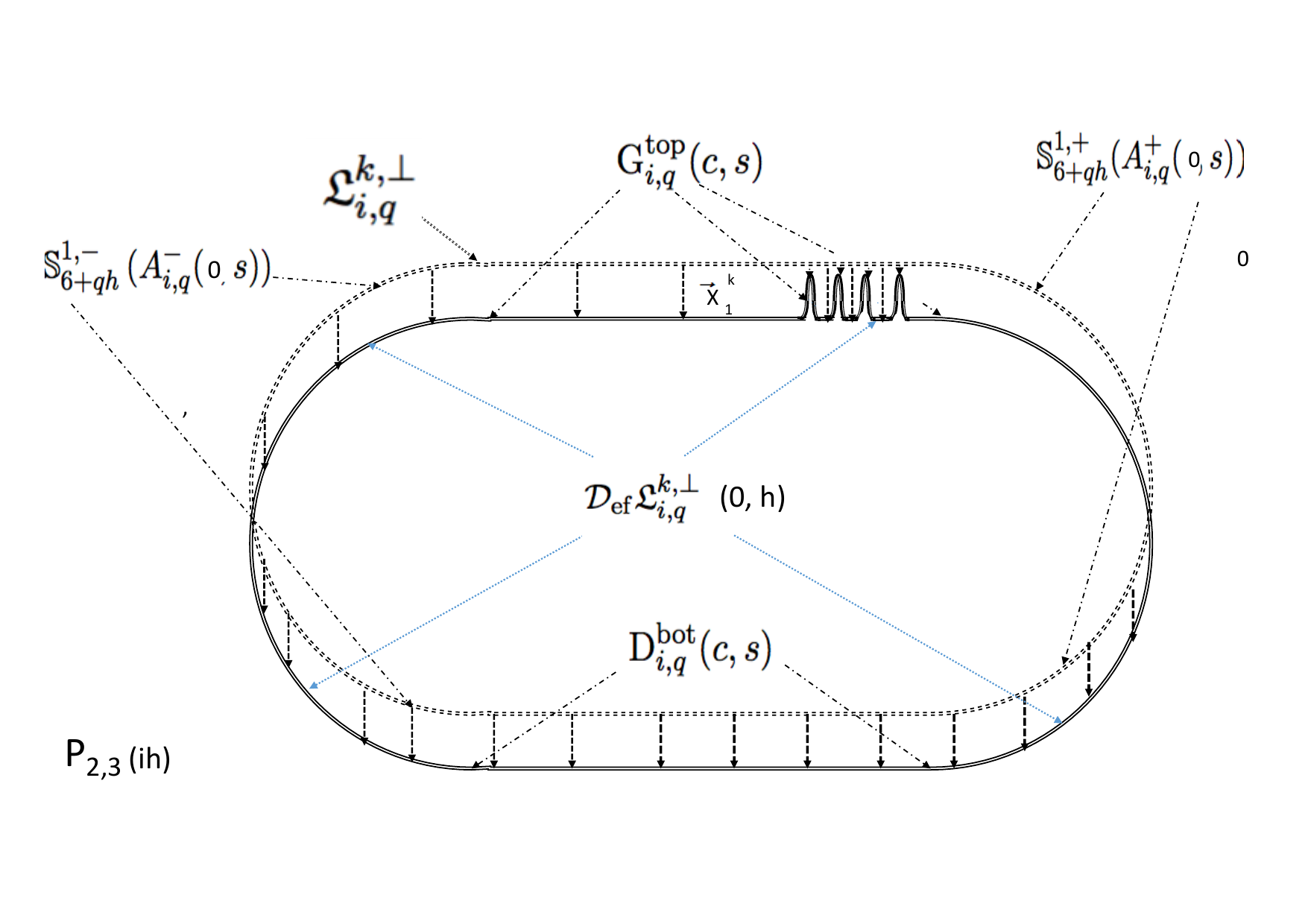}
\caption{  {\it The deformation of the   curve $\fLkperp_{i, q}$  by the flow $\Phi_1^k$, $k=4$ at time $h$.}}
\label{deflperp}
\end{figure}
 We are only be interested in the case $0\leq  c \leq 6+ qh$.  The shape of the curves $\fLkperp_{i, q}$ and its deformation  $\DefLkperp_{i, q} (0, h)$  is represented in figure \ref{deflperp} (i.e. for $c=0$).   An analytical description is provided  by  the decomposition in the plane $P_{2,3}(ih)$
 \begin{equation}
 \label{detritus}
\DefLkperp_{i, q} (c, s)= \S_{6+qh}^{1, -} \, (A_{i, q}^-(c, s))\cup  \S_{6+qh}^{1, +}(A_{i, q}^+(c, s))
 \cup {\rm G}_{i, q}^{\rm top}(c, s)\cup {\rm D}_{i, q}^{\rm bot}(c, s), 
 \end{equation}
  where $A_{i, q}^-(c, s)=(ih,-3+qh,-c-sh)$, $A_{i, q}^-(c,s)=(ih, 9+qh,-c-sh)$,  where ${\rm D}_{i, q}^{\rm bot}$ denotes the segment at the bottom of  $\DefLkperp_{i, q} (c, s)$ parallel to $\be_2$, that is  
  \begin{equation*}
  \left\{
  \begin{aligned}
  {\rm D}_{i, q}^{\rm bot}(c,s)&=ih\be_1+[(-3+qh)\be_2, (9+qh)\be_2]-(6-qh+c+s)\be_3\\
  &=[B_{i, q}^-(c, s), B_{i, q}^+(c, s)],
  {\rm \ with \ } \\
  B_{i, q}^-(c, s)&= (ih, -3+qh,-6+qh-c-s)) 
  {\rm \ and \ }
  B_{i, q}^+(c, s)=(ih, 9+qh, -6+qh-c-s)), 
  \end{aligned}
  \right.
  \end{equation*}
  and where the set  ${\rm G}_{i, q}^{\rm top}(c, s)$ has the form of a graph  in the plane $P_{2, 3}(h)$, namely
  \begin{equation}
  \label{phikaton2}
   {\rm G}_{i, q}^{\rm top}(c, s)=\left\{\left(ih, x_2, 6+qh-c-\left[s- s\,  \mathfrak f^k_1(x_2)\right] \right), x_2 \in [-3+qh, 9+qh ] \right\}.
  \end{equation}

\begin{remark}
\label{proche}
{\rm  The set $\DefLkperp_{i, q} (c, s)$ is  a "perturbation" of the set   $\fLkperp_{i, q}-(c+s)\be_3$ in view of the relation 
\begin{equation*}
 \DefLkperp_{i, q} (c, s)\setminus \left( \fLkperp_{i, q}-(c+s)\be_3\right) 
 \subset
 \left \{\left(ih, x_2, 6+qh-c-s+ s\,  \mathfrak f^k_1(x_2) \right), x_2 \in [0, 1+\frac h 4]\right\}.
\end{equation*}
We have hence
\begin{equation}
\label{ise}
\DefLkperp_{i, q} (c, s)\subset \left[ \fLkperp_{i, q}-(c+s)\be_3\right] \cup V(c, s),  {\rm \ for \ } 0\leq s \leq h,
\end{equation}
 where $V(c, s)$ denotes, for $0\leq s \leq h$,  the parallelepipedic region 
 \begin{equation}
 \label{vcs}
 \begin{aligned}
 V(c, s)&=-(c+s)\be_3+\frac h2 \be_2+\left([0, 1]^2 \times [0, \frac{3s}{4}]\right) \\
 &\subset V(c, h)=
 -(c+h)\be_3+\frac h2 \be_2+\left([0, 1]^2 \times [0, \frac{3h}{4}]\right). 
\end{aligned}
 \end{equation}
%The sets $V(c, s)$  are  increasing  with $s$. 
}
\end{remark}

 \begin{remark}
 \label{disfibre}
 {\rm As  direct consequence of the inclusions \eqref{ise} and \eqref{vcs}, we  check that, given two distinct sets of indices ${i, q}\not = (i', q')$,  we have
 \begin{equation}
 \label{disfibre2}
 \left\{
 \begin{aligned}
&\left(  \DefLkperp_{i, q} (c, s)+\B_{h\slash  8} ^3(0)\right)  \cap \DefLkperp_{i', q'} (c, s)=\emptyset  {\rm \  and \  }  \\
&\left(  \DefLkperp_{i, q} (c, s)+\B_{h\slash 8}^3(0)\right)  \cap\left( \fLkperp_{i', q'}-(c+h) \be_3\right)=\emptyset,
{\rm \ for \ any \ } 
 s\in [0, h]  {\rm \ and \ }  c>0.
\end{aligned}
\right.
 \end{equation}
Moreover, if condition \eqref{ondiment}, which will be introduced later,  is satisfied, then we have 
\begin{equation}
 \label{disfibre20}
 %\left\{
% \begin{aligned}
%&
\left(  \DefLkperp_{i, q} (c, s)+\B_{h\slash  32} ^3(0)\right)  \cap \fLk_{i', q'} (c, s)=\emptyset,
%&\left(  \DefLkperp_{i, q} (c, s)+\B_{h\slash 64}^3(0)\right)  \cap\left( \fLkperp_{i', q'}-(c+h) \be_3\right)=\emptyset,
{\rm \ for \ any \ }  s\in [0, h].
 %\end{aligned}
%\right.
 \end{equation}
 }
 \end{remark}

 %%%%%%%%%%%%%%%%%%%%%%%%%%%%%%%%%%%%%%%%%%%
%%%%%%%%%%%%%%%%%%%%%%%%%%%%%%%%%%%%%%%%%%%%%%%%%%%%%%%%%%%%%%%%%%%%%%%%%%%%%%%%%%%%%%%%%%%%%
\subsubsection{A variant of the Pontryagin construction}
\label{variante}
  Whereas deformations of the domain act both on curves and functions,  we have seen in \eqref{pushcte}   that they  not "commute" in general with the Pontryagin construction, that is the  Pontryagin map of a deformed curve is not in general the deformation of the initial Pontryagin map. Concerning the curves   of interest for us, namely the curves $\DefLkperp_{i, q} (c, s)$, we taylor a specific variant of the Pontryagin  construction for  later use.  Given $\varrho>0$,  $i, q=1, \ldots, k$, $0\leq  c \leq 6+ qh$ and $s>0$, our variant $ \tpont_\varrho  [ \DefLkperp_{i, q} (c, s)]: \R^3 \to \S^2$ will  by different from the Pontryagin map only in a neighborhood of the top  part  $    {\rm G}_{i, q}^{\rm top}(c, s)$. We introduce therefore the set 
  $$
   {\rm U}_{i, q}^{\rm top}(c, s, \varrho)=
   \underset {a \in {\rm G}_{i, q}^{\rm top}(c, s)} 
    \cup \D_{1,3}^2 (\varrho, a)
  $$
  where, for $a=(a_1, a_2, a_3)\in \R^3$,   $\D_{1,3}^2 (\varrho, a)$ denotes the disk in the plane $P_{1, 3}(a_2)$ of radius $\varrho$ centered  at $a$, namely
\begin{equation*}
 \D_{1,3}^2 (\varrho, a)=\{(x_1, a_2, x_3)\in \R^3, (x_1-a_1)^2+(x_2-a_2)^2 <\varrho^2\} \subset P_{1,3}(a_2).
 \end{equation*}
  We  define the variant   $\tpont_\varrho  [ \DefLkperp_{i, q} (c, s), \, \frameref]$ of the Pontryagin map as follows: We set  
  \begin{equation}
   \label{varpont}
 \tpont_\varrho  [ \DefLkperp_{i, q} (c, s)] (x)=\pont_\varrho  [ \DefLkperp_{i, q} (c, s)] (x),  {\rm \ for \ } x \not \in  {\rm U}_{i, q}^{\rm top}(c, s, \varrho) 
 \end{equation}
 Otherwise, if  $x \in \D_{1,3}^2 (\varrho, a)$   for some  	$a =(a_1, a_2, a_3)\in  {\rm G}_{i, q}^{\rm top}(c, s)$,  we set 
\begin{equation}
\label{varpont1}
\tpont_\varrho  [ \DefLkperp_{i, q} (c, s)] (x_1, a_2, x_3)=\chi_\varrho \left(x_1-a_1, x_3-a_3\right),
 \end{equation}
 where $\chi_\varrho$ is defined in \eqref{defchi1} and \eqref{blablanabla}. 
In other words,  in the construction of  $\tpont_\varrho  [ \DefLkperp_{i, q} (c, s)]$, we replace the  plane  orthogonal  to the curve by the plane parallel to $P_{1, 3}$\footnote{These two planes coincide if $\mg'(x_2)=0$.} on the part   ${\rm G}_{i, q}^{\rm top}(c, s)$.  We are going to rely on the following:

\begin{lemma}  We have,  provided  $\DefLkperp_{i, q} (c, s) \subset \R^2 \times \R^+$
$$ \tpont_\varrho  [ \DefLkperp_{i, q} (c, s)]=\mathcal P_{\zeta_1^k}( \pont_\varrho  [ \fLkperp_{i, q} -c\be_3, \frameref])(s), {\rm \ for \ } s \in [0, h].$$
\end{lemma}
\noindent
{\it Sketch of the proof}.
The proof follows from the observation that the flows $\Phi_1^k$ transforms disks  in planes parallel to $P_{1,3}$ into disks of the same radius, namely
$$\Phi_1^k(\D_{1,3}^2 (\varrho, a),s)=\D_{1,3}^2 (\Phi_1^k(a,s), \varrho), $$
 a consequence of the fact that $\zeta_1^k$ depends only on the variable $x_2$ in the region considered and the fact that we consider only disks in planes orthogonal to $\be_2$.

\medskip
\noindent
Concerning gradient estimates, we have:

\begin{lemma} 
\label{gradur}
 We have,  for some constant ${\rm C}_{\rm def}>0$ and  for any $c\in \R^+$ and $s\in [0, h]$
$$
\vert \nabla_3  \tpont \left [\DefLkperp_{i, q} (c, s)\right] \vert + 
\vert \frac{\partial}{\partial s}\tpont\left [ \DefLkperp_{i, q} (c, s)\right] \vert \leq {\rm C}_{\rm def} k.
$$
In particular, for fixed $c \geq 0$, considering the map $(x, s)\mapsto \DefLkperp_{i, q} (c, s)(x)$, we obtain 
\begin{equation}
\label{gradur1}
\vert \nabla_4\tpont\left [ \DefLkperp_{i, q}\right] (c, s)(x)\vert \leq  {\rm C}_{\rm def} k,  {\rm \ for \ } (x, s)   \in \R^3 \times [0, h].
\end{equation}
\end{lemma}
One may deduce these estimates from \eqref{zrcz} or one  might proven  them directly.
%%%%%%%%%%%%%%%%%%%%%%%%%%%%%%%

\begin{remark}
\label{toutcomme}
{\rm It follows from \eqref{disfibre2} in Remark \ref{disfibre} that the respective   sets of points in $\R^3$ where  the maps $\tpont \left [\DefLkperp_{i, q} (c, s)\right]$,  $\tpont \left [\DefLkperp_{i', q'} (c, s)\right]$  and $\pont[\fLk]$   are different from $\sP$  are disjoint sets  for any $s\in [0, h]$, provided condition  \eqref{ondiment} is fulfilled. The same holds for the maps $\tpont \left [\DefLkperp_{i', q'} (c, s)\right]$, $\pont [\fLk_{i, q'}-(c+s)\be_3]$ and  $\pont[\fLk]$, provided condition \eqref{ondiment} is fulfilled.

} \end{remark}

%%%%%%%%%%%%%%%%%%%%%%%%%%%%%%%%%%%%%%%%%%%%%%%%%%%%%%%%%%%%%%%%%%%%%%%%%%%%%%%%
%%%%
%%%%%%%%%%%%%%%%%%%%%%%%%%%%%%%%%%%%%%%%%%%%%%%%%%%%%%%%%%%%%%%%%%%%%
\subsubsection{Cubic extensions}
\label{cubic}
%%%%%%%%%%%%%%%%%%%%%%%%%%%%%%%%%%%%%%%%%%%%%%%%%%%%%%%%%%%%%%%%%%
Whereas the previous  construction  works for quite general classes of maps and is hence not specific to the Sobolev framework, the extension method presented here induces singularities and hence is specially appropriate in  the Sobolev setting.   

\medskip
\noindent
{\it The cube  $\rQ_r^4(\rba)$ and its boundary}. We consider the $\infty$-norm on $\R^4$  given by
$$ \vert \rbx \vert_\infty= \underset{i=1, \ldots, 4}{ \overset {}\sup} \vert x_1 \vert,  {\rm \ for \ } \rbx=(x_1, x_2, x_3, x_4) \in \R^4,  
$$
 and   the corresponding $\infty$-ball  ${\rm  Q}^4_r$ of radius  $r>0$ defined by
 $$
 \rQ_r^4=\rQ_r^4(0),  {\rm \ where \ more \ generally \  } \rQ_r^4(\rba)\equiv \{ \rbx \in \R^4, \vert x-\rba  \vert_\infty < r \} {\rm \ for \ } \rba \in \R^4, 
 $$
 so that  actually $\rQ_r^4$ corresponds  the hypercube $\rQ_r^4=[-r, r]^4$. 
Given a 4-dimensional hypercube $\rQ_r^4(\rba)$,  its boundary $\partial \rQ_r^4(\rba)$ is the union  of the closure of  $8$  distinct three-dimensional cubes $\mathfrak Q_{p}^{3,\pm} (r, \rba)$ of size $r$ defined, for  $\rba=(a_1, a_2, a_3, a_4)$ and $p=1, \ldots, 4$, by
\begin{equation}
\label{scarface}
\mathfrak Q^{3,\pm}_p(r, \rba)=\{\rbx=(x_1, x_2, x_3, x_4) \in \R^4, x_p=a_p\pm r, \,  \underset {i\neq  p} \sup\,  \vert x_i-a_i \vert < r\}.
\end{equation}
 The sets $\mathfrak Q^{3,\pm}_p(r, \rba)$ are therefore included in a 3-dimensional hyperplane of $\R^4$ orthogonal to the vecteur $\be_p$. We have  
\begin{equation}
\label{delpotro}
\partial \rQ_r^4(\rba)=\underset {p=1} {\overset {4}\cup} \left( \overline{ \mathfrak Q^{3,+}_p(r, \rba)} \cup 
\overline{\mathfrak Q^{3,-}_p(r, \rba)} \right).
\end{equation}
     \medskip
   \noindent
  {\it Construction of the extension operator.}  
Given a map $v:\partial \rQ_r^4( a)\to \R^\ell$ defined on   the boundary $\partial \rQ_r^4(\rba)$ of a cube $\rQ_r^4(\rba)$ we consider its cubic-radial extension   $\Ext_{r, \rba} (v)$ defined on the full cube $\rQ_{r}^4(\rba)$  for   $v: \partial \rQ_r^4( \rba)\to \R^\ell$  by 
 \begin{equation}
  \Ext_{r, \rba} (v)(\rbx)=v \left(\rba+ r\frac{\rbx-\rba\, \, \, \, }{\vert \rbx-\rba \vert_{\infty} } \right), {\rm \ for \ } \rbx \in \rQ_{r}^4 (\rba), 
%  {\rm \ for \ every \ } 
 %  v: \partial \rQ_r^4( \rba)\to \R^\ell, 
 \end{equation}
 so that 
 $ \displaystyle{
 \Ext_{r, \rba} (v)=v {\rm \ on  \  the \  boundary  \ } \partial \rQ_r^4(\rba)}.
 $
If $v$ is Lipschitz, then $\Ext_{r, \rba} (v)$ is locally Lipschitz on $\rQ_r^4(\rba)\setminus \{\rba\}$, but possesses a singularity at the point  $\rba$, except if $v$ is constant. However,  if the map $v$ has finite energy $\rE_3$  on the three-dimensional set  $\partial \rQ_r^4(\rba)$, then  the same assertion holds for its extension  
 $\Ext_{r, \rba}(v)$ on the cube $\rQ_r^4(\rba)$ with the estimate
 \begin{equation} 
 \label{enextension}
 \rE_3 \left(\Ext_{r, \rba}(v), \,  \rQ_r^4(\rba)\right) \leq  {\rm K _{\rm ext}} \, r \, \rE_3(v, \partial \rQ_r^4(\rba)),
\end{equation}
where $ {\rm K _{\rm ext}}$  denotes some universal constant. 
%%%%%%%%%%%%%%%%%%%%%%%%%%%%%%%%%%%%%%%%%%%%%%%%%%%%%%%%%%%%%%%%%%%%%%%%%%
%%%%%%%%%%%%%%%%%%%%%%%%%%%%%%%%%%%%%%%%%%%%%%%%%%%%%%%%%%%%%%%%%%%%%%%%%%%%%%%%%%%%%%
\subsubsection {Deforming topologically  trivial maps  to constant maps }
\label{trivial}
%%%%%%%%%%%%%%%%%%%%%%%%%%%%%%%%%%%%%%%%%%%%%%%%%%%%%%%%%%%%%%%%%%%%
%%%%%%%%%%%%%%%%%%
  We  assume  that we are given a map $w\in {\rm Lip} \cap \left( W^{1, 3}(\R^3, \S^2)+ \{\sP\}\right) $ such that we have 
  $$  w(x)=\sP {\rm \ for \ }  x \in \R^3 \setminus [-R, R]^3,  {\rm \ for \ some \ } R>0. $$
  Hence, we may  define the $\rH(w)$.   We have (see e.g.  \cite {BeZ, Be2} for related constructions):
  
  \begin{proposition}
  \label{trivialextend}
   Let $w$ and $R>0$ be as above and assume that $\rH (w)=0$.   There exists a map 
  $W \in  C^0\cap W^{1, 3}([-R, R]^4,  \S^2)$ such that the following holds: 
  \begin{itemize}
  \item  $W(x, -R)=w(x)$, for $x\in [-R, R]^3$,
  \item  $W(x, R)=\sP$, for $x\in [-R, R]^3$,
  \item $W(x, s)=\sP$, for $x\in \partial([-R, R]^3)$ and $s\in [-R, R]$,
  \item  $ \rE_3(W, [-R, R]^4)  \leq 2{\rm C _{\rm ext}} \, R \, \rE_3(v, [-R, R]^3).$
  \end{itemize}
  \end{proposition}
\begin{proof}   We consider the   continuous map $\tilde w$ from  the boundary $\partial ([-R, R]^4)$ to $\S^2$ defined by
\begin{equation*}
\left\{
\begin{aligned}
&\tilde w(x, -R)=w(x), {\rm \ for \  } x \in [-R, R]^3  {\rm \ and \ }   \\
&\tilde w(\rbx)=\sP, {\rm \ for \ }  \rbx\in \partial ([-R, R]^4)\setminus [-R, R]^3 \times \{-R \}, 
\end{aligned}
\right.
\end{equation*}
so that $\tilde w$ is Lipschitz and  the homotopy class of $\tilde w$ is trivial. There exists  therefore a Lipschitz map  
$\varphi:  [-R, R]^4 \to  \S^2$ such that 
$$ \varphi(\rbx)=\tilde w(\rbx), {\rm \ for \ } \rbx \in \partial ([-R, R]^4).$$
  Since $\varphi$ is Lipschitz, we have
  $$ I_1\equiv  \int_{[-R, R]^4} \vert \nabla_{_4} \varphi \vert^3 <+\infty.$$ 
 Let $0<\rho \leq R$ be such that 
$\rho I_1 \leq {\rm C _{\rm ext}} \, R^2 \rE_3(v, [-R, R]^3)$.  We define $W$ as 
\begin{equation*}
\left\{
\begin{aligned}
 W(\rbx)&=\tilde w\left(\frac{  R\, \rbx}{\ \, \vert  x \vert_\infty}\right) {\rm  \ if \ }  \vert \rbx \vert_\infty \geq \rho  \\
 W(\rbx)&=\varphi \left(\frac{R\rbx}{\rho}\right) {\rm  \ if \ }  \vert \rbx \vert_\infty \leq \rho, 
 \end{aligned}
\right.
\end{equation*}
so that $W$ satisfies the three first condition in Proposition \ref{trivialextend}. For the energy estimate, we observe that, by \eqref{enextension}, we have
$$\int_{ \vert \rbx \vert_\infty \geq \rho} \vert \nablaq W \vert^3 \leq {\rm C _{\rm ext}} \, R \rE_3(w, [-R, R]^3).$$
On the other hand, by scaling we have 
$\displaystyle{\rE_3(W, [-\rho, \rho]^4)=\frac{\rho}{R} \rE_3(\varphi, [-R, R]^4)=\frac{\rho I_1}{R}.
}$
The conclusion follows combining the previous estimates.
 \end{proof}
 
%%%%%%%%%%%%%%%%%%%%%%%%%%%%%%%%%%%%%%%%%%%%%%%%%%%%%%%%%%%%%%%%%%%%%%%%%%%%%%%%%%%%%%%%%%
\subsubsection{Adjusting boundary values}
The next result, which is of  technical nature, will be used only  in Subsection  \ref{okdac}, for the proof of Proposition \ref{unpoco}. The proof strongly relies on ideas introduced in \cite{SU} and \cite{HL}.

\begin{lemma}
\label{epsilonerie} 
 Let $v \in C^\infty (\overline{\B^4}, \S^2)$ be such that 
\begin{equation}
\label{petitude}
\int_{\B^4\setminus \B^4(3\slash4)} \vert v(\rbx-\sP) \vert^3 \rd \rbx \cdot \rE_3(v, \B^4) \leq \epsilon_0, 
\end{equation} 
where $\epsilon_0>0$ is some absolute constant. Then,  there exists some map $w\in  C^\infty (\overline{\B^4}, \S^2)$  such that $w=v$ on $\B^4(3\slash 4)$,  $w(\rbx)=\sP$ for  $\rbx \in \partial \B^4$, and  such that, for some absolute constant   ${\rm K}_{\rm dir } \geq 1$, we have the estimates 
\begin{equation}
\label{hautpuech}
\left\{
\begin{aligned}
&\rE_3(w,  \B^4) \leq {\rm K}_{\rm dir} \left( \rE_3(v, \B^4)+ \int_{\B^4\setminus \B^4(3\slash4)} \vert v(\rbx-\sP) \vert^3 \right)  {\rm \ and \  }   \\
& \int_{\B^4\setminus \B^4(3\slash4)} \vert w(\rbx)-\sP \vert^3 \rd \rbx  \leq {\rm K}_{\rm dir}   
 \int_{\B^4\setminus \B^4(3\slash4)} \vert v(\rbx)-\sP \vert^3.
 \end{aligned}
\right.
\end{equation} 
\end{lemma}

\begin{proof} We proceed  in two steps. In the first step, we construct a map $  \tilde w$  which is smooth, except at two point singularities, and which merely satisfies condition \eqref{hautpuech}.  We then remove these singularities using concentration along a line joining the singularities. 

\smallskip
\noindent
{\it Step 1: Construction of $\tilde w$}.  By a  standard  mean-value argument, we may find some radius $r_0 \in [3\slash4, 7\slash 8]$ such that, on the sphere $\S^3(r_0)$, we have 
\begin{equation}
\label{minus}
\left\{
\begin{aligned}
\int_{\partial \B^4(r_0)}  \vert \nabla v\vert ^3 (x) \rd x &\leq  16\, \rE_3(v)  {\rm \ and  \ }  \\
\int_{\partial \B^4(r_0)}  \vert  v-\sP \vert ^3 (x) \rd x \leq  &16  \Vert v-\sP \Vert_{L^3(\mathcal C^4(3\slash 4))}^3 \leq 16 \epsilon_0,
\end{aligned}
\right. 
\end{equation} 
where $\mathcal  C^4(r)=\B^4(7\slash 8)\setminus \B^4(r)$, for $0 \leq r\leq 7\slash 8.$  In order to reduce once more the dimension,  we consider for $h\in [0, 1\slash 4]$ the two-dimensional sphere $\mathcal  S^2(h)$ defined by 
$$\mathcal  S^2(h)=\partial \B^4(r_0) \cap P_{1,2}(h)=\{\rbx=(x_1, x_2, x_3, h), {\rm \ s.t. \  } x_1^2+x_2^2+ x_3^3=r_0^2-h^2  \geq  1 \slash 2 \}.$$
We decompose the sphere $\S^3(r_0)$ as $\S^3(r_0)=\S^{3, +} (r_0, h) \cup \S^{3, +} (r_0, h))$,  and the annulus $\mathcal  C^4(r_0)$ as 
$\mathcal C^4(r_0)=\mathcal C^{+}(h)\cup \mathcal C^-(h)$, where  we have set
\begin{equation*}
\left\{
\begin{aligned}
&\S^{3, +}(r_0, h)=\left\{ \rbx=(x_1, x_2, x_3, x_4 ) \in \S^3(r_0), x_4 \geq h \right\}, \S^{3, -}(r_0, h)=\left\{ \rbx \in \S^3(r_0), x_4 \leq h \right\}, \\
&\mathcal C^{ +} (h)=\left\{\rbx \in \mathcal C^4(r_0), \frac{r_0\rbx}{\vert \rbx \vert} \in \S^{3, +} (r_0, h)\right\}  {\rm \ and \ } 
\mathcal C^{-}(h)=\left\{\rbx \in \mathcal C^4(r_0),  \frac{r_0\rbx}{\vert \rbx \vert} \in \S^{3, +} (r_0, h)\right\}. 
\end{aligned}
\right.
\end{equation*}
We deduce, once more by a mean-value argument,  that there  exists $h_0 \in [0, 1\slash4]$ such that 
\begin{equation}
\label{minus2}
\left\{
\begin{aligned}
\int_{\mathcal S^2(h_0))}  \vert \nabla v \vert ^3 (\upsigma) \rd \upsigma &\leq 8\int_{ \S^3(r_0)}  \vert \nabla v \vert ^3 (x) \rd x  \leq 128\, \rE_3(v)  {\rm \ and  \ }  \\
% \Vert u-\sP \Vert_{L^3(\mathcal S^2(h_0)}^3
\int_{\mathcal S^2(h_0) }  \vert  v(\upsigma)-\sP \vert ^3 \rd \upsigma & 
% \leq 8 \Vert u-\sP \Vert_{L^3(\S^4(h_0))}^3 
\leq 8 \int_{\S^3(r_0)}  \vert  v-\sP \vert ^3 
 \leq 128  \Vert v-\sP \Vert_{L^3(\mathcal C^4(\frac 3 4))}^3. \\
%  &\leq 128 \epsilon_0.
\end{aligned}
\right. 
\end{equation} 
Since $\mathcal S^2(h_0)$ is two-dimensional, a Gagliardo-Nirenberg interpolation inequality shows that, for any number $1\slash 3 <\alpha <1$, we have, for some constant 
${\rm C}_{GN}$ depending on $\alpha$, 
\begin{equation*}
\Vert v-\sP\Vert_{L^\infty( \mathcal S^2(h_0)) }^3 \leq {\rm C}_{GN}\left( \int_{\mathcal S^2(h_0))}  \vert \nabla v \vert ^3 (\upsigma) \rd \upsigma\right)^\alpha  \cdot
 \left( \int_{\mathcal S^2(h_0) }  \vert  v(\upsigma)-\sP \vert ^3 \rd \upsigma\right)^{1-\alpha}.  
\end{equation*}
Choosing $\alpha =1 \slash 2$, we deduce from the above  inequality combined with \eqref{minus2} that, if $v$ satisfies assumption \eqref{petitude}, then we have 
%\begin{equation}
%\label{prochitude}
$\displaystyle{\Vert v-\sP\Vert_{L^\infty( \mathcal S^2(h_0)) } \leq 128^{1\slash 3} {\rm C}_{GN} \epsilon_0^{2\slash3}. }$
%\end{equation}
We fixe now the value of $\eps_0$  so small that $128^{1\slash 3} {\rm C}_{GN} \epsilon_0^{2\slash3}\leq 1 \slash 4$: This yields 
\begin{equation}
\label{prochitude2}
\Vert v-\sP\Vert_{L^\infty( \mathcal S^2(h_0)) } \leq \frac{1}{4}. 
\end{equation}
We consider next the interpolation $\tilde v$ of $v$ and $\sP$ on the set $\mathcal C^4(r_0)$, defined by   
$$ \tilde v(\rbx)=\sP+\frac{7\slash 8-\vert \rbx\vert }{7 \slash 8-r_0} \, \left[ v\left ( \frac{r_0\rbx}{\vert\rbx \vert}\right)-\sP \right],  {\rm \ for \ } \rbx \in \mathcal C^4(r_0), $$
so that $\tilde v(\rbx)=v(\rbx)$ if $\rbx \in \S^3(r_0)$ and $\tilde v(\rbx)=\sP$ if $\rbx \in \S^3(7 \slash 8)$. It follows from \eqref{prochitude2} that, for 
$\rbx \in V(r_0, h_0)$, we have 
\begin{equation}
\label{prochitude}
\vert \tilde v(\rbx)-\sP \vert \leq \frac{1}{4}, 
{\rm \ where \ }
V(r_0, h_0)= \left\{\rbx \in \mathcal C^4(r_0), \frac{r_0\rbx}{\vert \rbx \vert} \in \mathcal S^2 (r_0, h_0)\right\}. 
\end{equation}
We are now in position to define  the value $\tilde w$ on the boundaries $\partial \mathcal C^\pm (h_0)$, which may be  decomposed as
$\displaystyle{\partial \mathcal C^\pm (h_0)= \S^{3, \pm}(r_0, h_0) \cup  V(r_0, h_0) \cup \left(\partial \mathcal C^\pm (h_0) \cap \S^3(7\slash8)\right)}$.
We set
\begin{equation}
\left\{
\begin{aligned}
&\tilde w(\rbx)=v(\rbx) {\rm \ for \ } \rbx \in \S^3(r_0), \, \tilde w(\rbx)=\sP, {\rm \ for \ } \rbx \in \S^3(7\slash 8), {\rm \ and \ }  \\ 
&\tilde w(\rbx)= \frac{ \tilde v(\rbx)}{\vert  \tilde v(\rbx)\vert},  {\rm \ for \ } \rbx \in V(r_0, h_0),  
\end{aligned}
\right. 
\end{equation}
 the last definition being well defined thanks to \eqref{prochitude}. One verifies that $\tilde w$ is Lipschitz on $\partial \mathcal C^\pm (h_0)$ and, in view of \eqref{minus} and \eqref{minus2}, that, for some constant $C>0$, we have
 \begin{equation}
 \label{goalitude}
 \left\{
 \begin{aligned}
& \int_{\partial \mathcal C^\pm (h_0)}  \vert  \nabla  \tilde w\vert ^3   \leq  C \left(\rE_3(v) + \Vert v-\sP \Vert_{L^3(\mathcal C^4(3\slash 4))}^3 \right) \  {\rm \ and  \ } \\
& \int_{\partial \mathcal C^\pm (h_0)}  \vert \tilde w-\sP\vert ^3  \leq C \Vert v-\sP \Vert_{L^3(\mathcal C^4(3\slash 4))}^3.
 \end{aligned}
 \right.
 \end{equation}
 We extend $\tilde w$ to the interior of $\mathcal C^\pm (h_0)$ taking advantage that there is a Lipschitz homeomorphism from $\mathcal C^\pm (h_0)$ to the standard cube: Hence, we may use (or adapt) the cubic extension  presented in Subsection \ref{cubic}. This 
 allows us to define $w$ on  $\mathcal C^\pm (h_0)$ in such a way that the restriction of  $\tilde w$ is Lipschitz on every compact subset of $\overline {\mathcal C^4 (h_0)}\setminus \{A^+, A^-\}$, where 
 $A^\pm=(0, 0, 0, \pm \frac{13}{16})$, and such that, for some universal constant $C>0$, we have 
  \begin{equation}
  \label{goalitude2}
 \left\{
 \begin{aligned}
& \int_{ \mathcal C^4 (r_0)}  \vert  \nabla  \tilde w\vert ^3   \leq  C \left(\rE_3(v) + \Vert v-\sP \Vert_{L^3(\mathcal C^4(3\slash 4))}^3 \right) \  {\rm \ and  \ } \\
& \int_{\mathcal C^4 (r_0)}  \vert \tilde w-\sP\vert ^3  \leq C \Vert v-\sP \Vert_{L^3(\mathcal C^4(3\slash 4))}^3.
 \end{aligned}
 \right.
 \end{equation}
Finally, we complete the construction of $\tilde w$ setting  
\begin{equation}
\label{goalitude3}
 \tilde w(\rbx)=v(\rbx), {\rm \ for \ } \rbx \in \B^4(r_0), {\rm \ and \ }  \tilde w(\rbx)=\sP, {\rm \ for \ } \rbx \in \B^4 \setminus \B^4(\frac 78),
\end{equation}
so that $\tilde w$  is defined on the whole $\B^4$, Lipschitz on every compact subset of $\overline {\B^4}\setminus \{A^+, A^-\}$.  Moreover,  estimates \eqref{goalitude2} remains valid with a different constant $C$,  if we integrate on $\B^4$ instead of $\mathcal C^4(r_0)$.

\medskip
\noindent
{\it Step 2: Construction of $ w$ completed}. To complete the proof, it remains to remove the two singularities. This can be achieved concentrating "bubbles" along the segment $[A^+, A^-]$,  as in Proposition \ref{segment}. This yields 
a sequence of Lipschitz maps $\tilde w_n \in {\rm Lip }(\B^4, \S^2)$ such that $\tilde w_n \rightharpoonup  \tilde w$ in $W^{1, 3}(\partial \B^4)$,  such that $\tilde w_n=\sP$   on $\B^4 \setminus \B^4(r_0)$,  and such that 
\begin{equation}
\label{fifinal}
\underset { n \to + \infty} \lim \rE_3(\tilde w_n, \B^4) = \rE(\tilde w, \B^4)+ \upnu_{\S^2} (d)\vert A^+ -A^-\vert,
\end{equation}
 where  $d$ represents the Hopf invariant at the singularity $A^+$.  Using a standard "mollifying-reprojection" method, we may moreover assume that $\tilde w_n$ is smooth. The number $d$ corresponds also to the Hopf invariant of the map $\tilde w$ restricted to  $\partial \mathcal C^+(h_0)$.  We therefore have,
 \begin{equation}
 \label{fifinal2}
 \upnu_{\S^2} (d)  \leq C \int_{\partial \mathcal C^+ (h_0)}  \vert  \nabla  \tilde w\vert ^3   \leq  C \left(\rE_3(v) + \Vert v-\sP \Vert_{L^3(\mathcal C^4(3\slash 4))}^3 \right), 
 \end{equation}
  for some  $C>0$, where we used \eqref{goalitude} for the last inequality. Combining \eqref{fifinal}, \eqref{fifinal2} and \eqref{goalitude2} (with  the domain of integration replaced by $\B^4$), we obtain
 \begin{equation}
\label{fifinal3}
\underset { n \to + \infty} \lim \rE_3(\tilde w_n, \B^4)  \leq C \left(\rE_3(v) + \Vert v-\sP \Vert_{L^3(\mathcal C^4(3\slash 4))}^3 \right).
\end{equation}
By compact embedding, $\Vert \tilde w_n-\tilde w\Vert_{L^3(\B^4)} \to 0$ as $n \to + \infty$, so that, combining with \eqref{goalitude2} (with  the domain of integration replaced by $\B^4$), we are led to 
\begin{equation}
\label{fifinal4}
\underset { n \to + \infty} \lim \int_{\B^4}  \vert \tilde w_n-\sP\vert ^3  \leq C \Vert v-\sP \Vert_{L^3(\mathcal C^4(3\slash 4))}^3
\end{equation}
We finally choose $w=w_{n_0}$, for $n_0$ sufficiently large. The proof of \eqref{hautpuech}  then follows from \eqref{fifinal3} and \eqref{fifinal4}.
\end{proof}

%%%%%%%%%%%%%%%%%%%%%%%%%%%%%%%%%%%%%%%%%%%%%%%%%%%%%%%%%%%%%%%%%%%%%%%%%%%%%%%%%%%%%%%%%%%%%%
\subsection{Creating  singularities through  the crossing of lines}
\label{creahopf}
%%%%%%%%%%%%%%%%%%%%%%%%%%%%%%%%%%%%%%%%%%%%%%%%%%%%%%%%%%%%%%%%%%%
We analyze in this subsection   a situation which   accounts for the creation of singularities in the  construction of the Gordian cut $\Gordk$. Roughly speaking, the construction we present allows to transform the  "rounded" part of  a deformation  of $\fLkperp_{j, q}$ into a straight segment, the "rounded" part being here represented by the curve $C_{\perp, h}^-(a)$,  whereas  the straight segment is represented by   ${\rm D}_{\perp, h}^+(a)$, both  sets being defined in \eqref{matches} and \eqref{defsegments} below  respectively (see Figures \ref{cube11} and \ref{cube11bis}) .  However, in order to perform this transformation, we need to cross a part of $\fLk$, represented in this subsection by the straight  segment $D_{0, h}(a)$, defined in \eqref{defsegments} (see Figure \ref{DversusC}).  The deformation  for the corresponding maps  is allowed thanks to the extension operator, at  the cost of a topological singularity.

We work  on cubes of radius $r=h/2$, the $x_4$ coordinate will be used for our further purposes as a deformation variable.   
 The singularities are created applying  the extension operator  to  Lipschitz  maps  $\Upsilon_{\rba}^h$ defined on the boundary of four-dimensional cubes\footnote{As a general rule roman bold characters as $\rba$ correspond to points in $\R^4$ whereas  symbols as $a$ refer to points in $\R^3$.}
 \begin{equation}
 \label{upgam}
   \Upsilon_{\rba}^h: \partial \rQ_{h/2}^4(\rba)  \to \S^2, {\rm \ where \ }  \rba=(a_1, a_2, a_3, a_4)\equiv(a, a_4)\in \R^4, h\geq 0, 
   \end{equation}
which we are going to define next,    using  the Pontryagin construction or its variant.   These construction are build on   the relevant curves  mentioned above, embedded  on the top and bottom  faces  
 $$
 \mathfrak Q^{3,\pm}_4(h/2,  \rba)=\rQ^3_{h/2}(a)\times \{a_4\pm\frac h2\}.  
 $$ 
 We  start the   precise description working in the  \emph{three-dimensional} reference cube  
 $$\rQ_{h/2}^3(a)=\{(x_1, x_2, x_3)\in \R^3,  \vert x_i-a_i  \vert \leq h/2,\,  i=1,2,3 \} \subset \R^3, {\rm \ for \  } \ a=(a_1, a_2, a_3)\in \R^3.  $$
 
  %%%%%%%%%%%%%%%%%
  
  %%%%%%%%%%
 \subsubsection{Some relevant curves in $\rQ^3_{h/2}(a)$}
 \label{relevant}
 Let $a \in \R^3$.    We   consider the two  segments of $\rQ_{h/2}^3(a)$ given by  
 \begin{equation}
 \label{defsegments} 
 \left\{
 \begin{aligned}
 {\rm D}_{0, h}(a)&=\{a\} + [-\frac{h}{2}\be_1, +\frac{h}{2}\be_1] \subset P_{1,3}(a_3) \\
   {\rm  and \  \ \  \ } & \\
 {\rm D}_{\perp, h}^+(a)&= \{a-\frac{3h}{8}\be_3\}+ [-\frac{h}{2}\be_2, \frac{h}{2}\be_2] \subset P_{2,3}(a_1).
 \end{aligned}
 \right.
 \end{equation}
The two segments  are parallel to $\be_1$ and $\be_2$ respectively, have hence orthogonal directions,  each of them    joining  opposite faces of the cube (see Figure \ref{cube11}).

\smallskip
We consider  also   the  smooth curve $\mathcal C_{\perp, h}^-(a)$  given as the following graph in the plane $P_{2,3}(a_1)$
\begin{equation}  
\label{matches}
  \mathcal C_{\perp, h}^-(a)=\left\{\left( a_1,x_2, a_3-\frac{3h}{8} + hg_2 \,(\frac{x_2-a_2}{h})\right), \,  x_2\in  [a_2-h/2, a_2+h/2]  \right\}, 
  \end{equation}
 where  the function $g_2$ is defined in \eqref{defh2}.  This  definition  is consistent with \eqref{phika} and \eqref{phikaton2}:  Indeed, ${\rm D}_{0, h}(a)$ on one hand and  $\mathcal C_{\perp, h}^-(a)$  and ${\rm D}_{\perp, h}^+(a)$  on the other are aimed to model suitable subsets of fibers $\fLk_{i, j}$ and $\DefLkperp_{i, q}$ respectively,  as we will discuss in Subsection \ref{pes16} below.
   Both ${\rm D}_{\perp, h}^+(a)$ and   $\mathcal C_{\perp, h}^-(a)$   are included  in    the same affine  plane $P_{2,3}(a_1)$ and intersect along two segments parallel to $\be_2$, namely we have
   \begin{equation}
 \label{important}
 {\rm D}_{\perp, h}^+(a)\cap   \mathcal C_{\perp, h}^-(a) \supset
  \{a-\frac{3h}{8}\be_3\}+\left(  [-\frac{h}{2}\be_2,- \frac{h}{4}\be_2] +
 [\frac{h}{2}\be_2, \frac{h}{4}\be_2]\right).
   \end{equation} 
  In particular, their respective intersection with a suitably small neighborhood of the boundary coincide, see Figure \ref{DversusC} and \ref{cube11bis}.
  \begin{figure}[h]
\centering
\includegraphics[height=8.5cm]{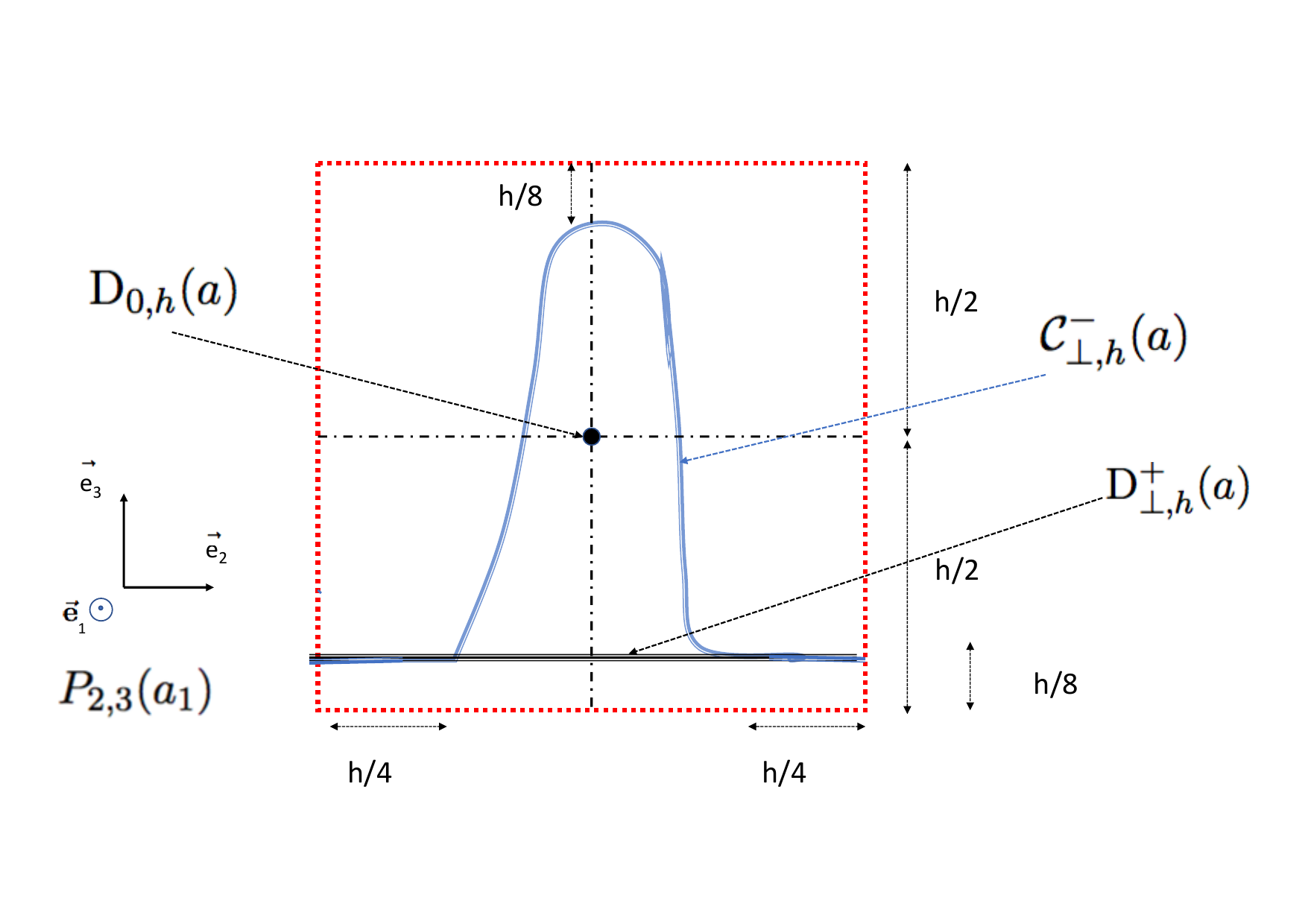}
\caption{  {
\it  The segment $ {\rm D}_{\perp, h}^+(a)$ and the curve  $\mathcal C_{\perp, h}^{-}(a)$  are in the same affine plane, the plane $P_{2,3}(a_1)$. Their union bounds in this plane a region containing the point $a$.  This region is crossed orthogonaly by the segment ${\rm D}_{0, h}(a)$. 
} }
\label{DversusC}
\end{figure}

  \begin{figure}[h]
\centering
\includegraphics[height=8.5cm]{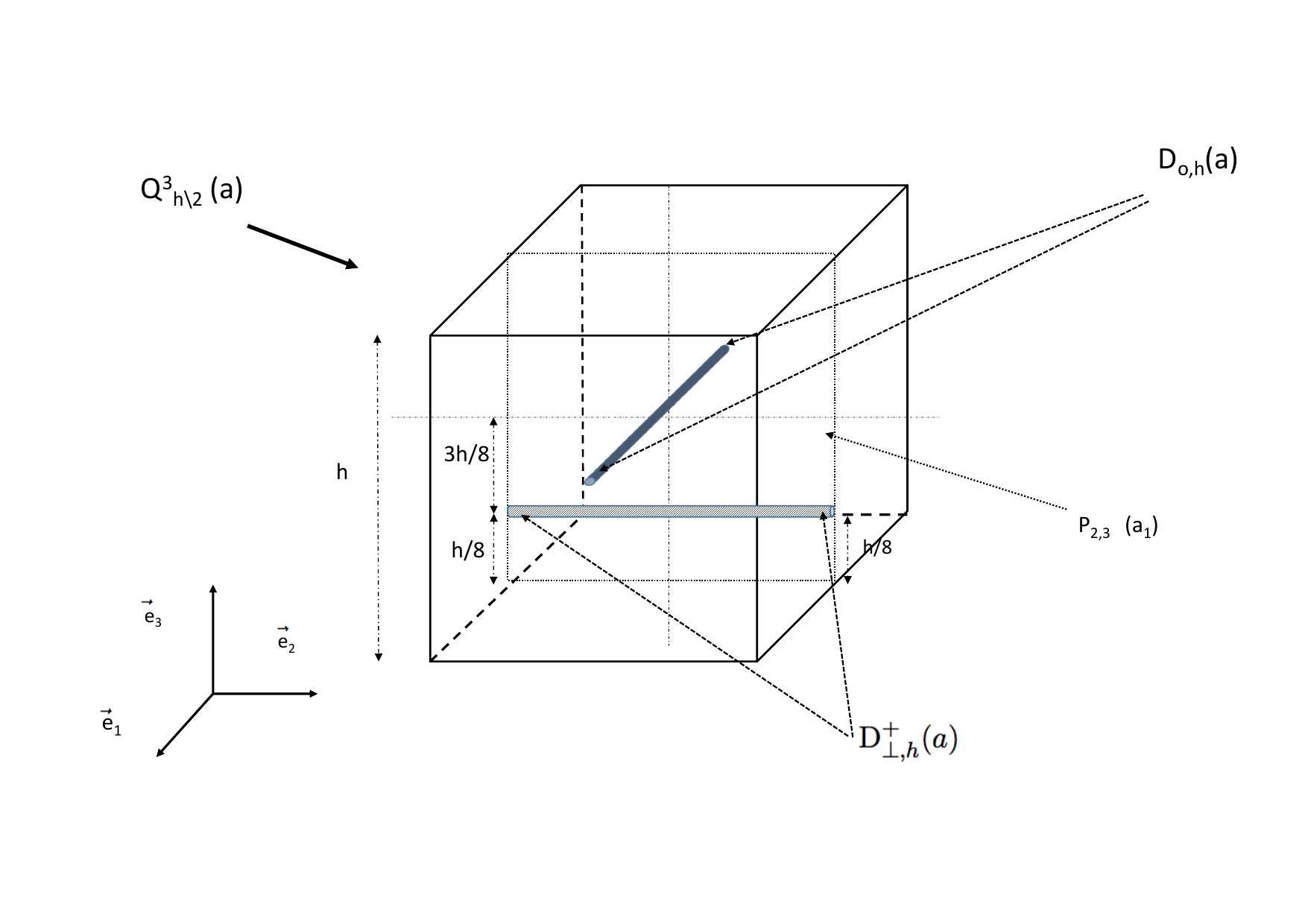}
\caption{  {
\it  The segment $ {\rm D}_{\perp, h}^+(a)$ and the segment ${\rm D}_{0, h}(a)$. These curves yield  by Pontryagin's  construction and its variant the map $\upgamma_{a}^{h, +}$ on   $\mathfrak Q^{3,+}_4(h/2,  \rba)$.
} }
\label{cube11}
\end{figure}

    \begin{figure}[h]
\centering
\includegraphics[height=9cm]{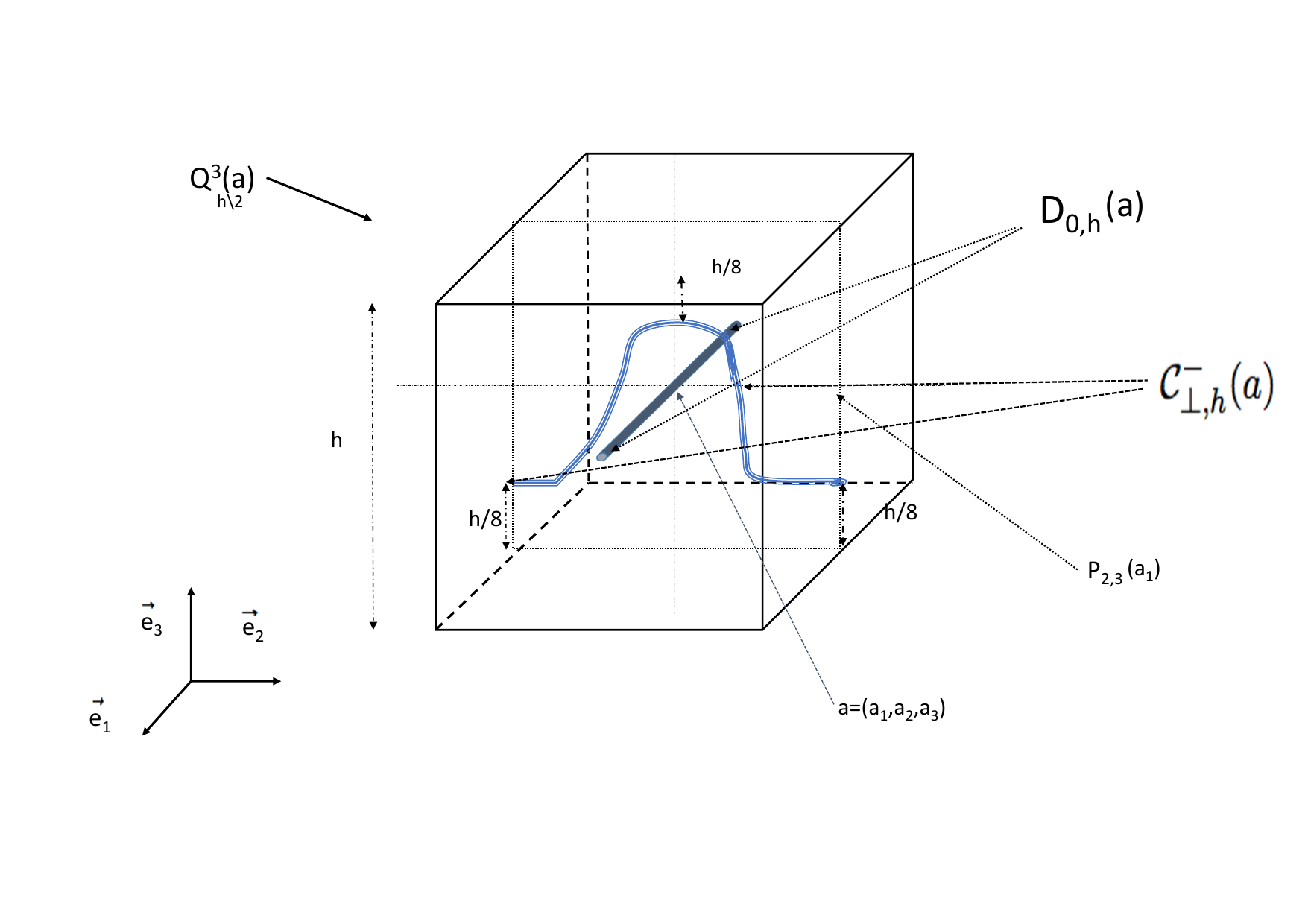}
\caption{  {
\it  The curve $ \mathcal C_{\perp, h}^-(a)$ and the segment ${\rm D}_{0, h}(a)$. These curves yield  by Pontryagin's  construction and its variant the map $\upgamma_{a}^{h,-}$ on   $\mathfrak Q^{3,-}_4(h/2,  \rba)$.
} }
\label{cube11bis}
\end{figure}

 \subsubsection{ $\S^2$-valued maps  on  $\rQ^3_{h/2}(a)$}
  Let  $ \varrho = 10^{-3}h$.  We relate to the curves constructed above  $\S^2$-valued maps using the Pontryagin construction or its  variant.  In order to have orientations consistent with  the  constructions in subsection \ref{laforme},  in particular the framings on the sheaves, we choose on ${\rm D}_{0, h}(a)$ the framing $\mathfrak e_{0}^\perp=( \be_3,- \be_2)$, whereas on ${\rm D}_{\perp, h}^+(a)$ we set $\mathfrak e_0^\perp=(\be_1, -\be_3)$.  We first consider the map $\upgamma_{a} ^{h,-}$ defined  on $\rQ^3_{h\slash2}(a)$ by (see Figure \ref{DversusC})
 \begin{equation}
  \label{alcatel}
  \upgamma_{a} ^{h,-}=\pont[ ({\rm D}_{0, h}(a),  \mathfrak e_0^\perp)]\wedgetrois \tpont_\varrho[ {\mathcal C}_{\perp, h}^-(a)].
 \end{equation}
  The notation  $\tpont_\varrho[ {\mathcal C}_{\perp, h}^+]$ which appears in \eqref{alcatel} refers to the \emph{ variant of the Pontragyin construction} defined in Paragraph \ref{variante}, for which the  plane  orthogonal  to the curve is replaced by a plane parallel  
  \footnote{The corresponding  framing  would correspond then to the framing on ${\rm D}_{\perp, r}^-$ that is $(\be_1, -\be_3)$.}
   to $P_{1, 3}$. More explicitely, it is  defined on $\rQ^3_{h/2}(a)$ by 
\begin{equation}
\label{important2}
 \tpont_\varrho[ {\mathcal C}_{\perp, h}^-(a)](x_1, x_2, x_3)=
 \chi_\varrho \left(x_1-a_1, -\left[x_3-a_3 +\frac {3h}{8}-hg_2(\frac{x_2-a_2}{h})\right] \right), 
 \end{equation}
  where  $\chi_\varrho$ is defined in  \eqref{defchi1} and  \eqref{blablanabla}.
 We define  on $\rQ^3_{h/2}(a)$ another  map $ \upgamma_{a} ^{h,+}$  (see Figure \ref{cube11bis}) as 
\begin{equation}
\label{alcatel0}
 \upgamma_{a} ^{h,+}=\pont_\varrho[ ({\rm D}_{0, h}(a),  \mathfrak e_0^\perp)]\wedgetrois \pont_\varrho[ {\rm D}_{\perp, h}^+(a),  \mathfrak e_0^\perp)]. 
\end{equation}
  An important  consequence of  the definitions \eqref{alcatel0} and \eqref{alcatel} 
  as well as of  \eqref{important} 
  is that 
\begin{equation}
\label{equalbor}
 \upgamma_{a} ^{h,-}(x)= \upgamma_{a} ^{h,+}(x) {\rm \ for \ } x \in \partial \rQ_{h/2}^3(a), 
\end{equation}
  In analogy with \eqref{scarface},  we set,  for $p=1,2,3$,
\begin{equation}
\label{pinkmec}
\mathfrak Q^{2,\pm}_p(r, a)=\{x=(x_1, x_2, x_3) \in \R^3, x_p=a_p\pm r, \,  \underset {i\neq  p} \sup\,  \vert x_i-a_i \vert < r\}.
\end{equation}
 so that $\partial \rQ_{h/2}^3(a)=\cup\,  \overline{\mathfrak Q^{2,\pm}_p(r, a)}$. We observe  that,  in view of 
\eqref{important} and \eqref{important2}, we have
\begin{equation}
\label{equalbor2}
\left\{
\begin{aligned}
& \upgamma_{a} ^{h,+}(x)= \upgamma_{a} ^{h,-} (x)=\sP, {\rm \  for   \ } x \in \mathfrak Q^{2,\pm}_3(h \slash 2, a),   \\
 & \upgamma_{a} ^{h,+}(x)= \upgamma_{a} ^{h,-} (x)=\chi_\varrho (x_1-a_1, a_3-x_3+\frac{3h}{8}),  {\rm \ for   \ } 
 x  \in \mathfrak Q^{2,\pm}_2(h \slash 2, a), \\
& \upgamma_{a} ^{h,+}(x)= \upgamma_{a} ^{h,-} (x)=\chi_\varrho \left((x_3-a_3),- (x_2-a_2) \right),
{\rm \ for  \ }  x \in  \mathfrak Q^{2,\pm}_1(h \slash 2, a).  
\end{aligned}
\right.
\end{equation}
  We notice also the symmetry properties on the boundary, for $p=1,2,3$
\begin{equation}
\gamma_{a}^{h, +}(x)=\gamma_{a}^{h,- }(x)=\gamma_{a}^{h, +}(x\mp h\be_p)=\gamma_{a}^{h, +}(x\mp h\be_p),  {\rm \ for \ } x \in
  \mathfrak Q^{2,\pm}_p(h \slash 2, a).
  	\end{equation}
  Moreover, the maps $\upgamma_{a} ^{h,\pm}$ are Lipschitz on $\rQ^3_{h/2}(\rba)$ and  we have the gradient bound
  \begin{equation}
  \label{gradupgamma}
  \vert \nablatrois \upgamma_{a} ^{h,\pm}(x)  \vert  \leq  C     h^{-1},   \forall h\geq 0, \forall x\in \rQ^3_{h/2}(\rba), 
  \end{equation}
  where $C>0$  is some universal constant. 
  
%%%%%%%%%%%%%%%%%%%%%%%%%%%%%%%%%%%%%%%%%%%%%%%%%%%%%%%%%%%%%%%%%%%%%%%
\subsubsection {Construction of   $\Upsilon^h_{\rba}: \partial \rQ^4_{h/2}(\rba) \to \S^2$}  Let $\rba=(a, a_4)=(a_1, a_2, a_3, a_4)$. We   take  advantage of \eqref{equalbor}  and \eqref{equalbor2} to define on 
 $\partial \rQ^4_{h/2}(\rba)$ an $\S^2$-valued map $\Upsilon^h_{\rba}$  whose restriction to the top  face of the boundary is $ \upgamma_{a} ^{h,+}$  and   whose restriction to the bottom  face is $\upgamma_{a} ^{h,-}$.  We define it as follows:
 \begin{equation}
 \label{definupsilon}
 \left\{
 \begin{aligned}
&  \Upsilon_{\rba}^h (x, a_4+\frac h2)= \upgamma_{a} ^{h,+}(x),{\rm \ for \ } x\in \rQ^3_{h/2}(a) {\rm \ i.e.} \
\rbx=(x, a_4+\frac h2) \in  \mathfrak Q^{3,+}_4(h/2,  \rba),\\
&  \Upsilon_{ \rba}^h (x, a_4-\frac h2)= \upgamma_{a} ^{h,-}(x), {\rm \ for \ } x\in \rQ^3_{h/2}(a)  
 {\rm \ i.e.} \
\rbx=(x, a_4-\frac h2) \in  \mathfrak Q^{3,-}_4(h/2,  \rba), \\
%{\rm \ and \ }\\
&   \Upsilon_{\rba}^h (x, x_4)= \upgamma_{a} ^{h,+}(x)= \upgamma_{a} ^{h,-}(x) 
   {\rm \ for \ } x\in \partial (\rQ^3_{h/2}(a))  {\rm \ and \ } x_4\in [a_4-\frac h 2, a_4+\frac h2]. 
  \end{aligned}
 \right.
 \end{equation}
  It follows  from \eqref{equalbor}  that $\Upsilon_{\rba}^h$ is a Lipschitz  $\S^2$-valued map on  $\partial \rQ_{h/2}^4(\rba)$, a set  which has the topology of  $\S^3$. We deduce  from  \eqref{gradupgamma} that we have a similar gradient bound for $\Upsilon_{\rba}^h$,
  \begin{equation}
  \label{gradupsilon}
  \vert \nablatrois   \Upsilon_{\rba}^h (\rbx) \vert  \leq   C h^{-1},   \forall h\geq 0, \forall \rbx \in \partial \rQ^4_{h/2}(\rba),
  \end{equation} 
  where $C>0$  is some universal constant. By integration on  $\partial \rQ^4_{h/2}(a)$, whose measure is $8h^{3}$,  we   see  that $\rE_3$-energy of $\Upsilon_{\rba}^h$ is bounded by a constant independent of $h$, namely
\begin{equation}
\label{lilou}
\rE_3\left(\Upsilon_{\rba}^h,\partial \rQ^4_{h/2}(a)\right) \leq  {\rm K}_\upgamma, {\rm \ for \ } h>0, 
\end{equation}
where $ {\rm K}_\upgamma,$ is some universal constant.

%%%%%%%%%%%%%%%%%%%%%%%%%%%%%%%%%%%%%%%%%%%%%%%%%%%%%%%%%%%%%%%%%%%%%%%%%%%%%%%%%%%%%%%%%%%%%%%%%%%%%%%%%%%%%
\subsubsection{The map  $\Upsilon_{\rba}^h$ and the Pontryagin construction}
  The map  $\Upsilon_{\rba}^h$ can be defined alternatively on $\partial \rQ_{h/2}^4(\rba)$ using Pontryagin's constructions for a curve we define next.
  Consider  the two-dimensions square 
  $${\rm B}(\rba)=\rba+  [-h/2, h/2] \times \{(0,0)\} \times [-h/2, h/2]\subset   \rQ_{h/2}^4(\rba)\subset \R^4$$
 and  its boundary, the curve
 \begin{equation}
 \label{defL0a}
 \mathcal L_{0,\rba}=\partial \rm B(\rba).
 \end{equation}
 Both ${\rm B}(\rba)$ and its boundary  $ \mathcal L_{0,\rba}$ are    included in the two-dimensional  subspace $\rbP_{1,4}(a_2, a_3)$ of $\R^4$ given by   $\rbP_{1,4}(a_2, a_3)=\{ \rbx \in \R^4, x_2=a_2  {\rm \ and \   }x_3=a_3\}$ (see Figure
 \eqref{deformL0}, where  $\mathcal L_{0,0}= \mathcal L_{0}(0)$).  The curve $ \mathcal L_{0,\rba}$ is equipped with the framing 
 \begin{equation}
 \label{framatome0}
 \mathfrak e_{0}^\perp=( \be_3,- \be_2).
 \end{equation}
 We consider also the curve $\mathcal L_{\perp,\rba}$,   included in the hyperspace $x_1=a_1$  and
   defined by
 \begin{equation}
 \label{defLperpa}
 \mathcal L_{\perp,\rba}=\left( \mathcal C_{\perp, h}^-(a)\times \{a_4-\frac h2\} \right)
 \cup  
 \left({\rm D}_{\perp, h}^+(a)\times \{a_4+\frac h2\} \right)
 \cup F_{\perp,1} \cup F_{\perp,2}, 
 \end{equation}
 where $F_{\perp,1}$ and $F_{\perp,2}$ denote the  segments parallel to $\be_4$ given by
 \begin{equation}
\label{grasz}
 \left\{
 \begin{aligned}
 &F_{\perp,1}\equiv [N_{\rm L, \rba}^{h, +}, N_{\rm L, \rba}^{h, -}]=\{\rba\} +\{(0, -\frac h2, -\frac {3h}{8})\} \times  [-\frac h2, +\frac h 2]  {\rm \ and \ }  \\ 
&F_{\perp,2}\equiv[N_{\rm R, \rba}^{h, +}, N_{\rm R, \rba}^{h, -}]=\{\rba\} + \{(0, +\frac h2, -\frac {3h}{8})\} \times  [-\frac h2, +\frac h 2],
\end{aligned}
\right.
\end{equation}
where $\displaystyle{N_{\rm L}^{h, \pm}(\rba)= \rba+(0,  -\frac h2, -\frac {3h}{8}, \pm \frac h2)}$ and  
$\displaystyle{N_{\rm R}^{h, \pm}(\rba)= \rba+(0,  \frac h2, -\frac {3h}{8}, +\frac h2)}$.
  The curve $\mathcal L_{\perp, \rba}$ is equipped with the framing $\mathfrak e_0^\perp=( \be_1,\be_3)$
  (see Figure
 \ref{deformLperp}, where  $\mathcal L_{\perp,0}= \mathcal L_{\perp}(0)$). The main result of this subsection is:

\begin{lemma} 
\label{grimm}
The sets $\mathcal L_{0, \rba}$ and $\mathcal L_{\perp, \rba}$ are  connected curves which don't intersect. Set $\mathcal L_\rba=\mL_{0,\rba} \cup \mL_{\perp,\rba}.$
We have $\mathcal L_\rba \subset \partial(\rQ_{h/2}^4(\rba))$
 and  
\begin{equation}
\label{hermanos}
 \begin{aligned}
  \Upsilon_{\rba}^h&= \tpont_\varrho  [  \mathcal L_\rba, \mathfrak e_0^\perp],  \\
%  &\equiv 
\end{aligned}
 \end{equation} 
\end{lemma}
\begin{proof}  For the first assertion, we observe
 that $\mathcal L_{0,\rba}$ is composed of four segments of length $h$, two of them parallel to $\be_1$, the two others to $\be_4$.  The vertices $M_{\rm L}^\pm(\rba), M_{\rm R }^\pm(\rba)$ are given by
  $$
   M_{\rm L, \rba}^\pm=\rba +(-\frac h 2, 0, 0,\pm \frac h2)  {\rm \ and \ }
   M_{\rm R, \rba }^\pm=\rba +(\frac h 2, 0, 0,\pm \frac h2),
   $$
   so that 
   $$
   \mathcal L_{0,\rba}=\left[M_{\rm L, \rba}^+,M_{\rm R, \rba}^+\right] \cup
  \left[M_{\rm L, \rba}^-,M_{\rm R, \rba}^-\right]  \cup
  \left [M_{\rm L, \rba}^-,M_{\rm L, \rba}^+\right] \cup
  \left  [M_{\rm R, \rba}^-,M_{\rm R, \rba}^+\right].
   $$ 
   
   Notice that
$
\left[M_{\rm L, \rba}^\pm,M_{\rm R, \rba}^\pm\right]={\rm D}_{0,h}(a)\times \{a_4\pm \frac h 2\}
$ and that the two other segments are parallel to $\be_4$.  We have the inclusions
\begin{equation}
\label{carreaux}
\left\{
\begin{aligned}
& F_{0, {\rm top}}\equiv{\rm D}_{0,h}(a)\times \{a_4+ h \slash 2\}\subset \mathfrak Q^{3,+}_4(h/2,  \rba), \\
& F_{0, {\rm bot}}\equiv{\rm D}_{0,h}(a)\times \{a_4- h \slash 2\}\subset \mathfrak Q^{3,-}_4(h/2,  \rba), \\
& F_{0,\rm L} \equiv  \left [M_{\rm L, \rba}^-,M_{\rm L, \rba}^+\right] \subset 
	\mathfrak Q^{3,-}_1(h/2,  \rba)  {\rm \ and \ } \\
&F_{0,\rm R}\equiv  \left [M_{\rm R, \rba}^-, M_{\rm R, \rba}^+\right] \subset 
 \mathfrak Q^{3,+}_1(h/2,\rba).
 \end{aligned}
 \right.
 \end{equation}
%Notice that
%\begin{equation*}
%\left\{
%\begin{aligned}
%&\left[M_{\rm L}^\pm(\rba),M_{\rm R}^\pm(\rba)\right]={\rm D}_{0,h}(a)\times \{a_4\pm \frac h 2\}  \subset \mathfrak Q^{3,\pm }_4(h/2,  \rba) 
%\subset  \partial(\rQ_{h/2}^4(\rba)) \\
%&\left[M_{\rm L}^-(\rba),M_{\rm L}^+(\rba)\right]=(a_1-\frac h2, a_2, a_3)\times \left (\{a_4\} + [-\frac h2, \frac h 2]  \right )
%\subset \mathfrak Q^{3,-}_1(h/2,  \rba) 
%\subset  \partial(\rQ_{h/2}^4(\rba)) \\
%&\left[M_{\rm R}^-(\rba),M_{\rm R}^+(\rba)\right]=(a_1+\frac h2, a_2, a_3)\times \left (\{a_4\} + [-\frac h2, \frac h 2]  \right )
%\subset \mathfrak Q^{3,+}_1(h/2,  \rba) 
%\subset  \partial(\rQ_{h/2}^4(\rba)),  \\
%\end{aligned}
%\right.
%\end{equation*}
%so that $\mathcal L_{0,\rba}\subset  \partial(\rQ_{h/2}^4(\rba))$. The  curve $\mathcal L_{0, \rba}$ is equipped with a framing $\mathfrak e_0^\perp$ defined as follows:  we set
%\begin{equation}
%\label{bidule1}
%\left\{
%\begin{aligned}
%&\mathfrak e_{0}^\perp=( \be_3,- \be_2) {\rm \  on  \ }  F_{0, \rm top}, \mathfrak e_{0}^\perp=( \be_3,- \be_2) {\rm \  on  \ }  F_{0, \rm bot} \\
%& \mathfrak e_{0}^\perp=( \be_3,- \be_2) {\rm \  on  \ }  F_{0, L}  {\rm \ and \ }  \mathfrak e_{0}^\perp=( \be_3,- \be_2) {\rm \  on  \ }  F_{0, R}.\\
%\end{aligned}
%\right. 
%\end{equation}
 
  Concerning   $\mathcal L_{\perp,\rba}$, we have
  \begin{equation*}
  \left\{
  \begin{aligned}
  & {\rm D}_{\perp, h}^+(a)\times \{a_4 + h \slash 2\}\subset \mathfrak Q^{3,+}_4(h/2,  \rba),\\
  & \mathcal C_{\perp, h}^-(a)\times \{a_4 - h \slash 2\}\subset \mathfrak Q^{3,-}_4(h/2,  \rba),  \\
  & {F_{\perp,1} \subset 
  	\mathfrak Q^{3,-}_2(h/2,  \rba) } {\rm \ and \ } F_{\perp,2} \subset
  \mathfrak Q^{3,+}_2(h/2,  \rba), 
  \end{aligned}
  \right.
  \end{equation*}
   so that $\mathcal L_{\perp,\rba}\subset  \partial(\rQ_{h/2}^4(\rba))$ as well.    One may then check that the curves are connected and that their intersection is empty (see also the proof of Lemma \ref{commea}). Identity \eqref{hermanos} is a direct consequence of  identities \eqref{alcatel} and \eqref{alcatel0} for the maps $\upgamma_{a, h}^-$ and 
 $\upgamma_{a, h}^+$ respectively. 
\end{proof}

\subsubsection{Relationship  with  the curves $\fLk_{i, j} $ and $\DefLkperp_{i, q}$}
 \label{pes16}
 
 The sets   ${\rm D}_{\perp, h}^+(a)$,  $\mathcal C_{\perp, h}^-(a)$ and ${\rm D}_{0, h}(a)$  are designed  to represent  specific parts  of  the fibers $\fLk_{i, j}$ and $\DefLkperp_{i, q}$.  In the proof of Proposition \ref{deform}, we are led to consider points of the form 
\begin{equation}
\label{share}
a=\ra_{i, j, q}^k\equiv h(i, j, q),{\rm \ for \ some \ integers \ } i, j, q=1, \ldots, k.  
\end{equation}
These points  belong to the cube $[0, 1]^3$. 
 We   verify that $\ra_{i, j, q}^k\in \fLk_{j, q}$, so that    
%\begin{equation}
%\label{sel}
%{\rm D}_{0, h}^+(\ra_{i, j, q}^k) =\rQ^3_{h/2}(\ra_{i, j, q}^k)\cap \fLk=\rQ^3_{h/2}(a_{i, j, q}^k)\cap \fLk_{j, q}. 
%\end{equation}
the segment  ${\rm D}_{0, h}(\ra_{i, j, q}^k)$  represents  hence  the portion of the fiber $\fLk_{j, q}$ which intersects  the cube  $\rQ^3_{h/2}(\ra_{i, j, q}^k )$. Concerning the "orthogonal" sets $\mathcal C_{\perp, h}^-(\ra_{i, j, q}^k)$ and ${\rm D}_{\perp, h}^+(\ra_{i, j, q}^k)$,  we notice that, given any number $c\geq 0$ and any numbers $i', q' \in \{1, \ldots, k\}$,  we have (see e.g. Figure \ref{side0} as well as  identity \eqref{chouperche} in Remark \ref{chouchou})
\begin{equation}
\label{rabia}
\left\{
\begin{aligned}
&\ra_{i, j, q}^k\in \fLkperp_{i', q'}-c\,  \be_3,  {\rm \ provided \ } i=i' {\rm \ and \ } 6+q' h-c=qh, {\rm \ i.e. \ } (q-q')=\frac{6-c}{h}, \\
&\ra_{i, j, q}^k-\frac{3h}{8}\in \fLkperp_{i', q'}-c\,  \be_3, {\rm \ provided \ } i=i' {\rm \ and \ } 6+\frac {3h}{8}+ q' h-c=qh. 
\end{aligned}
\right.
\end{equation}
We deduce:

\begin{lemma} 
\label{round-up}
Assume that  the condition 
\begin{equation}
 \label{ondiment}
c=6-\frac {5h}{8}+(q -q')h, 
 \end{equation}
  involving only\footnote{So that only the "vertical coordinates" are involved in condition \eqref{ondiment}.} the numbers $c, q$ and $q'$,  \emph{but not}   $i$ and $j$, is fulfilled. 
Then, we  have:
  \begin{equation}
  \label{sel}
  \left \{
  \begin{aligned}
 \mathcal C_{\perp, h}^-(\ra_{i, j, q}^k)&= \DefLkperp_{i, q'}(c, h) \cap \rQ^3_{h/2}(\ra_{i, j, q}^k),    \\
{\rm D}_{\perp, h}^+(\ra_{i, j, q}^k)&=\left (\fLkperp_{i, q'}-\left(c +h\right)\be_3 \right)\cap \rQ^3_{h/2}(\ra_{i, j, q}^k)
 {\rm  \ and }\\
{\rm D}_{0, h}(\ra_{i, j, q}^k) &=\rQ^3_{h/2}(\ra_{i, j, q}^k)\cap \fLk=\rQ^3_{h/2}(a_{i, j, q}^k)\cap \fLk_{j, q}. 
 \end{aligned}
 \right. 
  \end{equation}
 \end{lemma}
 Turning to the map $\Upsilon_{\rba}^h$,  we notice that, for   a point  $\rba$ of the form
 $\rba=(\ra_{i, j, q}^k, a_4)$ where $\ra_{i, j, q}^k$ is of the form given by \eqref{share}, we have
  \begin{equation}
 \label{konami}
 \Upsilon_{\rba}^h(\rbx)=\pont\left[\fLk\cup(\fLkperp_{j, p}-(c+h)\be_3), \frameref\right](\rbx) {\rm \ for  \ }  \rbx \in 
 \partial \rQ^3 (\ra_{i, j, q}^k)\times [a_4-\frac h2, a_4 +\frac h2 ], 
\end{equation}
 provided the numbers $c, q$ and $q'$ satisfy  relation \eqref{ondiment}.  Since
 $$
 \partial \rQ^3 (\ra_{i, j, q}^k)\times [a_4-\frac h2, a_4 +\frac h2 ]=\partial \rQ^4_{h\slash 2}((\ra_{i, j, q}^k), a_4)
 \setminus \left( \mathfrak Q^{3,+}_4(h/2,  \rba) \cup  \mathfrak Q^{3,-}_4(h/2,  \rba)\right), 
 $$
   it follows  from \eqref{konami} and \eqref{definupsilon}  that 
  \begin{equation}
 \label{konami2}
 \Upsilon_{\rba}^h(\rbx)=\pont\left[\fLk\cup(\fLkperp-(c+h)\be_3), \frameref\right](\rbx), {\rm \ for  \ }  \rbx \in
 \partial \rQ^4_{h\slash 2}(\rba)
 \setminus  \mathfrak Q^{3,-}_4(h/2,  \rba) 
\end{equation}
  provided there exists some $q'\in \{1, \ldots, k\}$ such that \eqref{ondiment} holds. Notice that the r.h.s of \eqref{konami} does no longer depend on $i, j, q$.  As  a consequence we have the periodicity property
  \begin{equation}
  \label{periodicity}
   \Upsilon_{\rba}^h(x,s)= \Upsilon_{\rba\mp h\be_\ell}^h(x, s),  {\rm \ if \ } 
   x  \in  \mathfrak Q^{2,\pm}_\ell(h/2,  \rba), \ell=1,2
   {\rm \ and \ } s \in  [a_4-\frac h2, a_4 +\frac h2 ].
  \end{equation}

 \subsubsection{Topological Properties of the map $\Upsilon_{\rba}^h$} 
 We establish in this subsection the following  property:
  \begin{lemma}
 \label{topo}
  We  have $\displaystyle{ \rH (\Upsilon_{\rba}^h)=2.}$
 \end{lemma}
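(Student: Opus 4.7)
My plan is to follow precisely the strategy used for Lemma \ref{hopfion2}, namely to compute the Hopf invariant via the linking number formula \eqref{linkhopf} applied to preimages of two regular points on $\S^2$. First, I would identify $\partial \rQ_r^4$ with $\S^3$ (which changes nothing in the homotopy class of $\upgamma_{\varrho,r}$ nor in linking computations, since the linking number is a topological invariant of oriented curves in $\S^3$). The map $\upgamma_{\varrho,r}$ is, by construction, a (variant of the) Pontryagin map associated with the framed curve $\mL = \mL_1 \cup \mL_2 \subset \partial \rQ_r^4$: the equalities \eqref{equalbor} and \eqref{equalbor2} guarantee that the upper and lower definitions agree on $\partial\rQ_r^3\times[-r,r]$, producing a globally defined Lipschitz map whose singular set (the preimage of the south pole) is exactly the $\varrho$-tubular neighborhood boundary of $\mL$.

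Second, I would choose as regular points the north pole $\nP$ and a nearby point $M=(1,0,0)$ on the equator. As in the proof of Lemma \ref{hopfion2}, one checks face by face that
\[
L_{\nP} = \mL = \mL_1 \cup \mL_2,
\qquad
L_M = \mL_1' \cup \mL_2',
\]
where $\mL_i'$ is a small parallel translate of $\mL_i$ in the direction prescribed by the reference framing. The only subtlety is the $\tilde\Uppsi_\varrho[\mathcal C_{\perp,r}^+]$ piece on the lower face: here the disks are parallel to the plane $x_2=0$ rather than normal to $\mathcal C_{\perp,r}^+$, but one verifies directly from the formula $\chi_\varrho(x_1,-[x_3-\mg(x_2/r)])$ that $\tilde\Uppsi_\varrho^{-1}(\nP) = \mathcal C_{\perp,r}^+$ and $\tilde\Uppsi_\varrho^{-1}(M)$ is the graph obtained by translating $\mathcal C_{\perp,r}^+$ by $g^{-1}(0)\varrho\vec e_1$, so that the preimages are still curves parallel to $\mL_2$ in the relevant sense.

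Third, I would apply the bilinearity \eqref{linksom} of the linking number:
\[
\mathfrak m (L_{\nP}, L_M)
= \mathfrak m(\mL_1,\mL_1') + \mathfrak m(\mL_1,\mL_2')
+ \mathfrak m(\mL_2,\mL_1') + \mathfrak m(\mL_2,\mL_2').
\]
The self-linkings $\mathfrak m(\mL_1,\mL_1')$ and $\mathfrak m(\mL_2,\mL_2')$ vanish: $\mL_1$ bounds the flat rectangle $[-r,r]\times\{0\}\times\{r/4\}\times[-r,r]$ in the $2$-plane $\{x_2=0,\,x_3=r/4\}$, and $\mL_1'$ is a planar parallel translate of $\mL_1$ in the direction of the reference frame vector orthogonal to that $2$-plane, hence the two curves are unlinked; the same argument applies to $\mL_2$ (which is planar up to the graph piece $\mathcal C_{\perp,r}^+$, lying in $\{x_1=0\}$) and its translate $\mL_2'$. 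The cross-linkings, on the other hand, both equal $\mathfrak m(\mL_1,\mL_2)=1$ by \eqref{lamaison} together with the observation that $\mL_i'$ is homotopic to $\mL_i$ through a family disjoint from $\mL_j$ ($j\ne i$), so linking numbers are preserved.

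Combining these, $\mathfrak m(L_{\nP},L_M)=0+1+1+0=2$, and invoking \eqref{linkhopf} gives $\rH(\upgamma_{\varrho,r})=2$. The main potential obstacle is the careful bookkeeping needed for the modified piece $\tilde\Uppsi_\varrho[\mathcal C_{\perp,r}^+]$: one must check that its preimages are genuine smooth curves $C^0$-close to $\mathcal C_{\perp,r}^+$ and that the induced orientation agrees with the reference framing of $\mathcal L_2$ used to establish \eqref{lamaison}; both facts follow from the hypotheses \eqref{hypf} on $\mg$ and the explicit form of $\chi_\varrho$, but deserve an explicit verification.
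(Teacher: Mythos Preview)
Your proposal is correct and follows precisely the route the paper intends: the paper's proof reads in its entirety ``The proof follows immediately from Lemma \ref{hopfion2} and identity \eqref{lamaison},'' and what you have written is exactly the unpacking of that sentence, carrying over the preimage-and-linking computation of Lemma \ref{hopfion2} to the framed link $\mL=\mL_1\cup\mL_2$ with $\mathfrak m(\mL_1,\mL_2)=1$. Your flagged concern about the modified piece $\tilde\Uppsi_\varrho[\mathcal C_{\perp,r}^+]$ is the only point requiring care, and you have identified it correctly.
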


The proof relies on the following result:

  \begin{lemma}  
 \label{commea}
 	The two curves $\mathcal L_{0,\rba}$ and $\mL_{\perp,\rba}$ are linked in $\partial \rQ^4_{h \slash 2}(\rba)$ and 
   \begin{equation} 
   \label{lamaison}
   \mathfrak m\left(\mL_{0,\rba}, \mL_{\perp,\rba}\right)=1.
   \end{equation}
 \end{lemma}

 \begin{proof}  Although the  curves $\mathcal L_{0,\rba}$ and $\mL_{\perp,\rba}$ have  simple shapes, the main difficulty, in order  to prove   \eqref{lamaison},  is that we  have to work on the boundary of a 
 four-dimensional cube, a situation which is  rather hard to visualize. In order to overcome this difficulty, 
 we deform the two curves in a continuous way so to obtain a simpler geometry,  allowing us eventually to visualize the geometry  in the usual three-dimensional space, where the properties become more intuitive. 
 In order to simplify  notation,  we  assume without loss of generally that $\rba=0$ and  $h=1$,  and set $\mL_0=\mL_{0,0}$ and $\mL_\perp=\mL_{\perp,0}$. Our strategy is to construct  families of sets $\{\mathcal L_0(t)\}_{0\leq t\leq 1}$ and  $\{\mathcal L_{\perp}(s)\}_{0\leq s \leq 1}$ which are continuous deformations of $\mathcal L_0$ and $\mathcal L_\perp$ respectively, and which stay inside  the set $\partial \rQ^4_{1 \slash 2}(0)$.  Once the deformations are  performed,  $\mathcal L(1)$ and $\mathcal L_\perp(1)$ are included in  the same three-dimensional subspace of $\R^4$.
 
 \smallskip 
  For the  deformation of   $\mathcal  L_0$, we take advantage of the fact that $\mathcal  L_0$ is included in a two-dimensional space spanned by the vectors $\be_1$ and $\be_4$, that is $\mathcal L_0\subset {\rm \bf P}_{1, 4}(0)$, 
  where, 
  % for $i\not =j$ 
  %in $\{1, 2, 3, 4\}$ and $\rba  \in \R^4$,  we set
  \begin{equation}
  \label{defhyperpl}
   {\rm \bf P}_{i, j}(\rba)= {\rm Vect }(\be_i, \be_j)+ \{\rba\}  {\rm \ for \ } i\not=j  {\rm \ in \ } \{1, 2, 3, 4\} {\rm \ and \ } \rba \in \R^4.
  \end{equation}
   This allows us to construct the deformation inside  the three dimensional subspace $\mathbbmss V_2$ spanned by the vectors $\be_1, \be_3$ and $\be_4$, and even  inside the half-space   $\mathbbmss V_2^+$ defined by
  \begin{equation*}
  \mathbbmss V_2^+=\{\rbx=(x_1, x_2, x_3, x_4) \in \R^4, x_2=0, x_3 \geq 0\} \subset \mathbbmss V_2 \equiv {\rm Vect} (\be_1, \be_3, \be_4).
  \end{equation*}
  The idea is to "rotate" the rectangle $\mathcal L_0$ around one of   the segments parallel to $\be_4$, deforming it so  to stay inside  the boundary of the three-dimensional cube $\rQ^4_{1 \slash 2}(0)\cap \mathbbmss V_2$.
Ultimately,   we will check that    $\mathcal L_0(t)$ is a  connected curve and satisfies   the following  conditions 
 \begin{equation}
 \label{diffus}
 \left\{
 \begin{aligned}
& \mathcal L_0(0)=\mathcal L_0, \\
&\mathcal L_0(1)\subset    \mathfrak Q^{3,-}_4(1/2,  0)\cap  \mathbbmss V_2^+ \subset 
{\rm  \bf P}_{1, 3}(\rbA_0) {\rm \ with  \ } \rbA_0\equiv (0,0,0, -1\slash2), \\
  & \mathcal L_0(t) \subset  \partial \rQ^4_{1 \slash 2}(0)\cap \mathbbmss V_2^+ {\rm \ for \ } t \in [0, 1] {\rm \ and  \ } \\
  & \mathcal L_0(t) \cap \mathcal L_{\perp} =\emptyset, {\rm \ for \ } t \in [0, 1]. \\
 \end{aligned}
 \right. 
 \end{equation}
Since the deformation takes places in the three-dimensional space $\mathbbmss V_2^+$, we are in position to propose a graphical representation  of the deformation, see  Figure \ref{deformL0}.

\smallskip 
The deformation of $\mathcal L_\perp$ is  more involved, since 
$\mathcal L_\perp \subset \mathbbmss V_1=(\R\be_1)^\perp={\rm Vect} (\be_2, \be_3, \be_4)$, but is not included in a lower dimensional subspace. The idea is to "rotate" the part  of    $\mathcal L_\perp$  (see definitions  \eqref{defLperpa}  and \eqref{grasz})  composed of three segments   around a line parallel to $\be_2$,  so  to stay inside  the boundary of the three-dimensional cube $\rQ^4_{1 \slash 2}(0)\cap {\mathbbmss V_1^-}$, where
\begin{equation*}
  \mathbbmss V_1^-=\{\rbx=(x_1, x_2, x_3, x_4) \in \R^4, x_1=0, x_3 < 0\} \subset \mathbbmss V_1 \equiv {\rm Vect} (\be_2, \be_3, \be_4)=(\R\be_1)^\perp.
  \end{equation*}
The continuous deformation of $\mathcal L_\perp$ will transform it into a connected curve which is included into a two dimensional affine subspace. More precisely, we require $\mathcal L_\perp(s)$ to be connected and to satisfy the following conditions:
\begin{equation}
 \label{diffus2}
 \left\{
 \begin{aligned}
& \mathcal L_\perp(0)=\mathcal L_\perp, \\
& \mathcal L_\perp(1)\subset    \mathfrak Q^{3,-}_4(1/2,  0)\cap  \mathbbmss V_1 \subset 
{\rm  \bf P}_{2, 3}(\rbA_0) {\rm \ with  \ } \rbA_0\equiv (0,0,0, -1\slash2), \\
  & \mathcal L_\perp(s) \subset  \partial \rQ^4_{1 \slash 2}(0)\cap \mathbbmss V_1{\rm \ for \ } t \in [0, 1] {\rm \ and  \ } \\
  & \mathcal L_\perp(s) \cap \mathcal L_{0}(1) =\emptyset, {\rm \ for \ } t \in [0, 1]. \\
 \end{aligned}
 \right. 
 \end{equation}
At the end of the deformations the curves $\mathcal L_0(1)$ and $\mathcal L_\perp(1)$ belong to the same three-dimensional affine space, namely the  space $\mathbbmss V_4 (\rbA_0)$ given by 
\begin{equation}
\label{defv4}
\mathbbmss V_4 (\rbA_0) \equiv \mathbbmss V_4 + \{\rbA_0\}  \supset {\rm  \bf P}_{1, 3}(\rbA_0)\cup {\rm  \bf P}_{2, 3}(\rbA_0)
{\rm \ where \ } \mathbbmss V_4 \equiv {\rm Vect }(\be_1, \be_2, \be_3)=\R^3\times\{0\}. 
\end{equation}
We  next provide the  analytical details of the deformations.

 \medskip
 \noindent
   {\it Construction of $\mL_0(t)$, $0\leq t \leq 1$}.  Recall that $\mL_0$ is a square (see \eqref{defL0a}): We  construct $\mathcal L_0(t)$ is such a way that it remains a rectangle.  
    For $0\leq t \leq  1$, we denote  by $M_{\rm L}^\pm(t)$ and $M_{\rm R}^\pm(t)$ the vertices  the rectangle. Two of the vertices (the ones with the 
    "-"  superscript) are not moved by the deformation and stay fixed, namely we set, for $0 \leq t \leq 1$, 
    \begin{equation*}
  M_{\rm L}^-(t)=M_{\rm L}^-(0)=(-\frac12,0,0,-\frac12)  {\rm \ and  \ } M_{\rm R}^-(t)=M_{\rm R}^-(0)=(\frac12,0,0,-\frac12).
  \end{equation*}
   For the two other vertices, we decompose the deformation in two steps (see Figure \ref{deformL0}). First,  for $0\leq t \leq 1\slash 2$, we define 
      \begin{equation*}
     % \begin{aligned}
      M_{\rm L}^+(t)=M_{\rm L}^+(0)+t \be_3=(-\frac12, 0, t,\frac12) {\rm \ and \ }
       M_{\rm R}^+(t)=M_{\rm R}^+(0)+t \be_3=(\frac12, 0, t,\frac12).
       \end{equation*}
       whereas for $ 1\slash 2 \leq t \leq 1$, we set 
       \begin{equation*}
       \left\{
       \begin{aligned}
       &M_{\rm L}^+(t)=M_{\rm L}^+(0)+\frac 12 \be_3-
       (2t-1) \be_4=(-\frac12,0,\frac12,\frac32-2t)  {\rm \ and \ } \\
       &M_{\rm R}^+(t)=M_{\rm L}^+(0)+\frac 12 \be_3 -
       (2t-1) \be_4=(\frac12,0,\frac12,\frac32-2t).
       \end{aligned}
       \right.
     \end{equation*}
   The functions $t\mapsto M_{\rm L}^\pm(t)$ and $t\mapsto M_{\rm L}^\pm(t)$ are hence  continuous on $[0,1]$ and notice  that 
   \begin{equation}
 \label{notrice}
   \left\{
   \begin{aligned}
& \{M_{\rm L}^\pm(t), M_{\rm R}^\pm (t)\} \subset   \partial \rQ^4_{1 \slash 2}(0) \cap  \mathbbmss V_2^+ {\rm \ for \ }  t \in [0, 1]\\
&  \{M_{\rm L}^\pm(1), M_{\rm R}^\pm (1)\} \subset    \mathfrak Q_4^{3, -} (1 \slash 2, 0) \cap  \mathbbmss V_2^+.
\end{aligned}
\right. 
 \end{equation} 
      We define, for $0\leq t \leq 1$, the curve    $\mL_0(t)$ as the rectangle  
     \begin{equation}
     \label{defmL0t}
     \mL_0( t)=  [M_{\rm L}^-(t), M_{\rm R}^-(t)] \cap [M_{\rm R}^-(t), M_{\rm R}^+(t)]\cap [M_{\rm R}^+(t), M_{\rm L}^+(t)]\cap  [M_{\rm L}^+(t), M_{\rm L}^-(t)].
  \end{equation}
  It follows that  $\mL_0=\mL_0(0)$ and that  $t\mapsto \mL_0(t)$ is a continuous deformation of $\mL_0$.   It remains to verify that conditions \eqref{diffus} are satisfied.  The first and second conditions in \eqref{diffus}  are an immediate consequence of the definition of the points $M_{\rm L}^\pm(t)$ and $M_{\rm R}^\pm(t)$ and  of \eqref{notrice}. For the third  condition, we verify that 
   \begin{equation}
   \label{vospapier}
   \left\{
   \begin{aligned}
   &[M_{\rm R}^+(t), M_{\rm L}^+(t)] \subset \mathfrak Q_4^{3,+}(1\slash2, 0)  \cap \mathbbmss V_2^+, {\rm \ for \ } t\in [0, 1\slash 2]  \\
   & [M_{\rm R}^+(t), M_{\rm L}^+(t)] \subset \mathfrak Q_3^{3,+}(1\slash2, 0) \cap \mathbbmss V_2^+,{\rm \ for \ } t\in [ 1\slash 2, 1], \\
   &  [M_{\rm L}^+(t), M_{\rm L}^-(t)] \subset \mathfrak Q_1^{3,-}(1\slash2, 0) \cap \mathbbmss V_2^+, {\rm \ for \ } t\in [0, 1] {\rm  \ and \ }  \\
   &[M_{\rm R}^+(t), M_{\rm R}^-(t)]  \subset   \mathfrak Q_1^{3,+}(1\slash2, 0) \cap \mathbbmss V_2^+, {\rm \ for \ } t\in [0, 1]. 
  \end{aligned} 
   \right. 
   \end{equation}
Since $\mathfrak Q_i^{3, \pm} (1\slash2, 0)\subset \partial \rQ^4_{1\slash2}(0)$ for $i=1, \ldots, 4$,   the third condition in \eqref{diffus}  is checked. We finally turn to the fourth condition in \eqref{diffus}. To that aim,  we observe that 
    \begin{equation}
    \label{simic}
    \mathcal L_\perp \cap \mathbbmss V_2^+=\{\rbA_1\},  {\rm \ where  \ } \rbA_1=(0, 0, \frac{3}{8}, -\frac 12 ).
    \end{equation}
    Since, for any $0\leq t \leq 1$, we have  $\mathcal L_0(t)\subset \mathbbmss V_2^+$, it suffices therefore to verify that $\rbA_1 \not \in \mathcal L_0(t)$.   To check this latest fact, assume by contradiction that there is some $t_0\in [0, 1]$ such that $\rbA_1\in \mathcal L(t_0)$. Since the fourth coordinate of $\rbA_1$ is $-1\slash2$ and since the third coordinate is non zero, that one has necessarily $t_0=1$ and one checks that $\rbA_1 \not \in \mathcal L_0(1)$
    (see Figure \ref{deformL0}). This establishes the third assertion in \eqref{diffus}, and hence finishes the proof of \eqref{diffus}.

  \begin{figure}[h]
\centering
\includegraphics[height=7.5cm]{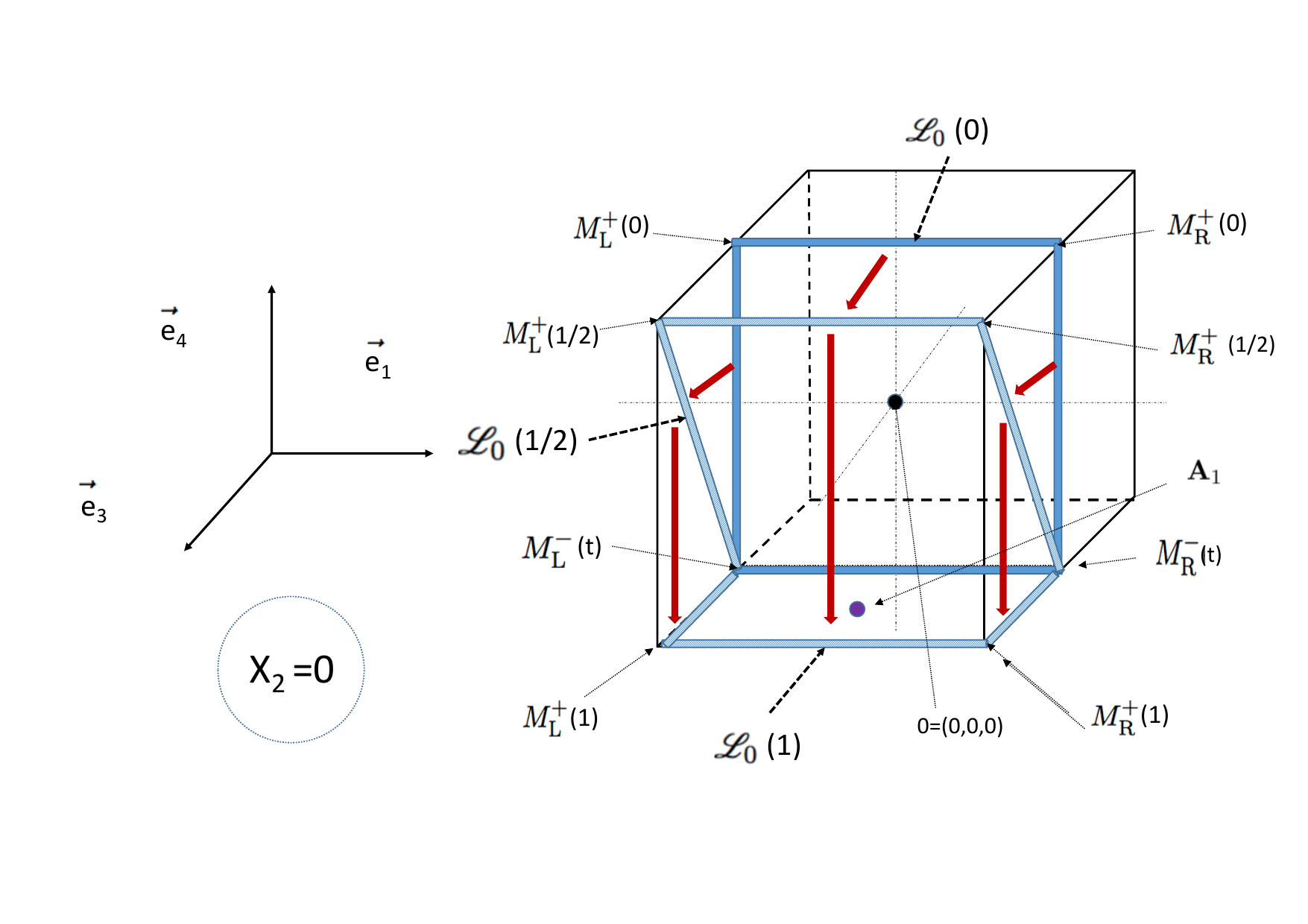}
\caption{  {\it The  deformation of the curve $\mL_0$ at  $t=0, 1\slash2$ and $t=1$ in the space $\mathbbmss V_2=(\R\be_2)^\perp.$}}
\label{deformL0}
\end{figure}

 \medskip
 \noindent
 {\it Step 2. Construction of $\mathcal L_\perp(s)$, $0\leq s \leq 1$}.  
  We perform a similar deformation for  the curve $\mL_\perp$, a  deformation which takes places in the three-dimensional space $(\R\be_1)^\perp$ ((see Figure \ref{deformLperp}).  In this deformation, the part of $\mathcal L_\perp$  provided by the  curve 
  $\displaystyle{\mathcal C_{\perp, h=1}^-(0)\times \{-\frac 12\}}$ (see \eqref{defLperpa}) remains fixed, whereas the three segments which are parts of a rectangle, are moved in a way similar to $\mathcal L_0$. We may hence decompose, for $0\leq s \leq 1$, the set   $\mathcal L_\perp(s)$ as  
  \begin{equation}
  \label{defLperpa1}
 \mathcal L_{\perp}(s)\equiv  \left(\mathcal C_{\perp, h}^-(0)\times \{-\frac 12\} \right) \cup   
[N_{\rm L}^-(s),N_{\rm L}^+(t)] \cup [N_{\rm L}^+(t),N_{\rm R}^+(t)]\cup [N_{\rm R}^+(t),N_{\rm R}^-(t)] .
 \end{equation}
 The vertices $N_{\rm L}^{\pm}(t)$ and  $N_{\rm R}^{\pm}(s)$ are defined as follows. We  first  set, for $0\leq s \leq 1$
 \begin{equation*}
 N_{\rm L}^-(s)=N_{\rm L}^-(0)=(0,-\frac12,-\frac38,-\frac12)  {\rm \ and  \ }
 N_{\rm R}^-(s)=N_{\rm R}^-(0)=(0,\frac12,-\frac38,-\frac12), 
 \end{equation*}
 so that these points are \emph{not moved}. 
   For the two other points, they are moved, for   $0\leq s \leq 1\slash 2$, in the direction $-\be_3$ according to the formulae
 \begin{equation}
 \label{defLperpa2}
 \left\{
 \begin{aligned}
 &N_{\rm L}^+(s)\equiv N_{\rm L}^-(0) -\frac{s}{4} \be_3+\be_4=\left(0, -\frac12, -\left(\frac 38+\frac{s}{4}\right),\frac12\right) {\rm \ and \ }  \\
 &N_{\rm R}^+(s)\equiv N_{\rm R}^-(0) -\frac{s}{4} \be_3+\be_4=\left(0,\frac12, -\left(\frac 38+\frac{s}{4}\right),\frac12\right).
 \end{aligned}
 \right.
  \end{equation}
Hence, we have  $\displaystyle{N_{\rm L}^+(\frac12)=(0, -\frac12, -\frac 12,\frac12)}$ and 
$\displaystyle{N_{\rm R}^+(\frac 12)=(0, \frac12, -\frac 12,\frac12)}$.
For $ 1\slash 2 \leq s \leq 1$, we are moved in the direction $-\be_4$ according to  
\begin{equation}
 \label{defLperpa3}
\left\{
\begin{aligned}
&N_{\rm L}^+(s)=N_{\rm L}^+(\frac 12)-
(2s-1) \be_4=\left(-\frac12,0,\frac12,\frac32-2s\right)  {\rm \ and \ } \\
&N_{\rm R}^+(s)=N_{\rm R}^+ (\frac 12) -
 (2s-1) \be_4=\left(\frac12,0,\frac12,\frac32-2s\right).
 \end{aligned}
 \right.
 \end{equation}
We next verify that \eqref{diffus2} is satisfied. First since $N_{\rm L}^\pm(0)=N_{\rm L, 0}^{1, \pm}$ and $N_{\rm R}^\pm(0)=N_{\rm R,0}^{1, \pm}$
(see \eqref{grasz}) we deduce that $\mathcal L_\perp(0)=\mathcal L_{\perp}$. On the other hand, since 
$\displaystyle{ N_{\rm L}^+(1)=(0, -\frac12, -\frac 12,-\frac12)}$ and 
$\displaystyle{N_{\rm R}^+(1)=(0, \frac12, -\frac 12,-\frac12), }$
 the points $N_{\rm L}^\pm(1), N_{\rm R}^\pm(1)$  belong the face  $\mathfrak Q^{3,-}_4(1/2,  0) \subset \R^3 \times \{-1\slash 2\}$, so  that the first and second assertions in \eqref{diffus2} are established.  The third  is an immediate consequence of definitions \eqref{defLperpa1},\eqref{defLperpa2}, and \eqref{defLperpa3}.  Finally for the fourth assertion in \eqref{diffus2},  we notice that, for any $s\in [0, 1]$, we have, similar to  \eqref{simic} 
\begin{equation}
\mathcal L_\perp (s) \cap \mathbbmss V_2^+ =\{\rbA_1\},  {\rm \ where  \ } \rbA_1=(0, 0, \frac{3}{8}, -\frac 12 ).
\end{equation}
Since $\mathcal L(1) \subset \mathbbmss V_2^+$ and since $\rbA_1 \not \in \mathcal L (1)$, we deduce the  third assertion, so that the proof of \eqref{diffus2} is complete.

  \begin{figure}[h]
\centering
\includegraphics[height=7.5cm]{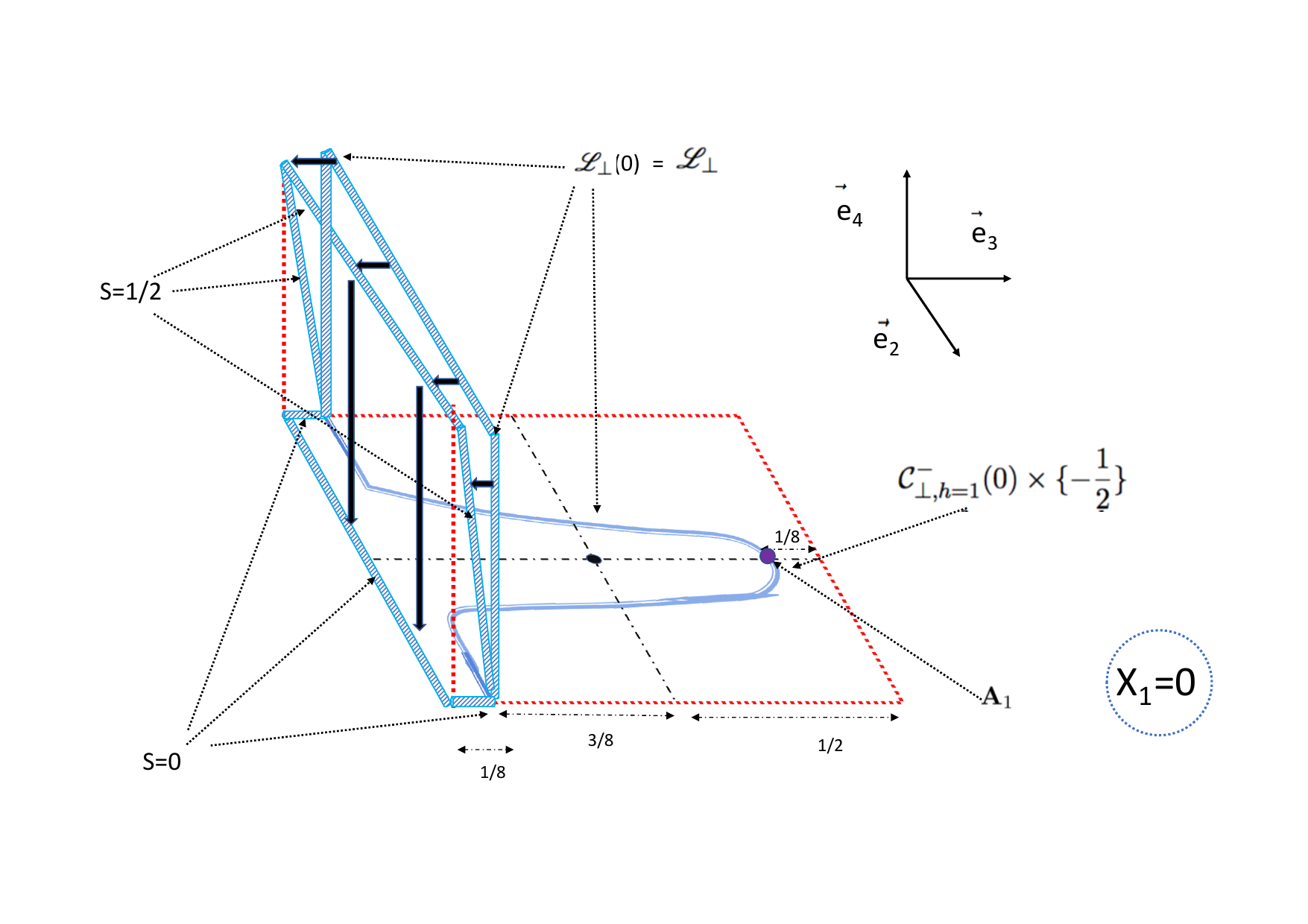}
\caption{  {\it The  deformation of the curve $\mL_\perp$ at  $t=0, 1\slash2$ and $t=1$ in the space $\mathbbmss V_1=(\R\be_1)^\perp$.}}
\label{deformLperp}
\end{figure}

  \begin{figure}[h]
\centering
\includegraphics[height=7.5cm]{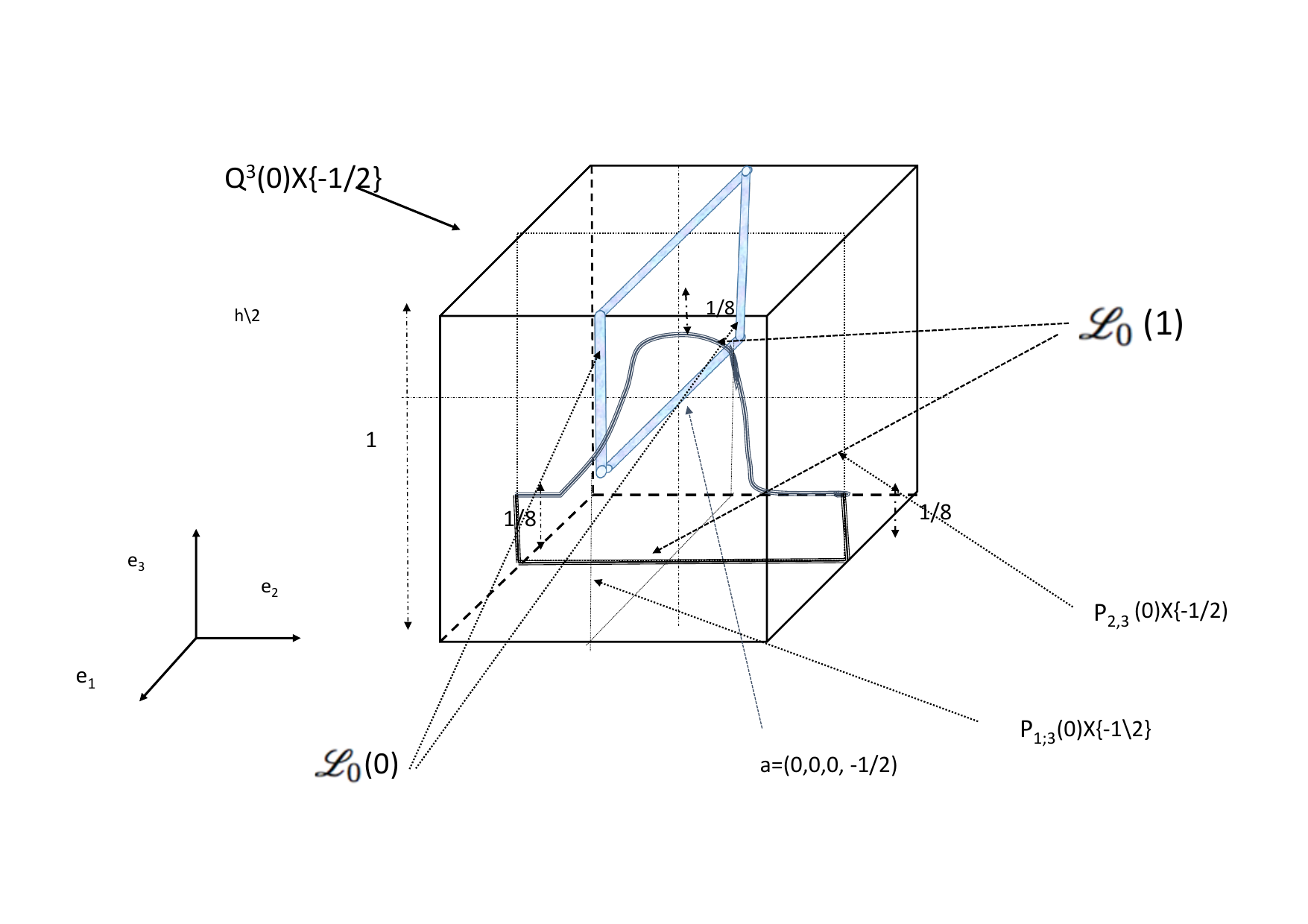}
\caption{  {\it The  linking of the  curves $\mL_0(1)$ and $\mL_\perp(1)$.}}
\label{anneau}
\end{figure}

 \medskip
 \noindent 
 {\it Proof of Lemma \ref{commea} completed}. 
By continuity of the linking number  and of the deformations, we have
\begin{equation*}
\begin{aligned}
\mathfrak m \left( \mL_0, \mL_\perp\right)&=\mathfrak m \left( \mL_0 (0), \mL_\perp\right)   
= \mathfrak m \left( \mL_0 (t), \mL_\perp\right),  {\rm \ for \ any \ } t \in [0, 1], \\
&=\mathfrak m \left( \mL_0 (1), \mL_\perp\right).
\end{aligned}
\end{equation*} 
Similarity, we have 
\begin{equation*}
\begin{aligned}
\mathfrak m \left( \mL_0(1), \mL_\perp\right)&=\mathfrak m \left( \mL_0 (1), \mL_\perp(0)\right)  
= \mathfrak m \left( \mL_0 (1), \mL_\perp(s)\right), {\rm \ for \ any \ } s \in [0, 1] \\
&=\mathfrak m \left( \mL_0 (1), \mL_\perp(1)\right),
\end{aligned}
\end{equation*} 
so that we obtain 
 \begin{equation}
 \label{laboue}
  \mathfrak m\left(\mL_0, \mL_\perp\right)= \mathfrak m\left(\mL_0(1), \mL_\perp(1)\right).
 \end{equation}
We may now take advantage, in view of \eqref{defv4},   that the two curves are planar curves  in the three dimensional affine  space $\mathbbmss V_4(\rbA_0)= \R^3 \times \{-1\slash 2\}$, included in planes which are not parallel, see Figure \ref{anneau}. Using  the method of   crossing numbers, we may then show that  
  $$
   \mathfrak m\left(\mL_0(1), \mL_\perp(1)\right)=1.
  $$
  Combining this identity with identity \eqref{laboue}, we deduce the desired result.  
\end{proof}

\begin{proof}[Proof of Lemma \ref{topo}]   Since  $\partial \rQ^4_{h/2}(\rba)$  is not \emph{a smooth manifold}, and since the framing might appear as discontinuous, we rely instance on the definition  of the Hopf invariant \eqref{linkhopf} based on  the linking number of two pre-images, and argue as  in the proof of Lemma \ref{hopfion2}.  As there, we set, for $Q \in \S^2$, $\displaystyle{L_Q={\Upsilon_{\rba}^h}^{-1}(Q)}$, and consider the points $M=(1, 0, 0)$ and $\nP$  on $\S^2$.  We have, in view of the definition \eqref{hermanos} of $\Upsilon_{\rba}^h$
\begin{equation}
\left\{
\begin{aligned}
&L_{{}_{\nP}}=\mL_{0,\rba} \cup \mL_{\perp,\rba} {\rm \ and \ } \\
&L_M=\left( \mL_{0,\rba}+g^{-1}(0)\varrho_k \be_3 \right) \cup \left(\mL_{\perp,\rba}+ g^{-1}(0)\varrho_k \be_1\right).
\end{aligned}
\right.
\end{equation}
The proof of Lemma \ref{topo} is  then completed adapting identity \eqref{frais} of Lemma \ref{hopfion2}. 
\end{proof}
%%%%%%%%%%%%%%%%%%%%%%%%%%%%%%%%%%%%%%%%%%%%%%%%%%%%%%%%%%%%%%%%%%%%%%%%%%%%%%%%%%%%%%%%%%%%%%%%%%%%%%%%%%%%%%%%%%%%%%%%%%%%%%%%%%%%%%%%
 \subsubsection{Extension to the cube  $\rQ^4_{h/2}(\rba)$ and energy estimates}
 Finally, we introduce   the extension $\boxbox_{h, \rba}$ of  $\Upsilon_{\rba}^h$ to the cube $\rQ^4_{h/2}(\rba)$ given by 
\begin{equation}
\label{etoo}
\boxbox_{h, \rba} (\rbx)=\Ext_{h/2, \rba} \left(\Upsilon_{\rba}^h\right)(\rbx), {\rm \   for  \  }  \rbx \in \rQ^4_{h/2}(\rba).  
\end{equation}
\begin{lemma}
\label{extension}  The map $\boxbox_{h, \rba}$ is locally Lipschitz on the cube  $\rQ^4_r(\rba)\setminus \{a\}$, where  it possesses a point singularity  of Hopf invariant equal to $2$.  We  have the energy identity
\begin{equation}
\label{engamma}
\rE_3(\boxbox_{h, \rba}) \leq  {\rm \bf K}_{\rm box} h, {\rm \ for \ any \ } h>0, 
\end{equation}
where $ {\rm \bf K}_{\rm box}$ is a universal constant. 
\end{lemma}
\begin{proof}  The first assertions  follow from the results in subsection \ref{cubic}. Concerning  the energy estimate \eqref{engamma}, it  is  a direct consequence  of \eqref{enextension}  and \eqref{lilou},  with 
${\rm \bf K}_{\rm box}= {\rm K _{\rm ext}}{\rm K}_\upgamma$.
  \end{proof}

%%%%%%%%%%%%%%%%%%%%%%%%%%%%%%%%%%%%%%%%%%%%%%%%%%%%%%%%%%%%%%%%%%%%%%%%%%%%%%%%%%%%%%%%

%We refer  can be found in \cite{Be2} we therefore omit it. 
%%%%%%%%%%%%%%%%%%%%%%%%%%%%%%%%%%%%%%%%%%%%%%%%%%%%%%%%%%%%%%%%%%%%%%%%%%%%%
\section{Proof of Proposition \ref{deform} }
\label{laforme}

\subsection {General outline  of the proof}
%%%%%%%%%%%%%%%%%%%%%%%%%%%%%%%%%%%%%%%%%%%%%%%%%%%%%%%%%%%%%%%%%%%%%%%%%%%%%%%%%%%%%%%%%%%%
\subsubsection{ Crossings of sheaves} 
\label{dreamteam}
We provide in  this section the detailed  construction   the map  $\Gordk$, which is defined on the strip 
$\Lambda=\R^3\times [0, 50]\subset \R^4$,   with values into  $\S^2$, 
 and  corresponds to a  deformation of the Spaghetton map $\Spagk$ to a constant map.  The main part of the construction consist of deforming  
 $\Spagk$ to a  map of trivial homotopy class, the later being deformed to a constant maps thanks to Proposition \ref{trivialextend}.   In the deformation process, the fourth space variable $x_4$ stands for a deformation or \emph{time} parameter. The construction relies on corresponding deformations of the fibers of the sheaf  $\fLkperp$, the sheaf $\fLk$ remaining  unchanged.   The value of $\Gordk$ is  then obtained thanks to the Pontryagin construction or its variant.  
  The guiding idea   for deforming $\fLkperp$  is to "push down" along the $x_3$-axis the set $\fLkperp$,  while keeping  the set  $\fLk$ fixed.  Singularities are  created when two fibers meet and cross.    When the sheaf  $\fLkperp$ has been pushed down sufficiently, then the two sheaves are no longer linked, so that we obtain for the corresponding three-dimensional "time" slices  a map with trivial homotopy class. 

  In order to present the strategy, let us start with a naive approach.  If we would  push   down   the sheaf $\fLkperp$  in the direction $ \be_3$ with \emph{constant speed} $1\slash2$, then  the transported sheaf   at time $s\geq 0$ is  given by  $\displaystyle{\fLkperp -s\slash2 \, \be_3}$.  It  would cross the fixed sheaf $\fLk$ at the \emph{collision times}
    $$\displaystyle{s_\ell=10+ 2\ell h}, {\rm \  where \ } \ell=1, \ldots, 2k-1.$$
     For $s>s_{2k-1}$, the two sheaves would  no longer  be linked.    Collisions occur in the straight parts\footnote{They consist  of segments parallel to $\be_2$ and $\be_3$ respectively.} of the sheaves $\fLk$ and $\fLkperp-  s_\ell \slash2 \, \be_3$.  The points   where the fibers would cross  belong to the cube $[0, 1]^3$ and are given by
  \begin{equation}
 \label{singr3}
 {\rm a}_{i, j, q}^k=(\frac{i}{k}, \frac{j}{k}, \frac{q}{k})=(i\, h, j \, h, q\, h )=h(i, j, q),  {\rm \ for \ }  i, j, q=1, \ldots, k 
  {\rm \ with \ } h=\frac 1 k, 
 \end{equation}
 so that the set of crossing points in $\R^3$ is given by 
 \begin{equation}
 \label{rbAstar}
   \rbA_\star^k \equiv \{{\rm a}_{i, j, q}^k\}_{i, j, q=1, \ldots, k}=(h\mathbb I_k)^3=\boxplus_k^3(h)\subset \fLk \cap [0,1]^3.
   \end{equation}
In view of the first relation in \eqref{rabia},  we also  have 
 \begin{equation}
 \label{singloton}
 {\rm a}_{i, j, q}^k  \in \fLkperp -c\be_3, {\rm \ provided \ } q-\frac{6-c}{h} \in \{1, \ldots, k\}.
 \end{equation}
% Due to some unessential technical reason, the actual singularities will be translated by $-(h\slash 8)\be_3$  and  will project onto the points 
 % \begin{equation}
 %\label{singr3bis}
 %\tilde \alpha_{i, j, q}^k= \alpha_{i, j, q}^k-\frac h 8 \be_3=(i\, h, -j \, h, (q-\frac 1 8)\, h ) \in \mathcal Q_0  {\rm \ for \ }  i, j, q=1, \ldots, k.
% \end{equation}
 Since our aim is to construct $\S^2$-valued maps based on the Pontryagin construction,  we would have to face, if \emph{we would use} constant speed  transportation of $\fLkperp$ as above, the presence of crossing points at the times $s_\ell=10+2\ell h$, $\ell=1, \ldots, 2k-1$. These singularities  are  however incompatible with the Pontryagin construction, so that an adaptation of the  strategy  is required.   
 The main tool which allows  the crossing of fibers, in the  Sobolev  context,   has been  developed so far in Subsection \ref{creahopf}.  It is tailored in such a way  that we may use it here as a "black box". It requires as a preliminary step   the "rounding" of curves near singularities, for which we use the tools developed in Subsection \ref{domain} as another "black box". 
%%%%%%%%%%%%%%%%%%%%%%%%%%%%%%%%%%%%%%%%%%%%%%%%%%%%%%%%%%%%%%%%%%%%%%%%%%%%%%%%% 
 \subsubsection{Adapting the strategy, dividing into steps}
%%%%%%%%%%%%%%%%%%%%%%%%%%%%%%%%%%%%%%%%%%%%%%%%%%%%%%%%%%%%%%%%%%%%%%%%%%%%%%%%%%
 The idea we are going to develop  is still to rely \emph{deformations of the sheaves}, so that we denote, for given time 
 $s>0$, by  $\fLk_{j, q} (s)$ and $\fLkperp_{i, q}(s)$ the deformations at time $s$ of the curves  $\fLk_{j, q} $ and $\fLkperp_{i, q}$ respectively, imposing the initial  value conditions  
 $$ 
 \fLk_{j, q} (0)=\fLk_{j, q}  {\rm \ and \ } \fLkperp_{i, q}(0)=\fLkperp_{i, q}.
 $$
  As mentioned, the sheaf $\fLk$ will not move, so that we have throughout
\begin{equation}
\label{notemoved}
\fLk_{j, q} (s)=\fLk_{j, q}  {\rm \ for \ every \ }  s \geq 0.
\end{equation}
In order to define  $\fLkperp(\cdot)$, we  introduce the discrete set of  times $\{\tk_n, n=1, \ldots, 4k\}$ given  by 
 \begin{equation}
 \label{timestep}
 \tk_1=10+\frac{3h}{4} {\rm \ and \ } \tk_{n+1}=\tk_n+ h, {\rm \ so \ that\ } \tk_n= nh +\tau_h,   {\rm \ for \ }  n \in \{1, \ldots, 4k-1\}, 
  \end{equation}
where $\displaystyle{\tau_h=10-\frac{h}{4}}$, $\displaystyle{h=k^{-1}}$, the final time $\tk_{4k}$ being given by  $\tk_{4k}=\tk_{4k-1}+40$.  We  divide the construction into   $4k$  distinct  steps.   Each step  $n$  corresponds to a specific  time interval $[\tk_{n-1}, \tk_{n}]$, with $\tk_0=0$. 
   All these intervals, except the first and the last one,   have hence size $h$. At  each step $n\in \{ 1, \ldots, 4k\}$, we  define the  restriction of  the  map $\Gordk$  to  the corresponding strips in the $\R^4$ space, namely the strip  $\Lambda^k_n$  given by
   \begin{equation}
   \label{meryl}
    \Lambda^k_n=\R^3 \times  [\tk_{n-1}, \tk_{n}], 
    \end{equation}
   taking care that the construction yields  \emph{the same value  on their interfaces}, that is on the time slices\footnote {Notice indeed that $\Lambda^k_n \cap \Lambda^k_{n+1}=\R^3 \times \{\tk_n\}$.}   $\R^3\times \{ \tk_n\}$. 

%%%%%%%%%%%%%%%%%%%%%%%%%%%%%%%%%%%%%%%%%%%%%%%%%%%%%%%%%%%%%%%%%%%%%%%%%%%%%%%   
   \subsubsection{Description of  the deformation off the crossing region}
%%%%%%%%%%%%%%%%%%%%%%%%%%%%%%%%%%%%%%%%%%%%%%%%%%%%%%%%%%%%%%%%%%%%%%%%%%%%%%%
   We  focus in most parts  of the construction of $\Gordk$ on  the region where fibers  cross, that is on appropriate  neighborhoods of the  cube $[h, 1]^3$ of $\R^3$: Such neighborhoods will be called  \emph{crossing regions}. In these regions,  which contain  the \emph{crossing points} introduced  in \eqref{singr3},    the sheaf $\fLkperp(\cdot)$ undergoes the most delicate transformations. However, \emph{off the crossing regions}, the dynamics has a simple behavior.  For instance, choosing the cube $[0, 1+h]^3$ as a neighborhood of the unit cube,  our construction (see \eqref{poucepied}  and \eqref{dejavu} below) imposes  that 
   \begin{equation}
  \label{presque}
  \fLkperp (s)\setminus [0, 1+h]^3=\left( \fLkperp-c_h(s) \be_3  \right) \setminus[0, 1+h]^3,  {\rm \ for \ } s \in [0, \tk_{4k-1}]. 
  \end{equation}
   The  function $s \mapsto c_h(s)$ represents   a piecewise affine function  defined  on $[0, \tk_{4k-1}]$ by
  \begin{equation}
  \label{defchuisse}
  \left\{
  \begin{aligned} 
  c_h(s)&=\frac{s}{2},{\rm \ for \ } s\in [0, \tk_1], \\
  c_h(s)&=\frac{\tk_{2\ell}}{2},  {\rm \ for \ } s\in [\tk_{2\ell}, \tk_{2\ell+1}]   {\rm \ and \ } \ell=1, \ldots, 2k-1,\\
  c_h(s)&=s-\frac{\tk_{2\ell-1}}{2}, {\rm \ for \ } s\in [\tk_{2\ell-1}, \tk_{2\ell}] {\rm \ and \ } \ell=1, \ldots, 2k-1.  
  \end{aligned}
  \right. 
  \end{equation}
   % so that the transformation corresponds to a simple translation.
  We
    construct $\Gordk$ accordingly, for $
    (x, x_4)\in 
    \left( \R^3\setminus [0, 1+h]^3 \right) \times  [0, \tk_{4k-1}]$, as
    \begin{equation}
  \label{gordpresque} 
   \Gordk (x, x_4)=\pont\left[\left( \fLkperp-c_h(x_4) \be_3  \right)\cup \fLk\right](x).  
 \end{equation} 
 %  \eqref{presque} and \eqref{gordpresque}  the  
 % and $\displaystyle{ x_4\in  [0, \tk_{4k-1}]}$.  
   The frame on the fibers of $\fLk(s)$ and $\fLkperp(s)$  are taken to be the reference frames, so that we omit to mention them in the operator $\pont$.
It follows from \eqref{presque} and \eqref{defchuisse} that outside $[0, 1+h]^3$, the sheaf $\fLkperp$ is translated in the $\be_3$ direction with speeds either equal to $-1$, for the even steps,  or $0$, for the odd steps, except the first one, for which the speed is $1\slash2$,   so that the "average speed" is $-1\slash 2$. 

The precise construction provided in the next subsections involves \emph{two distinct}  crossing regions. For the \emph{odd steps}, we will work on the region $\Ocrosstar$ defined by  
\begin{equation}
\label{hermanstraut}
\Ocrosstar \equiv [0, 1]^3+ \frac{h}{2}(\be_1+\be_2+\be_3)= \underset{i, j, q=1}{\overset k \cup } \overline{\rQ_{h\slash2}^3(\ra_{i, j, q})}, 
\end{equation} 
and  define $\Gordk$ on each of the elementary cubes 
$\rQ_{h\slash2}^3(\ra_{i, j, q})$ using the results  of Subsection \ref{creahopf}. For the construction on the even steps, we use  instead the set $\tOcrosstar$ given  by
 \begin{equation}
\label{hermanstraut2}
\begin{aligned}
\tOcrosstar &\equiv (\frac h 8, 1+\frac{h}{8}]^3+ \frac{h}{2}(\be_1+\be_2+\be_3)\\
&= 
\left(\Ocrosstar + \frac h 8 \be_3\right)\setminus [0, 1+\frac{h}{2}]^2 \times \{\frac {5h} {8}\}.  
\end{aligned}
\end{equation} 
Both $\Ocrosstar$ and $\tOcrosstar$ are subsets of $[0, 1+h]^3$.
\subsubsection{The  preliminary step}
It corresponds to our   first step, aimed to define  the map $\Gordk$ on $\Lambda_1^k=\R^3 \times [0, \tk_1]$.  
We move, along the ideas  exposed in  Subsection \ref{dreamteam}, the sheaf $\fLkperp$	 along   the constant vector field $\displaystyle{1 \slash 2 \vec X_0=- 1\slash2\be_3}$ while keeping $\fLk$ fixed  throughout. The purpose is  to bring $\fLkperp$ sufficiently close to $\fLk$, so to enter the crossing region $\tOcrosstar$.   
For  $0\leq s \leq \tk_1$,  the sheave   $\fLkperp(s)$ is hence translated with constant speed, without touching $\fLk$. It is hence given by
 \begin{equation}
 \label{poucepied}
 \fLkperp (s)\equiv \fLkperp-\frac{s}{2}\be_3, \,  {\rm \  for \ }  s \in [0, \tk_1]=[0, 10+\frac{3h}{4}].
\end{equation}
 This is consistent with \eqref{presque}, definition \eqref{presque} being simply extended to the whole sheaf $\fLkperp(s)$. As there, we  define the map $\Gordk$ on the strip $\Lambda^k_1\equiv \R^3\times [0,\tk_1]$ accordingly as 
\begin{equation}
\label{stepzero}
\Gordk (x, s) =\pont_\varrho\left [(\fLkperp -\frac{s}{2} \be_3) \cup \fLk\right](x), \forall x \in \R^3  {\rm  \ and \ } s \in [0, \tk_1].
\end{equation}
We have:

\begin{proposition} 
\label{restriction1}
The map  $\Gordk$  defined in \eqref{stepzero} is Lipschitz on the strip $\Lambda^k_1$. We have 
\begin{equation}
\label{restriction11}
\vert  \nabla_{_4} \Gordk(\rbx)  \vert \leq {\rm C}^0_{\rm flow } k,  {\rm \ for \ }   \rbx\in \Lambda_1^k
\end{equation}
and, setting  ${\rm K}^0_{\rm flow}= 11 (60)^3({{\rm C}^0_{\rm flow }})^3$,
\begin{equation}
\label{energie1} 
\int_{\Lambda^k_1} \vert \nabla_{_4} \Gordk\vert^3 \leq 11 (60)^3({{\rm C}^0_{\rm flow }})^3  k^3\leq  {\rm K}^0_{\rm flow}k^3.
\end{equation}
\end{proposition}
\begin{proof}Inequality \eqref{restriction11} is a direct consequence of inequalites \eqref{constantgord}   and \eqref{constantgord2} of   Remark \ref{constantspag}. For inequality   \eqref{energie1}, we notice that   $ \Gordk$ is constant  on the complement of $[-30, 30]^3\times [0, \tk_{4k-1}]$, so that its gradient vanishes there. Therefore,  integrating  on the set $[-30, 30]^3\times [0, \tk_{1}]$, we obtain the desired result, observing that  $0<\tk_1\leq 11$. 
\end{proof}

 \medskip
We verify that  $\Gordk$ is indeed  a deformation of $\Spagk$, since
  \begin{equation}
\label{pointzero}
\Gordk(\cdot, 0)=\Spagk(\cdot)   {\rm \ on \ } \R^3. 
\end{equation}
We notice  also
% $\displaystyle{\dist ((\fLkperp -\frac s2 \be_3, \fLk)\geq  \frac{5h}{8}}$ for $s\in [0,\tk_1]$ and that  
%\begin{equation} 
%\label{labroche}
 % \dist ((\fLkperp -\frac{\tk_1}{2} \be_3,\,  \fLk)=\frac{5h}{8}, 
%\end{equation}
 that $\fLkperp(\tk_1)$ has come close to $\fLk$. Indeed, we have at time $\tk_1$, setting 
 $\displaystyle{\fL_{\star, 1}^{k, \perp}=   \underset{i=1}{\overset {k}\cup} \fLkperp_{i, 1}}$, the relations 
\begin{equation*}
\left\{
\begin{aligned}
\fLkperp(\tk_1) \cap \tOcrosstar &=\left(  \fLkperp_{\star, 1}-\frac{\tk_1}{2}\be_3\right)\cap \tOcrosstar   = 
\underset{i=1}{\overset {k}\cup}\{ih\} \times  
[\frac{h}{2}, (1+\frac h2 ]  \times \{1+\frac{5h}{8}\},  \\
\fLkperp(\tk_1) \cap \tOcrosstar&=\emptyset {\rm \ for \ } s \in [0, \tk_1), 
\end{aligned}
\right. 
\end{equation*} 
so that $\tk_1$ represents the entrance time into $\tOcrosstar$. The lowest fibers, with respect  to the $x_3$ coordinate, 
 of $\fLkperp (\tk_1)$, i.e.  the curves $\fLkperp_{i, 1}(\tk_1)$  for $i=1, \ldots,  k$, are  at distance $5h/8$ of  the upper fibers of $\fLk$, i.e. the curves $\fLk_{j, k}$ for $j=1, \ldots,  k $.  
 %the nearest points being the points   in the set  
%\begin{equation}
%\label{defA1k}
%\rbA_1^k=\{\ra_{i, j, k},  \,   i, j=1, \ldots, k\}=\boxplus_k^2(h)\times \{1\} \subset \fLk_{\star,k} \equiv 
%\underset{j=1}{\overset k \cup } \fLk_{j, k}, 
%\end{equation}
%since, for the last inclusion we have ${\rm a}_{i, j, k}^k \in \fLk_{j, k}$ for 
%$i, j\in \{1, \ldots, k\}$. Moreover, it holds 
%$$
%{\rm dist} (\rbA_1, \fLkperp_{i, 1}(\tk_1))={\rm dist} ({\rm a}_{i, j, k}^k, \fLkperp_{\star , 1}(\tk_1))=\frac {5h}{8}=\frac{h}{2}+\frac{h}{8}, 
%{\rm \ for \ } i, j\in \{1, \ldots, k\}.
 %$$

  \begin{figure}[h]
\centering
\includegraphics[height=10cm]{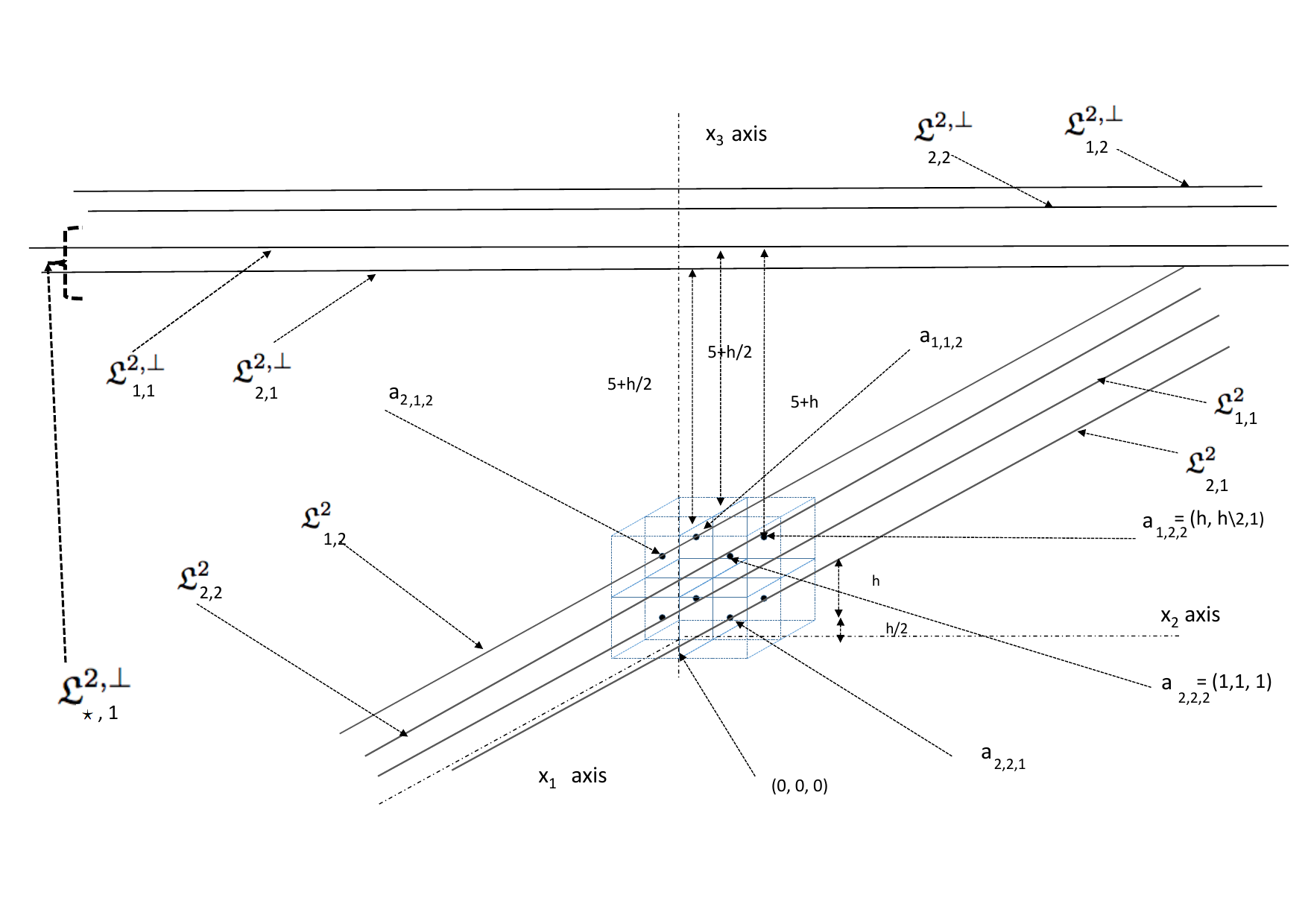}
\caption{  {\it  A zoom of  the crossing $\Ocrosstar$ area at time $0$, $k=2$.}}
\label{timetk2}
\end{figure}

%%%%%%%%%%%%%%%%%%%%%%%%%%%%%%%%%%%%%%%%%%%%%%%%%%%%%%%%%%%%%%%%%%%%%%%%%%%%%%%%%%
\subsubsection{The iterative argument}
%%%%%%%%%%%%%%%%%%%%%%%%%%%%%%%%%%%%%%%%%%%%%%%%%%%%%%%%%%%%%%%%%%%%%%%%%%%%%%%%%%%
The main part of the rest of the construction is to define the map $\Gordk$  on the set $\R^3 \times [\tk_1, \tk_{4k-1}]$. As mentioned, we proceed by steps,  constructing at step $n$ the   map $\Gordk$ on the  strip $\Lambda_n^k$. Steps go by \emph{pairs of even and off steps}, these pairs of steps being indexed by an integer $\ell\in \{1, \ldots, 2k-1\}$:  

\smallskip
- On steps with \emph{even index}  $n=2\ell$, we deform   the fibers   $\fLkperp_{i, q}(\tk_{2\ell-1})$  intersecting   $\tOcrosstar$ using the flow generated by the vector field $\vec X_1^k$, whereas the fibers  $\fLkperp_{i, q}(\tk_{2\ell-1})$  which do not intersect $\tOcrosstar$  are moved by the constant flow $\vec X_0$.  As a resulting effect, the fibers intersecting  $\tOcrosstar$ remain smooth, but are  \emph{curved} near the singularities defined in \eqref{singr3} which are on their way. This procedure allows them to \emph{circumvent  these crossing points}, and is  a preliminary step for the \emph{next odd step}, where crossing  takes place.   We  define $\Gordk$ accordingly, using the Pontryagin's construction or its variant.  The restriction of the map $\Gordk$  to the corresponding sets $\Lambda_{2\ell}$ is hence \emph{Lipschitz}.  
 
 \smallskip
 - On steps with \emph{odd index} $n=2\ell+1\geq 3$, we use the extension operator and the results in Subsection \ref{creahopf}, in particular the map $\Upsilon_{\rba}$,  to allow "fibers to cross".  The map will then have space-singularities of the form $(\ra, (\tk_{2\ell}+\tk_{2\ell+1})\slash2)$, where
$\ra$ is of the form \eqref{singr3}.

\smallskip
  An  salient feature of this part of the construction is that, at the end of the  odd step $2\ell-1$, with $\ell=1, \ldots, 2k$,  we obtain the following  identities, stated as property\footnote{It corresponds to imposing a  "boundary value" on the time slice  $\R^3 \times \tk_{2\ell-1}$} $\mathcal {BV}(\ell)$
\begin{equation}
\label{mochouille}
\mathcal {BV}(\ell)
\left\{
\begin{aligned}
\fLkperp (\tk_{2\ell-1})  &=\fLkperp-\frac{\tk_{2\ell-1}}{2} \be_3=\fLkperp-c_h\left(\frac{\tk_{2\ell-1}}{2}\right) \be_3,  \\
%\fLk(\tk_{2r+1})&=\fLk,  \\
\Gordk(x, \tk_{2\ell-1})&= \pont \left[( \fLkperp-\frac{\tk_{2\ell-1}}{2} \be_3) \cup \fLk \right](x),  {\rm \ for \ } x\in \R^3.
\end{aligned}
\right.
\end{equation}
  The r.h.s of the second identity is well defined in 
 view of Remark \ref{toutcomme}, noticing that condition \eqref{ondiment}  holds for $c=\tk_{2\ell-1}\slash 2$.
 The resulting effect is that, after a pair of even and odd steps,  the sheaf $\fLkperp$ is simply  translated in the $\be_3$ direction, \emph{as if it were moved downwards with constant speed $-1\slash2$}. Identity $\mathcal {BV}(\ell)$ is  consistent  with \eqref{presque} and the second property in \eqref{defchuisse}. In view of\eqref{stepzero}, Property $\mathcal {BV}(\ell)$, that is identities  \eqref{mochouille},  is   satisfied  for $\ell=1$. The   construction  has hence as an iterative character: We start assuming $\mathcal {BV}(\ell)$, then construct $\Gordk$   on $\R^3 \times[\tk_{2\ell-1}, \tk_{2\ell+ 1}]$   so that $\mathcal {BV}(\ell+1)$ holds,  until we finally reach $\ell={2k-1}$. 
 
 We next provide a complete description of the construction.
 % We then complete the construction using Proposition \ref{trivialextend}.
 %%%%%%%%%%%%%%%%%%%%%%%%%%%%%%%%%%%%%%%%%%%%%%%%%%%%%%%%%%%%%%%%%%%%%%%%%%%%%%%%%%%%%%%%%%%%%%%%%%%%%%%%%%%%%%%%%%%%%%%%%%%%%%%%%%%%%%%%%%%%%%%%%%%%%%%%%%%%%%%%%%%%%%%%
\subsection{Description of the  even steps: Rounding fibers near singularities}
\subsubsection{Definition  of $\Gordk$ on $\Lambda_{2\ell}^k$}
Let us  assume     that  we are given $\ell \in \{1, \ldots, 2k-1\} $, that such the odd step $2\ell-1$ has been completed and such  that \eqref{mochouille} holds at time $\tk_{2\ell-1}$. This is for instance the case for $\ell=1$, as seen above.   Let us explain next how, starting from  identity \eqref{mochouille} at time $\tk_{2\ell-1}$, we construct $\Gordk$  for the even step $n=2\ell$. We  introduce the condition  for $i,q \in \{1, \ldots, k\}$, 
\begin{equation}
\label{ahaut}
 \fLkperp_{i, q}(\tk_{2\ell-1}) \cap \tOcrosstar= (\fLkperp_{i, q}- \frac{\tk_{2\ell-1}}{2}\be_3) \cap \tOcrosstar \not =\emptyset, 
\end{equation}
  which defines entrance at time $\tk_{2\ell-1}$ in the crossing region  $\tOcrosstar$ of the fiber $\fLkperp_{i, q}(\tk_{2\ell-1})$. We  define $\fLkperp_{i, q}(s)$  for $i, q=1, \ldots, k$ and  $s \in [\tk_{2\ell-1}, \tk_{2\ell}]$ by the formulas  
\begin{equation}
\label{definepaire}
\left\{
\begin{aligned}
\fLkperp_{i, q}(s)&=\DefLkperp_{i, q}(\tk_{2\ell-1}, s-\tk_{2\ell-1}),   {\rm   \ if    \ }  \eqref{ahaut}{\rm \ holds, \ and  \ } \\
% (\fLkperp_{i, q}- \frac{\tk_{2\ell-1}}{2}\be_3) \cap \mathcal O_{\rm cross}^{h, \star} \not =\emptyset   {\rm \ and \ }\\
\fLkperp_{i, q}(s)&=\fLkperp_{i, q}-\left(s-\frac{\tk_{2\ell-1}}{2}\right) \be_3, \,   {\rm   \ if \  }  
\eqref{ahaut}  {\rm \ does \ not \ hold}.
%(\fLkperp_{i, q}- \frac{\tk_{2\ell-1}}{2}\be_3)  \cap \mathcal O_{\rm cross}^{h, \star}  =\emptyset. 
\end{aligned}
\right. 
\end{equation}
In other words, fibers  $\fLkperp_{i, q}(\tk_{2\ell-1})$ such that \eqref{ahaut} holds are rounded near the singularities, whereas the others are translated down with constant speed equal to $1$.
This  again is consistent with \eqref{presque},  \eqref{defchuisse} and \eqref{mochouille}, and yields the same value on the sets where $\Gordk$ has already been defined.  Moreover, since 
$\DefLkperp_{i, q}(\tk_{2\ell-1},0)=\fLkperp_{i, q} (\tk_{2\ell-1})$, it yields at time $s=\tk_{2\ell-1}$ the same result as \eqref{mochouille}.
On the other hand, condition \eqref{ondiment}  holds for $c=\tk_{2\ell-1}\slash 2$, so that, 
in view of Remark \ref{toutcomme}, we may then define $\Gordk$  using the variant of the Pontryagin operator, namely
\begin{equation}
\label{flashgordon}
\Gordk(x, s)=\tpont \left  [\fLkperp(s) \cup \fLk\right ](x), {\rm \ for \ }  (x, s) \in \Lambda_{2\ell}^k. 
\end{equation}
The  results in Subsection \ref{variante}, in particular Lemma \ref{gradur} yield:
 
 \begin{proposition}
 \label{flahut} Let $\ell$ be in $\{1, \ldots, 2k-1\}$, and  let $\Gordk$ be defined by \eqref{flashgordon} on $\Lambda_{2\ell}^k$.   Then $\Gordk$ is Lipschitz on $\Lambda_{2\ell}^k$,  Property $\mathcal {BV}(\ell)$ holds, and   we have the gradient estimate
 \begin{equation}
 \label{flahut0}
 \vert \nabla_4 \Gordk(x, s) \vert \leq {\rm C}_{\rm def} k, {\rm \ for \ } (x, s) \in \Lambda_{2\ell},  
   \end{equation}
 where ${\rm C}_{\rm def }>0$ is  given in Lemma  \ref{gradur}. We obtain, by integration, with ${\rm K}_{\rm def}=(60)^3{\rm C}_{\rm def}^3$, 
 \begin{equation}
 \label{flahut1}
 \int_{\Lambda_{2\ell} } \vert \nabla_4 \Gordk(\rbx ) \vert^3 {\rm d} \rbx  \leq   {\rm K}_{\rm def} k^2. 
 \end{equation}
 
 \end{proposition}
\begin{proof}  Estimate \eqref{flahut0}  follows immediately applying on one hand estimate \eqref{gradur1} of  Lemma \ref{gradur} for the fibers $\fLkperp_{i, q}(s)$ such that \eqref{ahaut} holds, and using  inequalities \eqref{constantgord}   and \eqref{constantgord2} of   Remark \ref{constantspag}  for the fibers $\fLkperp_{i, q}(s)$ such that \eqref{ahaut} does not hold, as well as for the fibers of $\fLk$.  Assuming moreover that ${\rm C}_{\rm flow}^0 \leq {\rm C}_{\rm def}$, the conclusion follows. 

\smallskip
For inequality 
   \eqref{flahut1}, we notice that   $ \Gordk$ is constant  on the complement of $[-30, 30]^3\times [\tk_{2\ell-1}, \tk_{2\ell}]$, so that its gradient vanishes there. It suffices therefore to integrate  on the set $[-30, 30]^3\times [\tk_{2\ell-1}, \tk_{2\ell}]$, whose measure is 
   $\displaystyle{(60)^3h=(60)^3 k^{-1}}$. The conclusion follows. 
    \end{proof}
%%%%%%
%%%%%%%%%%%%%%%%%%%%%%%%%%%%%%%%%%%%%%%%%%%%%%%%%%%%%%%%%%%%%%%%%%%%%%%%%%%%%%%%%%%%%%%%%%%%%%%%%%%%%%%%%%%%%%%%%%%%%%%%%%%%%%%%%%%%
\subsubsection{Definition \eqref{definepaire} and condition \eqref{ahaut} revisited} 
\label{defnore}
Condition  \eqref{ahaut} involves  only the index $q$ and not the index $i$. It  can be rephrased as 
$$6+qh-\frac{\tk_{2\ell-1}}{2}\in (\frac{5h}{8}, 1+\frac{5h}{8}],$$
 which yields, in view of the definition \eqref{timestep} of $\tk_{2\ell-1}$, 
 the relation $\displaystyle{k+q-\ell \in (0, k]}$.  
Taking into account  the fact that  we deal with integers, \eqref{ahaut} is equivalent  to  the simple relation 
\begin{equation*}
\ell-q \in \{0, \ldots, k-1\}.  
\end{equation*}
Introducing  the set of indices 
\begin{equation}
\label{encorelui}
\Gamma(\ell)=\{q \in \{1, \ldots, k\}, {\rm \ \  such \ that \ } \eqref{ahaut} {\rm \ holds \ } \}, 
\end{equation}
 we obtain hence a more explicit definition of $\Gamma(\ell)$, namely
\begin{equation}
\label{setindique}
\left\{
\begin{aligned}
\Gamma(\ell)&=\{1, \ldots, \ell\},   {\rm \ if \ } \ell \leq k,  \\
\Gamma(\ell)&=\{\ell-k+1, \ldots, k \},                     {\rm \ if \ }  k+1 \leq \ell  \leq 2k-1. 
\end{aligned}
\right.
\end{equation}
In view of definitions \eqref{definepaire}  and \eqref{flashgordon}, we introduce the notation 
\begin{equation}
\label{derrick}
\left\{
\begin{aligned}
%\fL_{\star, \ell}^{k, \perp}={ \underset {q\in \Gamma(\ell)} \cup} \left(   \underset{i=1}{\overset {k}\cup} \fL^{k, \perp}_{i, q}\right)  {\rm \ and \ }
&\fL_{\star, \ell}^{k, \perp}(s)= { \underset {q\in \Gamma(\ell)} \cup}\left( \underset{i=1}{\overset {k}\cup} \fL^{k, \perp}_{i, q}(s)\right) {\rm \ and \ }  \\
&\DefLkperp_{\star, \ell}(\tk_{2\ell-1}, s-\tk_{2\ell-1})= \underset {q\in \Gamma(\ell)}\cup \left(   \underset{i=1}{\overset {k}\cup}\DefLkperp_{i, q}(\tk_{2\ell-1}, s-\tk_{2\ell-1}) \right).
\end{aligned}
\right.
\end{equation}
%as well as 
%\begin{equation}
%\label{derrick}
%\rbN_{\star, \ell}^{k, \perp}={ \underset {q  \not \in \Gamma(\ell)} %\cup} \left(   \underset{i=1}{\overset {k}\cup} \fL^{k, \perp}_{i, %q}\right)  {\rm \ and \ }
%\rbN_{\star, \ell}^{k, \perp}(s)= { \underset {q \not \in \Gamma(\ell)} %%\cup}\left( \underset{i=1}{\overset {k}\cup} \fL^{k, \perp}_{i, q%}(s)\right).
%\end{equation}
%Setting also
%$\displaystyle{
%\DefLkperp_{\star, q}(\tk_{2\ell-1}, s-\tk_{2\ell-1})= \underset {q\in \Gamma(\ell)}\cup \left(   \underset{i=1}{\overset {k}\cup}\DefLkperp_{\star, q}(\tk_{2\ell-1}, s-\tk_{2\ell-1}) \right), 
%}$
We recast \eqref{flashgordon} as, for $(x, s) \in \Lambda_{2\ell}$, with $\fL_{\star, \ell}^{k, \perp}=\fL_{\star, \ell}^{k, \perp}(0) $
\begin{equation}
\label{flashouille}
\begin{aligned}
\Gordk(x, s)=
  & \pont\left[
 \underset {q  \not \in \Gamma(\ell)} \cup \left( \underset{i=1}{\overset {k}\cup} \fLkperp_{i, q}\right)-\left(s-\frac{\tk_{2\ell-1}}{2}\right) \be_3
   \right]  
  \wedgetrois \pont[ \fLk]  \\
  &\wedgetrois\tpont \left  [\DefLkperp_{\star, \ell}(\tk_{2\ell-1}, s-\tk_{2\ell-1})
%\cup 
%\left(\rbN_{\star, \ell}^{k, %\perp}-\left(s-\frac{\tk_{2\ell-1}}{2}\right) \be_3\right)
 \right](x). \\
\end{aligned}
\end{equation}

\begin{remark} {\rm Relations \eqref{setindique} may be interpreted as follows:

\smallskip
 - If $1\leq \ell \leq k$,  the  layer of fibers  $\underset{i=1}{\overset {k}\cup} \fL^{k, \perp}_{i, \ell}(.)$ enters the crossing region $\tOcrosstar$ at the  very  end of step  $2\ell-1$, i.e. at time $\tk_{2\ell-1}$,   but enters only later, at time $\tk_{2\ell-1} +h\slash8$  
 %almost  at the  end\footnote {As  a matter of fact, it enters  at time $\tk_{2\ell}-h\slash8$  for $\ell>2$, and at time $\tk_1-h\slash 4$, for $\ell=1$ 
 the crossing region $\Ocrosstar$.
 % of  step $ 2\ell$  the crossing region$\Ocrosstar$.  
 The layers   $\underset{i=1}{\overset {k}\cup} \fL^{k, \perp}_{i, q}(.)$, for $q=1, \dots,  \ell-1$ are curved  down, but remain in the crossing region  $\Ocrosstar$.  The crossing region  $\Ocrosstar$ therefore intersects $\ell$ such layers.

\smallskip
- If  $k<\ell \leq 2k-1$,  the layer    $\underset{i=1}{\overset {k}\cup} \fL^{k, \perp}_{i, \ell-k}(\cdot)$ exits  the crossing region  $\tOcrosstar$  at the very end
 \footnote {It remains however until time $\tk_{2\ell-1}+h\slash 8$ inside $\Ocrosstar$.}  of step $2\ell-1$  whereas the layers   $\underset{i=1}{\overset {k}\cup} \fL^{k, \perp}_{i, q}(.)$, for $q=\ell-k+1, \dots,  k$ are curved  down, but remain in the crossing region.  The crossing region $\Ocrosstar$ intersects   $2k-\ell$ such layers.
}
\end{remark}
%%%%%%%%%%%%%%%
%%%%%%%%%%%%%%%%%
%%%%%%%%%%%%%%%%
\subsubsection{The shape of the fibers at the end of the even steps}
%%%%%%%%%%%%%%%%%%%%%%%%%%%%%%%%%%%%%%%%%%%%%%%%%%%%%%%%%%%%%%%%%%%%%%%%%%%%%%%%%%%
We provide a few comments concerning the shape of the fibers at  time $\tk_{2 \ell}$, see also Figure \ref{timetk2}. We first observe that,
 in view of \eqref{ise} and \eqref{vcs}  of Remark \ref{proche}, we have for $s\in [\tk_{2\ell-1}, \tk_{2\ell}]$
\begin{equation*}
\left\{
\begin{aligned}
\fLkperp_{i, q}(s)&= \fLkperp_{i, q}-(s-\frac{\tk_{2\ell-1}}{2})\, \be_3,  {\rm \ if \ } q \not \in \Gamma(\ell),  \\
\fLkperp_{i, q}(s)&\subset 
  \left( \fLkperp_{i, q}-(s-\frac{\tk_{2\ell-1}}{2})\, \be_3\right)+\left(\frac h2 \be_2+[0, 1]^2 \times  \left  [0, \frac{3h}{4}\right]\right),
  {\rm \ if \ }  q \in \Gamma(\ell). 
\end{aligned}
\right.
\end{equation*}
At time $s$, all fibers of $\fLkperp$ have  hence  been translated by $\displaystyle{ -(s-\frac{\tk_{2\ell-1}}{2})\be_3}$, except the fibers  in $\fLkperp_{\star, q}$, for 
$q \in \Gamma(\ell)$,  which have been \emph{rounded near  points in $\rbA_\star^k$}, defined in \eqref{rbAstar},  in order to avoid collision with $\fLk$, 
 which they would otherwise have crossed.  As a a matter of fact, the subset of $\rbA_\star^k$ of points near which $\fLkperp_{\star, q}(\cdot)$ are rounded is provided by the set
 \begin{equation}
\rbA_\ell^k=\underset{ q \in \Gamma(\ell)} \cup \left ( \underset {i, j=1} {\overset k \cup} \{\ra_{i, j, k+q-\ell}^k\}\right) =\underset{ q' \in \tilde {\Gamma}(\ell) } \cup \left ( \underset {i, j=1} {\overset k \cup} \{\ra_{i, j, q'}^k\}\right), 
\end{equation}
where we have set
\begin{equation}
\label{tildegamma}
\tilde {\Gamma}(\ell)=\left\{q'=k+q-\ell, \,  q \in \Gamma (\ell)\right\}=\{1, \ldots, k\}\cap (k-\ell, 2k-1-\ell].
\end{equation}
It follows from \eqref{tildegamma}  that $ \tilde {\Gamma}(\ell)=\{k-\ell+1, \ldots, k \}$ if $\ell \leq k$ and  
$\tilde{\Gamma}(\ell)=\{1, \dots, 2k-\ell \}$ if $k<\ell \leq 2k-1$\footnote{For instance, $\tilde{\Gamma}(1)=\{k\}$, $\tilde{\Gamma}(2)=\{k-1, k\}$, \ldots,   $\tilde{\Gamma}(k)=\{1, \ldots, k\}$, $\tilde{\Gamma}(k+1)=\{1, \ldots, k-1\}$,   and $\tilde{\Gamma}(2k-1)(1)=\{1\}$.}.
We introduce    the corresponding  subset $\mathcal O_{\rm cross, \ell}^h$ of $\Ocrosstar$ given by 
\begin{equation}
\label{crossell1}
\mathcal O_{\rm cross, \ell}^h=\underset {a \in \rbA_\ell^k} \cup \overline{\rQ^3_{h/2}(a)}=
\frac h2 (\be_1+\be_2+\be_3)+[0, 1]^2\times  
 \underset{q' \in \tilde{\Gamma}(\ell)} \cup [q'h, (q'+1)h], 
\end{equation}
so that 
\begin{equation}
\label{crossell2}
\left\{
\begin{aligned}
\mathcal O_{\rm cross, \ell}^h&=\frac h2 (\be_1+\be_2+\be_3)+[0, 1]^2\times [1-\ell h, 1],  {\rm \ if \ } \ell \leq k,  {\rm \ and \ \ }  \\
\mathcal O_{\rm cross, \ell}^h &=\frac h2 (\be_1+\be_2+\be_3)+[0, 1]^2\times [0, (2k-\ell)h],  {\rm \ if \ } k<\ell \leq 2 k-1. 
\end{aligned}
\right.
\end{equation}
 The "rounding" process mentioned above  is  obtained  thanks to   the transformation  of $\fL_{\star, \ell}^{k, \perp}-(\tk_{2\ell-1}\slash 2) \be_3$ into the curve $\displaystyle{\DefLkperp_{\star, \ell} (\tk_{2\ell-1},h)}$ at time $s=\tk_{2\ell}$.  We have indeed, since 
 $$\tk_{2 \ell}-\frac{\tk_{2\ell-1}}{2}=\frac{\tk_{2\ell+1}}{2}, $$ 
the relations
 \begin{equation}
 \label{laborieux}
\left\{
\begin{aligned}
\fLkperp_{i, q}(\tk_{2\ell})&= \fLkperp_{i, q}-\frac{\tk_{2\ell+1}}{2}\be_3 \ {\rm \ and \ }
\fLkperp_{i, q}(\tk_{2\ell}) \cap  \mathcal O_{\rm cross, \ell}^h=\emptyset,  {\rm \ if \ } q \not \in \Gamma(\ell),  \\
\fLkperp_{i, q}(\tk_{2\ell})&=\DefLkperp_{i, q} (\tk_{2\ell-1},h) \subset \mathcal O_{\rm cross, \ell}^h,  {\rm \ if \ }  q \in \Gamma(\ell). \\
\end{aligned}
\right.
\end{equation}
It follows
\begin{equation}
\label{laborieux2}
\fLkperp_{i, q}(\tk_{2\ell})\subset 
  \left( \fLkperp_{i, q}-\frac{\tk_{2\ell+1}}{2}\be_3\right)+\left(\frac h2 \be_2+[0, 1]^2 \times  \left  [0, \frac{3h}{4}\right]\right).
   \end{equation}
 We next focus on the shape  of $\fLkperp(\tk_{2\ell})$ near the points of $\rbA_\ell^k$  (see Figure \ref{timetk2}).   We have:
 
 \begin{proposition} 
 \label{roundupe}
 Let $q\in \Gamma(\ell)$ and let $q'=k+q-\ell$. Then, for $i, j \in \{1, \ldots, k\}$,  we have:
 \begin{equation}
 \label{roustanvt}
 \left\{
 \begin{aligned}
 \fLkperp(\tk_{2\ell})\cap\rQ^3_{h\slash 2} (\ra_{i, j, q'}^k)&=\mathcal C_{\perp, h}^-(\ra_{i, j, q'}^k)  {\rm \ and \ } \\
  \fLk(\tk_{2\ell})\cap\rQ^3_{h\slash 2} (\ra_{i, j, q'}^k)&={\rm D}_{0, h}(\ra_{i, j, q'}^k), 
 \end{aligned}
 \right.
 \end{equation}
where, for $a \in \R^3$,  the curves  $\mathcal C_{\perp, h}^-(a)$ and ${\rm D}_{0, h}(a)$ are defined in Subsection \ref{relevant} (see also Figure \ref{cube11bis}).  We have $q' \in \tilde {\Gamma}(\ell)$  and 
 \begin{equation}
 \label{rondupe2}
 \Gordk(x, \tk_{2\ell})=\upgamma_{\ra_{i, j, q'}}^{h,- }(x),  {\rm \  for \ } x \in \rQ^3_{h\slash 2} (\ra_{i, j, q'}^k), 
 \end{equation}
 where  the map $\upgamma_{a}^{h,- }:\rQ^3_{h\slash 2} (a)\to \S^2$ is defined in \eqref{alcatel}. 
 \end{proposition}
\begin{proof} Since $q\in \Gamma(\ell)$, It follows from definition \eqref{setindique}  that $q'\in \{1, \ldots, \ell\}.$  Moreover, since  
$$\tk_{2\ell-1}\slash2=5+ \ell h-5h\slash 8,$$
 Going back to Lemma \ref{round-up}, we verify, since $kh=1$,  that  relation  \eqref{ondiment} is satisfied with the choice  $c=\tk_{2\ell-1}\slash2, q$ and $q'=k+q-\ell$. Identity \eqref{roustantv} are then a direct consequence of the corresponding identities \eqref{sel} in Lemma \ref{round-up}.
%On the other hand, we have
 %$\displaystyle{\fLkperp(\tk_{2\ell})=\fLkperp_{i, q}-\frac{\tk_{2\ell+1}}{2}\be_3}$
 %It follows from  that  \eqref{rabia} yields
%\begin{equation}
%\fLkperp(\tk_{2\ell})\cap\rQ^3_{h\slash 2} (\ra_{i, j, q}^k)=
 %\fLkperp_{i, q'}(\tk_{2\ell})\cap \rQ^3_{h\slash 2} (\ra_{i, j, q}^k).
 %\end{equation}
 %On the other hand, it follows from definition \eqref{ahaut}  that
%  $$\fLkperp_{i, q'}(\tk_{2\ell})= \DefLkperp_{i, q'}(\tk_{2\ell-1},h). $$
   %Lemma \ref{roundupe} is  then   an  immediate consequence of definition  \eqref{mochouille} and \eqref{definepaire}, combined with  Lemma \ref{round-up}, applied with
%$\displaystyle{c=\frac{T_{2\ell-1}}{2}}$, $q$ and $q'$  as above, 
 % so that \eqref{ondiment} is  satisfied.  
   \end{proof}

   \begin{figure}[h]
\centering
\includegraphics[height=9cm]{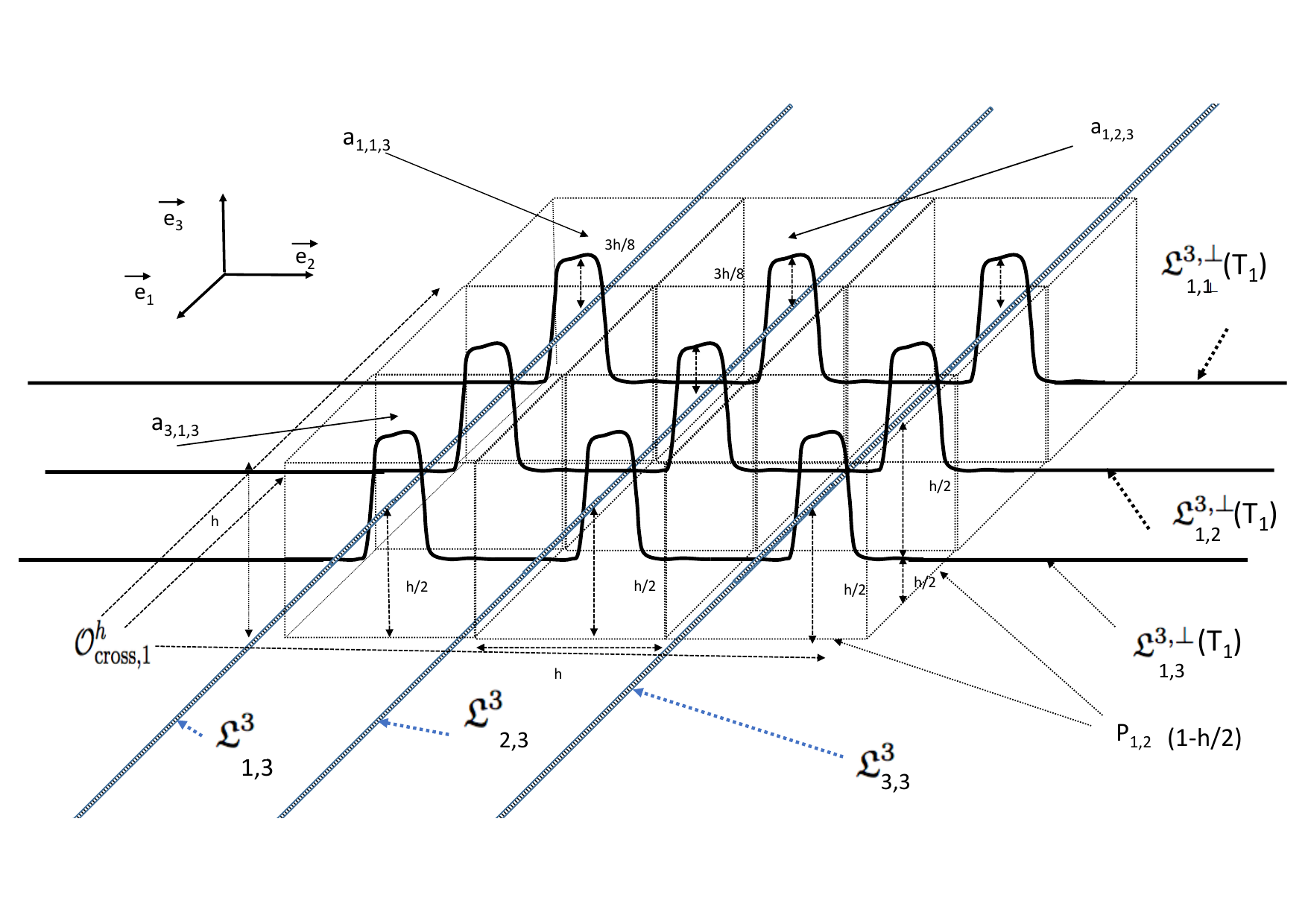}
\caption{  {\it The curves  $\fLkperp_{i, 1}(\tk_2)$ and $\fLk_{j, k}$ at time $\tk_2$, $k=3$ corresponding to the end of the even step in iteration $\ell=1$.  All fibers $\fLkperp_{\star, 1}$ have entered the  set
$\mathcal O_{\rm cross, 1}^h$ formed of $k^2=9$ cubes  $\overline{\rQ_{ h\slash 2} (\ra)}$,   of edge $h/2$.}}
\label{timetk2}
\end{figure}
%%%%%%%%%%%%%%%%%%%%%%%%%%%%%%%%%%%%%%%%%%%%%%%%%%%%%%%%%%%%%%%%%%%%%%%%%%%%%%%%%%%%
%%%%%%%%%%%%%%%%%%%%%%%%%%%%%%%%%%%%%%%%%%%%
\subsection {Description of the odd steps: Crossings of fibers thanks to singularities}
\label{odieux}
In order to set up an iterating procedure,  our aim is to recover, at time $\tk_{2\ell+1}$,  Property $\mathcal {BV}(\ell+1)$, that is, we wish to have identities  \eqref{mochouille} with $\ell$ replaced by $\ell+1$, i.e.  to obtain
\begin{equation}
\label{mochouillebis}
\mathcal {BV}(\ell+1) \left\{
\begin{aligned}
&\fLkperp (\tk_{2\ell+1})  =\fLkperp-\frac{\tk_{2\ell+1}}{2} \be_3  {\rm  \ and \ }  \\
%\fLk(\tk_{2r+1})&=\fLk,  \\
&\Gordk(x, \tk_{2\ell+1})= \pont \left[( \fLkperp-\frac{\tk_{2\ell+1}}{2} \be_3) \cup \fLk \right](x),  {\rm \ for \ } x\in \R^3.
\end{aligned}
\right.
\end{equation}
We begin with the construction off the crossing region $\Ocrosstar$.
%%%%%%%%%%%%%%%%%%%%%%%%%%%%%%%%%%%%%%%%%%%%%%%%%%%%%%%%%%%%%%%%%%%%%%%%%%%%%%%%%%%%%%%%
\subsubsection{Construction of $\Gordk$ on $\Lambda_{2\ell+1}\setminus\left( \mathcal O^h_{{\rm cross},\ell}\times [\tk_{2\ell}, \tk_{2\ell+1}]\right)$}
\label{porcstar}
%%%%%%%%%%%%%%%%%%%%%%%%%%%%%%%%%%%%%%%%%%%%%%%%%%%%%%%%%%%%%%%%%%%%%%%%%%%%%%%%%

We notice that, at time $\tk_{2\ell}$,   conditions $\mathcal {BV}(\ell)$ are already satisfied on a part of $\R^3$, since we have, in view of  
%\eqref{presque} and 
 \eqref{laborieux}
 \begin{equation}
 \label{dejafait}
 \Gordk(x, \tk_{2\ell})= \pont \left[( \fLkperp-\frac{\tk_{2\ell+1}}{2} \be_3) \cup \fLk \right](x),  {\rm \ for \ } x\in \R^3\setminus \mathcal O^h_{{\rm cross},\ell}. 
 \end{equation}
 Hence, we choose, since, by definition,   $c_h(s)=\tk_{2\ell+1}\slash 2$  for $s \in  [\tk_{2\ell}, \tk_{2\ell+1}]$ (see \eqref{defchuisse}), to define 
 $\Gordk(x, s) $ , for   $(x, s) \in \left(\R^3\setminus \mathcal O^h_{{\rm cross},\ell} \right) \times  
  [\tk_{2\ell}, \tk_{2\ell+1}]$,  as
 \begin{equation}
 \label{dejavu}
 \Gordk(x, s)= \pont \left[( \fLkperp-\frac{\tk_{2\ell+1}}{2} \be_3) \cup \fLk \right](x).
  \end{equation}
 In other words, nothing has moved on the set $\R^3\setminus \mathcal O^h_{{\rm cross},\ell}$. Definition  \eqref{dejavu} is   consistent with \eqref{presque},  \eqref{defchuisse}  and yields the same value on the sets where $\Gordk$ is already defined.  Moreover, we have as in the previous subsections, in particular Proposition \ref{flahut}, the gradient estimate \begin{equation}
 \label{restriction1bis}
 \left   \vert \nabla_4 \Gordk(\rbx) \right \vert  \leq   {\rm C}_{\rm flow}^0 k  {\rm \ for \ } \rbx \in \left(\R^3\setminus \mathcal O_{\rm cross, \ell}^{h}\right)\times [\tk_{2\ell}, \tk_{2\ell+1}].
 \end{equation}
 Arguing as in  the proof of  Proposition \ref{flahut}, we are led to the integral estimate
 \begin{equation}
 \label{integral}
 \rE_3\left( (\Gordk, \left(\R^3\setminus \mathcal O^h_{{\rm cross},\ell} \right) \times  
  [\tk_{2\ell}, \tk_{2\ell+1}] \right) \leq {\rm K}_{\rm flow} k^2.
 \end{equation}

 \begin{remark} 
 \label{also}
   {\rm  It follows from Proposition  \ref{roundupe} that \eqref{dejafait} is also satisfied on the boundaries of the cubes $\rQ_{h\slash2}^3(a)$, for $a\in \rbA_\ell^k$, that is  
    \begin{equation}
 \label{vitefait}
 \Gordk(x, \tk_{2\ell})= \pont \left[( \fLkperp-\frac{\tk_{2\ell+1}}{2} \be_3) \cup \fLk \right](x), {\rm \ for \ } 
 x \underset {a\in \rbA_\ell} \cup  \partial\rQ_{h\slash2}^3(a) . 
 \end{equation}
  }
 \end{remark}
 %%%%%%%%%%%%%%%%%%%%%%%%%%%%%%%%%%%%%%%%%%%%%%%%%%%%%%%%%%%%%%%%%%%%%%%%%%%%%%%%%%%%%%%%%%%%%%%%%%%%%%%%%%%%%%%%%%%%%%%%%%%%%%%%%%%%%%%%%%%%%%%%%%%%%%%%%%%%%%
\subsubsection{Defining  $\Gordk$ on $ \mathcal O^h_{{\rm cross},\ell}\times [\tk_{2\ell}, \tk_{2\ell+1}]$} 
\label{devint}
 %%%%%%%%%%%%%%%%%%%%%%%%%%%%%%%%%%%%%%%%%%%%%%%%%%%%%%%%%%%%%%%%%%%%%%%%%%%%%%%%%%
{\it Decomposition of the  crossing set into cubes}. It remains  to define   $\Gordk$ on the {space-time crossing region}
\begin{equation*}
\begin{aligned}
\Theta_{\rm cross, \ell}^h &\equiv \mathcal O_{\rm cross, \ell}^h \times [\tk_{2\ell}, \tk_{2\ell}+h=\tk_{2\ell+1}]  \subset \R^4 \\
&=\underset{ \ra\in  {\bf  \rbA}_{\ell}^k }\cup \left(\overline{{\rm Q}_{ h\slash 2}^3( \ra)}\times [\tk_{2\ell}, \tk_{2\ell+1}]\right), 
\end{aligned}
\end{equation*}
where we used   the decomposition \eqref{crossell1}  of $\mathcal O_{\rm cross, \ell}^h $ for the last identity. It  yields a related decomposition  of  $\Theta_{\rm cross, \ell}^h$  into four-dimensional cubes of edges of length $h$. Indeed, setting 
$$
\tk_{2\ell+\frac12}=\frac 12 \left(\tk_{2\ell} +\tk_{2\ell+1} \right) =\tk_{2\ell}+\frac{h}{2}=\tk_{2\ell+1}-\frac{h}{2}, 
$$
we have, for $\ra \in \rbA_\ell^k$,
 \begin{equation*}
 \begin{aligned}
 {\rm Q}_{ h\slash 2}^3( \ra)\times [\tk_{2\ell}, \tk_{2\ell+1}]&={\rm Q}_{ h\slash 2}^3( \ra)\times [\tk_{2\ell+\frac12}-\frac h2, \tk_{2\ell+\frac12}+\frac h2] \\
 &={\rm Q}_{ h\slash 2}^4\left( (\ra,\tk_{2\ell+\frac12} )\right). 
 \end{aligned}
 \end{equation*}
This leads us to   introduce, for   $i, j, q=1, \ldots, k$   and $\ell=1, \ldots, 2k-1$,  the points in $\R^4$ given by
  \begin{equation}
  \label{textswaba}
%  \left\{
 % \begin{aligned}
 % & \tk_{2\ell+\frac12}=\frac 12 \left(\tk_{2\ell} +\tk_{2\ell+1} \right) =\tk_{2\ell}+\frac{h}{2} {\rm \ and \ } \\
%\begin{aligned}
\rba^k_{i, j, q, \ell}\equiv(\ra_{i, j, q}^k, \tk_{2\ell+\frac 12})=(\ra_{i, j, q}^k, 10+ (2\ell +\frac 14) h) \in \Lambda^k_{2\ell+1}, \\
%&=(i\, h, j \, h, q\,  h, \tk_{2r}+\frac h 2)= (ih , j\, h, q\,   h ,5-\frac h8+ 2h\, r ),
%\end{aligned}
%\end{aligned}
%\right.
\end{equation}
and to consider, for $\ell\in \{1, \ldots, 2k-1\}$, the collection  of points $\mathbb A_{\ell}^k$, i.e. 
\begin{equation}
 \label{grognon}
 \begin{aligned}
 {\bf \mathbb A}_{\ell}^k &=\underset {i, j=1}  {\overset {k} \cup}\underset {q' \in \tilde {\Gamma}(\ell)}  { \cup}
 \{\rba_{i, j,q', \ell}\}=\rbA_{\ell}^k\times \{ \tk_{2\ell+\frac 12}\}, \\
 &=\boxplus^2_k(h) \times h  \,  \tilde {\Gamma}(\ell) \times  \{ \tk_{2\ell+\frac 12}\} \\
 &=\left(\left(10\, + \frac h4\right) \be_4\right)+h\left[  \{1, \ldots, k\}^2 \times  \tilde {\Gamma}(\ell)\times \{2\ell \}\right].
 \end{aligned}
\end{equation}
In view of this notation, we may now write  
 \begin{equation}
\label{thetacross}
%\begin{aligned}
\Theta_{\rm cross, \ell}^h=\underset{ \rba\in  {\bf  \mathbb A}_{\ell}^k }\cup \overline{{\rm Q}_{h\slash2}^4( \rba)}
=\underset {q\in \Gamma (\ell)}  {\overset {k} \cup}  \left(
\underset {i, j=1}  {\overset {k} \cup}
\overline{ {\rm Q}_{h \slash2}^4( \rba^k_{i, j,k+q-\ell, \ell})}  \right).
 %\end{aligned}
\end{equation}
 The intersection of two distinct cubes composing $\Theta_{\rm cross, \ell}^h$ is  empty whereas  their closure might  intersect on a common face, that is, $\displaystyle{\mathfrak Q^{2,\pm}_p({h}\slash{2}, \ra)}$,  the two-dimensional square  defined  in \eqref{pinkmec}, 
 \begin{equation}
 \label{intermarche}
  \left\{
  \begin{aligned}
 {\rm Q}_{ h\slash 2}^4( \rba) \cap  {\rm Q}_{ h\slash 2}^4( \rba') &=\emptyset  {\rm \ for \ } \rba =(\ra,\tk_{2\ell+\frac12}) \not =\rba'
 =(\ra',\tk_{2\ell+\frac12} ) {\rm \ in \ } \mathbb A_{\ell}^k, \\
\overline{{\rm Q}_{ h\slash 2}^4( \rba)} \cap \overline{ {\rm Q}_{ h\slash 2}^4( \rba') }&=\overline{\mathfrak Q^{2,\pm}_p(\frac{h}{2}, \ra)}
\times   [\tk_{2\ell}, \tk_{2\ell+1}] {\rm  \  iff  \ } \rba'-\rba=\pm h\be_p.
 \end{aligned}
\right. 
 \end{equation}

 \bigskip
 \noindent
 {\it Defining the value of the $\Gordk$ on each cube $\overline{{\rm Q}_{h\slash2}^4( \rba)}$, $\rba\in  \mathbb A_{\ell}^k$}.
Recall that we have seen, in identity \eqref{rondupe2} of Proposition \ref{roundupe},  that, for $\rba=(\ra, \tk_{2\ell+\frac 12}) \in \mathbb A_\ell^k$,  with 
$\ra \in \rbA_\ell^k$,  we have
\begin{equation}
\label{finito}
\Gordk (x, \tk_{2\ell})=\upgamma_{\ra}^{h,- }(x)=\Upsilon_{\rba}(x,\tk_{2\ell}), {\rm \ for \ }   x\in  \overline{ \rQ^3_{h\slash 2}(\ra)}. 
\end{equation}
We notice  that the set 
$$\displaystyle{\rQ^3_{h\slash 2}(\ra) \times  \{\tk_{2\ell}\}= \mathfrak Q^{3,-}_4(h/2,  \rba) \subset \partial \rQ^4_{h\slash 2}(\rba)}$$
  is the "lower" (with respect to the $x_4$-variable) face of the cube  $\rQ^4_{h\slash 2}(\rba)$. Similarly,    the set  
$$\displaystyle{\rQ^3_{h\slash 2}(\ra) \times  \{\tk_{2\ell+1}\}= \mathfrak Q^{3,+}_4(h/2,  \rba) \subset \partial \rQ^4_{h\slash 2}(\rba)}$$
 corresponds to the "upper" face of the cube $\rQ^4_{h\slash 2}(\rba)$. Arguing as in the proof of Proposition \ref{roundupe}, we have, in view of the second identity in \eqref{sel} of Lemma \ref{round-up}, 
 \begin{equation}
 \label{roustantv}
% \begin{aligned}
\left(  \fLkperp-\frac{\tk_{2\ell+1}}{2} \be_3\right)\cap \rQ^3_{h\slash 2}(\ra) =
\left (\fLkperp_{i, q}-\left(\frac{\tk_{2\ell-1}}{2} +h\right)\be_3 \right)\cap \rQ^3_{h/2}(\ra)
 ={\rm D}_{\perp, h}^+(\ra), 
% \end{aligned}
 \end{equation}
 where, $i, j ,q'$ are such that $\ra=\ra_{i, j, q'}^k$  and $q$ is defined by $q'=k+q-\ell$. Combining with the second identity in \eqref{roustanvt}, we notice that, if \eqref{mochouillebis} holds, then, we need to have  
 \begin{equation}
 \label{mochete}
 \Gordk(x, \tk_{2\ell+1})=\upgamma_{\ra}^{h,+ }(x)=\Upsilon_{\rba}(x,\tk_{2\ell+1}),  {\rm \ for \ }   x\in  \overline{ \rQ^3_{h\slash 2}(\ra)}.
  \end{equation}
 Combining \eqref{finito}, which is a consequence of our construction on step $2\ell$,  with \eqref{mochete}, which is \emph{the goal of the construction at step $2\ell+1$}, we are led to impose  the value of $\Gordk$ on the boundaries on the cubes ${\rm Q}_{\frac h 2}^4( \rba)$ as
 \begin{equation}
 \label{gordcubeas}
\Gordk (\rbx)=\Upsilon_{\rba}^h (\rbx) {\rm \ on \ } \partial ({\rm Q}_{\frac h 2}^4( \rba)),   {\rm \ for \ } \rbx  \in \partial \rQ_{h/2}^4(\rba),\  \rba=(\ra_{i, j, q'}^k,   \tk_{2\ell+\frac 12})\in \mathbb A_{\ell}^k. 
\end{equation}
This definition is also consistent with Remark \ref{also}.     We extend the value given by \eqref{gordcubeas}  inside  each cube using the  \emph{cubic extension}, which yields 
 \begin{equation}
 \label{pointeell}
\Gordk (\rbx)=\boxbox_{h, \rba }(\rbx)=\Ext_{h/2, \rba} \left(\Upsilon_{\rba}\right)(\rbx),   \ {\rm \   for  \  }  
\rbx \in  \overline { \rQ^4_{h/2}(\rba)},  \rba \in \mathbb A_{\ell}^k.  
\end{equation}
Being defined on each cube $\overline{\rQ^4_{h/2}(\rba)}$, the map is hence  defined on the whole of $\mathcal O^h_{{\rm cross},\ell}$, in view of the decomposition \eqref{thetacross}. However, since the cubes may possibly intersect, in view of \eqref{intermarche}, on their boundaries, we need to check that the respective  definitions agree on the intersections of the domains. This will be done in the next subsection. 

\medskip
Restricting ourselves  for the moment to a  single cube $\overline {\rQ^4_{h/2}(\rba)}$ , $\rba  \in \mathbb A_\ell^k$,  definition \eqref{pointeell} of $\Gordk$  on $\overline {\rQ^4_{h/2}(\rba)}$  and the results in Lemma \ref{extension}  show that 
  the map $\Gordk$ restricted to $\overline{\rQ^4_{h/2}(\rba)}$ is  Lipschitz on  every compact subset of    $\overline{\rQ^4_r(\rba)}\setminus \{\rba\}$. At the point ${\rba}$,  it possesses a singularity  of Hopf invariant equal to $2$.  We  have   moreover the energy inequality
\begin{equation}
\label{engamma}
\rE_3\left(\Gordk,\overline{\rQ^4_{h/2}(\rba)} \right) \leq  {\rm \bf K}_{\rm box} h, {\rm \ for \ any \ } h>0, 
\end{equation}
where $ {\rm \bf K}_{\rm box}$ is the  universal constant provided in Lemma \ref{extension}. 

 %%%%%%%%%%%%%%%%%%%%%%%%%%%%%%%%%%%%%%%%%%%%%%%%%%%%%%%%%%%%%%%%%%%%%%%%%%%%%%%%%%%%%
 \subsubsection{Properties of the construction of $\Gordk$ on $ \Theta_{\rm cross, \ell}^h$} 
 \label{proptheta}

 %%%%%%%%%%%%%%%%%%%%%%%%%%%%%%%%%%%%%%%%%%%%%%%%%%%%%%%%%%%%%%%%%%%%%%%%%%%%%%%%%%%%%%
 We first show that   \eqref{pointeell} defines $\Gordk$ unambiguously on  $\Theta_{\rm cross, \ell}$ defined in 
 \eqref{thetacross}. 
 We have:
 
 \begin{lemma}
  \label{tony}
 Let $\Gordk$  be  defined by \eqref{pointeell}  on each cube $\overline{\rQ^4_{h/2}(\rba)}$, $\rba \in \mathbb A_\ell^k$, composing $\Theta_{\rm cross, \ell}$ (see \eqref{thetacross}). Then,  definition \eqref{pointeell}  yields the same value on the intersections of these cubes and  defines  the map $\Gordk$ on the whole of $\Theta_{\rm cross, \ell}$. The    restriction of    $\Gordk$ 
  to $\Theta_{\rm cross, \ell}^h$ defined  in  that way  belongs on  $C^0(\Theta_{\rm cross, \ell}^h \setminus \mathbb A_\ell^k, \S^2).$
It is Lipschitz on every compact subset of  $\Theta_{\rm cross, \ell}^h \setminus \mathbb A_\ell^k$. Every singularity in  $\mathbb A_\ell^k$ has Hopf invariant equal to $2$. 
  \end{lemma}

\begin{proof}
The only point to check is that definition \eqref{pointeell} yields the same value on the intersection of distinct cubes. As  noticed,   the intersection of two distinct cubes   $\rQ^4_{h/2}(\rba)$ and $\rQ^4_{h/2}(\rba')$, with $\rba, \rba'$ in $\mathbb A_\ell^k$ is included in the intersection of their boundaries, see \eqref{intermarche}. 
Therefore, we have only to check that 
\begin{equation}
\label{coincide}
\Upsilon_{\rba}^h(\rbx) =\Upsilon_{\rba'}^h(\rbx),   {\rm \ for  \ } 
\rbx \in \partial \rQ^4_{h/2}(\rba)  \cap \partial \rQ^4_{h/2}(\rba'). 
\end{equation}
   To that aim, we recall that, thanks to   \eqref{konami2}, we have, for  
  $\rba \in  \mathbb A_\ell^k$
 \begin{equation}
 \label{coincide2}
 \Upsilon_{\rba}^h(\rbx) =\pont_\varrho \left[(\fLkperp-\frac{\tk_{2\ell+1}}{2}\be_3) \cup\fL^k\right](x), {\rm \ for \ } \rbx=(x, s) \in  \partial \rQ^4_{h/2}(\rba)\setminus  \mathfrak Q^{3,-}_4(h/2,  \rba).
 \end{equation}
 On the other hand, we have $\mathfrak Q^{3,-}_4(h/2,  \rba) =\rQ_{h\slash2}(\ra) \times \{\tk_{2\ell}\}$, where we have written $\rba=(\ra, \tk_{2\ell+1\slash2})$, with $\ra \in \rbA_\ell^k$.  It follows therefore, in view of \eqref{flashouille} that 
 \begin{equation}
 \label{coincide3}
  \Upsilon_{\rba}^h(\rbx)= \tpont \left  [\DefLkperp_{\star, \ell}(\tk_{2\ell-1}, h)\right] (x), {\rm \ for \ }
  \rbx=(x, \tk_{2\ell})\in \mathfrak Q^{3,-}_4(h/2,  \rba). \end{equation}
  We observe that  the r.h.s of  \eqref{coincide2} and   \eqref{coincide3} do not depend on the choice of point $\rba\in \mathbb A_\ell^k$. Hence the formula lead the same value for points $\rbx$ in the intersection $\partial \rQ^4_{h/2}(\rba)  \cap \partial \rQ^4_{h/2}(\rba')$. This establishes the claim \eqref{coincide} and completes the proof of the Lemma.
 \end{proof}

 With the same arguments, we obtain:
 
 \begin{lemma} 
 \label{grouinstar}
  The   value  of $\Gordk$ on   $\partial \Theta_{\rm cross, \ell}^h$, as defined by \eqref{pointeell},  is given by
 \begin{equation*}
 \label{bound cross}
 \left\{
 \begin{aligned}
 \Gordk(\rbx)&=\pont  \left[(\fLkperp-\frac{\tk_{2\ell+1}}{2}\be_3) \cup\fL^k\right](x), {\rm \ for \ } \rbx=(x, s) \in  \partial \Theta_{\rm cross, \ell}^h \setminus 
 \mathcal O_{\rm cross, \ell}^h \times \{\tk_{2\ell}\}, \\
 \Gordk(\rbx)&=\tpont \left  [\DefLkperp_{\star, \ell}(\tk_{2\ell-1}, h)\right] (x),  {\rm \ for \ }  \rbx=(x, \tk_{2\ell}) \in  \mathcal O_{\rm cross, \ell}^h \times \{\tk_{2\ell}\}.
 \end{aligned}
 \right.
 \end{equation*}
  \end{lemma}

The proof is an immediate  consequence of identities \eqref{coincide2} and  \eqref{coincide3}.
  Concerning energy estimates, we have: 
\begin{lemma}  
\label{enertheta}
We have the energy bound
\begin{equation}
\label{enertheta2}
\rE_3\left(\Gordk ,\Theta_{\rm cross, \ell}^h \right) \leq  {\rm \bf K}_{\rm box} \sharp \left( \Gamma (\ell)\right) k.
\end{equation}
\end{lemma}
 \begin{proof} It follows from \eqref{thetacross}, \eqref{intermarche}, the fact that  $\sharp (\mathbb A_\ell^k)=\sharp (\rbA_\ell^k)=k^2\sharp \left(\tilde{ \Gamma }(\ell)\right)=k^2\sharp \left({ \Gamma }(\ell)\right)$ and the identity $hk=1$. Indeed, we have  
 \begin{equation*}
% \begin{aligned}
 \rE_3(\Gordk ,\Theta_{\rm cross, \ell}^h )=\underset{\rba \in \mathbb A_\ell^k} \sum \rE_3\left(\Gordk,\overline{\rQ^4_{h/2}(\rba)} \right) 
 \leq \sharp (\mathbb A_\ell^k) \, {\rm \bf K}_{\rm box} h =
 %k^2 \sharp \left( \Gamma (\ell)\right) {\rm \bf K}_{\rm box} h = %
 \sharp \left( \Gamma (\ell)\right) {\rm \bf K}_{\rm box}k.
% \end{aligned}
 \end{equation*}
 \end{proof}
 %%%%%%%%%%%%%%%%%%%%%%%%%%%%%%%%%%%%%%%%%%%%%%%%%%%%%%%%%%%%%%%%%%%%%%%%%%%%%%%%%%%%%
  \subsubsection{Properties of the construction of $\Gordk$ on the strip $ \Lambda_{2\ell+1}^h$} 
 Combining the results in subsections  \ref{porcstar}, \ref{devint} and \ref{proptheta}, we are led to:
 
 \begin{proposition} 
 \label{goretstar}
  Let $\Gordk$ be defined by \eqref{dejavu} and  \eqref{pointeell} on  the strip $\Lambda_{2\ell+1}^h=\R^3 \times [\tk_{2\ell}, \tk_{2\ell+1}]$. Then the restriction of $\Gordk$ to  every compact subset of  
  $\Lambda_{2\ell+1}^k \setminus \mathbb A_\ell^k$ is Lipschitz. Every singularity in  $\mathbb A_\ell^k$ has Hopf invariant equal to $2$. A time $\tk_{2\ell+1}$  %relations \eqref{mochouillebis} are satisfied, that is,  
  we have
$$
 \Gordk(x, \tk_{2\ell+1})= \pont \left[( \fLkperp-\frac{\tk_{2\ell+1}}{2} \be_3) \cup \fLk \right](x),  {\rm \ for \ } x\in \R^3, 
$$
 so that relations \eqref{mochouillebis}  and condition $\mathcal {BV}(\ell+1)$ holds. We have the gradient estimate
 \begin{equation}
 \label{porkistar}
 \rE_3(\Gordk ,\Lambda_{2\ell+1}^h)  \leq {\rm \bf K}_{\rm box} \sharp\left( \Gamma \left(\ell\right)\right) k + {\rm K}_{\rm flow} k^2.
 \end{equation}
\end{proposition}

\begin{proof}  We have first to check that definitions \eqref{dejavu} and  \eqref{pointeell}  coincide on their interface, that is the set 
$\displaystyle{\partial \Theta_{\rm cross, \ell}^h \setminus 
 \left( \mathcal O_{\rm cross, \ell}^h \times \{\tk_{2\ell}, \tk_{2\ell+1}\}\right)}$:  This fact is an immediate consequence of \eqref{dejavu} and Lemma \ref{grouinstar}.  The first two statement are then  a direct consequence of Lemma \ref{tony} and \eqref{restriction1bis}. Relations \eqref{mochouillebis}
are consequences of definition \eqref{dejavu} and Lemma \ref{grouinstar}. Finally, the energy estimate \eqref{porkistar} follows combining \eqref{integral} and \eqref{enertheta2}. 
\end{proof}

 %%%%%%%%%%%%%%%%%%%%%%%%%%%%%%%%%%%%%%%%%%%%%%%%%%%%%%%%%%%%%%%%%%%%%%%%%%%%%%%%%%%%%%%%%%%%%%%%%%%%%%%%%%%%%%
%%%%%%%%%%%%%%%%%%%%%%%%%%%%%%%%%%%%%%%%%%%%%%%%%%%%%%%%%%%%%%%%%%%%%%%%%%%%%%%%%
\subsection{The iteration process}
 The construction of $\Gordk$ may be considered as an iterative process, involving at each iteration $\ell$  a pair of steps  composed of an  even step, the step $2\ell$,   and an odd step, the step $2\ell+1$. 
%%%%%%%%%%%%%%%%%%%%%%%%%%%%%%%%%%%%%%%%%%%%%%%%%%%%%%%%%%%%%%%%%%%%%%%%%%%%%%%
 \subsubsection{Pairing even and odd steps}
 For $\ell \in \{1, \ldots, 2k-1\}$, we set 
 \begin{equation}
 \label{upsigma}
 \Upsigma_\ell^h=\Lambda_{2\ell}^k \cup \Lambda_{2\ell+1}^k=\R^3\times [\tk_{2\ell-1}, \tk_{2\ell+1}].
 \end{equation}
 The building block  for the iterative construction of $\Gordk$ is provided by the following proposition, which summarizes the main properties of the construction of $\Gordk$ on the strip $\Upsigma_\ell^h$:
 
 \begin{proposition} 
 \label{building}
 Let $\ell$ be in $\{1, \ldots, 2k-1\}$ and let $\Gordk$ be defined by \eqref{flashgordon}, \eqref{dejavu} and  \eqref{pointeell} on  the strip 
 $\Upsigma_\ell^h=\R^3\times [\tk_{2\ell-1}, \tk_{2\ell+1}]$.Then,  the restriction of $\Gordk$ to  every compact subset of  
 $\Upsigma_\ell^h \setminus \mathbb A_\ell^k$ is Lipschitz.
 Every singularity in  $\mathbb A_\ell^k$ has Hopf invariant equal to $2$.  At time  $\tk_{2\ell-1}$, relations \eqref{mochouille} are satisfied
 so that condition $\mathcal {BV}(\ell)$ holds. 
 At time  $\tk_{2\ell+1}$, relations \eqref{mochouille} are satisfied
 so that condition $\mathcal {BV}(\ell+1)$ holds. 
  Moreover, we have the integral  estimate
 \begin{equation}
 \label{porkistar2}
 \rE_3(\Gordk, \Upsigma_\ell^h)  \leq {\rm \bf K}_{\rm box} \sharp\left(\Gamma (\ell)\right) k + {\rm K}_{\rm flow} k^2+ {\rm K}_{\rm def} k^2.
 \end{equation}
\end{proposition}
  \begin{proof}  The result in Proposition \ref{building} follow directly combining the corresponding results on the strip $\Lambda_{2\ell}$ and  the strip $\Lambda_{2\ell+1}$, that is from the results in Proposition \ref{flahut} and in Proposition \ref{goretstar}. The only new point which has to be checked is that   definition \eqref{flashgordon}  of $\Gordk$ on $\Lambda_{2\ell}$ on one hand, and definition \eqref{flashgordon}  of $\Gordk$ on $\Lambda_{2\ell+1}$
coincide on their intersection, namely on 
$$
\Lambda_{2\ell} \cap\Lambda_{2\ell+1}=\R^3  \times \{\tk_{2\ell}\}.
$$
This latest fact is a consequence of Lemma \ref{grouinstar}, definition \eqref{flashouille}, see also \eqref{laborieux}.  Concerning the energy estimate 
\eqref{porkistar2}, it suffices to combine estimates \eqref{porkistar} and \eqref{flahut1}.
  \end{proof}

 \begin{figure}[h]
\centering
 \includegraphics[height=9cm]{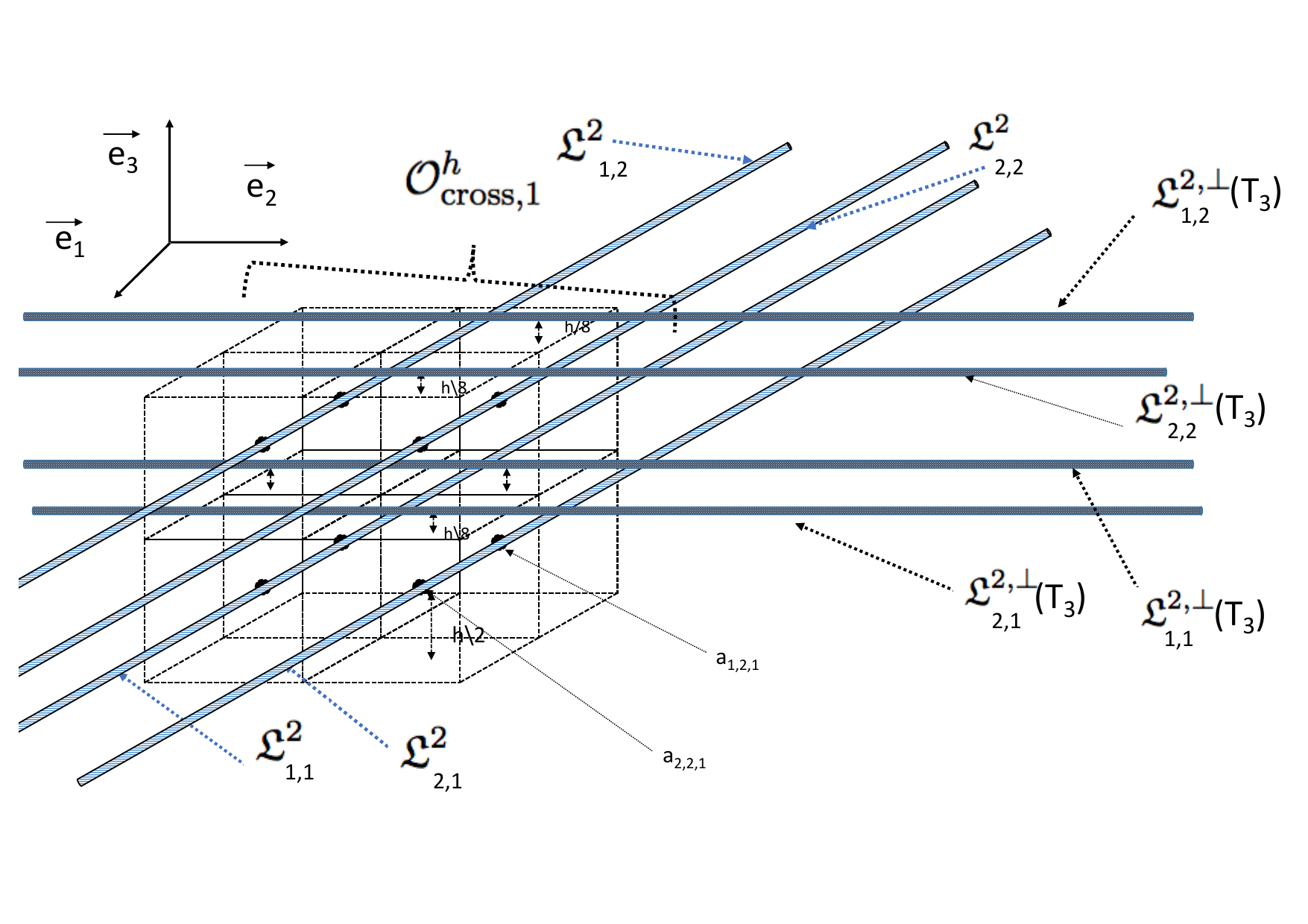}
\caption{  {\it  The shape of the fibers $\fLkperp(\tk_3)$ at time $\tk_3$ for $k=2$, corresponding to  the end of the even step in iteration  $\ell=1$.  The set of fibers $\fLkperp_{\star, 1}(\tk_3)$ have crossed the upper layer of fibers on $\fLk$, but the fibers  $\fLkperp_{j, 2}(\tk_3)$ have entered  the crossing region $\tOcrosstar$, but not yet entered the crossing region $\Ocrosstar$. }}
\label{timetk3}
\end{figure}

%The fact that condition $\mathcal {BV}(\ell+1)$ holds at the end of the  construction  is of course crucial: It allows to start again with $\ell$ replaced by $\ell+1$. 
%%%%%%%%%%%%%%%%%%%%%%%%%%%%%%%%%%%%%%%%%%%%%%%%%%%%%%%%%%%%%%%%%%%%%%%%%%%%%%%%%
\subsubsection {Constructing $\Gordk$  on $\R^3 \times [0, \tk_{4k-1}]$ gluing pairs of even and odd steps}
We decompose the set $\R^3 \times [0, \tk_{4k-1}]$  as 
\begin{equation}
\R^3 \times [0, \tk_{4k-1}]=\Lambda_1\cup \left(   \underset {\ell=1} {\overset{2k-1} \cup} \Upsigma_\ell^h\right), 
{\rm \ with \ } \Upsigma_\ell^h=\Lambda_{2\ell}^k \cup \Lambda_{2\ell+1}^k, 
\end{equation}
and consider the subset $ \mathbb A_\star^k$ of  $\R^3\times  (\tk_1, \tk_{4k-1})$  given by 
\begin{equation}
 \label{mathastar}
 %\left\{
% \begin{aligned}
 \mathbb A_\star^k=\underset{\ell=1}{\overset{2k-1}\cup } \mathbb A_\ell^k  
 =\left(\left(10\, + \frac h4\right) \be_4\right)+h\left[  \{1, \ldots, k\}^2 \times  
 \underset{\ell=1}{\overset{2k-1}\cup }  \left( \tilde{\Gamma}(\ell)\times \{2\ell \}\right)\right], 
%\end{aligned}
%\right.
 \end{equation}
see Figure \ref{singlet}. We have constructed so far the $\Gordk$ on the strip $\Lambda_1$ as well as on all the  strips $\Upsigma_\ell^h$. Gluing the definitions on the different strips we introduced so far, we are led to:

\begin{proposition}
\label{enfinouf} Let $\Gordk$ be defined by \eqref{stepzero} on $\Lambda_1^k$, and by \eqref{flashgordon}, \eqref{dejavu} and  \eqref{pointeell} on each of the strips 
 $\Upsigma_\ell^h$, for $\ell=\{1, \ldots, 2k-1\}$.  Then, the following properties hold:
 \begin{itemize}
 \item The restriction of $\Gordk$ to  every compact subset of  
 $\left( \R^3 \times [0, \tk_{4k-1}] \right) \setminus \mathbb A_\star^k$ is Lipschitz.
 Every singularity in  $\mathbb A_\star^k$ has Hopf invariant equal to $2$. 
 \item   We have at time $t=0$, $\Gordk(x, 0)=\Spagk(x)$ for any $x\in \R^3$.
 \item At time  $\tk_{2\ell-1}$, relations \eqref{mochouille} are satisfied  for $\ell=\{1, \ldots, 2k\}$. 
 \item $\Gordk (x, s)=\sP$, For any $x\in \R^3, \vert x \vert \geq 20$ and $s \in [0, \tk_{4k-1}]$.
 \end{itemize}
 Moreover, we have the integral  estimate
 \begin{equation}
 \label{porkistar4}
 \rE_3(\Gordk, \R^3 \times [0, \tk_{4k-1}])  \leq  \left(  {\rm \bf K}_{\rm box}  + 2{\rm K}_{\rm flow} +2 {\rm K}_{\rm def}\right) k^3.
 \end{equation}
\end{proposition}

\begin{proof} The results are direct consequences of the results in Proposition \ref{building} and  Proposition \ref{restriction1}. Besides the energy estimate \eqref{porkistar4}, the only point we have to check is that the definitions provided on different strips yield the same result on their intersections, which are given by 
\begin{equation}
\left\{
\begin{aligned}
\Upsigma_\ell \cap \Upsigma_{\ell+1}&=\R^3 \times \{\tk_{2\ell+1}\}  {\rm  \  and \  } \\
\Lambda_1\cap \Upsigma_1&=\R^3 \times \{\tk_{1}\}.
\end{aligned}
\right. 
\end{equation} 
It follows from the second and third statements in Proposition \ref{building} that the value of $\Gordk$ at time $\tk_{2\ell+1}$ is given by \eqref{mochouillebis} on each of the strip $\Upsigma _\ell$ and $\Upsigma_{\ell+1}$. Hence  the  definitions coincide, yielding continuity on $\R^3\times [0, \tk_{4k-1}]$. Concerning the energy estimate,  adding estimates \eqref{porkistar2} for $\ell=1$ to $2k-1$ together with estimate \eqref{flahut1}, we obtain 
\begin{equation}
 \label{porkistar5}
 \begin{aligned} 
 \rE_3\left(\Gordk, \R^3 \times [0, \tk_{4k-1}] \right)  &\leq
{\rm \bf K}_{\rm box} \underset{\ell=1}{\overset {2k-1} \sum} \sharp\left( \Gamma (\ell)\right) k \\
&+(2k-1) \left[ {\rm K}_{\rm flow} k^2+ {\rm K}_{\rm def} k^2\right]+ {\rm K}_{\rm def} k^2\ .
\end{aligned}
 \end{equation}
For the first term on the r.h.s of inequality \eqref{porkistar5}, we have, in view of identities \eqref{setindique},  
$$
\sharp\left( \Gamma (\ell)\right)=\ell, { \rm \ for \ } \ell \in \{1, \ldots, k\},    {\rm \ and \ }  \sharp\left( \Gamma (\ell)\right)=2k-\ell, { \rm \ for \ } \ell \in \{1, \ldots, k\} .
$$
Hence, we obtain 
\begin{equation}
\label{rover}
%\begin{aligned}
 \underset{\ell=1}{\overset {2k-1} \sum} \sharp\left( \Gamma (\ell)\right)= \underset{\ell=1}{\overset {k} \sum}\ell  +  \underset{\ell=k+1}{\overset {2k-1} \sum}(2k-\ell) =\frac{k(k+1)}{2}+\frac{k(k-1)}{2}=k^2. 
%\end{aligned}
\end{equation}
Combining \eqref{rover} and \eqref{porkistar5}, we obtain the energy estimate \eqref{porkistar4}.
\end{proof}

 \begin{figure}[h]
\centering
 \includegraphics[height=9cm]{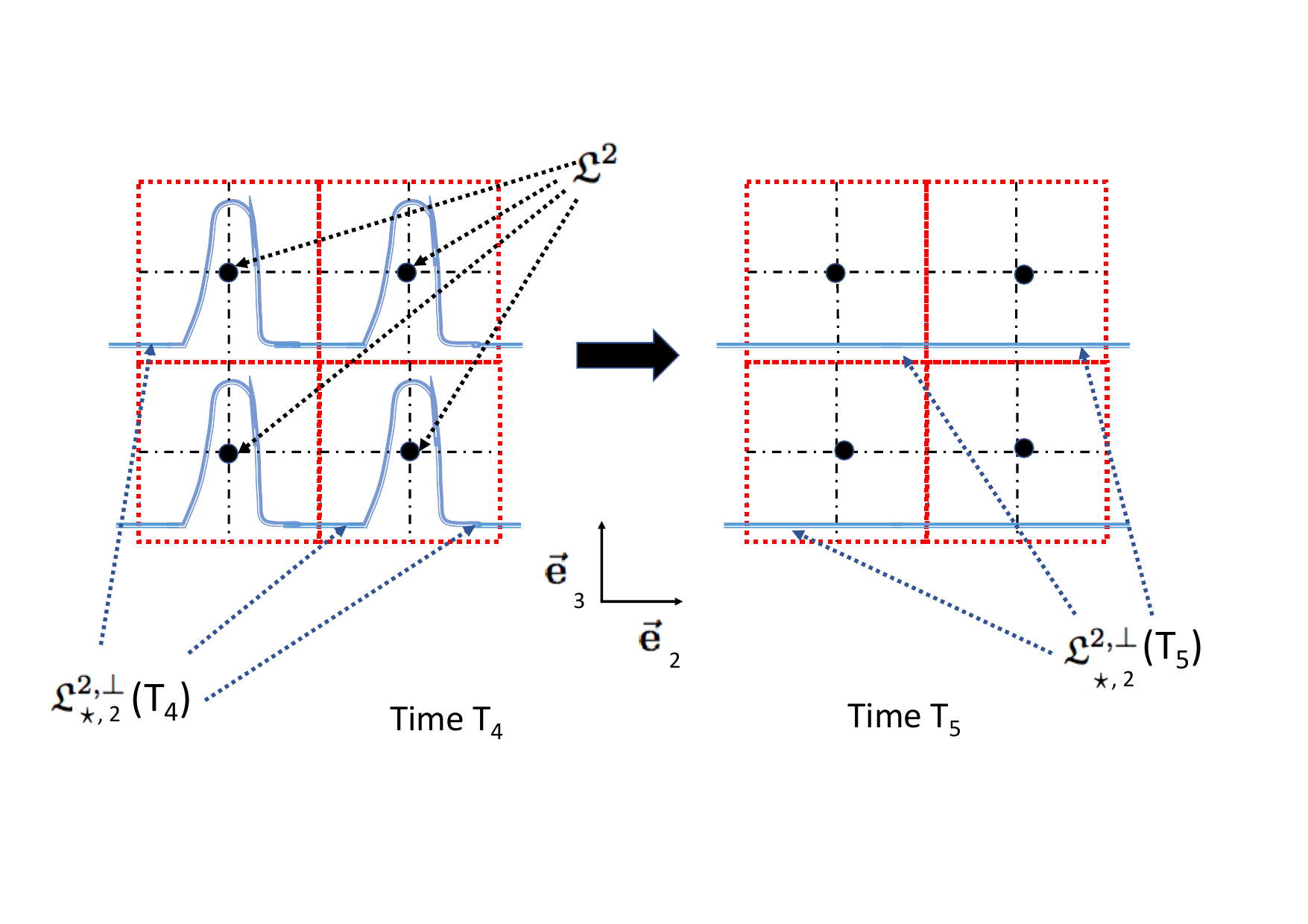}
\caption{  {\it  Sideview of the shape of the fibers $\fLkperp(\tk_4)$ and $\fLkperp(\tk_5)$ at times $\tk_4$ and $\tk_5$, respectively,  for $k=2$, corresponding to iteration $\ell=2$.  The restriction to the  cube $\overline{\rQ_{h\slash2}(a)}$ composing $\Ocrosstar$  corresponds to Figure \ref{DversusC}. At time $\tk_4$,  the set of fibers $\fLkperp_{i, 1}(\tk_4)$ have crossed the  layer of  fibers $\fLk_{j, 2}$, but not yet the fibers  $\fLk_{j, 1}$. At time $\tk_5$,  they have crossed the whole sheaf $\fLk$ and property $\mathcal {BV}(3)$ holds. }}
\label{timetk4V2}
\end{figure}

\begin{figure}[h]
\centering
\includegraphics[height=9cm]{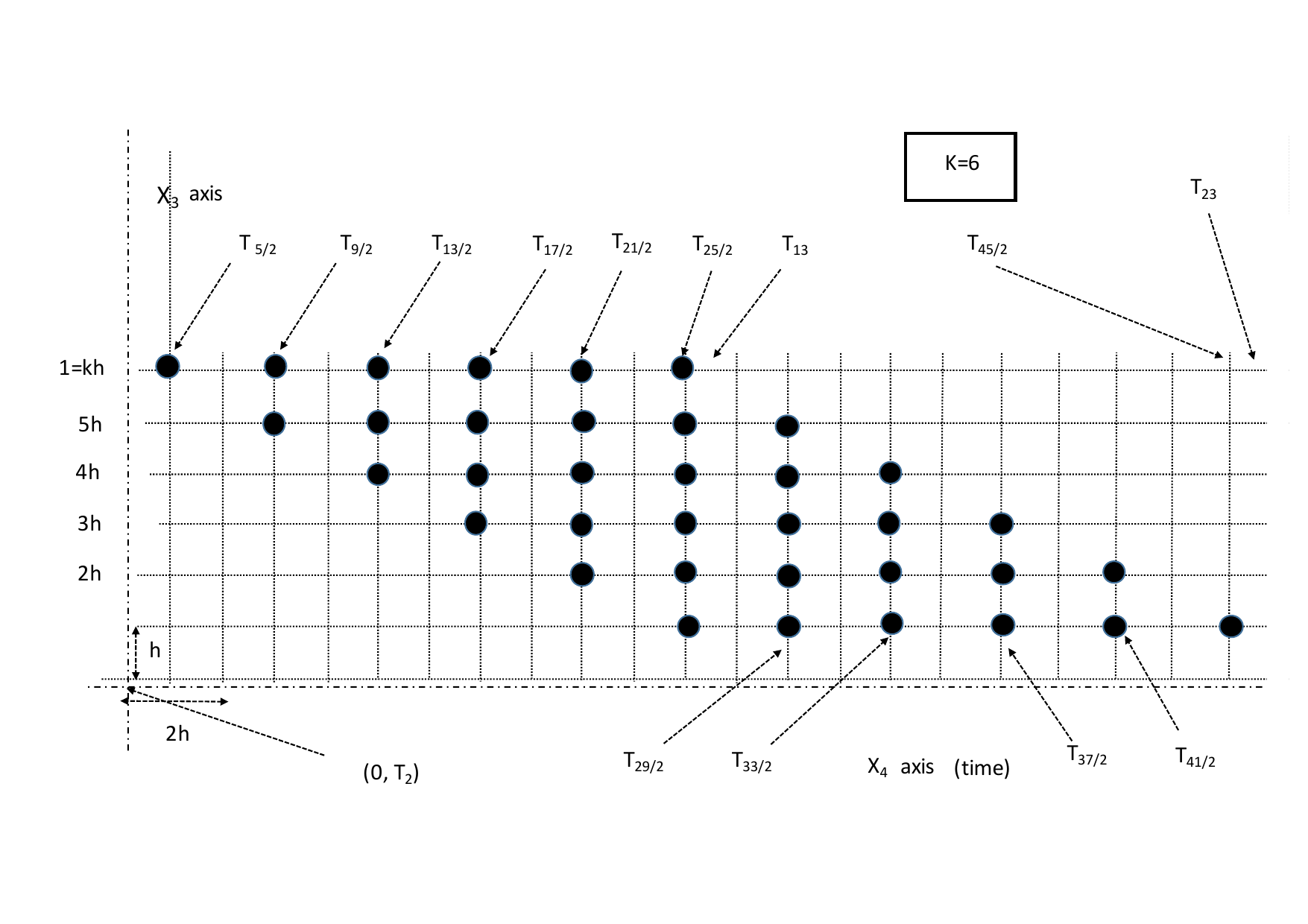}
\caption{  {\it  The singular set $\mathbb A_{\star}^k$   for $k=6$ projected onto the $(\be_3, \be_4)$ plane.
% that is  
% $\mathbb Y_{\rm sing}^6$
}}
\label{singlet}
\end{figure}

\begin{remark}
\label{bvl}
{\rm    Going back to Subsection \ref{defnore}, property $\mathcal {BV}(\ell+1)$, show that at time $\tk_{2\ell+1}$,  that the fibers $\fLkperp_{i, q}(\tk_{2\ell+1})$  have undergone a downwards vertical translation, so to cross the fibers of $\fLk$. More precisely, for $1\leq \ell \leq k$, the fibers  $\fLkperp_{i, 1}(\tk_{2\ell+1})$  have crossed $\ell$ layers of fibers of  $\fLk$. In particular, for $\ell=k$, the $\fLkperp_{i, 1}(\tk_{2\ell+1})$  are no longer linked to the sheave $\fLk$. For $\ell=k+1$, the same property holds for the fibers $\fLkperp_{i, 2}(\tk_{2\ell+3})$.  More generally, for  $\ell=k+r$, 
 $\fLkperp_{i, r}(\tk_{2\ell+2r+1})$ are no longer linked to $\fLk$, until the two sheaves are unlinked. 
}
\end{remark}

%%%%%%%%%%%%%%%%%%%%%%%%%%%%%%%%%%%%%%%%%%%%%%%%%%%%%%%%%%%%%%%%%%%%%%%%%%%
%%%%%%%%%%%%%%%%%%%%%%%%%%%%%%%%%%%%%%%%%%%%%%%%%%%%%%%%%%%%%%%%%%%%%%%%%%%%%%%%
\subsubsection{Properties of the  map $\Gordk$ at time $\tk_{4k-1}$}
At time $\tk_{4k-1}=14-5h\slash 4$,   
 all fibers  of $\fLkperp(\tk_{4k-1})$  have  left the crossing region $\tOcrosstar$  and the two sheaves   $\fLkperp(\tk_{4k+1})$  and $\fL^k$ are not longer linked. Indeed, we have 
 \begin{equation*} 
 \left\{
 \begin{aligned}
&\fL^{k, \perp}(\tk_{4k-1})=\fLkperp-(7-5h\slash 8)\be_3  \subset  \R^2 \times (-\infty,\frac{5h}{8}] {\rm \ and \ } \\
&\fLk (\tk_{4k-1})=\fLk\subset \R^2\times [h, +\infty),
\end{aligned}
\right.
 \end{equation*}
 so that the two sets are separated by a hyperplane. It follows that
 \begin{equation}
 \label{lompa}
 \rH\left( \Gordk (\cdot, \tk_{4k-1})\right)=0.  
  \end{equation}
  The  map $\Gordk (\cdot, \tk_{4k-1})$ is constant, equal to $\sP$ on $\R^3 \setminus [-20, 20]^3$. Concerning the energy, it follows from 
 $\mathcal{BV}(2k)$ that $\Gordk (\cdot, \tk_{4k-1})$ has  the  energy of the Spaghetton map, i.e. 
 \begin{equation}
 \label{lompa2}
 \rE_3\left( \Gordk (\cdot, \tk_{4k-1}), \R^3)\right) \leq {\rm K}_{\rm spag} k^3.  
  \end{equation}
 %%%%%%%%%%%%%%%%%%%%%%%%%%%%%%%%%%%%%%%%%%%%%%%%%%%%%%%%%%%%%%%%%%%%%%%%%%%%%%%%%%%%%%%%%%%%%%%%%%%%%%%%%%%%%%%%%%%%%%%%%%%%%%%%%%%%%%%
%%%%%%%%%%%%%%%%%%%%%%%%%%%%%%%%%%%%%%%%%%%%%%%%%%%%%%%%
\subsection{  Step   $4k$: deforming to a constant map.}
In this last step, we define the map $\Gordk$ and the strip 
$\Lambda_{\rm fin}^{4k}=[\tk_{4k-1},\tk_{4k}]\equiv\tk_{4k-1}+40 ]$. We deform for  that purpose the map $\Gordk (\cdot, \tk_{4k-1})$ which is Lipschitz and  has trivial homotopy class to a constant map  in a Lipschitz way invoking Proposition \ref{trivialextend}. We apply Proposition \ref{trivialextend} with $R=20$, take as map $w$   the restriction of $\Gordk (\cdot, \tk_{4k-1})$ to the  cube $[-20, 20]^3$: This yields a \emph{Lipschitz map} $W=W^k$ defined on $[-20, 20]^4$ satisfying  the four properties listed in Proposition \ref{trivialextend}.   We set 
\begin{equation}
\label{fenelon}
 \Gordk (x, \tk_{4k-1} +s)=W^k (x, s-20),  {\rm \ for \ } x \in \R^3 {\rm \ and \ } s \in [0, 40],  
 \end{equation}
  so that  the value of $\Gordk(\cdot, \tk_{4k-1})$ given by \eqref{fenelon} and \eqref{gordcubeas} for $\ell=2k-1$ coincide,  and $\Gordk (\cdot, \tk_{4k})=\sP$.
Moreover,  $\Gordk$ is \emph{Lipschitz} on $\Lambda^{4k}$, continuous near $\R^3\times \tk_{4k-1}$ and  
  \begin{equation}
  \label{lesud}
  \Gordk (x, s)=\sP  {\rm  \  on  \ } (\R^3 \setminus [-20, 20]^3) \times [\tk_{4k-1}, \tk_{4k}] \cup \R^3  \times \{\tk_{4k}\}.
  \end{equation}
   The fourth property in Proposition  \ref{trivialextend} yields the energy estimate
   \begin{equation}
   \label{finalenergy}
 %  \begin{aligned}
  \rE_3(\Gordk, \Lambda^{4k}) \leq 40 {\rm C}_{\rm ext} \rE_3(\Gordk(\cdot, \tk_{4k-1}, \R^3) 
  \leq 40 {\rm C}_{\rm ext}{\rm K}_{\rm spg} k^3.
%  \end{aligned}
   \end{equation}
   %%%%%%%%%%%%%%%%%%%%%%%%%%%%%%%%%%%%%%%%%%%%%%%%%%%%%%%%%%%%%%%%%%%%%%%%%%%%%%%%%%%%%%%%%%%%%%%%%%%%%%%%%%%%%%%%%%%%%%%%%%%%%%%%%%
   %%%%%%%%%%%%%%%%%%%%%%%%%%%%%%%%%%%%%%%%%%%%%%%%%%%%%%%%%%%%%%%%%%%%%%%%%%%%%%%%%%%%%%%%%%%%%%%%%%%%%%%%%%
   \subsection{Completion of the construction of $\Gordk$}
  Finally,  we notice that 
  $\tk_{4k}=\tk_{4k-1}+40= 44-5h\slash 4< 50, $
  so that we complete the construction of $\Gordk$ setting 
\begin{equation}
\label{lusurp}
\Gordk (\rbx)\equiv \sP, {\rm \ for \ } x\in \Lambda_{\rm 4k+1}^k=\R^3\times[ \tk_{4k}, \tk_{4k+1}=50].
\end{equation}
This definition yields a continuous map on a open neighborhood of $\Lambda_{\rm 4k+1}^k$. We have:

\begin{proposition}
\label{ultimatum}
Let $\Gordk$ be defined by \eqref{stepzero} on $\Lambda_1^k$,  by \eqref{flashgordon}, \eqref{dejavu} and  \eqref{pointeell} on each of the strips 
 $\Upsigma_\ell^h$, for $\ell=\{1, \ldots, 2k-1\}$, by \eqref{fenelon} on $\Lambda_{4k}$, and \eqref{lusurp} on $\Lambda_{4k+1}$.   Then, $\Gordk$ is defined on $\Lambda$ with values into $\S^2$ and satisfies Property \eqref{kontiki}. Moreover, the  restriction of $\Gordk$ to  every compact subset of  
 $\left( \R^3 \times [0, 50] \right) \setminus \mathbb A_\star^k$ is Lipschitz.
 Every singularity in  $\mathbb A_\star^k$ has Hopf invariant equal to $2$. 
  We have the integral  estimate
 \begin{equation}
 \label{porkistar5}
 \rE_3(\Gordk, \Lambda])  \leq  \left(  {\rm \bf K}_{\rm box}  + 2{\rm K}_{\rm flow} +2 {\rm K}_{\rm def}+ 
 40 {\rm C}_{\rm ext}{\rm K}_{\rm spg} \right) k^3.
 \end{equation}
\end{proposition}
\begin{proof}  The fact that $\Gordk$ belongs to $\mathcal R(\Lambda, \S^2)$ is an immediate consequence of Proposition \ref{enfinouf}, combined with  the fact that $W^k$ is Lipschitz, and  the fact that  the definitions  yield the same result on their intersections. The second statement in \eqref{kontiki} follows from the second statement in Proposition \ref{enfinouf}, whereas the third is a consequence of \eqref{lusurp}. The last condition in \eqref{kontiki} follows from the fourth statement in Proposition \ref{enfinouf}, combined with \eqref{lesud} as well  as \eqref{lusurp}. The energy estimate \eqref{porkistar5} is obtained combining \eqref{porkistar4} with \eqref{finalenergy}.
\end{proof}

%%%%%%%%%%%%%%%%%%%%%%%%%%%%%%%%%%%%%%%%%%%%%%%%%%%%%%%%%%%%%%%%%%%%%%%%%%%%%%%%%%%%%%%%%%%%%%%%%%%%%%%%%%%%
   \subsection{Proof of Proposition \ref{deform} completed.}
In view of Proposition \ref{ultimatum} we have already established property \eqref{kontiki} for $\Gordk$, and the energy estimate \eqref{porkistar5} yields \eqref{averell}, with the choice of constant 
$$
{\rm K}_{\rm gord}={\rm \bf K}_{\rm box}  + 2{\rm K}_{\rm flow} +2 {\rm K}_{\rm def}+ 
 40 {\rm C}_{\rm ext}{\rm K}_{\rm spg}.
$$  
 It remains to investigate the properties of the singular set $\mathbb A_\star^k$, which is defined in \eqref{mathastar},  and in particular to establish identity \eqref{joedalton}. To that aim, 
 %Recall that, in view of \eqref{encorelui}, a positive  integer  $q'$ belongs to $\Gamma (\ell)$ if and only if $q'=$ `
 we observe that 
 \begin{equation}
 \label{ramonstar}
 \begin{aligned}
\underset{\ell=1}{\overset {2k-1}\cup}\tilde{ \Gamma}(\ell)\times \{2\ell \}&= \underset {p=0} {\overset {k-1} \cup} \underset {q=1} {\overset {k} \cup}  (k-p,  2(q+p)) 
= \underset {p'=1} {\overset {k} \cup} \underset {q'=1} {\overset {k} \cup}  (p',  2(k+q-p')) \\
&= (0, 2k) + \underset {p'=1} {\overset {k} \cup} \underset {q=1} {\overset {k} \cup}  (p', 2q-2p')= (0, 2k)+\mathbbmss  T(\{ 1, \dots, k\}^2), 
\end{aligned}
 \end{equation}
 where $\mathbbmss T:\R^2 \to \R^2$ denotes the linear mapping defined by
 $\mathbbmss T(a, b)=(a, -2a+2b)$, for $(a, b)\in \R^2$.
Indeed, for the first identity in \eqref{ramonstar},  recall  that  a positive  integer  $q$ belongs to $\Gamma (\ell)$ if and only if there exists $p \in \{0,\ldots,  k-1\}$ such that $\ell=p+q$.  On the other hand, in view of definition \eqref{encorelui}, elements in $\tilde{ \Gamma }(\ell)$ are of the form $k+q-\ell$, hence of the form $k-p$,  with  $p \in \{0,\ldots,  k-1\}$: Hence, we may sum on $p$ instead of   $\ell$. The first  identity in \eqref{ramonstar} then follows.  For the second identity in \eqref{ramonstar}, we introduce the new variables $p'=k-p$ and $q'=k+1-q$.

Combining \eqref{ramonstar} with the definition \eqref{mathastar} of 
$\mathbb A_\star^k$, we obtain \eqref{joedalton}, so that the proof of Proposition \ref{deform} is complete.
 %%%%%%%%%%%%%%%%%%%%%%%%%%%%%%%%%%%%%%%%%%%%%%%%%%%%%%%%%%%%%%%%%
 \section{Proof of the main results}
 \label{proofmain}
 \subsection {Proof of Proposition \ref{deformons}}
 \label{proxim}
 %%%%%%%%%%%%%%%%%%%%%%%%%%%%%%%%%%%%%%%%%%%%%%%%%%%%%%%%%%%%%%%%
 \subsubsection {Constructing the sequence  $(\mathfrak v_k)_{k \in \N}$}
%%%%%%%%%%%%%%%%%%%%%%%%%%%%%%%%%%%%%%%%%%%%%%%%%%%%%%%%%%%%

 The maps $\mv_k$ are   deduced from the maps $\Gordk$  performing  a few  elementary transformations,  in order  to transform the set of singularities given by \eqref{bboxplus}, which are the nodes of a distorted grid,  into the nodes  of a four  dimensional orthonormal regular grid. 
 
 \medskip
 \noindent
  {\it Transforming singularities into an orthonormal regular  grid: The map $\tGordk$}. The map $\Gordk$ is only defined on the  strip $\Lambda$ defined in 
  \eqref{kontiki}.  Given an integer $k \in \N^\star$, we first extend  the map  $\Gordk$  to the whole space $\R^3 \times  \R $ setting 
  \begin{equation}
  \label{wholespace}
  \left\{
  \begin{aligned}
   \Gordk (x, s)=\sP, {\rm \ for \ }    x \in \R^3 {\rm  \ and \ } s \geq 50, \\
   \Gordk (x, s)=\Spagk(x),  {\rm \ for \ }    x \in \R^3 {\rm  \ and \ } s \leq 0.
   \end{aligned}
   \right.
\end{equation}
It follows from this definition, setting   ${\mathcal   V} \equiv 
   \{ (x, s)\in \R^3 \times \R {\rm \ s.t. \  }  \vert x \vert \leq 40 {\rm \ and \ }  s \leq 50\}$, that 
\begin{equation}
\label{suditude}
   \Gordk (\rbx)=\sP,  {\rm \ for \ } \rbx \in \R^4\setminus \mathcal V.
   %  {\rm \ where \ }  {\mathcal   V} \equiv 
   %\{ (x, s)\in \R^3 \times \R {\rm \ s.t. \  }  \vert x \vert \leq 40 {\rm \ and \ }  s \leq 50\}.
\end{equation}
In view of the results in Proposition \ref{deform} and Proposition \ref{grammage}, we have the energy estimate
\begin{equation}
\label{energitude}
\rE_3(\Gordk, \R^3\times [-a, 0] )\leq \left({\rm \bf  K}_{\rm Gord} +a {\rm \bf K}_{\rm spg}\right)\, k^3, {\rm \ for \ any  \ }  a\geq 0.
\end{equation}
Let $ \Upphi_k: \R^4 \to \R^4$  be the affine map defined in \eqref{bboxplus},  given, for $\rbx=(x_1, x_2, x_3, x_4)$, by  
$ \Upphi_k(\rbx)=(x_1, x_2, x_3, 12+h\slash 4 -2x_3+2x_4)$. It is  onto and   such that $ \mathbb A_{\star}^k=\Upphi_k\left( \boxplus^4_k (h)\right)$. Consider next the map $\tGordk$ defined on $\R^3\times \R^+$ by 
$$ 
\tGordk (\rbx)=\Gordk \circ \Upphi_k(\rbx)= \Gordk \left( \Upphi_k(\rbx)\right), {\rm \ for \ } \rbx \in \R^3\times [0, +\infty),
$$
%It follows from property \eqref{bboxplus} 
so that $\tGordk \in C^0(\R^3\times \R^+\setminus \boxplus_k^4(h), \S^2)$, each of the $k^4$ singularities  in $\boxplus_k^4(h)$ having Hopf invariant equal to $+2$.   We claim that 
\begin{equation}
\label{claque}
\left\{
\begin{aligned}
&\tGordk (\rbx)=\sP, {\rm  \ for \  }  \vert \rbx \vert \geq 100, \ \rbx   \in \R^3\times \R^+ {\rm \ and \ }\\
&\rE_3(\tGordk, \R^3\times [0, +\infty) )\leq 54  \left({\rm \bf  K}_{\rm Gord} +144{\rm \bf K}_{\rm spg}\right)\, k^3.
\end{aligned}
\right.
\end{equation}
 In order to prove   the first assertion of  \eqref{claque}, we take advantage of \eqref{suditude}, and show that, for any $\rbx \in \R^3\times \R^+$, $\Upphi_k(\rbx) \in \mathcal V$ implies $\rbx \in \B^4(100)$. Let   
$\Omega_k \equiv \Upphi_k(\R^3 \times \R^+)=\left\{ (x_1, x_2, x_3, -2x_3+2x_4+12+h\slash4) {\rm  \ with  \ }  x_4\geq 0\right\}$: We have   to show that 
\begin{equation}
\label{claquemur}
\Upphi_k^{-1}(\Omega_k \cap \mathcal V) \subset\B^4 (100),
 \end{equation}
  the inverse  $\Upphi_k^{-1}$ of $\Upphi$ being given by
 $\displaystyle{ 
\Upphi_k^{-1}(\rby)=(y_1, y_2, y_3, y_3+ \frac 12y_4-6-\frac h 8)}$, 
  for any $\rby=(y_1, y_2, y_3, y_4)\in \R^4$. Let  $\rby\in \Omega_k\cap \mathcal V$ and           
  $\rbx=\Upphi_k^{-1}(\rby)$, so that in particular $\rby \in \mathcal V$ and $\rbx   \in \R^3\times \R^+$. It follows that 
  $\vert y_i \vert=\vert x_i \vert  \leq 40$, for $i=1, 2, 3$,  that $y_4 \leq 50$ and  that  $0 \leq x_4 $. In view of the  form of $\Upphi_k^{-1}$  we have therefore 
    $$0\leq x_4=y_3+ \frac 12y_4-6-\frac h 8 \leq 40+\frac{50}{2}-6-\frac h 8\leq 40+25-6=59.  $$ 
It follows that $\vert \rbx \vert \leq \sqrt{3\times 40^2+59^2}= 91<100,$  so that \eqref{claquemur} and hence the first assertion in \eqref{claque} is established. For the second assertion in \eqref{claque}, we have, by the chain rule, 
$ \displaystyle{\partial_{x_i} \tGordk (\rbx)=\partial_{x_i} \Gordk (\Upphi(\rbx))}$  for $ i=1, 2$, whereas
$$
 \partial_{x_3} \tGordk (\rbx)=\partial_{x_3} \Gordk (\Upphi(\rbx))-2\partial_{x_4} \Gordk (\Upphi(\rbx) ) {\rm \ and \ }
  \partial_{x_4} \tGordk (\rbx)=2\partial_{x_4} \Gordk (\Upphi(\rbx)).
  $$ 
If follows that we have 
$$
\vert \nablaq \tGordk (\rbx) \vert^2 \leq 9\vert \nablaq \Gordk (\Upphi_k(\rbx))\vert^2, { \rm \ for \ }    
\rbx \in \R^3 \times \R^+.
    $$
Integrating on $\R^3\times \R^+$, we may restrict  ourselves  to  the set  $\B^4(100) \cap (\R^3 \times \R^+)$, in view of the first assertion of \eqref{claque}, so that  
\begin{equation*}
\begin{aligned}
\rE_3(\tGordk, \R^3\times [0, +\infty) )&\leq \int_{\B^4(100)} \vert \nablaq \tGordk (\rbx) \vert^3 {\rm d} \rbx \leq 
27 \int_{\B^4(100)} \vert \nablaq \Gordk (\Upphi(\rbx)) \vert^3 {\rm d} \rbx  \\ 
&\leq 54 \int_{\Upphi^{-1}(\B^4(100))} \vert \nablaq \Gordk (\rby)) \vert^3 {\rm d} \rby \\
& \leq 
54 \int_{\R^3\times [-144, +\infty)  } \vert \nablaq \Gordk (\rby)) \vert^3 {\rm d} \rby, 
\end{aligned}
\end{equation*} 
 where we used the fact that the Jacobian determinant of $\Upphi_k$ is equal to $2$ for the  second line, and the fact that 
 $\Upphi^{-1}(\B^4(100)) \subset \R^3 \times [-144, +\infty)$ for the last line. The second assertion in \eqref{claquemur} follows combined with \eqref{energitude}.

\medskip
 \noindent
 {\it Extending $\tGordk$ by symmetry}.  So far, the map $\tGordk$ is only defined on $\R^3 \times \R^+$.  We   extend  the map $\tGordk$ by symmetry to the whole on $\R^4$, setting
 \begin{equation}
  \tGordk(x, s) =\tGordk(x, -s),  {\rm \  for \ }  x \in \R^3 {\rm \ and \ } s\leq 0. 
 \end{equation}
 It follows from  the trace theorem that this extension belongs to  $W_{\rm loc} ^{1, 3}(\R^4, \S^2)\cap C^0(\R^4 \setminus {\tilde {\mathbb A}}_{\star}^k, \S^2)$, where the set  ${\tilde {\mathbb A}}_{\star}^k$ is given by
  $\displaystyle{
   {\tilde {\mathbb A}}_{\star}^k= \boxplus_k^4(h)\cup \mathbbmss S_{\rm sym} \left(\boxplus_k^4(h)\right), 
  }$
   with  $\mathbbmss S_{\rm sym}$ corresponding  to the symmetry defined in \eqref{bbmsym}. In view of  \eqref{claque}, we have    \begin{equation}
\label{claque2}
\left\{
\begin{aligned}
&\tGordk (\rbx)=\sP, {\rm  \ for \  }  \vert \rbx \vert \geq 100 {\rm \ and \ }\\
&\rE_3(\tGordk, \R^4 )\leq 108 \left({\rm \bf  K}_{\rm Gord} +144 {\rm \bf K}_{\rm spg}\right)\, k^3.
\end{aligned}
\right.
\end{equation}

  \medskip
 \noindent
 {\it Rescaling  $\tGordk$}. We now are in position to define the map $\mathfrak v_k$ as 
 \begin{equation*}
 \mathfrak v_k(\rbx)=\tGordk(100\, \rbx), {\rm \ for \ } \rbx \in \R^4. 
 \end{equation*}
 It follows then from \eqref{claque2} and scaling laws that 
 \begin{equation}
  \label{claque3}
\left\{
\begin{aligned}
&\mathfrak v_k (\rbx)=\sP, {\rm  \ for \  }  \vert \rbx \vert \geq 1{\rm \ and \ }\\
&\rE_3(\mathfrak v_k, \R^4 )\leq \frac{108}{100} \left({\rm \bf  K}_{\rm Gord} +144 {\rm \bf K}_{\rm spg}\right)\, k^3.
\end{aligned}
\right.
\end{equation}
  Moreover $\mv_k \in C^0(\R^4\setminus  \Sigma_{\rm sing}, \S^2)$ where  $\Sigma_{\rm sing}$ is described in \eqref{sigmasing}. In order  to prove \eqref{debranche}, we will  rely on some additional notion related to branched transportation which are exposed in Appendix A, in particular the branched connection to the boundary $\Lbra$, with the exponent $\upalpha$ equal to the critical exponent in dimension 4, namely $\upalpha_4=1-\frac 14=\frac {3}{4}$.   As a direct is a direct consequence of Proposition \ref{droppy} of the Appendix, we have: 
   
  \begin{proposition}
  \label{castelmoche}
  We have  the lower bound, for some universal constant $C>0$
  $$
  \Lbrafour (\boxplus_k^4(\hscal), \partial ([0, \frac{1}{100}]^4)  \geq C  k^3 \,  \log k,  {\rm   \  \ for \ any \ }  k \in \N^*,  
  {\rm where  \ }  \hscal=\frac{h}{100}.
  $$
  \end{proposition}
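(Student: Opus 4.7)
The plan is to reduce the statement to a lower bound for branched transportation at the critical exponent $\upalpha_4 = 3/4$ on a regular cubic grid, a bound then supplied by the Appendix.

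First I would exploit the explicit grid structure contained in $\check \Upsilon^{k,+}$. By \eqref{gasquet}, the set $\check{\Upsilon}^{k,+}$ contains the translated lattice $\mathcal G_k \equiv \check A_0^h + \boxplus^4_k(h/40)$, which consists of order $k^4$ points enclosed in the parallelepiped $\Boxh$ of fixed size (independent of $k$), with minimal mutual spacing bounded below by $h/40 = 1/(40k)$. Moreover, every such singularity carries the same topological charge $+2$, so in the functional $W_2(G)$ defined in \eqref{w2} the weight $\nudeux(2)$ assigned to an edge terminating at a grid point is a positive universal constant, and the set of admissible graphs is controlled by the combinatorics of the grid alone.

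Next I would invoke the monotonicity of $\Lbrafour$ with respect to the source set. Any branched connection of $\check \Upsilon^{k,+}$ to $\partial \Boxh$ must, in particular, transport the topological charges concentrated on the subfamily $\mathcal G_k$; by projecting onto the subgraph carrying this mass one obtains an admissible connection of $\mathcal G_k$ to $\partial \Boxh$ of no greater weighted length. This yields
\begin{equation*}
\Lbrafour(\check{\Upsilon}^{k,+}, \partial \Boxh) \geq \Lbrafour(\mathcal G_k, \partial \Boxh),
\end{equation*}
and reduces matters to a lower bound for the branched transport cost of a uniform $k^4$-point cubic lattice in a bounded region of $\R^4$ at the critical exponent $\upalpha_4 = 3/4$.

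Finally I would apply Theorem \ref{droppy} of Appendix A, which, for the critical exponent $\upalpha_c = (d-1)/d = 3/4$ in dimension $d = 4$, establishes the logarithmic lower bound
\begin{equation*}
\Lbrafour(\mathcal G_k, \partial \Boxh) \geq C \, N^{\upalpha_4} \log N \quad \text{with } N = |\mathcal G_k| \sim k^4,
\end{equation*}
giving the claimed estimate $C k^3 \log k$. The main obstacle is not contained in the present section but in establishing Theorem \ref{droppy}: it is the precise logarithmic correction at the critical exponent. Heuristically, at $\upalpha = (d-1)/d$ the cost of gathering charge from the $N$ points at each dyadic scale contributes comparably, and summing over the $\sim \log N$ effective scales produces the logarithmic factor beyond the naive $N^{\upalpha}$ behavior that holds in the subcritical regime; this scale-by-scale analysis is precisely what fails for the Lebesgue measure in the sense of Devillanova--Solimini, and what must be quantified in the Appendix to obtain the effective bound used here.
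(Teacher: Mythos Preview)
Your proposal is correct and follows essentially the same route as the paper: use \eqref{gasquet} to pass to the regular grid $\check A_0^h+\boxplus^4_k(h/40)$ via monotonicity of $\Lbrafour$ in the source set, then invoke Theorem~\ref{droppy} at the critical exponent $\upalpha_4=3/4$ to obtain the $k^3\log k$ lower bound.

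Two small remarks. First, your aside about $W_2$ and $\nudeux(2)$ is a slight conflation: the statement concerns $\Lbrafour$, which is built from the abstract weight $d^{\upalpha}$ of the Appendix (see \eqref{walpha}), not from $\nudeux$; the link between $\Lbr$ and $\Lbrafour$ via \eqref{ofthelake} is only used later, in the proof of Proposition~\ref{deformons}. Second, Theorem~\ref{droppy} is stated for the grid $\boxplus^m_k(1/k)$ in the unit cube, so to apply it you should, as the paper does, insert the translation and scaling step $\Lbrafour(\boxplus^4_k(h/40),\partial\check{\rm B}_{\rm ox})=\frac{1}{40}\Lbrafour(\boxplus^4_k(h),\partial{\rm B}_{\rm ox})\geq \frac{1}{40}\Uplambda_4^{\upalpha_4}(k)$ before invoking the theorem. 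Neither point affects the validity of your argument.
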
 
On the other hand, we notice that the functional $\Lbr$ defined in \eqref{Lbranch} corresponds precisely to the functional $\Lbrafour$. More precisely, we have 
\begin{equation}
\label{Lbrafouille}
\Lbr (\mathfrak v_k)=\Lbrafour ( \Sigma_{\rm sing}, \partial \B^4).
\end{equation} 
%%%%%%%%%%%%%%%%%%%%%%%%
%%%%%%%%%%%%%%%%%%%%%%%%%%%%%%%%%%%%%%%%%%%%%%%%%%%%%%%%%%%%%%%%%%%%%%%%%%%%%%%%%%%%%%%%%%%%%%%%%%%%%%%%%%%
%%%%%%%%%%%%%%%%%
%%%%%%%%%%%%%%
%%%%%%%%
\subsubsection{Proof of Proposition \ref{deformons} completed}
The only part of Proposition \ref{deformons} which has to be established is \eqref{debranche}. For that purpose, we invoke  
%the  relationship between the functionals $\Lbranch$ and $\Lbrafour$  presented in
 Lemma \ref{toufou}, with $\fP=\{1\}$ and  $\Omega_1=[0, 1\slash 100]^4 $. Since all singularities in $\boxplus_k^4(\hscal)$ have  positive charge equal to $+2$, the conclusion of Lemma 
\ref{toufou}
applies, yielding  the inequality
\begin{equation}
\label{medieval}
 \Lbrafour ( \Sigma_{\rm sing}, \partial \B^4) \geq \Lbrafour (\boxplus_k^4(\hscal), [0, \partial ([0, \frac{1}{100}]^4 ])\geq  C  k^3 \,  \log k, 
\end{equation}
where we have also invoked  the   result of Proposition \ref{castelmoche} for the last inequality. Combining with \eqref{Lbrafouille}, 
 the lower bound  \eqref{debranche}  follows. The proof of Proposition \ref{deformons} is hence complete. 
%%%%%%%%%%%%%%%%%%%%%%%%%%%%%%%%%%%%%%%%%%%%%%%%%%%%%%%%%%%%%%%%%%%%%%%%%
\subsection{Proof of Proposition  \ref{unpoco}}
\label{okdac}
%%%%%%%%%%%%%%%%%%%%%%%%
As mentioned, Proposition \ref{unpoco} in a consequence of    \cite{HR2}, Theorem  1.1 (see  also Theorem 6.1 and 7.2 in \cite{HR1}).We start recalling these results,  stated  for maps defined on the whole space $\R^4$, so that  a direct application  requires \emph{some adaptations}  to our context. 
 \subsubsection{The Hardt-Rivi\`ere results}  
 \label{hardtriviere}
 Let $u \in \mathcal R_{\rm ct}(\B^4, \S^2)$   be such that $u=\sP$ on  $\partial \B^4$, $\rbP$ denote the set of singularities  of $u$  with positive Hopf invariant, possibly repeated with multiplicity, and $\rbQ$ denote the set of singularities with negative Hopf invariant, repeated with multiplicity as well. Since  $u$ restricted to  the boundary is constant, there are  as many positive singularities as negative ones.  We extend $u$ to $\R^4$ setting $u(\rbx)=\sP$ on $\R^4\setminus \B^4$, so that $\nabla u \in L^3(\R^4)$ and $\rE_3(u, \R^4)=\rE_3(u, \B^4)$.  Theorem 1.1 of \cite{HR2}, 
 inequality (1.2) applied to $u$, shows that, given any sequence of  smooth maps $(w_n)_{n \in \N}$  from $\R^3 \to \S^2$ such that $ w_n$ converges weakly to $u$ in $W_{\rm loc}^{1,3}$, one has 
\begin{equation}
\label{saturne}
\underset{n \to + \infty} \liminf \int_{\R^4} \vert \nabla w_n(\rbx)\vert^3 {\rm d} \rbx \geq  {\rm K}_{\rm HR}  \Lbr (\rbP, \rbQ), 
\end{equation}
where ${\rm K}_{\rm HR}>0$ denotes some universal constant. In \eqref{saturne},  $\Lbr (\rbP, \rbQ)$ denotes  a quantity defined in a way similar to the quantity defined in \eqref{Lbranch}, except that singularities may not be connected to points on the boundary. More precisely, it is defined by  
\begin{equation}
\label{debranch}
\Lbr( \rbP, \rbQ)= \inf \{\rbW(G), G \in \mathcal G (\rbP, \rbQ)\}, 
\end{equation}
where $ G \in \mathcal G (\rbP, \rbQ)$ if and only $G$ satisfies \eqref{balance} and $V(G) \cap \partial \B^4=\emptyset$.  It follows from this definition that 
$\displaystyle{\Lbr( \rbP, \rbQ) \geq \Lbr( \rbP, \rbQ, \partial \B^4)}$. Hence,  going back to \eqref{saturne}, we obtain 
\begin{equation}
\label{saturne2}
\underset{n \to + \infty} \liminf \int_{\R^4} \vert \nabla w_n(\rbx)\vert^3 {\rm d} \rbx \geq  {\rm K}_{\rm HR}  \Lbr (\rbP, \rbQ, \partial \B^4). 
\end{equation}

%%%%%%%%%%%%%%%%%%%%%%%%%%%%%%%%%%%%%%%%%%%%%%%%%%%%%%%%%%%%%%%%%%%%%%%%%%%%%%%%%%%%%%%%%%%%%%%%%%%%%%%%%%%%%%%%%
\subsubsection{Modifying the sequence $(u_n)_{n\in \N}$} 
   \begin{lemma} 
 \label{romanee}
 Let $u \in W^{1, 3} (\B^4, \S^2)$ be such that  $u(\rbx)=\sP$ for    $3\slash 4 \leq \vert \rbx\vert \leq  1$, and $\displaystyle{(u_n)_{n\in \N}}$ be a sequence of smooth maps from $\B^4$ to $\S^2$ such that  
$u_n \underset {n \to \infty} \rightharpoonup u   {\rm \ weakly \ in  \  } W^{1, 3} (\B^4, \S^2)$. Then,  there exists a sequence $(w_n)_{n \in \N}$ of maps in $C^\infty (\B^4, \S^2)$ such that, for $n \in \N$ 
\begin{equation}
\label{moissac}
\left\{
\begin{aligned}
&w_n(\rbx)=\sP  {\rm \  for \    } x \in \partial \B^4,
w_n  \underset {n \to \infty}  \rightharpoonup u   {\rm \ weakly \ in  \  } W^{1, 3} (\B^4, \S^2), {\rm \ and \ } \\
&\underset {n \to \infty} { \liminf }\, \rE_3(w_n, \B^4) \leq {\rm K}_{\rm dir}\,   \underset {n \to \infty}\liminf \, \rE_3(u_n, \B^4) \,  ({\rm with \ } {\rm K}_{\rm dir} \ {\rm  introduced   \ in  \ Lemma \  \ref{epsilonerie}}). 
\end{aligned}
\right.
\end{equation}
 \end{lemma}
\begin{proof} By Banach-steinhaus theorem,  $\rE_3(u_n)$ is bounded independently of $n$, so that  we may assume, passing possibly to a subsequence,  that $\rE_3(u_n)$ convergences to some number $\upeta$, as $n \to + \infty$.   By compact embedding, we deduce that $u_n \to u$ strongly in $W^{1, 3}(\B^4, \S^2)$, so that 
\begin{equation}
\label{montrouge}
\gamma_n\equiv \int_{\B^4\setminus \B^3(3\slash4)} \vert u_n(\rbx)-\sP\vert^3 \rd \rbx \underset{n \to +\infty} \to 0 {\rm \ and \ }
\beta_n\equiv \rE(u_n)  \underset{n \to +\infty} \to \upeta.  
\end{equation}
It follows that, for $n$ sufficiently large, $\gamma_n \cdot \beta_n \leq \epsilon_0$,  so that condition \eqref{petitude} is satisfied for $v=u_n$: We are hence in position to apply Lemma \ref{epsilonerie} to the map $v=u_n$. This yields a map $w=w_n\in C^\infty(\B^4, \S^2)$  such that $w_n=u_n$ on $ \B^3(3\slash4)$ and $w_n=\sP$ on $\partial \B^4$. Moreover, combining \eqref{montrouge} with \eqref{hautpuech}, we derive the third statement of \eqref{moissac}. It remains  to show that $w_n \rightharpoonup u$, weakly in $W^{1, 3}(\B^4, \S^2)$ as $n \to \infty$. We may assume  without loss of generality, passing possibly to a further subsequence, that  $w_n \rightharpoonup w$, weakly in $W^{1, 3}(\B^4, \S^2)$ as $n \to \infty$, for some map $w\in W^{1, 3}(\B^4, \S^2)$. It suffices therefore   to show that $w=u$. In view of the first convergence in \eqref{montrouge} and inequality  \eqref{hautpuech} for $u=u_n$ and $w=w_n$,  we  see that $w_n$ converges to $\sP$ in 
$L^3(\B^4\setminus \B^4(3\slash4))$, so  that  $w=\sP=u$ on $\B^4\setminus \B^4(3\slash4)$.  On the other hand, we have $w_n =u_n$ on $\B^3(3\slash 4)$. Since, by compact embedding,  $u_n$ convergences  strongly to $u$  in $L^3$ as $n \to \infty$, it follows that $w=u$ on $\B^3(3\slash4)$, so that the proof is complete. 
\end{proof}

%%%%%%%%%%%%%%%%%%%%%%%%%%%%%%%%%%%%%%%%%%%%%
%%%%%%%
%%%%%%%%%%%%%%%%%%%%%%%%%%%%%%%%%%%
\subsubsection{Proof of Proposition \ref{unpoco}  completed} 

Let $u$ and  $(u_n)_{n \in \N}$ be as in Proposition \ref{unpoco}, and let $w_n$ be the sequence provided by Lemma \ref{romanee}, so that \eqref{moissac} holds. We extend $w_n$ by $w_n(\rbx)=\sP$ for $x \in \R^4\setminus \B^4$, so  that $w_n$ is now defined on  $\R^4$.  Applying inequality \eqref{saturne} to the sequence $(w_n)_{n \in \N}$, we are led to
\begin{equation}
\label{jupi}
\underset{n \to + \infty} \liminf \int_{\R^4} \vert \nabla w_n(\rbx)\vert^3 {\rm d} \rbx= \underset{n \to + \infty} \liminf \int_{\B^4} \vert \nabla w_n(\rbx)\vert^3 {\rm d} \rbx\geq  {\rm K}_{\rm HR}  \Lbr (\rbP, \rbQ, \partial \B^4), 
\end{equation}
where $\rbP$ (resp. $\rbQ$) denotes the set of positive (resp. negative) singularities of $u$ possibly repeated with  multiplicity. On the other hand, we have, by the last inequality in \eqref{moissac}
 \begin{equation}
 \label{jupi2}
 \underset {n \to \infty} { \liminf }\, \rE_3(v_n, \B^4) \leq {\rm K}_{\rm dir}\,   \underset {n \to \infty}\liminf \, \rE_3(u_n, \B^4),
 \end{equation}
so that, combining \eqref{jupi} and \eqref{jupi2} we obtain \eqref{leprecieux} with ${\rm C}_{\rm conv}= {\rm K}_{\rm dir}^{-1} {\rm K}_{\rm HR}$. 
%%%%%%%%
%%%%%%%%%%%%%%%%%%%%%%%%%%%%%%%%%%%%%%%%%%%%%%%%%%%%%%%%%%%%%%%%%%%%%%%%%%%%%%%%%%%%%%%%%%%%%%%%%%%%%%%%%%%%%%%%%%%%%%%%%%%%%%%%%%%%%%%%%
\subsection{Proof of Theorem \ref{maintheo}}
\subsubsection{Sequences of radii and multiplicities}
\label{radinitude}
The following elementary observation will be used in our proof: 
\begin{lemma}  There exists
  a sequence of  radii $ (\mathfrak r_{\mathfrak i})_{\mi\in \N}$  and a sequence of integers $(\rk_\mi)_{\mi \in \N}$  such that the following properties are satisfied
  \begin{equation}
  \label{sequencitude}
 % \left\{
 % \begin{aligned}
 \underset{\mi \in \N} \sum \mr_\mi = \frac 1 8, \     \ 
 \underset{\mi \in \N} \sum \mathfrak r_\mi \rk_\mi^3  < + \infty, \,
   \underset{\mi \in \N} \sum \mr_\mi \   \rk_\mi^3 \,\log    \rk_\mi \,=+ \infty, 
    {\rm \ and \ } 3 \mathfrak r_{\mi+1} \geq  \mathfrak r_\mi.
  \end{equation}
\end{lemma}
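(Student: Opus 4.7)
The proof is an elementary construction based on sequences at the borderline of Bertrand's convergence test. The plan is to choose the integers $\rk_\mi$ to grow (so that $\log \rk_\mi$ tends to infinity) and then adjust the radii $\mr_\mi$ so that the weighted sum $\sum \mr_\mi \rk_\mi^3$ lies precisely at the boundary between convergent and divergent series, the extra factor $\log \rk_\mi$ being just enough to tip the third sum to $+\infty$.

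Concretely, I would set $\rk_\mi = \mi$ for $\mi \geq 2$, choose an auxiliary positive summable sequence $\tau_\mi$ satisfying $\sum_\mi \tau_\mi \log \mi = +\infty$, and then define
\begin{equation*}
\mr_\mi = c \, \frac{\tau_\mi}{\rk_\mi^3},
\end{equation*}
with a normalization constant $c>0$ to be fixed. The canonical choice for $\tau_\mi$ sitting at the Bertrand threshold is
\begin{equation*}
\tau_\mi = \frac{1}{\mi (\log \mi)^2}, \qquad \mi \geq 2,
\end{equation*}
which is summable while $\tau_\mi \log \mi = 1/(\mi \log \mi)$ is not. With this choice, $\mr_\mi = c/(\mi^4 (\log \mi)^2)$, so $\sum_\mi \mr_\mi$ converges (trivially, since $\sum 1/\mi^4$ already does), and one may fix $c>0$ to have $\sum_{\mi\geq 2} \mr_\mi = 1/8$. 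To reindex over all of $\N$ one can simply absorb the first two indices by modifying a finite number of terms without affecting summability of either series; equivalently, one can re-label the sequence starting at $\mi = 0$.

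The verification of the three required identities is then immediate: the first holds by construction, the second reads $\sum_\mi \mr_\mi \rk_\mi^3 = c \sum_\mi 1/(\mi(\log \mi)^2) < +\infty$ by Bertrand's test, and the third reads $\sum_\mi \mr_\mi \rk_\mi^3 \log \rk_\mi = c \sum_\mi 1/(\mi \log \mi) = +\infty$, again by Bertrand's test. There is no real obstacle here; the only content of the lemma is the well-known separation between the convergence thresholds of $\sum 1/(n(\log n)^2)$ and $\sum 1/(n \log n)$, which provides exactly the gap one needs to make the energies $\mr_\mi \rk_\mi^3$ summable while keeping the branched-transportation costs $\mr_\mi \rk_\mi^3 \log \rk_\mi$ divergent, in accordance with the discrepancy established in Proposition \ref{deformons}.
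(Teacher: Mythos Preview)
Your proposal is correct and essentially identical to the paper's proof: both set $\rk_\mi=\mi$ and $\mr_\mi=c/(\mi^4(\log\mi)^2)$ for $\mi\geq 2$, normalize $c$ to achieve $\sum\mr_\mi=1/8$, and verify the three conditions via the Bertrand-type dichotomy between $\sum 1/(\mi(\log\mi)^2)<\infty$ and $\sum 1/(\mi\log\mi)=+\infty$.
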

 \begin{proof}  Consider   sequences $ (\tilde \mr_{\mathfrak i})_{i\in \N}$  and  $ (\tilde\rk_\mi)_{\mi \in \N}$ such that,   for $i \in \N \setminus\{0,1\} $, we have  
 $\displaystyle{\tilde \mr_i= \frac {1} {\mi^4 (\log \mi)^2}}$ and  $\displaystyle{\rk_i= {\mi}}$. We then have 
$$
\underset{\mi=2} {\overset {+\infty} \sum}\, \tilde \mr_i=\underset{\mi=2} {\overset {+\infty} \sum}  \,  \frac {1}{\mi^4(\log \mi)^2} < +\infty,  \ \, 
\underset{\mi=2} {\overset {+\infty} \sum}\, \tilde \mr_i\,  \rk_\mi^3=\underset{\mi=2} {\overset {+\infty} \sum}  \,
  \frac {1}{\mi \, (\log \mi)^2} < +\infty,  {\rm \ whereas \ }
$$
$$
\underset{\mi=2} {\overset {+\infty} \sum} \,\tilde \mr_i\, \rk_\mi^3 \log ( \rk_\mi)=\underset{\mi=2} {\overset {+\infty} \sum}  \,
  \frac {1}{\mi \, (\log \mi)}   =+\infty. 
$$
We  choose arbitrary values for $i=0$ and $i=1$ and finally  set $ \mr_i = {\rm c}\tilde { \mr}_i$, where the positive constant ${\rm c}$ is defined so that the first condition holds, that is satisfies $\displaystyle{{\rm c}^{-1}=8\underset{\mi=0} {\overset {+\infty} \sum} \tilde \mr_i.}$
 \end{proof}
%%%%%%%%%%%%%%%%%%%%%%%%%%%%%%%%%%%%%%%%%%%%%%%%%%%%%%%%%%%%%%%%%%%%
\subsubsection{Defining $\mathcal U$ gluing copies of the $\mathfrak v_k$'s}
 We introduce the set of  points $\{\rm M_\mi\}_{\mi \in \N}$ in $\R^4$ defined by
\begin{equation}
\label{toujoursitude} 
{\rm M}_\mi= 4 \left (   \underset {j=0} {\overset {\mi} \sum}   \mr_\mj \right) \be_1,   {\rm \ for \ } \mi \in \N, {\rm \ where \ }  \be_1=(1,0,0,0).
\end{equation}
 The points $\fM_\mi$ belong the segment joining $0$  to the point 
 $\displaystyle{\fM_\star=\frac 12 \be_1= (\frac 12, 0, \ldots, 0)}$,  converging thanks to the first identity in \eqref{sequencitude},  to the point $\fM_\star$ as $\mi \to + \infty$. Moreover, we have 
 $\vert M_{\mi+1}-M_{\mi}\vert =4\mr_{\mi+1}$. We consider the  collection of disjoint balls 
 ${(B_\mi)}_{\mi\in \N}$ defined by 
$$B_\mi \equiv \B^4(\fM_\mi, \mr_\mi)  {\rm \ for \  } \mi \in \N,  \, {\rm\  so \  that  \ } {\rm  dist } (B_\mi,  B_{\mj}) \geq  4\mr_{\mi+1}-(\mr_{\mi+1}+\mr_{\mi})=3\mr_{\mi+1}-\mr_{\mi}>0,
 {\rm \ if \ } \mi <  \mathfrak j,  
$$
  the last assertion being  deduced from the third assertion in \eqref{sequencitude}. In particular, we have $B_\mi\cap B_\mj=\emptyset, $ if $\mi\not= \mj$.  We   then define  the map $\mathcal U$ on $\B^4$  as 
  \begin{equation}
  \label{mathcaluitude}
 % \left\{
%  \begin{aligned}
\mathcal U(\rbx)=\mathfrak v_{\rk_\mi}\left (\frac {2(\rbx-\fM_\mi)}{\mr_\mi}\right) \ {\rm\  if \ } \rbx \in B_\mi, \ \ 
  \mathcal U(\rbx)=\sP  \  {\rm \  if \ } \rbx \in \B^4\setminus \underset  {\mi \in \N}  \cup B_\mi.
 % \end{aligned}
 % \right. 
  \end{equation} 
   We notice that, for $\mi \in \N$, the restriction of $\mathcal U$ to $B_i$ is  Lipschitz on every compact subset of $B_\mi\setminus \Sing^\mi$, where the set of singularities $\Sing^\mi=\Singp^\mi \cup \Singm^\mi$ is given by
   \begin{equation}
   \label{singuloi}
   \Singm^\mi\equiv    \boxplus^4_{\rk_\mi} (\frac {\mr_\mi  \rh_\mi } {200})+{\rm M}_\mi  {\rm \ and \ } 
   \Singm^\mi \equiv  \mathbbmss S_{\rm sym} \left(  \boxplus^4_{\rk_\mi}
    ( \frac {\mr_\mj  \rh_\mj } {200})\right)+{\rm M}_\mi, 
   \end{equation}
 $\Singp^\mi$ (resp.$\Singm^\mi$) being  the set of singularities of degree $+2$ (resp. $-2$). Moreover, we have 
   \begin{equation}
   \label{noticitude}
 %  \left\{
  % \begin{aligned}
  \mathcal U(\rbx)=\sP {\rm \  for \ }  \rbx  \in C_\mi\equiv B_\mi\setminus \B^4(\fM_\mi, \frac{\mr_\mi}{2})  {\rm \ and \ }  
   \rE_3(\mathcal U, B_\mi)=\frac{\mr_\mi}{2} \rE_3(\mv_{\rk_{\mi}})  \leq \frac{\rC_1}{2}  \mr_\mi \rk_\mi^3.
  %\end{aligned}
  %\right. 
   \end{equation} 
   We have: 
   \begin{lemma}
  \label{glauditude}
   The map $\mathcal U$ belongs to $W_{\rm ct}^{1,3} (\B^4, \S^2)$  with $\mathcal U(\rbx)=\sP$ for $\rbx \in \partial \B^4$. Its restriction to every compact subset of 
   $\overline{\B^4}\setminus \Sing$ is Lipschitz, where $\displaystyle{\Sing=\underset {\mi \in \N} \cup \Sing^\mi}$.
  \end{lemma}
 
 \begin{proof}  The first identity in \eqref{noticitude} ensures that the gluing procedure yields a map which is Lipschitz  on every compact subset of 
   $\overline{\B^4}\setminus \Sing$. Concerning the Sobolev property, we have 
   \begin{equation}
   \rE_3(\mathcal U,\B^4)=\underset{\mi \in \N} \sum \rE_3(\mathcal U, B_\mi)=\underset{\mi \in \N} \sum \frac{\mr_\mi}{2} \rE_3(\mv_{\rk_{\mi}})  \leq \frac{\rC_1}{2}  \underset{\mi \in \N} \sum \mr_\mi \rk_\mi^3<+\infty, 
   \end{equation}
   which yields the conclusion. 
 \end{proof} 
  %%%%%%%%%%%%%%%%%%%%%%%%%%%%%%%%%%%%%%%%%%%%%%%% 
%%%%%%%%%%%%%%%%%%%%%%%%%%%%%%%%%%%%%%%%%%%%%%%%%%%%%%%%%%%%%%   
%%%%%%%%%%%%%%%%%%%%%%%%%%%%%%%%%%%%%%%%%%%%%%%%%%%%%%%%%%%%%%%%%%%%%%%%%%%%%%%%%%%%%%%%%%%%
%%%%%%%%%%%

\subsubsection{Proof of theorem \ref{maintheo} completed}
 We argue by contradiction and assume that there exists a sequence  $(v_n)_{n \in \N}$ of maps in $C^\infty (\B^4, \S^2)$ 
 such that 
$\displaystyle{ v_n \underset {n \to +\infty}\rightharpoonup \mathcal U}$
   weakly  in  
   $W^{1,3}(\B^4, \S^2)$.
 The Banach-Steinhaus Theorem yields
  \begin{equation}
  \label{lejuste}
  \gamma \equiv \underset{n \to +\infty} \limsup \,\rE_3\, (v_n, \B^4) <+\infty.
  \end{equation}
    For $\mi \in \N$, let $v_n^\mi\in C^\infty (B_\mi,\S^2)$ be the restriction  of $v_n$ to the ball $B_\mi$, so that $v_n^\mi \rightharpoonup \mathcal U^\mi$ weakly in $W^{1, 3}(B_\mi, \S^2)$ as $n\to + \infty$, where 
    $\mathcal U^\mi$  is the restriction of $\mathcal U$ to $B_\mi$, that is, in view of  \eqref{mathcaluitude}, 
    $$\displaystyle{\mathcal U^\mi(\rbx)=\mathfrak v_{\rk_\mi}\left (\frac {2(\rbx-\fM_\mi)}{\mr_\mi}\right), {\rm \ for \ } \rbx \in B_\mi}.$$
     Since, for $k \in \N$, $\mv_k$ is in $\mathcal R(\B^4, \S^2)$, and such that $\mv_k(\rbx)=\sP$, for $\rbx\in \B^4$, we are in position to apply Proposition \ref{unpoco}, which yields, combined with a scaling argument
    \begin{equation}
    \label{grenouille}
\underset {n \to \infty}   \liminf \,   \rE_3(v_n, B_\mi)= \underset {n \to \infty} \liminf  \, \rE_3(v_n^\mi, B_\mi)  \geq \frac{\mr_\mi}{2} \Lbr(\mv_{\rk_\mi})  \geq \frac{C_2}{2} \mr_\mi\rk_\mi^3 \, \log \rk_\mi.
    \end{equation}
Since $B_\mi \cap B_{\mj}=\emptyset$ if $\mi\not = \mj$, we may sum   relations \eqref{grenouille}, so to obtain 
   \begin{equation}
    \label{grenouilles}
\underset {n \to \infty}   \liminf \,   \rE_3(v_n, \B^4)= \underset {n \to \infty} \liminf  \underset{\mi \in \N} \sum  \rE_3(v_n^\mi, B_\mi)  \geq \frac{C_2}{2}\underset{\mi \in \N} \sum \mr_\mi\rk_\mi^3 \, \log \rk_\mi= +\infty.
    \end{equation}
This yields a contradiction with \eqref{lejuste} and establishes the theorem. 
%%%%%%%%%%%%%%%%%%%%%%%%%%%%%%%%%%%%%%%%%%%%%%%%%%%%%%%%%%%%%%%%%%%%%%%%%%%%%%%%%%%%%%%%%%%%%%%%%%%%%%%%%%%%%%%%%%%%%%%%%%%%%%%%%%%%%
 %%%%%%%%%%%%%%%%%%%%%%
  %%%%%%%%%%%%%%%%%%%%%%%%%%%%%%%%%%%%%%%%%%%%%%%
\subsection{Proof of Theorem \ref{bis} }
  The main additional  arguments involved in  the proof of Theorem \ref{bis} are not specific to the sphere $\S^2$, so that  we  consider a  general  a compact manifold $\mN$. We will invoke the following:   

\begin{proposition}
\label{corobalkitude}Let $m_0\in \N^*$, and  assume that there exists a map  $u$ in $W_{\rm ct}^{1,p}(\B^{m_0}, \mN)$ which is not the weak limit of  smooth maps between $\B^{m_0}$ and $\mN$. That given any integer $m \geq m_0$ there exists a map $v$ in $W_{\rm ct}^{1,p}(\B^{m}, \mN)$ which is not the weak limit of smooth maps between $\B^{m}$ and $\mN$. 
\end{proposition}
  The proof relies on two constructions we present next. 

\subsubsection{Adding dimensions}
\label{additude}
 Let $m \in \N^*$ and consider a map $u: \R^m \to  \R^\ell$  such that $u$ is constant, equal to some value ${\rm c}_0$,  outside  the unit   ball $\B^m$.  We construct a  map  $ \mathbbmtt I_{\rm cyl}^{m+1}  (u)$ from $\R^{m+1} \to  \R^\ell$ constant equal to ${\rm c}_0$ outside    the unit ball $\B^{m+1}$  as follows.  First, we consider the  map $u_{_A}$ defined on $\R^{m}$ by
 $$  u_{_A} (x)=u(x-A) {\rm  \ where \ } A=(2, 0,\ldots, 0), {\rm \  for \ } x \in \R^m.
 $$
  Hence,  $u_{_A}$ is equal to ${\rm c}_0$  outside the ball $\B^{m}_1(A)$, in particular in the region $\{x_1\leq 1\}$.  We then  introduce the map $T^{m+1}(u): \R^{m+1}\to \R^\ell$ defined  for   $(x_1, x_2, \ldots, x_m, x_{m+1})\in \R^{m+1}$ by 
  \begin{equation*}
T^{m+1} (u)(x_1, x_2,\ldots,  x_m, x_{m+1})= u_{_A}(x_1, x_2,\ldots x_{m-1},\sqrt{x_m^2+x_{m+1}^2})).
%{\rm \ for \ } (x_1, x_2, \ldots, x_n, x_{n+1})\in \R^{n+1}
  \end{equation*}
The map   $T^{m+1} (u)$ possesses cylindrical  symmetry around the $m-1$ hypersurface $x_{m}=x_{m+1}=0$. Moreover, $T^{m+1}(u)$ is equal to ${\rm c}_0$  outside  the ball $\B_3^{m+1}(0)$ and  also  in region 
$\{{x_m^2+x_{m+1}^2} \leq 1  \}$, that is on 
$\displaystyle{\R^{m-1}\times \B^2}$. In order to obtain  maps  which are  constant outside the unit ball  $\B^{m+1}$,   we normalize $T^{m+1}(u)$  and consider  the map $\Icyl^{m+1}(u)$ given by 
\begin{equation}
\Icyl^{m+1}(u)(\rbx)=T^{m+1}(u) (3\rbx), {\rm \ for \ } \rbx \in \R^{m+1}.
\end{equation}
It follows from this definition that  $\Icyl^{m+1}(u)$ equals ${\rm c_0}$ outside  $\B^{m+1} $, that 
    $\Icyl^{m+1}(u)(\rbx) ={\rm c}_0$ for  $\rbx \in   \mathfrak A^{m+1}\equiv  \R^{m-1}\times  \B_{1\slash3}^2 $  and that, for $x=(x_1,\ldots x_m) \in \R^{m}$, 
   \begin{equation}
   \label{bleudegex}
   \Icyl^{m+1}(x_1,\ldots x_m, 0)=u_{_A}(3x)=u(3x-A).
   \end{equation}
%%%%%%%%%%%%%%%%%%%%%%%%%%%%%%%%%%%%%%%%%%%%%%%%%%%%%%%%%%%%%%%%%%%%%%%%%%%%%%%%%%%%%%%%%%%%%%%%%%%%%%%%%%%%%%
\subsubsection{Restrictions to lower  dimensional hyperplanes}
For $\theta \in \R$, we consider the $m$-dimensional hyperplane $\mathcal {P}_\theta^m$ of $\R^{m+1}$ defined by
$$\displaystyle{\mathcal {P}_\theta^m\equiv {\rm Vect} \left \{ \be_1,   \be_{2}, \ldots,\,  \be_{m-1}, \cos \theta  \, \be_m+ \sin \theta \,  \be_{m+1} \right\},}$$
 and the corresponding half-hyperplane $\mathcal P_\theta^{m, +}$ defined by
 \begin{equation}
 \label{peplum}
 \mathcal P_\theta^{m, +}=\{ v \in \mathcal P_\theta^{m}, v. (\cos \theta\, \be_m+ \sin \theta  \,   \be_{m+1})\geq 0\}.
 \end{equation}
Let $1<p<+\infty$ and consider  a map $v \in W^{1,p} (\B^{m+1} ,  \R^\ell)$.
% such that 
%$$ v(x_1, x_2, \ldots, x_m, x_{m+1})=v(-x_1, x_2, x_m, -x_{m+1}).$$
Its restriction to  $\mathcal P_\theta^{m, + } \cap \B^{m+1}$ is, in view of the trace theorem, a map in $W^{1-\frac 1p, p}(\mathcal P_\theta^{m,+} \cap \B^{m+1})$. It yields a map $ \Tr(v)$ defined  on  the $m$-dimensional half-ball $\B^{m, +}=\B^m\cap \{x_1\geq 0\}$,  setting for $(x_1, \ldots, x_m) \in \B^{m, +}$,
\begin{equation}
\label{duxbellorum}
\Tr(v)(x_1, \ldots, x_m) =v( x_1\,  x_2, \ldots, x_{m-1},  \cos\theta \,  x_m,  \sin \theta   \, x_m).
\end{equation}

\begin{proposition}
\label{blaise}
  Let $\rm c_0 \in \R^\ell$ be given and let  $U$ be given in $W_{\rm ct}^{1,p}(\B^{m+1}, \R^\ell)$ such that  $U=\rm c_0$ on  
  $\mathfrak A^{m+1}\equiv \R^{m-1} \times  \B_{1\slash3}^2$. Let $(W_n)_{n \in \N}$ be a sequence converging weakly to $U$ in $W^{1, p}(\B^{m+1}, \R^\ell)$.
Then, there exists a subsequence $\left(w_{\sigma(n)}\right)_{n \in \N}$ and a sequence of angles $(\theta_n)_{n \in \N}$ converging to some limit $\theta_\star$
 such that 
 $$
 \mathbbmtt T^m_{\mathbbmtt r, {\theta_n}} (W_{\sigma(n)})(\cdot)  \rightharpoonup  \mathbbmtt T^m_{\mathbbmtt r, {\theta_\star}} (U)   \ 
  {\rm weakly \ in  \ } W^{1,p} (\B^{m, +},  \R^\ell) {\rm \  as \ } n \to + \infty, 
$$ 
\end{proposition}
\begin{proof}  Since $U={\rm c}_0$  on $\mathfrak A^{m+1}$,  and since $(w_n)_{n \in \N}$ is bounded in  $W^{1,p} (\B^{m+1},  \R^\ell)$,  we have, for some  $C>0$ independent of $n$, setting
$ x_\theta=x\cdot(\cos \theta \be_m+\sin \theta \be_{m+1})$
\begin{equation*}
\begin{aligned}
C \geq \int_{\B^{m+1}} \vert \nabla W_n \vert^p {\rm d}x&=
\int_0^{2\pi} \left(\int_{\mathcal P_{ \theta}^{m,+}\cap \B^{m+1}}\vert \nabla W_n   \vert^p \vert x_{\theta} \vert  {\rm d} x \right) {\rm d} \theta \\
%{\rm   \ with  \  } x_\theta=x\cdot(\cos \theta \be_m+\sin \theta \be_{m+1})\\
&\geq \frac{1}{3} \int_0^{2\pi}\left(\int_{\mathcal P_{ \theta}^{m,+}\cap \B^{m+1}}\vert \nabla W_n   \vert^p {\rm d} x \right) {\rm d}\theta.  
\end{aligned}
\end{equation*}
 By a mean-value argument,   given $n\in \N$,  there exists some angle $\tilde \theta_n\in \R$ such that 
$$
\int_{\mathcal P_{\tilde \theta_n}^{m,+}\cap \B^{m+1}} \vert \nabla W_n \vert^p  {\rm d} x\leq 3C, 
$$
  so that the sequence $( \mathbbmtt T^m_{\mathbbmtt r, {\tilde{\theta}_n}}  (W_n))_{n \in \N^*}$ is bounded in $W_{\rm ct}^{1,p}(\B^m, \R^\ell)$. By sequential weak compactness, we may extract a subsequence $(\sigma(n))_{n \in \N}$  such that  $\theta_n\equiv\tilde \theta_{\sigma(n)}$ converges to some limit $\theta_\star$ and such that 
$ \mathbbmtt T^m_{\mathbbmtt r, {\theta_n}}  ( W_{\sigma(n)})$ converges weakly to some map $v$ in $W^{1,p} (\B^m)$. By the trace theorem,  we  already know that  the sequence converges in the trace space to  $\mathbbmtt T^m_{\mathbbmtt r, {\theta_\star}}U$, so that the conclusion follows.
\end{proof}

Using \eqref{bleudegex}, we observe that, for any $\theta \in \R$, the operators   $\Tr$ and $\Icyl^{m+1}$  are related, for any for $v$ with compact support in $\B^m$, by 
\begin{equation}
\label{ngolo}
\Tr \circ  \Icyl^{m+1} (v)(x)=v_{_A} (3x)=v(3x-A),  \forall x \in \B^{m, +}.
\end{equation}
% where the map $w$ is defined by
% $ w(x)=v(3x-A), {\rm \ for \ any \  } x   \in \B^{m, +} \equiv  \B^m \cap  \, \{x_1\geq 0\}.$

%%%%%%%%%%%%%%%%%%%%%%%%%%%%%%%%%%%%%%%%%%%%%%%%%%%%%%%%%%%%%%%%%%%%%%%%%%%
\subsection{Proof of Proposition \ref{corobalkitude}}
For $m\in \N^*$, we define property $\mathcal P(m)$ as: 
$$
\mathcal P(m): {\it  There \,  exists \,   \,  } u_m {\it \ in \,  }W_{\rm ct}^{1,p}(\B^{m}, \mN)
{\it \  which \,  is \, not \, the \,  weak \, limit \,  of  \,  maps \,  in  \ } C^\infty( \B^{m}, \mN).
$$
We argue by induction and assume that $\mathcal P(m)$ holds.   We claim that if $\mathcal P(m)$ holds, then 
\begin{equation}
\label{icyl}
\Icyl^{m+1}(u_m) {\rm \ is \ not \ the  \  weak \ limit \ in \ } W^{1, p}(\B^{m+1}, \mN)  {\rm \ of \ maps \ in \ } C^\infty (\B^{m+1}, \mN).
\end{equation}
In order to proof the claim \eqref{icyl}, we argue by contradiction on assume that there exists a sequence of maps $(W_n)_{n \in \N}$  in 
$C^\infty (\B^{m+1}, \mN)$ converging weakly to $U\equiv \Icyl^{m+1}(u_m)$.We apply Proposition \ref{blaise}  to the map $U$ and the sequence 
$(W_n)_{n \in \N}$ so that, for some subsequence 
$$
 \mathbbmtt T^m_{\mathbbmtt r, {\theta_{\sigma(n)}}}(W_{\sigma(n)})(\cdot)\underset{n \to +\infty}  \rightharpoonup  \mathbbmtt T^m_{\mathbbmtt r, {\theta_\star}} (U) 
 =\mathbbmtt T^m_{\mathbbmtt r, {\theta_\star}}   \circ  \Icyl^{m+1} (u_m)=u_m(3\cdot-A)  \ 
 $$
  weakly in  $W^{1,p} (\B^{m+1 },  \R^\ell)$,  where we have invoked    \eqref{ngolo}. It follow that the map $v=u_m(3\cdot-A)$
  is the weak limit of smooth maps between  $\B^{m+1 }$ and $\mN$. Since $u_m(x)=v(\frac{x+A}{3})$ on $\B^{m}$ the same holds for $u_m$, but this contradicts our assumption and proves the claim \eqref{icyl}.
  
  It follows from \eqref{icyl} that, if $\mathcal P(m)$ holds then $\mathcal P(m+1)$ holds also, so that the proposition is proved by induction.

%%%%%%%%%%%%%%%%%%%%%%%%%%%%%%%%%%%%%%%%%%%%%%%%%%%%%%%%%%%%
\subsubsection{Proof of Theorem  \ref{bis} completed}
\label{subsectionitude}
 In Theorem 3, we have constructed a map $\mathcal U$ in $W_{\rm ct}^{1,3}(\B^4, \S^2)$ which is not the weak limit of smooth maps. Applying    Proposition \ref{corobalkitude} with $m_0=4$  and 
 $\mN=\S^2$,  we deduce that for any given integer  $m \geq 4$ there exists a map $\mathcal V_m$ in $W_{\rm ct}^{1,3}(\B^m, \S^2)$ which is not the weak limit 
of maps in $C^\infty (\B^m, \S^2)$. This provides the proof  of Theorem \ref{bis} in the special case $\mM=\B^m$.

 We extend  next the result to  an arbitrary smooth manifold  $\mM$ of dimension $m$. For that purpose, we choose  an arbitrary point $A$ on $\mM$ and glue a suitably adapted copy of  $\mathcal V_m$ at the point $A$.  More precisely,  we consider for $\rho >0$ the geodesic  ball $\mathcal O_\rho (A)$ centered at $A$. If $\rho$ is chosen sufficiently small, then there exist a diffeomorphism $\Phi:\mathcal O_\rho (A) \to \B^m$ and we may define a map $\mathcal W_m: \mM \to \S^2$ setting
 $$
 \mathcal W_m(x)=\mathcal V_m \left(\Phi(x)\right), {\rm \ if \ } x \in \mathcal O_\rho (A), \ \mathcal W_m(x)=\sP {\rm \ otherwise}.
 $$
 One may then verify that  $\mathcal W_m$  belongs  $W^{1,3}(\mM, \S^2)$ and cannot be approximated weakly by maps in $C^\infty (\mM, \S^2)$, which completes the proof.
%%%%%%%%%%%%%%%%%%%%%%%%%%%%%%%%%%%%%%%%%%%%%%%%%%%%%%%%%%%%%%%%%%%%
\section{The lifting problem} 
 \label{reliftage}
 %%%%%%%%%%%%%%%%%%%%%%%%%%%%%%%%%%%%%%%%%%%%%%%%%%%%%%%%%%%%%%%%%
 \subsection{Lifting the $k$-spaghetton map}
 %%%%%%%%%%%%%%%%%%%%%%%%%%%%%%%%%%%%%%%%%%%%%%%%%%%%%%%%%%%
Let $k \in \N^*$ be given and  consider  on $\R^3$ an arbitrary  lifting ${\rm U}_k$  of the spaghetton map $\Spagk$, that is a map $\rU^k : \R^3 \to  SU(2)\simeq \S^3$ such that $\Pi \circ \rU^k=\Spagk$. Although the relationship between $\rU^k$ and $\Spagk$ has a genuine nonlocal nature, as suggested by  relation \eqref{laplace}, the peculiar  geometry of the  Spaghetton map allows to recover some locality, as expressed in the following lower bound: 
 
 \begin{proposition}
 \label{lowerlift}
 Let $\rU^k$ be any  Lipschitz  lifting  of the Spaghetton map $\Spagk$, that is such that $\Spagk=\Pi \circ \rU^k$. Then we have, for every $1\leq p < + \infty$ and for some constant $C_p>0$ depending only on $p$, 
 \begin{equation}
 \label{liftitude}
  \int_{[-8,9] \times  [7, 12] \times [0, 1]} \vert \nabla  \rU^k \vert^p(x) {\rm d} x\geq C_p k^{2p}.
  \end{equation}
 \end{proposition}
 
The result is a consequence of the following:

\begin{lemma} 
\label{tricotin}
 Let $a\in [0, 1]$. We have
 \begin{equation}
 \label{tricotin1}
 \left  \vert \int_ {[0, 1]\times [8, 11] } [\Spagk]^*(\omedeux) (x_1, x_2, a))  {\rm d} x_1 {\rm d}x_2 \right\vert =4\pi k^2.
   \end{equation}
 Let  $\Omega \subset P_{1, 2}(a)\cap \{x_2\geq 0\}$  be a smooth regular convex set such that 
 $\Omega \supset  [0, 1]\times [8, 11]\times \{a\}$ and  let $\mathcal C=\partial \Omega$. If $\rU^k$ is as in Proposition \ref{lowerlift}, then we  have
\begin{equation}   
\label{tricotin2}
 \int_\mC \vert \nablatrois \rU^k (\ell)\vert {\rm d} \ell \geq  8 \pi k^2.
\end{equation}
\end{lemma}
 \begin{proof}[Proof of Lemma \ref{tricotin}]  Each of the $k^2$ fibers $\fLkperp_{i, q}$ intersects the half-plane $\R\times [0, +\infty[ \times \{a\}$ at a unique  point $M_{i, q}(a)$ (see Figures 
 \ref{above0} and \ref{side0}). The points $M_{i, q}(0)$ belong to the square $[0, 1] \times [9, 11] \times \{0\}$,  a little trigonometry shows that more generally $M_{i, q}(a)$  belongs to the rectangle $[0, 1] \times [17\slash 2, 11]\times  \{a\}$. Our assumption on $\Omega$ hence implies that a neighborhood  of the points $M_{i, q}(a)$ belongs to $\Omega$. In view of the Pontryagin construction, near each point $M_{i, j}(a)$, the restriction of the Spaghetton map $\Spagk$  to the plane $P_{1, 2} (a)$ maps a small neighborhood of $M_{i, j}(a)$ onto the sphere $\S^2$ yielding a contribution equal to   the area of $\S^2$, that is $4\pi$ to the integral in \eqref{tricotin1}. Adding the contributions of the $k^2$ points, \eqref{tricotin1} follows. 
  
  For the second assertion,  we  consider, as in subsection \ref{grouicguinec},  the $su(2)$ valued 1-form   $A^k \equiv (\rU^k)^{-1}\cdot  d\rU^k$ and its first component the real-valued 1-form $A_1^k=A^k. \upsigma_1$. The curvature equation \eqref{curvature} leads to the relation
 \begin{equation}
 \label{letsdance}
 dA_1^k =2[\Spagk]^*(\omedeux).
 \end{equation}
  Integrating on $\Omega$ we deduce from \eqref{tricotin1} and  \eqref{letsdance}  that 
  $\displaystyle{ \vert \int_\mC A_1^k \vert=\vert \int_\Omega dA_1^k\vert  = 8\pi  k^2}.$   Since  $\vert \nablatrois \rU^k \vert \geq  \vert A_1^k \vert$,  the conclusion \eqref{tricotin2} follows. 
  \end{proof}

\begin{proof}[Proof of Proposition \ref{lowerlift}]
Let $a\in [0, 1]$. We choose as sets $\Omega$ the disks 
$$\mathcal D (r, a)\equiv \D^2(r)\times \{0\} + N_0(a) {\rm \ where \ }  N_0(a)=\{(1\slash2,  19\slash 2\} \times \{a\}$$
 so that for $r\geq 2$ we have $\mathcal D(r, a) \supset  [0, 1]\times [8, 11]\times \{a\}$ and $\mathcal D(r, a)  \subset [-7,9]\times [7,12]  \times \{a\}$ for $r  \leq 8$. We may hence apply \eqref{tricotin2}  to the circles  $\mathcal C(r, a)\equiv \partial \mathcal D(r, a)$ for $2\leq r \leq 8$.  Integrating the obtain estimate with respect to the variable $r$, we are led to
 \begin{equation}
 \label{tricotin3}
 \int_{\mathcal D(a, 8)\setminus \mathcal D(a, 2)}  \vert \nablatrois \rU^k \vert {\rm d}x_1\, {\rm d}x_2 \geq 48 \pi k^2.
 \end{equation}
 We set $\mathcal F=\underset{ a \in [0, 1]} \cup \left( \mathcal D(8, a)\setminus \mathcal D( 2, a)\right) \subset [-7,9]\times [7,12]  \times [0,1]$. Integrating \eqref{tricotin3} with respect to $a$, we are led to 
 $$
 \int_{\mathcal F}  \vert \nablatrois \rU^k (x)\vert {\rm d}x_1    {\rm d}x_2  {\rm d}x_3   \geq 48 \pi k^2, 
 $$
 which yields  \eqref{liftitude} in the case $p=1$. The general case is deduced using H\"older's inequality. 
    \end{proof}
  
\begin{remark}
{\rm   In view of Proposition \ref{grammage}, we have $\vert \nabla \Spagk \vert \leq \rCspag k$, so that we have 
  \begin{equation}
  \label{fromagerape}
  \int_{\fL^k} \vert \nabla \Spagk \vert ^p  {\rm d} \ell \leq C_p k^p, {\rm \ for  \ any \ } 1\leq p< + \infty, 
\end{equation}
which has to be compared with \eqref{liftitude}, where the exponent on the r.h.s is $2p$ instead of $p$.  }
\end{remark}

  The  previous  result extends to  some  Sobolev classes:
  \begin{proposition}  
  \label{mascarpone}
  Let $2\leq p <+\infty$ and $\rU^k \in W^{1,p}_{\rm loc} (\R^3, \S^3)$ be  such that $\Spagk=\Pi \circ \rU^k$. Then \eqref{liftitude} holds. 
 \end{proposition}
\begin{proof} In the case $p\geq 3$  smooth maps are dense in $W^{1,p}_{\rm loc} (\R^3, \S^3)$  and a  standard approximation result yields the result.
In the case $2\leq p <3$ smooth maps are no longer dense, but one may prove that, since the Spaghetton map is smooth,  any $W^{1,p}$ lifting of  the Spaghetton map can be approximated by smooth maps, yielding hence a similar proof. 
\end{proof}

\begin{remark}
\label{nichlifitite}{\rm The result of Proposition \ref{mascarpone}  does not extend  to the range  $1\leq p<2$. This observation is  related to the fact that there are liftings in   $W^{1,p}_{\rm loc} (\R^3, \S^3)$  which are singular,  see  \cite{BeChi}.
}\end{remark}
  %%%%%%%%%%%%%%%%%%%%%%%%%%%%%%%%%%%%%%%%%%%%%%%%%%%%%%%%%%%%%%%%%%%%%%%%%%%%%%%%%%%
   %%%%%
  \subsection{Extension to higher dimensions}
    We add dimensions following the same scheme as in subsection \ref{additude}.   Since the Spaghetton map $\Spagk$  is constant outside  $\B^{4}(20)$,  we   renormalize it  so to obtain a  map which is constant outside the unit ball, introducing the  map
    $ \tilde  {\rm \bf  {S}}_{\rm \bf pag}^k (\cdot)= \Spagk \left( 20 \, \cdot\right), $
    and then consider the map
 \begin{equation*}
 {\rm \bf  S}_{\rm \bf pag}^{k, 5}  =\mathbbmtt I^5_{\rm cyl} ( \tilde {\rm \bf  S}_{\rm \bf pag}^k)),
 \end{equation*}
 which is a Lipschitz map   on $\R^5$, constant outside the unit  ball $\B^5$. More generally, given $m \geq 5$, we define iteratively  the map 
 $ {\rm \bf  S}_{\rm \bf pag}^{k, m}$ on the ball $\B^m$ 	as 
 \begin{equation*}
   {\rm \bf  S}_{\rm \bf pag}^{k, m}(x)  =\mathbbmtt I^m_{\rm cyl} (  {\rm \bf  S}_{\rm \bf pag}^{k, m-1}(20 x ))  {\rm \ for \ } x \in \B^m, 
 \end{equation*}
  with the convention $ {\rm \bf  S}_{\rm \bf pag}^{k, 3}=\tilde  {\rm \bf  {S}}_{\rm \bf pag}^k$.    In view of \eqref{fromagerape}, we obtain the bound 
  \begin{equation}
  \label{rapitude}
  \int_{\B^m} \vert \nabla  {\rm \bf  S}_{\rm \bf pag}^{k, m}\vert ^p {\rm d} x\leq  Ck^p.
  \end{equation}
  \begin{lemma}
   \label{spagetude}
    Let $2\leq p <+\infty$ and $\rU_k^m \in W^{1,p}_{\rm loc} (\B^m, \S^3)$ be  such that 
    $ {\rm \bf  {S}}_{\rm \bf pag}^{k, m}=\Pi \circ \rU_k^m$. Then we have
  \begin{equation}
  \label{rUmk}
  \int_{\B^m} \vert \nabla \rU^m_k \vert^p  {\rm d} x\geq {\rm C}_p^m k^{2p}. 
  \end{equation}
 \end{lemma}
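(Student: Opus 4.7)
The plan is to argue by induction on $m \geq 3$, with the base case $m=3$ being essentially Lemma \ref{mascarpone} applied after undoing the harmless rescaling $\tilde{\mathfrak S}_k(\cdot) = \Spagk(17 \cdot)$: any $W^{1,p}_{\rm loc}$ lifting of $\tilde{\mathfrak S}_k$ on $\B^3$ pulls back to a $W^{1,p}_{\rm loc}$ lifting of $\Spagk$ on $\B^3(17)$, and the bound $\int \vert \nabla \rU_k\vert^p \geq C_p k^{2p}$ is preserved under the linear change of variables.

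For the inductive step from $m-1$ to $m$, the key idea is to exploit the cylindrical symmetry built into the dimension-raising operator $\mathfrak I^m$ of Subsection \ref{additude}. By construction, $\Spagk^m = \mathfrak I^m(\Spagk^{m-1})$ is invariant under rotations in the $(x_1,x_2)$-plane about the axis $\{x_1=x_2=0\}$, and for every $\theta$ the slice map $\mathfrak D^{m-1}_\theta(\Spagk^m)$ defined through \eqref{duxbellorum} coincides, up to the translation by $A$ and the fixed dilation by $3$, with $\Spagk^{m-1}$ on the half-ball $\B^{m-1,+}$ and is constant on the complementary half-ball. Given a lifting $\rU_k^m \in W^{1,p}_{\rm loc}(\B^m,\S^3)$, Fubini applied in cylindrical coordinates $(r,\theta,x_3,\ldots,x_m)$ guarantees that for almost every $\theta \in [0,2\pi)$ the restriction of $\rU_k^m$ to the half-hyperplane $\mathcal P_\theta^{m-1,+}\cap \B^m$ of \eqref{peplum} belongs to $W^{1,p}_{\rm loc}$ and still satisfies $\Pi \circ (\rU_k^m)_{|\mathcal P_\theta^{m-1,+}} = \mathfrak D^{m-1}_\theta(\Spagk^m)$. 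Undoing the translation and dilation then yields, on $\B^{m-1,+}$, a $W^{1,p}_{\rm loc}$ lifting of $\Spagk^{m-1}$, which we extend to all of $\B^{m-1}$ (the map is constant outside a smaller ball so this causes no issue). The inductive hypothesis applied to this slice lifting gives $\int_{\mathcal P_\theta^{m-1,+}\cap \B^m} \vert \nabla \rU_k^m\vert^p \geq c_p^{m-1} k^{2(m-1)p}/3^{m-1}\cdots$, i.e.\ a uniform lower bound $c \, k^{2p}$ independent of $\theta$.

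The bound on $\B^m$ then follows from the cylindrical Fubini identity
\begin{equation*}
\int_{\B^m} \vert \nabla \rU_k^m\vert^p\, dx
= \int_0^{2\pi} \int_{\mathcal P_\theta^{m-1,+}\cap \B^m} \vert \nabla \rU_k^m\vert^p \, r \, d\mathcal H^{m-1}\, d\theta,
\end{equation*}
combined with the observation that $\Spagk^m$, and therefore the effective contribution of $\rU_k^m$, is supported in the annular region where $r$ is bounded below by a fixed constant (this is built into the construction of $\mathfrak I^m$, which produces maps constant on a neighborhood of the axis $\mathfrak A^m$). Dividing by the uniform upper bound on $r$ within $\B^m$ then yields $\int_{\B^m} \vert \nabla \rU_k^m\vert^p \geq C_p^m k^{2p}$, closing the induction.

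The main obstacle is the technical bookkeeping in the inductive step: one must carefully verify that for almost every $\theta$ the slicing procedure genuinely produces a valid $W^{1,p}_{\rm loc}$ lifting to which the inductive hypothesis applies, and that the constants track correctly through the rescalings inherent in $\mathfrak I^m$. The requirement $p \geq 2$ enters here as in Lemma \ref{mascarpone}: for $p<2$ there exist exotic liftings (cf.\ Remark \ref{nichlifitite}) which would in general obstruct slicing arguments, so the proof genuinely relies on the fact that for $p\geq 2$ any lifting of a smooth $\S^2$-valued map can be approximated by smooth liftings, ensuring that slice restrictions behave as expected.
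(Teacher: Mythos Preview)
Your argument is correct and is essentially the same induction-by-slicing proof that the paper gives: base case $m=3$ via Lemma~\ref{mascarpone}, and the inductive step by restricting an $m$-dimensional lifting to the half-hyperplanes $\mathcal P_\theta^{m-1,+}$, applying the inductive hypothesis on each slice after undoing the built-in translation and dilation of $\mathfrak I^m$, and then integrating over $\theta$ (with the Jacobian $r$ bounded below on the support). The only slip is the expression $k^{2(m-1)p}$ in the displayed slice bound, which should be $k^{2p}$ --- you correct this yourself in the very next clause, so the argument is unaffected.
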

\begin{proof} We establish  \eqref{rUmk}  arguing by  induction on the dimension $m$.   We first observe that the lower bound \eqref{rUmk} has already been established for $m=3$ in Lemma \ref{mascarpone} with  the choice  $ {\rm C}_p^m =C_p$, where $C_p$ refers to  the constant in inequality \eqref{liftitude}. Assume next  that inequality \eqref{rUmk} is  established  for some integer  $m\geq 3$:  We  are going to show that it then holds also in dimension $m+1$. For that purpose,  let $\rU^{m+1}_k\in W^{1,p}_{\rm loc} (\B^{m+1}, \S^3)$ be any arbitrary lifting of the map $ {\rm \bf  {S}}_{\rm \bf pag}^{k, m+1}$. 
 For $\theta \in [0,2\pi)$,  we consider  the half-hyperplane $\mathcal P_\theta^{m, +}$ defined in \eqref{peplum} and the map 
 $\Tr (\rU^{m+1}_k)$ defined on  $\B^{m, +}$ thanks to \eqref{duxbellorum}.  It  follows from  \eqref{ngolo} that 
 \begin{equation*}
 \begin{aligned}
 \Pi \circ  \Tr (\rU^{m+1}_k)&=   \Tr (\Pi \circ \rU^{m+1}_k) =\Tr ( {\rm \bf  {S}}_{\rm \bf pag}^{k, m+1})=
 \Tr(\mathbbmtt I^{m+1}_{\rm cyl} (  {\rm \bf  S}_{\rm \bf pag}^{k, m}(20 \cdot ))) \\
 &= {\rm \bf  S}_{\rm \bf pag}^{k, m}(20 (3\cdot-A ))={\rm \bf  S}_{\rm \bf pag}^{k, m}(60\cdot-20 A )), 
  %{\rm \bf  S}_{\rm \bf pag}^{k, m}(3x-A)  {\rm \ for \ } x \in \B^{m, +}. 
\end{aligned}
\end{equation*}
  so that $\Tr (\rU^{m+1}_k)$ is a lifting of ${\rm \bf  S}_{\rm \bf pag}^{k, m}(60\cdot-20 A ))$. Since by induction we assume that \eqref{rUmk} holds in dimension $m$, we are led to the lower bound
$$  
\int_{\mathcal P_\theta^{m, +}\cap  \,  \B^m(\frac 13  A, \frac {1}{60})}\vert \nabla \rU^{m+1}_k \vert^p \geq  {\rm C}_p^m \left ( \frac {1}{60} \right)^{m-p}    k^{2p}.
$$
Integrating with respect to $\theta$ on the interval $(0,2\pi)$,  we obtain 
$$
\int_{\B^{m+1}}\vert \nabla \rU^{m+1}_k \vert^{^p}  \geq   {\rm C}_p^m \frac {2\pi}{3} \left ( \frac {1} {60}\right)^{m-p}    k^{2p}, 
$$
 so that  Property \eqref{rUmk} is established for the dimension $m+1$ choosing  the constant ${\rm C}_p^{m+1}$ as 
${\rm C}_p^{m+1}=\frac{2\pi}{3}  \left ( \frac {1} {60}\right)^{m-p}  {\rm C}_p^m.$
\end{proof}
  %%%%%%%%%%%%%%%%%%%%%%%%%%%%%%%%%%%%%%%%%%%%%%%%%%%%%%%%%%%%%%%%%%%%%%%%%%%%%%%%%%%%%%%
  %%%%%%%%%%%%%%%%%%%%%%%%%%%%%%%%%%%%%%%%%%%%%%%%%%%%%%%%%%%%%%%%%%%%%%%%%%%%%%%%%%%%%%%%
  %%%%%%%%%%%%%%%%%%%%%%%%%%%%%%%%%%%
  \subsection {Proof of Theorem \ref{repulpage}}
  We first construct a map $\mathcal V=\mathcal  V_0$ in  the special  case $\mM=\B^m$, imposing moreover  the additional condition $\mathcal V_0=\sP$ on $\partial \B^m$.  \\

\subsubsection{Construction  of $\mathcal V_0$ on $\partial \B^m$ }
  \noindent
  {\it Gluing copies of the $ {\rm \bf  S}_{\rm \bf pag}^{k, m}$'s}.  We construct as in subsection \ref{radinitude}
  a sequence of radii $ (\mathfrak r_{\mi, p})_{\mi\in \N^*}$  and a sequence of integers $(\rk_{\mi, p})_{\mi \in \N}$  such that the following properties are satisfied:
  \begin{equation}
  \label{sequencitude2}
 % \left\{
 % \begin{aligned}
 \underset{\mi \in \N} \sum \mr_{\mi, p} = \frac 1 8, \     \ 
 \underset{\mi \in \N} \sum \mathfrak r_{\mi, p}^{m-p} \rk_{\mi, p}^p  < + \infty, \,    
   \underset{\mi \in \N} \sum \mr_{\mi, p}^{m-p} \   \rk_{\mi, p} ^{2p}   \,=+ \infty  {\rm \ and  \  }  3\mr_{\mi+1, p} \geq  \mr_{\mi, p}.
 % \end{aligned}
 % \right.
  \end{equation}
In the case $m-p > 1$,  a possible choice for these sequences is given,  for $\mi \geq 2$,   by 
$$\mr_{\mi, p}=\frac{{\rm c} }{\mi (\log \mi)^2 }   {\rm \ and \ }
\rk_{\mi, p}=\left[\mi^{\frac {m-p-1}{p}}\right],  {\rm \  where  \ }
{\rm c}=\frac {1}{8}\underset{ \mi \in \N}  \sum \frac{ 1}{\mi (\log \mi)^2 }.
$$
   In the case $0<m-p\leq 1$, we may choose instead   for $\mi \geq 2$,
$\displaystyle{\mr_i=\mi^{-\frac{3}{m-p}} }$ and  $\displaystyle{\rk_\mi=\mi^{\frac {1}{p}}}$. 
We define as above  a set of  points $\{\fM_\mi\}_{\mi \in \N}$ in $\B^m$  by
$$\fM_\mi= 4 \left (   \underset {j=0} {\overset {\mi} \sum}   \mr_j \right) \be_1, {\rm \ for \ } \mi \in \N, {\rm \  where  \ }  \be_1=(1,0,\ldots, 0).
$$
 These points converge to  $\fM_\star=\frac 12 \be_1 $ as $\mi \to + \infty$. We  consider the  collection of disjoint balls $\displaystyle{ \left(B_\mi\right)_{i\in \N}}$ defined by 
$B_\mi \equiv \B^m(\fM_i, \mr_i)$  for  $\mi \in \N$.
  We  then define  the map $\mathcal V_0$ on $\B^m$  as 
  \begin{equation}
  \label{matuiditude}
 % \left\{
%  \begin{aligned}
\mathcal V_0(x)= {\rm \bf  S}_{\rm \bf pag}^{k, m}\left (\frac {x-\fM_\mi}{\mr_\mi}\right), \ {\rm\  if \ } x \in B_\mi, \ \ 
  \mathcal U(x)=\sP,  \  {\rm \  if \ } x \in \B^4\setminus \underset  {\mi \in \N}  \cup B_\mi,
 % \end{aligned}
 % \right. 
  \end{equation} 
   so that  $\mathcal V_0=\sP$ on  $\partial \B^m$.  Invoking the scaling properties  \eqref{scalingprop} of the $p$-energy, we are led to
 $$
\rE_p (\mathcal V_0, \B^m)=\underset{ \mi \in \N} \sum\,  \rE_p  (\mathcal V_0, B_\mi) = \underset{ \mi \in \N} \sum\,  \mr_\mi^{m-p}  \rE_p ({\rm \bf  S}_{\rm \bf pag}^{k, m})\leq
  \rC  \underset{ \mi \in \N}\sum\,  \mr_\mi^{m-p} \rk_\mi^p <+\infty, 
$$
 hence  $\mathcal V_0$ belongs to $W^{1,p} (\B^m, \S^2)$.  Next assume that there exists  a lifting $\rU_0$ of  $\mathcal V_0$ in $W^{1,p} (\B^m, \S^3)$ and consider its restriction $\rU_\mi$ to the ball $B_\mi$.  It follows from Lemma \ref{spagetude} and the scaling properties of the energy that 
 $$\rE_p  (\rU_0, B_\mi)\geq   C_p^m \mr_\mi^{m-p} \rk_\mi^{2p}
 {\rm \ and \  hence \ }  \ 
  \rE_p (\rU_0, \B^m) \geq C_p^m \underset { \mi \in \N} \sum\mr_\mi^{m-p} \rk_\mi^{2p}=+\infty,
 $$
 leading to a contradiction, which established the proof of the theorem in the special case considered in this 
  present subsection.
 
 \subsubsection{Proof of Theorem \ref{repulpage} completed  for a general manifold $\mM$}
 The argument is parallel to the argument in subsection \ref{subsectionitude}.  With the same notation, we set
 $$\mathcal V=\mathcal V_0 \left(\Phi (x)\right)  {\rm \ if \ }  x \in \mathcal O_a, \mathcal V=\sP {\rm \ otherwise},$$
  and we verify that the map $\mathcal V$ has the desired property.

  %%%%%
%%%%%%%%%%%%%%%%%%%%%%%%%%%%%%%%%%%%%%%%%%%%%%%%%%%%%%%%%%%%%%%%%%%%%%%%%%%%%%%%%%%%
%%%%%%%%%%%%%%%%%%%%%%%%%%%%%%%%%%%%%%%%%%%%%%%%%%%%%%%%%%%%%%%%%%%%%%%%%%%%%%%%%%%%%%%%%%%%%
%\section{Proof of Theorem \ref{bis}}

%%%%%%%%%%%%%%%%%%%%%%%%%%%%%%%%%%%%%%%%%%%%%%%%%%%%%%%%%%%%%%%%%%%%%%%%%
%%%%%%%%%%%%%%%%%%%%
\setcounter{equation}{0} \setcounter{subsection}{0}
\setcounter{lemma}{0}\setcounter{proposition}{0}\setcounter{theorem}{0}
\setcounter{corollary}{0} \setcounter{remark}{0}
\setcounter{definition}{0}
\renewcommand{\theequation}{A.\arabic{equation}}
\renewcommand{\thesubsection}{A.\arabic{subsection}}
\renewcommand{\thelemma}{A.\arabic{lem}}
\renewcommand{\thedefinition}{A.\arabic{definition}}
\section*{Appendix: related  notions on branched  transportation}
\renewcommand{\thesection}{A}
\numberwithin{equation}{section} \numberwithin{theorem}{section}
\numberwithin{lemma}{section} \numberwithin{proposition}{section}
\numberwithin{remark}{section} \numberwithin{corollary}{section}
%Branched transportation in the critical case
%%%%%%%%%%%%%%%%%%%%%%%%%%%%%%%%%%%%%%%%%%%%%%%%%%%%%%%%%%%%%%%%%%%%%%%%%%%%%%%%%
%\begin{center}
%{\bf Branched transportation in the critical case }
%\end{center}

In this Appendix,  we recall and recast  some  aspects of branched transportation,    an optimization  problem involved  in a  wide area of applications, including practical ones, for instance   leafs growth, or network  design.   We focus on questions  directly related to our main problem,  keeping   however this part \emph{completely} self-contained.

\smallskip
 Branched transportation appears when one seeks to  optimize   transportation costs when   the average  cost   decreases  with  density.       
Consider   a finite set $A$  of points belonging to   the closure of a  bounded open   domain $\Omega$ of $\R^m$: We  wish to connect (or transport) them  to the boundary  $\partial \Omega$. The total cost be   to  be  minimized  is   the sum of the length of paths  joining the given points  to the boundary multiplied by a  \emph {density function $\varphi$},  depending on the density  representing  the number of points using the same portion of paths. For minimizers, such paths are unions of segments, but possibly with varying densities.    The intuitive idea   is  that, when $\varphi$ is sublinear,  it is \emph{cheaper to  share the same path than to travel alone}, so that  \emph{high densities} are  selected by the minimization process.  This induces branching points, i.e. points where segments join to induce higher multiplicity.  The density function  appearing in our context, as well as in  a large of part of the literature,  is given  by the power law
$\displaystyle{\varphi(d)=d^\upalpha}$,  with given parameter  $0<\upalpha< 1$. Notice that $\varphi$ is sublinear, $\left( d_1+d_2\right)^\alpha <<d_1^\alpha+ d_2^\alpha$ for large numbers. 
Our aim is to describe the behavior of minimal branched transportation when the number of \emph{points increases and ultimately goes to $+ \infty$}. Special emphasis is put on the critical case $\upalpha=\upalpha_m=1-1\slash m$.
 Our presentation closely follows \cite{xia, xia2} and also  \cite{bermocas}: We perform  the necessary adaptation for connections to the boundary, which have been less considered so far. As far as we are aware of, the main result of this Appendix, presented in  Theorem \ref{droppy}, is new.

%\subsection {Branched connections  connecting $A$ to the boundary $\partial \Omega$} 
%\label{ukraine}
%%%%%%%%%%%%%%%%%%%%%%%%%%%%%%%%%%%%%%%%%%%%%%%%%%%%%%%%%%%%%%%%
\subsection{ Directed graphs connecting  a finite set to the boundary}
\label{ukraine}
%%%%%%%%%%%%%%%%%%%%%%%%%%%%%%%%%%%%%%%%%%%%%%%%%%%%%%%%%%%%%%%%
\subsubsection{Directed graphs and charges}

The  \emph{theory  of oriented  graphs} offers an appropriate framework to describe the object we have in mind\footnote {We might as well  invoke the theory of 1-dimensional integer currents, which is however far more abstract.}. Such oriented graphs involve: 
\begin{itemize}
	\item {\bf Points}.  These points    are of two kinds: The points in $A$ we wish to connect to the boundary, but also additional points,  the branching points and points on the boundary. 
	\item {\bf Oriented  segments}. These segments  join  points  described   points above. Orientation is important, as well as multiplicity which is a positive  integer.
\end{itemize}
%As before, we 	assume that we are given 	 a lipschitz  open  bounded domain {$\Omega\subset \R^m$},  {$m\in \N^*$} and a finite  set {$A=\{a_1, \ldots, a_i\} \subset \overline{ \Omega}$}.  {This set corresponds later to a set of positive charges $+1$, or sources}.
 A   general directed graphs $G$  is defined by 
a  \emph{ finite }  set $E(G)$ of \emph{oriented segments  with endpoints belonging to $\overline{\Omega}$}: If $e$ is a segment in $E(G)$,  then we denote by $e^-$ and $e^+$ the endpoints of $e$, $e^-$ (resp $e^+$) denoting the entrance point (resp the exit point),  so that $e=[e^-, e^+]$ and $\partial e=\{e^-, e^+\}$.  We assume that for any segment $e$ in $E(G)$ the additional condition  
 
 \begin{equation}
 \label{asp2}  
 {\rm if \ } [e^-, e^+] \in E(G),  {\rm   \  then  \ } [e^+, e^-] \not \in E(G)
 \end{equation}
holds,  i.e. if an oriented segment  belongs to the graph, the segment with opposite direction does not.  Segments may be repeated with \emph{multiplicity}. If $e\in E(G)$, we denote by $d(e, G)\in \N^\star$ its multiplicity\footnote{This is of course an essential feature for branched transportation} and simply write $d(e)$ is this is not a source of confusion. 
 We denote  by  $\mathcal  G(\Omega)$ the set of graphs having the previous properties, namely
$$
\mathcal G(\Omega)=\left\{ {\rm graphs \ } G  {\rm \ such \ that \ }  \eqref{asp2} {\rm \ holds }  \right\}. 
$$
We denote by $V(G)$ be set of vertex  of the graph $G$,  i.e.
$$
V(G)=\underset {e\in E(G)}\cup \partial e=\underset {e\in E(G)}\cup \{e^-, e^+\}\subset  \overline{ \Omega}.
$$
Given a vertex $\upsigma \in V(G)$, we set 
 $$E^\pm (\upsigma, G)= \{  e\in E(G), e^\mp=\upsigma\} {\rm \ and \ }  E(\upsigma, G)=E^+(\upsigma, G) \cup E^-(\upsigma, G), $$
so that $E^+ (\upsigma, G)$ (resp $E^- (\upsigma, G)$) represents the set of segments of  $G$  having  $\upsigma $ as entrance point (resp. exit point) and $E(\upsigma, G)$ the set of segments having $\upsigma$ as endpoint.   We set 
$$
\sharp \left(E^\pm(\upsigma, G)\right)=\underset{e\in E^\pm (\upsigma, G)}\sum  d(e)  \in \N^\star, 
$$
 and introduce the notion of \emph{charge} of a point  $\upsigma \in V(G)$ as
   \begin{equation}
   \label{charge}
   \charge (\upsigma, G)=\sharp \left(E^+(\upsigma, G)\right)-\sharp \left(E^-(\upsigma, G)\right)  \in \mathbb Z .
     \end{equation} 
      We consider the subsets $V_0(G)$, $\Vcharged(G)$ and $\Vbd(G)$ of $V(G)$ defined by
  \begin{equation}
  \left\{
  \begin{aligned}
  V_0(G)&=\{\upsigma \in V(G),  \charge (\upsigma, G)=0\}\\ 
\Vcharged(G)&=\{\upsigma \in V(G), \charge (\upsigma,G)\neq 0, \upsigma \in V(G)\setminus( V_0(G)\cup  \partial  \Omega)  \} \\
\Vbd(G)&=\{\upsigma \in V(G),  \upsigma \in \partial \Omega  \}.
  \end{aligned}
  \right.
  \end{equation}
   A point $\upsigma \in  V_0(G)$ will be termed  \emph{a pure branching point}, a point in $V_{\rm chg}(G)$ a charged point or simply a \emph{charge}\footnote{Notice that a charged point may however also be a branching point}.   The set of graphs with only \emph{positive charges} plays a distinguished role in the later analysis. We set 
   \begin{equation}
   \label{bridou}
   \left\{
   \begin{aligned}
   \mathcal G^+(\Omega)&=\left\{ G\in \mathcal  G(\Omega), {\rm \ s.t.\  }\charge (\upsigma, G)\geq 0 \  \forall \upsigma \in V(G)\setminus \partial \Omega \right\}\ \\
    \mathcal G_0(\Omega)&=\left\{ G\in \mathcal  G(\Omega), {\rm \ s.t.\  }\charge (\upsigma, G)= 0 \  \forall \upsigma \in V(G)\setminus \partial \Omega \right\}.
       \end{aligned}
       \right.   
       \end{equation}   In several places, we will invoke the fact that, if $G\in \mathcal G^+(\Omega) $, then
       \begin{equation}
       \label{clearwater}
       E^+(\upsigma, G) \neq \emptyset,  {\rm \ for \ any  \ }  \upsigma \in V(G).
       \end{equation}
       Indeed, by definition  $E(\upsigma, G)$ contains at least one element, and since the charge is positive there are at least as many elements in $E^+(\upsigma, G)$ as in $E^-(\upsigma, G)$.
 %%%%%%%%%%%%%%%%%%%%%%%%%%%%%%%%%%%%%%%%%%%%%%%%%%%%%%%%%%%%%%%%%%%%
\subsubsection{Elementary operations on directed  graphs}
\label{mathelem}
  %%%%%%%%%%%%%%%%%%%%%%%%%%%%%%%%%%%%%%%%%%%%%%%%%%%%%%%%%%%%%%%%%%%%%%%%%%%
  \noindent
  {\it Gluing graphs}.   Let
   $G_1$ and $G_2$ be two graphs in $\mathcal G (\Omega)$.  We assume furthermore that  
  \begin{equation}
  \label{enplus}
 % \left\{
 % \begin{aligned}
 % {\rm if  \  } &e=[e^-, e^+] \in E(G_1)\  \, ({\rm resp. \ }  E(G_2)  )  {\rm  \ then \ }   -e\equiv [e^+ , e^-] \not \in  %E(G_2) \ \,  ({\rm resp.  \ }  
%  E(G_1))\\
 {\rm if \ }  e_1\in E(G_1), \ e_2 \in E(G_2){\rm \  then  \  either  \ } e_1=e_2, {\rm \ or \ }  e_1\cap e_2 {\rm  \  contains \ at \ most \ one \ point.}
%  \end{aligned}  
%  \right.
  \end{equation}
  If condition \eqref{enplus} is not met,  one may add new points and divide some segments in two so that the transformed graph satisfy the condition. Given  $e \in E(G)$,   we denote  $-e$  the segment with opposite orientation, i.e. if $e=[e^-, e^+]$, then $-e\equiv [e^+ , e^-]$.  We introduce  the following subsets of $E(G_1)\cup E(G_2)$
 \begin{equation*}
 \left\{
\begin{aligned}
 E_0(G_1, G_2)&\equiv \{e \in G_1 {\rm \ s.t. \ }, -e\in G_2 {\rm \ with \ } d(e, G_1)=d(-e, G_2) \}\\
E^+(G_1, G_2)&\equiv \{e \in G_1 {\rm \ s.t. \ } -e \not\in G_2  \} \cup\{e \in G_2 {\rm \ s.t. \ } -e \not\in G_1  \} \\
E^\pm(G_1, G_2)&\equiv \{e \in G_1 {\rm \ s.t. \ } -e \in G_2 {\rm \ with \ } d(e,G_1) >d(-e,G_2)\} \\
E^\mp(G_1, G_2)&\equiv \{e \in G_2 {\rm \ s.t. \ } -e \in G_1 {\rm \ with \ } d(e,G_2) >d(-e,G_1)\}
\end{aligned}
\right. 
 \end{equation*}
We   define the \emph{glued} graph
  \begin{equation}
  \label{decgraphe}
   G=G_1 \curlyvee G_2  \in \mathcal  G (\Omega), 
   \end{equation}
   given by the   set of its directed segments  
    \begin{equation}
    \label{jeankevin}
   \begin{aligned}
    E(G)&\equiv E(G_1)\cup E(G_2)\setminus E_0(G_1,G_2) \\
   &= E^+(G_1, G_2)\cup E^\pm(G_1, G_2)\cup E^\mp(G_1, G_2), 
    \end{aligned}
    \end{equation}
   with, for $e\in E(G)$,  multiplicities $d(e, G)$ given by
   \begin{equation}
   \label{longpoin}
   \left\{
  \begin{aligned}
   d(e, G)&=d(e, G_1)+d(e, G_2)  {\rm \ if \ } e \in E^+(G_1,G_2)\\
    d(e, G)&=d(e, G_1)-d(e, G_2)  {\rm \ if \ } e \in E^\pm(G_1,G_2)\\ 
    d(e, G)&=d(e, G_2)-d(e, G_1)  {\rm \ if \ } e \in E^\mp(G_1,G_2),
   \end{aligned}
   \right.
  \end{equation}
 where we have used the convention, for $i=1, 2$,  that $d(e, G_i)=0$ if $e \not \in G_i$.     The  vertex set $V(G_1\uvee G_2)$ is  then provided by the endpoints of the segments  in $E(G_1\uvee G_2)$, so that 
   $\displaystyle{V(G) \subset V(G_1) \cup V(G_2)}$.  The inclusion might be strict in the general case. We have:
 \begin{proposition} 
 \label{frumentum}
 Let $\upsigma \in V(G_1\uvee G_2)$. We have
 \begin{equation}
 \label{mbappe}
 \charge (\upsigma, G_1 \curlyvee G_2)= \charge (\upsigma, G_1)+ \charge (\upsigma, G_2),
 \end{equation}
 with the convention, for $i=1, 2$,  that $\charge(\upsigma, G_i)=0$ if $\upsigma \not \in G_i$. 
If    $G_i \in \mathcal G^+(\Omega)$ for $i=1, 2$,  then we have 
\begin{equation}
\label{jka}
 \Vcharged(G)= \Vcharged(G_1) \cup \Vcharged(G_2). 
\end{equation}
 \end{proposition}
  The result is  a direct consequence of \eqref{longpoin}.  We reader may check also that the gluing operation $\curlyvee$ enjoys classical properties as commutativity and associativity.  Finally we write 
  \begin{equation}
  \label{uvee}
  G=G_1\uvee G_2
  \end{equation}
in the case when, if  a segment $e$ belongs to $E(G_1)$, then the opposite segment does not belong to $G_2$, so that no cancellations for segments occur in the gluing process.  The set $E(G)$ is in that case the union $E(G_1)\cup E(G_2)$, the multiplicities being simply  summed.
  
%  If  $A_1$ and $A_2$ are two finite  disjoints subsets in  $\Omega$,  and $G_i\in \mathcal G(A_i, \partial \Omega)$, for $i=1, 2$, then
%  $$
%  G_1 \curlyvee G_2  \in \mathcal  G ( A_1\cup A_2, \partial \Omega).
%$$
%The points in $A_1\cup A_2$ being possibly repeated with corresponding multiplicity.

  \medskip
  \noindent
 {\it Subgraphs.} Let $G_1$ and $G$ be two graphs in $\mathcal G(\Omega)$. We say that $G_1$ is a  subgraph of $G$ if $E(G_1) \subset E(G)$  and if the multiplicities satisfy the conditions
 \begin{equation}
 \label{sousmultiplicite}
 d(e, G_1)\leq d(e, G), {\rm \ for \  any \ segment \   } e \in E(G_1).
 \end{equation}
  If the  two previous conditions are  fullfilled, then we write $G_1\Subset G$.
 We introduce    the complement $G_2=G\smallsetminus G_1$ of $G_1$ with respect to $G$,  defining the set of oriented segments of $G_2$ as 
$$
E( G\smallsetminus G_1)=E(G_2) \equiv  \left[ E(G)\setminus E(G_1) \right] \cup E_{\rm comp}(G_1, G)
$$
 where $E_{\rm comp}(G_1, G)$ is defined as
\begin{equation*}
E_{\rm comp}(G_1, G)\equiv \{ e \in E(G_1), d(e, G_1)<d(e, G)\},
\end{equation*}
 and with multiplicities given by
 \begin{equation}
 \label{reglemult}
 \left\{
 \begin{aligned}
 d(e, G\smallsetminus G_1)&=d(e, G), {\rm \ if \ } e \in E(G)\setminus E(G_1),\\
 d(e, G\smallsetminus G_1)&=d(e, G)-d(e, G_1), {\rm \ if \ } e \in E_{\rm comp}(G_1, G).
 \end{aligned}
 \right.
 \end{equation}
 Notice that there are no segments in $G_1$ and  $G_2$ with opposite orientations.
 It follows from these definitions that  
$$G=G_1 \uvee G_2=G_1\uvee (G\smallsetminus G_1).
%  {\rm \ so \ that \ we \ may \  write \  }
%  G_2 =G\smallsetminus G_1.
  $$
In view of Proposition \ref{frumentum} , if $G$ and $G_1$ belong to $\mathcal G^+(\Omega)$ with $G_1 \Subset G$, and if furthermore
\begin{equation}
\label{bonsous}
\charge(\upsigma, G_1)\leq \charge (\upsigma, G), 
{\rm \ for  \ any \ }
\upsigma \in \Vcharged (G),
\end{equation}
 then $G_2 \in \mathcal G^+(\Omega)$.   If $G_1$ and $G$ are two graphs in $\mathcal G(\Omega)$ such that $G_1\Subset G$ and  such that condition \eqref{bonsous} is satifies, then we write $G_1 \sousgraphe G$.

   % so that one ay verify that $ V \in \mathcal G(A_1 \cup A_2, \partial \Omega)$. 
 %Our next results  shows that it is always possible to decompose a given graph in $\mathcal G(A_1 \cup A_2,  \partial \Omega)$ as \eqref{decgraphe}

\medskip
\noindent
{\it Restrictions of graphs to subdomains}. Let   $\Omega_1 \subset \Omega$ be a subdomain of $\Omega$  and assume  for  the sake of simplicity (and also for further applications) that both   $\Omega_1$ and $\Omega$ are polytopes.  Let  $G$  be a graph in $\mathcal G(\Omega)$.  We define the restriction  $G_1=G  \rest \Omega_1$ of $G$ to   $\Omega_1$ as  the graph in $\mathcal G(\Omega_1)$ such that  its set of segments $E(G  \rest \Omega_1)$ is given by 
$$\displaystyle{E(G  \rest \Omega_1)=\{e \cap \bar \Omega_1, e \in E(G)\}}.$$
Its set of vertices is then given by
$$
V(G_1)=\left (V(G)\cap \Omega_1\right) \cup \left(\underset{ e\in E(G)}\cup  \partial \left ( \bar e \cap \bar {\Omega} _1\right )\right).
$$ 
 One may check that $G_1\in \mathcal G(\Omega_1)$ and also  $G_1\in \mathcal G(\Omega)$. If we assume moreover that $G\in \mathcal G^+(\Omega)$, then we have $G\in \mathcal G^+(\Omega_1)$, but it does not belong, in general to 
$\mathcal G^+(\Omega)$, since negative charges may be created on $\partial \Omega_1$.

 %We assume next that  $\fP=\{1,2\}$, that is $\bar \Omega=\Omega_1 \cup \Omega_2$, where $\Omega_1 \cap \Omega_2=\emptyset$, $\%Omega_1$ and $\Omega_2$ being polytopes

\subsubsection{The single path property}
 The next property,  termed \emph{the single path property}, has been  considered in  \cite{xia, xia2, bermocas}.
\begin{definition}  Let $G \in \mathcal G(\Omega)$. We say that $G$ possesses the single path property, if for any vertex $\upsigma \in V(G)\cap \Omega$ there is at most one segment $e$ in $E(G)$, possibly repeated with multiplicity,  such that $\upsigma$ is the entrance point of $e$, that is 
$E^+(\upsigma, G)$ is a singleton or empty. 
\end{definition}

In other words, if $G$ possesses the single path property, then there might be several segments ending at the same vertex, but at most one starting from it. This property  possibly models some intuitive features, as for  instance in   river networks. We denote by $\mathcal G_{\rm sp}(\Omega)$  (resp. $\mathcal G_{\rm sp}^+(\Omega)$) the  set of all graphs in $\mathcal G(\Omega)$ (resp. 
$\mathcal G^+ (\Omega)$) which possess the single path property. Notice that if $G \in \mathcal G^+_{\rm sp}(\Omega)$, then  $E^+(\upsigma, G)$ is not  empty for $\upsigma \in \Omega$, so that it is necessarily a singleton. 

%%%%%%%%%%%%%%%%%%%%%%%%%%%%%%%%%%%%%%%%%%%%%%%%%%%%%%%%%%%%%%%%%%%%%%%%%%%%
%%%%%%%%%%%%%%%%%%%
%%%%%%%%%%%%%%%%%%%%%%%%%%%%%%%%%%%%%%%%%%%%%%%%%%%%%%%%%%%%%%%%%%%%%%%%%%%%
\subsubsection {Threads, loops and bridges}
 A heuristic image  of the notion of thread we describe next,  is provided by a a curve for with one end is given by a point in  $\overline{\Omega}$, reaching to the boundary $\partial \Omega$, and constructed using  only  segments in $E(G)$.   This image suggests the following definition. 
 
 \begin{definition}
 \label{fildefer}
  A  directed graph $G$ is said to be a polygonal curve in $\Omega$, in short a $\Pom$-curve,  if and only if there exists an ordered collection $B=(b_1, \ldots, b_{\rq})$ of $q$  not necessary distinct points in $\bar \Omega$  such that $G$ satisfies  $V(G)=B$, relation  \eqref{asp2} holds,  and
 \begin{equation}
 \label{filoche}
  E(G)=\{[b_i, b_{i+1}],{\rm \ with \ multiplicity \  } 1,  i=1, \ldots, \rq\}
 {\rm \ and  \ }
 b_\rq \in \partial \Omega  {\rm \ or \ } b_\rq=b_1.
 \end{equation}
 \end{definition}
  Since the $\Pom$-curve $G$ is completely determined by the orderet set $B$, we may set 
   $$G= {\rm G}_{\rm rp} (B).$$
  Even if in \eqref{filoche} each segment $[b_i, b_{i+1}]$ appears with multiplicity one, the same segment may be repeated   further in  the sequence, so that its final multiplicity might be larger that one.

\begin{definition}
\label{filocher}
 Let $G= {\rm G}_{\rm rp} (B)$ be a $\Pom$-curve. 
 We say that ${\rm G}_{\rm rp} (B)$ is 
\begin{itemize}
\item  a loop if   $b_1=b_{\rq}$.
\item a bridge if $b_1\in \partial \Omega$ and $b_\rq \in \partial \Omega$.
\item A thread emanating  from a point $p \in \Omega$ if  $p=b_1$ and $b_{\rq} \in \partial \Omega$. 
\end{itemize}
\end{definition}
 A first elementary observation is:
 
 \begin{lemma}
 \label{doublon} Let  $B=(b_1, \ldots, b_{\rq})$ of $q$  be an ordered collected of points in  $\bar \Omega$. We have 
$ \charge(b_i,  {\rm G}_{\rm rp} (B))=0$ for   $i=2, \ldots, q-1$ and for $i=1, q$ if ${\rm G}_{\rm rp} (B)$ is a loop. If ${\rm G}_{\rm rp} (B)$ is not a loop, then we have 
 \begin{equation}
 \label{doublon1}
 \charge(b_1,  {\rm G}_{\rm rp} (B))=1  
  {\rm \ and \ }  \charge(b_q,  {\rm G}_{\rm r} (B))=-1.
 \end{equation}
\end{lemma}
 
\begin{proof}    Given $b_i$ in $B$, we denote by $m(i)$ its multiplicity:    If $i\in \{2, q-1\}$, then $b_i$  is $m(i)$ times an entrance point as well as an exit point, so that the first assertion follows. The others follow similar arguments. 
\end{proof} 
 
  We denote by   $\mathcal T_{\rm hread}(p,  \Omega)$  the set of  all threads emanating from $p$. We notice that
  \begin{equation}
  \label{chargefred}
  \left\{
  \begin{aligned}
    \Vcharged(\Grp(B))&=\emptyset {\rm \ when \ } \Grp(B) {\rm \ is \ a  \ loop \ or \ a \ bridge}, \\ 
 \Vcharged(\Grp(B))&=\{a\} {\rm \ with \ } \charge(a)=1,{\rm \ if \ } \Grp(B)  {\rm \ is \ in \ } \mathcal T_{\rm hread}(p,  \Omega).
  \end{aligned}
    \right.
    \end{equation}
Loops and bridges are elements in $\mathcal G_0(\Omega)$.   We denote by $\mathcal L_{\rm oop} (\Omega)$ the set of loops.   We say that a graph $G$ has a loop if there exists a loop $L$ such that $L \Subset G$. 
 In particular a thread $G= {\rm G}_{\rm rp} (B)$ has  \emph{ a loop}  if  there exists  a subset  formed of consecutive  points in $B$  yielding  a loop. Given a point  $p \in \Omega$ we denote by   
 $$\mathbbmss T_{\rm hread}(p,  \Omega)\subset \mathcal T_{\rm hread}(p, \Omega)$$
   the set of  all threads \emph{without loops emanating} from $p$. It follows rather straightforwardly from the definitions  that the segments of a thread in $\mathbbmss T_{\rm hread}(p,  \Omega)$ have  \emph{multiplicity} one and that, if  a thread has the single path property, then it has no loops.
  One may moreover verify:
  
  \begin{lemma}
  \label{lupoon}
  Let  $\upsigma \in \Omega$ and let    $T \in \mathcal T_{\rm hread }(\upsigma, \Omega) $. There exists a finite  family   $(L_j)_{j \in J}$  of loops  such that
 \begin{equation}
 \label{lupo}
  T= T_ p { \uvee} \left(\underset { j \in J} \uvee  L_j \right),  {\rm \ with \ } T_p \in \mathbbmss T_{\rm hread}(p,  \Omega). 
  \end{equation}
 \end{lemma}
\begin{proof}  We may write $T=\Grp(B)$ where $B$ denotes an ordered set $B=\{b_1=\upsigma,  b_2, \ldots, b_{\rq}\}$, with $b_{\rq} \in \partial \Omega$.  If all points  in $B$ are distinct, then $T \in \mathbbmss T_{\rm hread}(\upsigma, \Omega)$ and there is nothing to prove. Otherwise there are two points, say $b_{i_1}$ and $b_{i_2}$ with $1\leq i_1 <i_2 <b_{\rq}$ which are identical.  Then,  we set 
$L_1=\Grp\{b_{i_1}, \ldots, b_{i_2}=b_{i_1}\}$ and ${\tilde T}_1=\Grp\{b_1, \ldots, b_{i_1}, b_{i_2+1}, \ldots, b_{\rq}\}$. We verify that 
$$  T=\tilde T_1 \uvee L_1, {\rm  \  with \ }  \tilde T_1 \in \mathcal T_{\rm hread}(p,  \Omega)  {\rm  \ and \   } L_1 {\rm \ is \ a  \ 	 loop}.$$
If $\tilde T_1$ has  no loop, then we are done. Otherwise,  we start the process again with $T$ replaced  by $\tilde T_1$. It stops in a finite number of iterations, since the number of points is finite. 
\end{proof}
%%%%%%%%%%%%%%%%%%%%%%%%%%%%%%%%%%%%%%%%%%%%%%%%%%%%%%%%%%%%%%%%%%%%%%%%%%%%%%%%%%%%%%%%%%%%%%%%%%%%%%%%%%%%%%%%%%%%%%%%%%%%%%%%%%%%%%%%%%%%%%%%%%%%%%%%%%%%%%%%%%%%%%%%%%%%%%%%%%%%%%%%%%%%
\subsubsection{Maximal subcurves, subthreads and subloops}
 %%%%%%%
 %%%%%%
  Consider a graph $G$ in $\mathcal G(\Omega)$ and an  ordered  set $B=(b_1, \ldots, b_{\rq})$ of elements of $V(G)$.
  
  \begin{definition}
  The $P_\Omega$-curve 	$\Grp(B)$ is said to be a \emph{maximal subcurve} of $G$ if $\Grp(B)\Subset G$,  if $b_i \in \Omega$ for $i=1, \ldots, b_{\rq-1}$ and
 % \begin{itemize}
either $b_\rq \in \partial \Omega$ or $E^+\left(b_\rq, G \setminus \Grp(B)\right)$ is empty.
  \end{definition}
 If $\Grp(B)$ is  a \emph{maximal subcurve} of $G$ and $b_{\rq}\in \Omega$, then we have  $E^+\left(b_\rq, G \setminus \Grp(B)\right)=\emptyset$, which  implies that 
 $\charge (b_\rq, G\setminus \Grp(B))\leq 0$, and hence 
 \begin{equation}
 \label{doublon2}
\charge (b_\rq, G)=\charge (b_\rq, G\setminus \Grp(B))+ \charge (b_\rq,  \Grp(B))\leq  \charge (b_\rq,  \Grp(B)).
\end{equation}
 Our next result readily follows from the definition:
 
 \begin{lemma}
 	\label{excellence}
 	Let $G \in \mathcal G(\Omega)$ and $\upsigma \in V(G)$.  There exists an ordered set $B=(b_1, \ldots, b_{\rq})$ such that $b_1=\upsigma$ and such that  $\Grp(B)$ is a maximal subcurve of $G$.
 \end{lemma}
 \begin{proof}
 	  We construct the maximal subcurve inductively.  If $E^+(\upsigma, G)=\emptyset$,  we take $B=\{\upsigma\}$ and we are done. Otherwise, we choose some $b_2\in E^+(\upsigma, G)$, so that  $[\upsigma, b_2] \in E^+(\upsigma, G)$ and therefore $\Grp\{\upsigma, b_2\} \Subset G$. If $b_2 \in \partial \Omega$  or if $E^+(b_2, G)=\emptyset$, then $B\equiv \{\upsigma, b_2\}$ is maximal and we are done. Otherwise, since $E^+(b_2, G)$ is not empty,  there exists some point $b_3 \in V(G)$ such that $[b_2, b_3] \in E^+(b_2, G\setminus \Grp \{\upsigma, b_2\})$ and therefore $\Grp\{\upsigma, b_2, b_3\} \Subset G$. If $b_3 \in \partial \Omega$ or if  $E^+(b_2, G\setminus \Grp \{\upsigma, b_2\})$,  then $B\equiv \{\upsigma, b_2, b_3\}$ is maximal and we are done. Otherwise we go on, until we reach the boundary or have no more segments available to go on.
	  
	  %Since $G \in \mathcal G^+(\Omega)$, it follows from \eqref{clearwater} that
 %$E^+(\upsigma, G)$ is not empty, hence there exists some point $b_2 \in V(G)$ such that $[\upsigma, b_2] \in E^+(\upsigma, G)$ and therefore $\Grp\{\upsigma, b_2\} \Subset G$. If $b_2 \in \partial \Omega$ then $B\equiv \{\upsigma, b_2\}$ is maximal and we are done. Otherwise, since $E^+(b_2, G)$ is not empty,  there exists some point $b_3 \in V(G)$ such that $[b_2, b_3] \in E^+(\upsigma, G)$ and therefore $\Grp\{\upsigma, b_2, b_3\} \Subset G$. If $b_3 \in \partial \Omega$ then $B\equiv \{\upsigma, b_2, b_3\}$ is maximal and we are done. Otherwise we go on, until we reach the boundary or have no more segments available to go on.
 \end{proof}

\begin{lemma}
	\label{looping}
Let $G\in \mathcal G^+(\Omega)$.	 A maximal subcurve $\Grp(B)$  of   $G$ is either a thread emanating from  a point $\upsigma  \in \bar \Omega$ or a  loop.
\end{lemma} 
 
 \begin{proof}  If $b_{\rq} \in \partial \Omega$, then $\Grp(B)$ is a thread emanating from $\upsigma$ and the statement is proved. If   $b_{\rq} \in \Omega$, we argue by contradiction, and assume that $\Grp(B)$ is not a loop, so that $b_1\not = b_\rq$.  It then follows from Lemma \ref{doublon} that $\charge(b_\rq, \Grp(B))=-1$. Hence, invoking \eqref{doublon2}, we obtain $\charge(b_\rq, G)\leq-1$. This is a contradiction with the fact that $G \in \mathcal G^+(\Omega))$. 
    \end{proof}
 
 \begin{lemma}
 	\label{maxcourbe}
Let  $B=(b_1, \ldots, b_{\rq})$  be such that $\Grp(B)$ is a maximal subcurve of $G$. If $\charge(b_1, G)>0$, 
 then $\Grp(B)$ is a thread emanating from $b_1$. 
 \end{lemma}
\begin{proof} We argue by contradiction: If the statement is not true, it follows from Lemma \ref {excellence} that $\Grp(B)$ is a loop, so that $b_1=b_\rq$, and in view of lemma \ref{doublon}, $\charge (b_1, \Grp(B))=0$.  Invoking  \eqref{doublon2},  we derive that $ \charge (b_1, G)=\charge (b_q, G)\leq 0$,  in  contradiction with the assumption $\charge(b_1, G)>0$. \end{proof}
  
  Combining Lemmas \ref{lupoon}, \ref{excellence}, \ref{looping} and \ref{maxcourbe}, we deduce:
  
  \begin{corollary}
  \label{nary}
  \label{idest}  Let $\upsigma \in \Vcharged{G}$. There exists a thread $T_\upsigma  \in \mathbbmss T(\upsigma, \Omega)$ such that 
  $T_\upsigma \sousgraphe G$.
     \end{corollary}
  
 %%%%%%%%%%%%%%%%%%%%%%%%%%%%%%%%%%%%%%%%%%%%%%%%%%%%%%%%%%%%%%%%%%%%%%%%%%%%%%%%%%%%%%%%%%%%%%%%%%%%%%%%%%%%%%%%%%%%%%%%%%%%%%%%%%%%
%%%%%%%%%%%%%%%%%%%%%%%%%%%%%% 
   \subsubsection {Decomposing graphs  into threads and loops and bridges}
  \label{threads}
  Consider a graph  $G \in \mathcal G^+(\Omega)$. Since $V(G)$ is a finite set, we may write  
 $$\Vcharged(G)=\{p_1, \ldots, p_{_{\ell_{\rm c}}}\}, $$
  each point $p_i$ in the collection having multiplicity  $\Rm_i\equiv \charge (p_i, G)\in \N^*$.
 The following result emphasizes the importance of threads in this context:

\begin{proposition} 
\label{decompresse}
 Let $G \in \mathcal G^+(\Omega)$. We may decompose  the graph $G$ as 
\begin{equation}
\label{decortique}
 G=\underset { i \in \{1, \ldots, \ell_{\rm c}\} } \uvee \left(\underset{j\in \{1, \ldots, \Rm_i\}}\uvee T_{i, j}\right)   \uvee T_0,  \,  {\rm \ with \ }
  T_{i, j} \in  \mathbbmss T_{\rm hread}(p_i, \Omega)  {\rm \ and  \ }   T_0\in \mathcal G_0( \Omega).
 \end{equation}
 If moreover $G \in \mathcal G_{\rm sp}^+(\Omega)$, that is if $G$ possesses the single path property,  then decomposition \eqref{decortique} is unique  and, for any $i\in \{1, \ldots, \ell_{\rm c}\}$, we have 
 \begin{equation}
 \label{grosfilex}
 T_{i, j}=T_{i, j'},   {\rm \ for \ }  j {\rm \ and \ } j' {\rm \ in  \ } \{1, \ldots, \Rm_i\}.
 \end{equation}
 \end{proposition}

\begin{proof}
	 We present  first  the  construction of the subgraph $T_{1, 1}$ and then proceed  recursively. \\

\noindent  
{\bf Step 1: Construction of $T_{1, 1}$}.
  Since the point $p_1$ has positive charge $\Rm_1$ with respect to $G$, we may apply  Corollary \ref{nary} and choose $T_{1, 1}=T_{p_1}$, to define  the graph 
$$
G_{1, 1}=  G\setminus { T}_{1, 1},  \  {\rm  and \ hence \ }   G=G_{1, 1} \uvee \, {T}_{1, 1},   {\rm \ with \ } G_1\in \mathcal G^+(\Omega). 
$$
the total charge of $G_{1,1}$ compared to the total charge of $G$ has  decreased by $1$. More precisely, it follows from the rules  \eqref{reglemult}  and \eqref{chargefred}  for charges that  for $i=2, \ldots, \ell_{\rm c}$, we have 
$$ \charge(p_i, G_{1, 1})=\charge (p_i, G),  {\rm \ for \  } i=2, \ldots, \ell_{\rm c},   {\rm \ and \ }  \charge(p_1, G_{1, 1})=\charge (p_1,  G)-1, 
$$
in case $p_1\in V(G_{1, 1})$, which occurs in particular in $p_1$ has multiplicity. In the case $\ell_{\rm c}=1$  and $\Rm_1=1$, we deduce that 
$G_{1, 1}\in \mathcal G_0(\Omega)$, so that setting  $T_0=G_{1, 1}$, we obtain \eqref{decortique}. Otherwise, we proceed recursively.

\medskip
\noindent
{\bf Step 2: Iterating the construction}.  We proceed as in step 1, but  with $G$ replaced by $G_{1,1}$.   If $\Rm_1 >1$, we  invoke Corollary \ref{nary} again to assert that there exists  a thread   $\tilde T_{1, 2}  \in \mathbbmss T_{\rm hread}(p_1, \Omega)$   which is a subgraph of $G_{1, 1}$. We set $G_{1, 2} =G_{1, 1}\setminus { T}_{1, 2}$ so that we have $G_{1, 1}=G_{1, 2} \uvee \, { T}_{1, 1}$ and  
$$
\charge(p_i, G_{1, 2})=\charge (p_i, G),  {\rm \ for \  } i=2, \ldots, \ell_{\rm c},   {\rm \ and \ }  \charge(p_1, G_1)=\charge (p_1,  G)-2.
$$
If $\Rm_1=2$ and $\ell_{\rm c}=1$, then we are done,  we obtain \eqref{decortique} with  $ T_0=G_{1, 2}$. Otherwise, we  proceed with $G_{1, 2}$ and construct iteratively the threads $\tilde T_{1, 3}, \ldots, \tilde T_{1, \Rm_1}$, and  then $\tilde T_{2, 1}, \ldots, T_{2, \Rm_2}$, ... 
$\tilde T_{\ell_{\rm c}, 1}, \ldots, \tilde T_{\ell_{\rm c}, \Rm_{\ell_{\rm c}}}$. Setting $\ T_0=G_{\ell_{\rm c},  \Rm_{\ell_{\rm c}}}$ we obtain formula \eqref{decortique}. 
\end{proof}

%\begin{remark}
%\label{curver}
%{\rm  Given a  thread $T_a \in \Thread (a,  \Omega)$,  the union of  the directed segments of this thread yields   a  curve $\mathcal C_a$ which    is parametrized by arc-length   by  a function $f_a: I\to \R^m, s \mapsto  f_a(s)$, where $I$ is an interval of the form $I=[0, s_{\rm max}(a)]$, and   such that  $\vert f'_a \vert=1$ , $f_a(0)=a$ and $f_a(s_{\rm  max}(a)) \in \partial \Omega$. }
%\end{remark}

%\begin{remark}{\rm   In  \eqref{decortique}, we may impose additionnally that $G_{\rm thread}$ has no loop. 
%}
%\end{remark}     
 
     \subsubsection{Prescribing charges and the Kirchhoff law}
We are now in position to model connections of discrete sets to the boundary with possible branching points. Consider a finite set     $A \subset \overline{\Omega}$, with points  repeated with multiplicity   ${\Rm}(a) \in \N^{\star}$, so that 
     $\sharp(A)=\underset{a\in A}\sum {\rm M} (a)$.   We restrict ourselves   to  graphs  $G \in \mathcal G^+(\Omega)$ satisfying 
     \begin{equation}
     \label{prescharge}
     \Vcharged (G)=A {\rm \ and \ } \charge (a, G)={\Rm}(a), \ \forall a \in  A \cap \Omega.
     \end{equation}
 Condition \eqref{prescharge} is equivalent to \emph{Kirchhoff's law }   
      \begin{equation}
     \label{kirchhoff}
     \left\{
     \begin{aligned}
     & \sharp \left(E^+(a , G)\right)=
     \sharp \left(
     E^- (a, G)\right)+ {\rm M}(a),  {\rm \, \ for  \ }  a\in A\cap\Omega\subset V(G) \\  
     &\sharp \left(E^+(\upsigma , G)\right)=\sharp \left( E^- (\upsigma, G)\right),  {\rm \,  \ for  \ any \ }  \upsigma \in V(G)\cap \Omega \setminus A.
     \end{aligned}
     \right. 
     \end{equation}
    We introduce  the class of graphs aimed to model connections  of points in $A$ to the boundary
     \begin{equation} 
     \label{mathcalG}
     \mathcal G(A, \partial \Omega)=\{ G \in \mathcal  G^+(\Omega) \  {\rm such \ that \ }  A\subset V(G) {\rm \ and \ }\eqref{prescharge}  {\rm \ holds }\}.
     \end{equation}
     If $G$ belongs to $\mathcal G(A, \partial \Omega)$, then the points   of $A$ are the only "source" points of the graph inside $\Omega$, with charge ${\rm M}(a)$, whereas all the other points have charge $0$.   The simplest  example $G_0$ of an element in $\mathcal G(A, \partial \Omega)$  when $\Omega$ is convex is provided by the graph for which each element $a$ in $A$  is connected  by a segment to an element of the boundary $b$ so that in this case $\displaystyle{V(G_0)=\underset {a \in A}\cup\{a, b\}}$ and   
     $\displaystyle{E(G)=\underset {a \in A}\cup\{[a, b]\}}$.  
      Going back to \eqref{decortique}, if $G \in  \mathcal G(A, \partial \Omega)$,  then we have 
     \begin{equation}
     \label{gthread}
     G_{\rm thread}\equiv \underset { i \in \{1, \ldots, \ell_{\rm c}\} } \uvee \left(\underset{j\in \{1, \ldots, \Rm_i\}}\uvee T_{i, j}\right)  \in G(A, \partial \Omega). 
     \end{equation}

       \begin{figure}[h]
	\centering
     	\includegraphics[height=7cm]{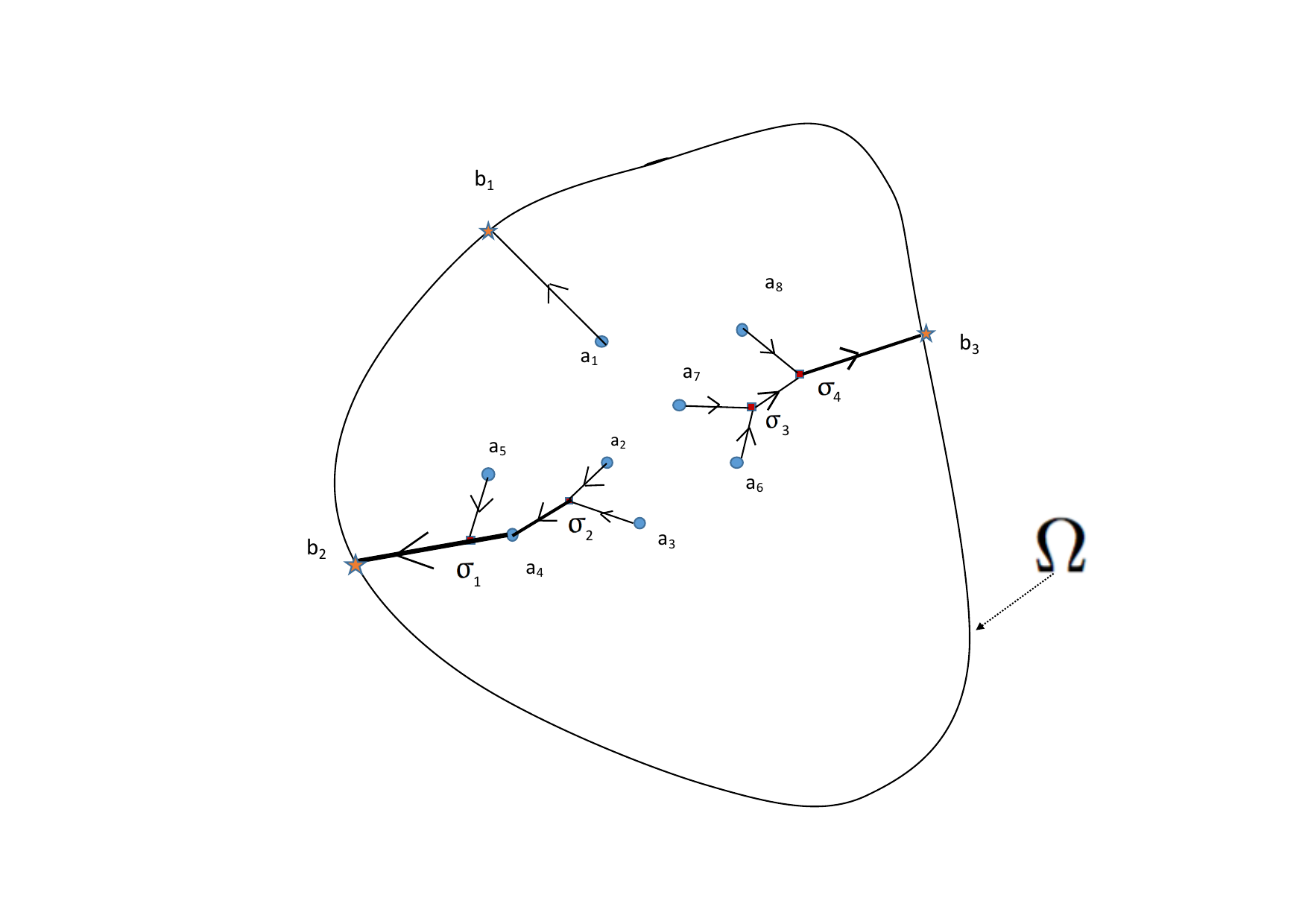}
     	\caption{  {\it Branched transport of the points $a_i$.	}}
     	\label{graphetrans}
     \end{figure}
     
\begin{remark}
\label{auchoix}
{\rm In  definition \eqref{mathcalG} we  allow points in $A$ to belong to $\partial \Omega$: This, perhaps unnatural choice,  is motivated by the fact that we face such a situation in Subsection \ref{zertyu},  the chosen  convention simplifying  somewhat the presentation. However, one may verify that 
\begin{equation}
\label{follow}
  \mathcal G(A, \partial \Omega)=\mathcal G(A\setminus \partial \Omega, \partial \Omega).
  \end{equation}
}
\end{remark}

 %%%%%%%%%%%%%%%%%%%%%%%%%%%%%%%%%%%%%%%%%%%%%%%%%%%%%%
 %%%%%%%%%%%%%%%%%%%%%%%%%%%%%%%%%%%%%%%%%%%%%%%%%%%%%%%%%%%%%%%%%%%%%%%%%%%%%%%%%%%%%%%%%%%%%%%%%%%%%%%%%
 \subsection{The functional and minimal branched connections to the boundary}
Given $0\leq \upalpha \leq 1$, we  consider the functional  ${\rm W}_\upalpha$ defined on  the set  $\mathcal G( \Omega)$ by 
 \begin{equation}
 \label{walpha}
 {\rm W}_\upalpha(G)= \underset { e\in E(G)}\sum (d(e))^\upalpha  \mathcal H^1 (e),  \, {\rm \ for \ }    G \in \mathcal G(\Omega), 
 \end{equation}
 and the non-negative quantity
\begin{equation}
\label{waluigi}
\Lbra(A, \partial \Omega)= \inf\left\{ \rW_\upalpha(G), G \in \mathcal G (A, \partial \Omega)\right\}, 
\end{equation}
 which we will term the branched connection  of order $\upalpha$ of the set $A$ to the boundary $\partial \Omega$. Notice that the case $\upalpha =1$  has already been introduced in \cite{BCL} as minimal connection to the boundary. 
Using, among other arguments, the fact that 
$${\rm W}_\upalpha (G) \leq {\rm W}_\upalpha (G_{\rm thread}), \,   {\rm \ where \ } G_{\rm thread}  {\rm \ is \ defined \ in  \ } \eqref{gthread}, $$
with equality if and only if $T_0$ in \eqref{decortique} is empty, 
  it can be proved, as in \cite{xia}:
 
 \begin{lemma} 
 \label{xialemma}
 The infimum in \eqref{waluigi} is achieved by some graph 
 $G_{\rm opt} ^{^\upalpha } \in  \mathcal G (A, \partial \Omega)$. The graph $\Gopt$ has no loops and we  may  therefore write
 \begin{equation}
 \label{grouic0}
 \Gopt=  G=\underset { a \in A } \uvee \left(\underset{j\in \{1, \ldots, \Rm(a)\}}\uvee T_{a, j}\right),
 %\underset { a \in A} \curlyvee  T_a   \,
   {\rm \ with \ } T_{a, j} \in  \mathbbmss T_{\rm hread}(a,  \Omega).
\end{equation}
Moreover, we have  $d(e)\leq \sharp (A)$ for any $e \in E(\Gopt).$
 \end{lemma}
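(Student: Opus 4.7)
My plan is to establish the lemma in three stages: existence of a minimizer via a compactness argument, elimination of loops from any minimizer (which simultaneously yields the decomposition \eqref{grouic0}), and finally the multiplicity bound as a corollary of that decomposition.

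\medskip
\noindent
\textbf{Stage 1 (existence).} First I would show $\Lbra(A,\partial\Omega)<+\infty$ by taking the trivial graph consisting of $\sharp(A)$ straight segments, each joining a point $a\in A$ to a nearest point of $\partial\Omega$, which belongs to $\mathcal G(A,\partial\Omega)$ and has finite $\mathrm W_\upalpha$ energy. Now let $(G_n)_{n\in\N}$ be a minimizing sequence. Along this sequence one may impose two a priori reductions: (i) any edge of multiplicity zero can be discarded, and (ii) any two vertices not separated by a vertex of $A\cup\partial\Omega$ can be collinearized and merged when the two adjacent edges are parallel, since this does not change $\mathrm W_\upalpha$ and only reduces complexity. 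Together with the bound $d(e)\le \sharp(A)$ which I will prove in Stage 3 (and can be imposed a priori on a minimizing sequence by the same loop-removal argument of Stage 2), the bound on $\mathrm W_\upalpha(G_n)$ yields a uniform bound on the total length $\sum_{e\in E(G_n)}\mathcal H^1(e)$ (since $d(e)\ge 1$ gives $d(e)^\upalpha\ge \sharp(A)^{\upalpha-1}$ from above, hence $\mathcal H^1(e)\le \sharp(A)^{1-\upalpha}d(e)^\upalpha\mathcal H^1(e)$) as well as a uniform bound on the number of vertices, since each vertex of $V(G_n)\setminus(A\cup\partial\Omega)$ has degree at least three after reduction (ii). Passing to a subsequence, the vertices converge in $\bar\Omega$, the set of edges stabilizes combinatorially, and the multiplicity assignment is eventually constant (as it takes values in $\{1,\ldots,\sharp(A)\}$). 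The limit $\Gopt$ lies in $\mathcal G(A,\partial\Omega)$ and by continuity of $\mathrm W_\upalpha$ on this finite-dimensional configuration space, $\mathrm W_\upalpha(\Gopt)=\Lbra(A,\partial\Omega)$.

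\medskip
\noindent
\textbf{Stage 2 (no loops in $\Gopt$).} Suppose for contradiction that $\Gopt$ contains a loop $L$, which we may view as a subgraph $L\Subset\Gopt$ belonging to $\mathcal G(\emptyset,\partial\Omega)$, each of its edges carrying multiplicity $1$ \emph{as a subgraph}. Write $\Gopt=L\curlyvee G'$ with $G'\in\mathcal G(A,\partial\Omega)$, so that by \eqref{addition} the multiplicity functions satisfy $d_{\Gopt}(e)=d_L(e)+d_{G'}(e)$ on each $e\in E(L)$. Comparing energies:
\begin{equation*}
\mathrm W_\upalpha(\Gopt)-\mathrm W_\upalpha(G')=\sum_{e\in E(L)}\bigl[(d_{G'}(e)+1)^\upalpha-d_{G'}(e)^\upalpha\bigr]\mathcal H^1(e)>0,
\end{equation*}
since the function $t\mapsto t^\upalpha$ is strictly increasing on $[0,+\infty)$ and each edge has positive length. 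This contradicts optimality. Applying then the decomposition result \eqref{decortique} to the loop-free graph $\Gopt$, the residual loop-graph $T_0$ must itself be empty (otherwise it would produce a loop in $\Gopt$), yielding exactly the decomposition \eqref{grouic0}.

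\medskip
\noindent
\textbf{Stage 3 (multiplicity bound).} From \eqref{grouic0} and the additivity \eqref{addition} iterated over the gluing, for every edge $e\in E(\Gopt)$ we have $d(e)=\sum_{a\in A}d_{T_a}(e)$. Since each $T_a$ is a $\Pom$-curve with edge multiplicities equal to $1$ on the edges it contains, we obtain $d(e)\le\sharp(A)$.

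\medskip
\noindent
\textbf{Main obstacle.} The delicate point is Stage 1's a priori bound on the complexity of minimizing graphs: since $0<\upalpha\le 1$, merging flows is energetically favorable, so one cannot exclude by soft arguments that a minimizing sequence develops arbitrarily many vertices. The loop-removal lemma of Stage 2, applied to \emph{any} $G_n$ with multiplicity exceeding $\sharp(A)$ somewhere (by the same flow-decomposition argument carried out on $G_n$), is what gives the uniform a priori multiplicity bound used in Stage 1; care is therefore needed to keep the logical order consistent, by first establishing the loop-removal step as an operation available on any admissible graph (not just optimizers), and only afterwards invoking compactness.
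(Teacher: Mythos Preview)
The paper does not supply its own proof of this lemma: it is stated with the citation \cite{xia} and then used as a black box, so there is nothing in the text to compare your argument against directly.

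That said, your three-stage route (loop removal available on arbitrary competitors, then compactness on a preprocessed minimizing sequence, then the thread decomposition \eqref{decortique} giving the multiplicity bound) is the standard one and is essentially correct. Two small comments. First, in Stage~1 the total-length bound follows immediately from $d(e)\ge 1\Rightarrow d(e)^\upalpha\ge 1$, so the detour through $\sharp(A)^{\upalpha-1}$ is unnecessary. Second, the uniform bound on the number of vertices of $G_n$ is a little more delicate than you indicate: after loop removal and straightening, the graph is a gluing of $\sharp(A)$ directed paths, and one must still argue that the number of distinct merging/splitting points is controlled (a tree-type leaf-counting argument does this once one checks the resulting directed graph is acyclic in the sense of the paper). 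Your ``Main obstacle'' paragraph identifies exactly the right logical point: the loop-removal step must be established for an arbitrary admissible graph first, so that it can be applied along the minimizing sequence before invoking compactness.
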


 We notice that, as a  straightforward consequence of  \eqref{follow}, we have 
 \begin{equation}
 \label{follow2}
   \Lbra(A, \partial \Omega)=\Lbra(A\setminus \partial \Omega, \partial \Omega).
 \end{equation}
 We next show, similar to results in \cite{xia, xia2, bermocas}:
 \begin{lemma}
 \label{singlemalt}
 The graph $\Gopt$ possesses the single path property. 
 \end{lemma}
 
 \begin{proof} We  argue by contradiction and assume that there exists some vertex $\upsigma_0\in V(\Gopt)$ and two  distinct vertices $\upsigma_1$ and $\upsigma_2$ in
  $V(\Gopt)$ such that $[\upsigma_0, \upsigma_i] \in E(\Gopt) $  for $ i=1, 2$.   In view of the decomposition \eqref{grouic0}, there are two points  $a_1$ and $a_2$ in $A$ such that, for $i=1, 2,$ the segment $[\upsigma_0, \upsigma_i]$ belongs to $E(T_i)$
 where  $T_i$ is a thread  of the form $T_{a_i, j_i}$  appearing in \eqref{grouic0}.  
 %
 % In the case the two threads have no vertex in common past the vertex $\upsigma_0$, we may write them under the form
  %\begin{equation}
  %\label{calo}
 %T_i=\Grp(B_i)=\Grp(B_{0, i}) \uvee \Grp(B_{1, i}), 
 %{\rm \ with \ } B_i {\rm \ of \ the \ form \  } B_i=B_{0, i} \cupord B_{1, i} 
 %\end{equation}
 %where $B_{0, i}=\{a_i, \ldots, \upsigma_0\}$ and $B_{1, i}=\{\upsigma_0,  \upsigma_i, b_{i, 2}, \ldots, b_{i, \ell_i}\}$, with $ b_{i, \ell_i} \in \partial \Omega$.
%  the vertex  $\upsigma_0$ being the only common point of the sets $B_{1, 1}$  and $B_{1, 2}$.
   In the case the two threads have a common  vertex  $b_{\rm com}$ past $\upsigma_0$ we decompose $T_i$  as
  \begin{equation}
  \label{calo}
 T_i=\Grp(B_i)=\Grp(B_{0, i}) \uvee \Grp(B_{1, i})\uvee \Grp(B_{2, i}),
\end{equation}
 where $B_{0, i}=\{a_i, \ldots, \upsigma_0\}$, $B_{1, i}=\{\upsigma_0,  \upsigma_i, b_{i, 2}, \ldots, b_{i, \ell_i}=b_{\rm com}\}$ and $B_{2, i}=\{b_{\rm com}, \ldots, b_{\rq_i}\}$, with 
 $b_{\rq_i} \in \partial \Omega$. In the case the two threads have no vertex in common past the vertex $\upsigma_0$,  we use also decomposition \eqref{calo}, $B_{2, i}$ being void.
In order to obtain a contradiction, we compare the energy of  $\Gopt$ with the energy of two comparison graphs $\tilde G_1$ and $\tilde G_2$, which correspond  to an interchange of the threads $T_1$ and $T_2$, or more precisely to the parts $B_{1, 1}$ and  $B_{1, 2}$.  We first consider the modified threads
 $$
 \tilde T_1=\Grp(B_{0, 1}) \uvee \Grp(B_{1, 2})\uvee \Grp(B_{2, 1})  {\rm \ and \ } 
  \tilde T_2=\Grp(B_{0, 2}) \uvee \Grp(B_{1, 1})\uvee \Grp(B_{2, 2}). 
 $$
  We then define 
  \begin{equation*}
  \tilde G_1=\left(G \setminus T_1\right) \uvee \tilde T_1  {\rm \ and \ }   \tilde G_2=\left(G \setminus T_2\right) \uvee \tilde T_2. 
  \end{equation*}
  One verifies that $\tilde G_i \in \mathcal G(A, \partial \Omega)$  for $i=1, 2$.  For $i=1, 2$ and   $j=0, \ldots, \ell_i-1$ we set 
  $e_{i, j}\equiv  [b_{i, j}, b_{i, j+1}]$ where $b_{i, 0}=\upsigma_0$ and $b_{i, 1}=\upsigma_i$. Setting $d_{i, j}=d(e_{i, j}, G)$, we observe that  
  \begin{equation*}
  \left\{
  \begin{aligned}
  d(e_{1,  j}, \tilde G_1)&= d_{1, j} -1, {\rm \ for \ } j=1, \ldots, \ell_1,  {\rm \ and \  } 
  d(e_{2,  j}, \tilde G_2)=d_{2, j} +1, {\rm \ for \ } j=1, \ldots, \ell_2, \\
    d(e_{2,  j}, \tilde G_1)&= d_{i, j} +1, {\rm \ for \ } j=1, \ldots, \ell_1,  {\rm \ and \  } d(e_{1,  j}, \tilde G_2)=d_{2, j} -1,  
    {\rm \ for \ } j=1, \ldots, \ell_2. 
    \end{aligned}
    \right.
 \end{equation*}
  All other segments have the same density as for $\Gopt$. It follows : 
    \begin{equation*}
  \left\{
  \begin{aligned}
   W_\upalpha(\tilde G_1)- W_\upalpha(\Gopt)&=
   \underset{j=0} {\overset {\ell_1-1}\sum}\left[ \left(d_{1, j}-1\right)^\upalpha- d_{1, j}^\upalpha \right]\vert e_{1, j} \vert
   + 
   \underset{j=0} {\overset {\ell_2-1}\sum} \left[(d_{2, j}+1)^\upalpha -d_{2, j}^\upalpha  \right] \vert e_{2, j} \vert \geq 0\\
     W_\upalpha(\tilde G_2)- W_\upalpha(\Gopt)&=
   \underset{j=0} {\overset {\ell_1-1}\sum}\left[ \left(d_{1, j}+1\right)^\upalpha- d_{1, j}^\upalpha \right]\vert e_{1, j} \vert
   + 
   \underset{j=0} {\overset {\ell_2-1}\sum} \left[(d_{2, j}-1)^\upalpha -d_{2, j}^\upalpha  \right] \vert e_{2, j} \vert \geq 0.
   \end{aligned}
 \right.
  \end{equation*}
  Adding these inequalities we obtain
  $$
 { \underset{i=1}{\overset{2} \sum}} {\underset {j=0} {\overset{\ell_i-1} \sum}}
\left [\left(d_{i, j}+1\right)^\upalpha+\left(d_{i, j}-1\right)^\upalpha-2 d_{i, j}^\upalpha \right] \geq 0.
  $$ 
  By concavity of the density function $\phi(d)=d^\upalpha$, we have however  for $d\geq 1$
  $$ \displaystyle{(d+1)^\upalpha+(d-1)^\upalpha-2d^\upalpha<0}, $$
   so that we have reached  a contradiction which establishes the announced result. 
   \end{proof}

\begin{remark} 
\label{brancion}
{\rm 
 Using simple comparison arguments, one may easily prove that if $A$ and $B$  are disjoint finite subsets of $\Omega$,  then 
  \begin{equation}
 \label{lbrasomme}
 \Lbra (A \cup B, \partial \Omega) \leq\Lbra (A, \partial \Omega)+
 \Lbra (B, \partial \Omega).
 \end{equation} 
   Similarily,  if $0<\upalpha' \leq \upalpha$  and  $A \subset \Omega \subset \Omega'$,   then we have
\begin{equation}
   {\rm L}^{\upalpha'}_{\rm branch} (A, \partial \Omega)  \leq  \Lbra  (A, \partial \Omega)
 {\rm  \ and \ }  \Lbra (A, \partial \Omega) \leq\Lbra (A, \partial \Omega').
 \end{equation}
}
\end{remark}

 \begin{remark}
 \label{planine}
 {\rm
We will be led to consider the case $\Omega$ is a polytope,  $\bar \Omega=\Omega_1 \cup \Omega_2$, where $\Omega_1 \cap \Omega_2=\emptyset$, $\Omega_1$ and $\Omega_2$ being polytopes.  Given a graph $G\in \mathcal G^+(\Omega)$, one verifies that 
\begin{equation}
\label{platane}
W_\upalpha (G)=W_\upalpha (G_1) + W_\upalpha (G_2), {\rm \ where \ }  G_\fp= G\rest  \Omega_\fp {\rm \ for \ } \fp=1,2.
\end{equation}
  Assume next that $G\in \mathcal G(A, \Omega)$, where $A  \subset  \Omega$  is a finite set.  We have, for $\fp=1, 2$
  \begin{equation}
  \label{platine}
  G_\fp  \in \mathcal G(A_\fp, \Omega_\fp), {\rm \ so  \ that \ } W_\upalpha (G_\fp ) \geq  \Lbra (A_\fp, \partial \Omega_\fp),
{\rm \ where \ } G_\fp= G \rest \Omega_\fp. 
\end{equation}
}
 \end{remark}
 
  In the next subsection, we will be  concerned with the asymptotic behavior of  $\Lbra  (A, \partial \Omega)$ as the number of elements in $A$ tends to $+\infty$, specially in the case they are equi-distributed. Our methods rely on various decompositions.

 %%%%%%%%%%%%%%%%%%%%%%%%%%%%%%%%%%%%%%%%%%%%%%%%%%%%%%%%%%%%%%%%
 \subsection{Decomposing the domain and the graphs}
 %%%%%%%%%%%%%%%%%%%%%%%%%%%%%%%%%%%%%%%%%%%%%%%%%%%%%%%%%%%%%
  We discuss here  issues   related to partitions of the domain $\Omega$, assuming it is a polytope. We  consider the case where the set $\Omega$ is decomposed as a finite  union 
 \begin{equation}
 \label{decompga}
 \bar \Omega=\underset{\mathfrak p \in \fP}\cup \bar  \Omega_{\mathfrak p}, 
 {\rm \ where \  the  \  sets \ } \Omega_\fp {\rm \  are \  disjoint \ polytopes \ i. e. \  }  \Omega_\fp\cap \Omega_\fp'=\emptyset {\rm  \ for \ }
 \fp\not = \fp'. 
 \end{equation}
Given    a finite subset $A$ of $\Omega$,  
% which does not intersect the boundaries $\partial \Omega_\fp$,
 we have the lower bound, in view of Remark \ref{brancion}
\begin{equation}
\label{decomlbra}
\Lbra  (A, \partial \Omega)\geq \underset {\fp\in \fP}  \sum \Lbra  (A_\fp, \partial \Omega_\fp), 
{\rm \ where \ } 
A_\fp= \bar \Omega_\fp \cap A.
%+ L_\fP (A, \partial \Omega).
\end{equation}
Indeed, if $G$ is a graph in $\mathcal G(A, \partial \Omega)$, then  the restriction $G_\fp$ to the subset $\Omega_\fp$  belongs to $\mathcal G(A_\fp, \partial \Omega_\fp)$. On the other hand, we have
$$
\rW_\upalpha (G)=\underset{\fp \in \fP} \sum \rW_\upalpha (G_\fp), 
$$
 from which the conclusion \eqref{decomlbra} is deduced.   We assume next that  $\fP=\{1,2\}$, that is $\bar \Omega=\overline{\Omega_1} \cup \overline{\Omega_2}$, where $\Omega_1 \cap \Omega_2=\emptyset$, $\Omega_1$ and $\Omega_2$ being polytopes. 
 Our next result,  is an improvement of \eqref{decomlbra} for this case. 
 % Consider  a number $0<\mu_1\leq 1$ such that
 %\begin{equation}
 %\label{mu1}
 %\sharp (A_1) \geq \mu_1 \sharp  ( A).
 %\end{equation}
 
 \begin{proposition}
 \label{decompp}
  Assume that \eqref{decompga} holds, with $\fP=\{1,2\}$, so that $\bar \Omega=\bar \Omega_1 \cup \bar \Omega_2$, with $\Omega_1\cap \Omega_2=\emptyset$. We have the lower bound
 \begin{equation}
\label{decomlbra2}
\Lbra  (A, \partial \Omega)\geq  \Lbra  (A_1, \partial \Omega_1)
+\Lbra  (A_2, \partial \Omega_2)
+ \upkappa_\upalpha
\frac{ \sharp (A_1)}{ \sharp (A)} (\Nel )^\upalpha   {\rm \dist}(\Omega_1, \partial \Omega), 
%\mathfrak I_{\rm nter}^\upalpha (\Omega_1, \Omega_2)
%+ L_\fP (A, \partial \Omega).
\end{equation}
where $\upkappa_\upalpha>0$ depends only on $\upalpha$ and where $\Nel=\sharp (A)$ denotes the number of elements in $A$.
 \end{proposition}
 
 The previous result is only of interest in the case  ${\rm \dist}(\Omega_1, \partial \Omega)\not =0$, that is when 
 $\bar \Omega_1 \subset \Omega$. 
   The proof involves     concavity properties, in particular the next elementary  result. 
   %The first one is the callsical  sublinearity inequality  for a finite famlily of non-negative numbers $(\gamma_i)_{i \in J}$
   %\begin{equation}
   %\label{sublin}
   %\sum \gamma_i^\upalpha \geq \left(\sum \gamma_i\right)^\upalpha. 
   %\end{equation}
    %Another one is provided by the following:

   \begin{lemma} 
   \label{concavite}
   Let $0<\upalpha\leq 1 $, $a\geq 1$ and $b\geq 1$ be two given numbers. There exists some universal constant $\upkappa_\upalpha >0$ depending only on $\upalpha$ such that 
$$
   (a+b)^\upalpha \geq a^\upalpha+   \upkappa_\upalpha  \inf\{b^{\upalpha}, \,  b\,   a^{\upalpha-1} \}.
   $$
 \end{lemma}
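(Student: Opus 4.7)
The plan is to reduce the inequality to a mean-value estimate on the function $t\mapsto t^\upalpha$ and then split into two natural regimes according to whether $d_2$ is smaller or larger than $d_1$. First I would write, by the mean value theorem applied to $f(t)=t^\upalpha$ on the interval $[d_1, d_1+d_2]$, the identity
\begin{equation*}
(d_1+d_2)^\upalpha - d_1^\upalpha = \upalpha\, (d_1+\theta d_2)^{\upalpha-1}\, d_2
\end{equation*}
for some $\theta\in(0,1)$. Since $\upalpha-1\leq 0$, the function $t\mapsto t^{\upalpha-1}$ is non-increasing, hence $(d_1+\theta d_2)^{\upalpha-1}\geq (d_1+d_2)^{\upalpha-1}$, giving the basic lower bound
\begin{equation*}
(d_1+d_2)^\upalpha - d_1^\upalpha \geq \upalpha\,(d_1+d_2)^{\upalpha-1}\, d_2.
\end{equation*}

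Next I would perform the case split. If $d_2\leq d_1$, then $d_1+d_2\leq 2d_1$ and the monotonicity of $t\mapsto t^{\upalpha-1}$ yields $(d_1+d_2)^{\upalpha-1}\geq 2^{\upalpha-1}d_1^{\upalpha-1}$, so the right-hand side is bounded below by $\upalpha\, 2^{\upalpha-1}\, d_2\, d_1^{\upalpha-1}$; moreover in this regime $d_2\, d_1^{\upalpha-1}\leq d_2\, d_2^{\upalpha-1}=d_2^{\upalpha}$, so the bound matches the infimum appearing in the statement. Conversely, if $d_2\geq d_1$ then $d_1+d_2\leq 2 d_2$ and the same monotonicity gives $(d_1+d_2)^{\upalpha-1}\geq 2^{\upalpha-1}d_2^{\upalpha-1}$, leading to the lower bound $\upalpha\, 2^{\upalpha-1}\, d_2^{\upalpha}$; and here $d_2^{\upalpha}\leq d_2\, d_1^{\upalpha-1}$, so again the infimum in the statement coincides with the bound obtained. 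In both regimes I may therefore take
\begin{equation*}
\upkappa_1 = \upalpha\, 2^{\upalpha-1},
\end{equation*}
which depends only on $\upalpha$.

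There is no real obstacle in this argument: the inequality is essentially a quantitative strict concavity statement for $t\mapsto t^\upalpha$, and the only small subtlety is to check that the $\min$ of the two expressions $d_2^\upalpha$ and $d_2 d_1^{\upalpha-1}$ is selected correctly by the case distinction, which follows directly from the comparison $d_1\lessgtr d_2$ combined with $\upalpha-1\leq 0$. The assumption $d_1,d_2\geq 1$ is not even needed for this argument, it suffices that $d_1,d_2>0$; the integer hypothesis in the intended application only serves to ensure that $d(e)\geq 1$ along edges, which is what guarantees the bound will later be useful in controlling $\rW_\upalpha$.
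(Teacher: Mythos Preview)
Your argument is correct and in fact cleaner than the paper's. The paper proceeds by a three-case split ($d_2\leq \tfrac12 d_1$, $\tfrac12 d_1\leq d_2\leq 8d_1$, $d_2\geq 8d_1$), using in the first case a second-order Taylor-type inequality for $(1+s)^\upalpha$ and in the other two cases direct size comparisons; this yields the rather small constant $\upkappa_1=\min\{\tfrac14\upalpha,\ \upalpha\,8^{-(\upalpha+1)},\ 1-8^{-\upalpha}\}$. Your mean-value-theorem route is both shorter (only two cases, split at $d_1=d_2$) and gives the sharper constant $\upkappa_1=\upalpha\,2^{\upalpha-1}$, which is exactly $1$ when $\upalpha=1$ and hence optimal in that limiting case. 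Your closing observation that the hypothesis $d_1,d_2\geq 1$ is immaterial for the inequality itself is also correct; that hypothesis only matters downstream when one applies the lemma to edge multiplicities in the proof of Proposition~\ref{decompp}.
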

   \begin{proof}[Proof of Lemma \ref{concavite}]  
   We rely on the  Taylor  expansion of the expression  $(1+s)^\upalpha$. It  yields
   \begin{equation}
\label{fasto}
  ( 1+s)^\upalpha \geq 1 + \upalpha s  +\frac 12 \upalpha (\upalpha-1)  s^2
   \geq 1 + \frac 12 \upalpha s  (1+ (\upalpha-1)s )\geq 1+  \frac 12  \upalpha s (1-s) {\rm \ for \ }  s \in [0, 1] .
   \end{equation}
We distinguish three cases. 
   
   \medskip
   \noindent
   {\it Case  1:} $\displaystyle{b\leq \frac a 2   }$.     We apply \eqref{fasto} with  $\displaystyle{s= \frac {b} {a}\leq \frac 12}$,  so that $\displaystyle{1-s \geq \frac 12}$, leading to the inequality
  \begin{equation}
  \label{bielo} 
   (a+b)^\upalpha   \geq a^\upalpha (1+\frac 14 \upalpha s)\geq a^\upalpha + \frac 1 4 \,   \upalpha b \, a^{\upalpha-1}.
    \end{equation}  
  
  \noindent 
  {\it Case  2:} $\displaystyle{ 8a\geq b\geq \frac a 2 }$. In this case, we obtain invoking \eqref{fasto} with $\displaystyle{s=\frac 12}$ 
 \begin{equation}
 \label{bielo2}
   (a+b)^\upalpha \geq (\frac 3 2 a)^\upalpha =(1+\frac 12)^\upalpha a^\upalpha \geq (1+\frac 1 8 \upalpha ) a^\upalpha   \geq a^\upalpha + \frac 1 8 \upalpha \left( \frac{ b}{8}\right)^\upalpha\geq  a^\upalpha + \upalpha \left (\frac 1 8\right)^{\upalpha+1}b^\upalpha.
 \end{equation}

  \noindent 
  {\it Case  3:} $\displaystyle{ 8a\leq b}.$  In this case,  we write
 \begin{equation}
 \label{bielo3}
 ( a+b)^\upalpha\geq b^\upalpha \geq  \frac{1}{8^\upalpha}b^\upalpha  +  ( 1-\frac {1} {8^\upalpha} ) b^\upalpha 
 \geq  a^\upalpha +( 1-\frac {1} {8^\upalpha} ) b^\upalpha.
\end{equation}
We set $\displaystyle{ \upkappa_\upalpha =\inf\{ \upalpha  \slash 4,  \upalpha \left(  1 \slash 8\right)^{\upalpha +1}, \left(1- {1}\slash{8^\upalpha}\right) \}.}$
 Combining  \eqref{bielo}, \eqref{bielo2} and \eqref{bielo3} in the three cases, we  complete the proof of the lemma.
 \end{proof}  

  We  use Lemma \ref{concavite}  in the case we have    the additional assumption
  \begin{equation}
  \label{number}
   	a +b \leq \Nbre,  
   %{\rm  \ and \ } 
   %d_2 \leq \mu \Nbre {\rm  \  with  \ } 0<\mu <1,  
   \end{equation}
   where  $\Nbre\gg 1$ is some large number.    It follows from \eqref{number} that 
   $\displaystyle{b^\upalpha \geq b (\Nbre)^{\upalpha-1}}$ and
   $\displaystyle{a^{\upalpha-1} \geq  (\Nbre)^{\upalpha-1}}$ 
   so that in this case,  \eqref{bielo} leads to the inequality
   \begin{equation}
   \label{spartacus}
    (a+b)^\upalpha \geq a^\upalpha+   \upkappa_\upalpha  \,  b (\Nbre)^{\upalpha-1},
   \end{equation}
 and hence the right hand side of \eqref{spartacus} \emph{behaves linearily} with  respect to $b$.
 % This observation will be important in our argument.
\medskip   
\begin{proof}[Proof of Proposition \ref{decompp}] 
 We  assume that $\bar \Omega_1 \subset \Omega$, since otherwise the result  \eqref{decomlbra2}  is a  immediate consequence of \eqref{decomlbra}. In view of the assumptions,  we have $\Omega_2=\Omega \setminus \bar \Omega_1$. Let $\Gopt$ be an optimal graph  for  $\Lbra  (A, \partial \Omega)$,  assuming   for simplicity that all multiplicities  in $A$ are equal to one.  We proceed first to a spatial decomposition of this graph, introducing the subgraphs
$\displaystyle{G_\fp= G \rest \Omega_\fp.}$
Going back to Remark \ref{planine}, we have 
\begin{equation}
\label{renarde}
W_\upalpha (G)=W_\upalpha (G_1) + W_\upalpha (G_2) \geq  \Lbra  (A_1, \partial \Omega_1) + W_\upalpha (G_2). 
\end{equation}
In order to estimate $W_\upalpha (G_2)$, we rely on  the next Lemma:

\begin{lemma} 
\label{nardre}
We have  the improved  lower bound for $W_\upalpha (G_2) $
\begin{equation}
\label{poularde}
W_\upalpha (G_2)  \geq   \Lbra  (A_2, \partial \Omega_2)   + \upkappa_\upalpha
\frac{ \sharp (A_1)}{ \sharp (A)} (\Nel )^\upalpha   {\rm \dist}(\Omega_1, \partial \Omega), 
\end{equation}
 where $\upkappa_\upalpha >0$ is the constant  provided by Lemma \ref{concavite}. 
\end{lemma}
Combining Lemma \ref{nardre} with inequality \eqref{renarde}, we derive inequality \eqref{decomlbra2} which completes the proof of Proposition \ref{decompp}.
\end{proof}

\begin{proof}[Proof of Lemma \ref{nardre}]
 In view of Lemma \ref{xialemma},  we   may decompose  $\Gopt$    as in \eqref{grouic0}, that is
 $$ 
 \Gopt=\underset {a \in A} \curlyvee T_a=\left( \underset {a \in A_1} \curlyvee T_a\right) \curlyvee \left( \underset {a \in A_2} \curlyvee T_a\right), {\rm  \  where  \  } 
T_a  \in  \Thread(a,   \Omega).
 $$
 We may decompose the graph $G_2=G \rest \Omega_2 $ as 
 $\displaystyle {G_2=  G_{2,1}\uvee G_{2,2}}$ with,  for $\fq=1,2$, 
$$
G_{2,\fq}=\left(\underset {a \in A_\fq} \curlyvee T_a\right)\rest \Omega_2, {\rm  \  where  \ the \ threads \  } 
T_a  \in  \Thread(a,   \Omega)
{\rm \ satisfy \ } \eqref{enplus}.
$$
We notice that $G_{2,2} \in \mathcal G(A_2, \Omega_2)$, so that 
\begin{equation}
\label{voilavoila}
W_\upalpha ( G_{2,2}) \geq \Lbra (A_2, \partial \Omega_2), 
\end{equation}
whereas $G_{2,1} \in \mathcal G_0( \Omega_2)$.  
Given a segment $e$ of the graph $G_2$, we denote by $d_{2,1}(e)$ (resp. $ d_{2,2}(e)$) its multiplicity according to the graph $ G_{2,1}$ (resp $ G_{2,2}$), with the convention that $ d_{2,1}(e)=0$ (resp. $d_{2,2}(e)=0$) if the segment does not belong to $E( G_{2,1})$  (resp. $E(G_{2,2}))$. 
 It follows from the last statement in Lemma \ref{xialemma} that 
 \begin{equation}
 \label{xiasuite}
  d(E, G)= d(E, G_2)=d_{2,2}(e) +  d_{2,1} (e) \leq \Nel, 
 \end{equation}
  and the definition of $W_\upalpha$ leads to the identity
$$
W_\upalpha(G_2)=W_\upalpha ( G_{2,2} \curlyvee  G_{2,1})=
\underset {e \in E( G_2)} \sum  \left( d_{2,2}(e) +  d_{2,1} (e)\right)^\upalpha \mathcal H^1(e).
$$
We  split the remaining of the proof into three steps.

\smallskip
\noindent
{\bf  Step 1}. {\it  We have  the lower bound }
 \begin{equation}
\label{antipasti}
  W_\upalpha(G_2)=W_\upalpha (G_{2,2} \curlyvee G_{2,1})  \geq W_\upalpha (  G_{2,2} )+
  \upkappa_\upalpha \, \left(\Nel\right)^{\upalpha-1}  \,  \underset {e \in  E(G_2)} \sum
d_{2,1}(e) \mathcal H^1(e), 
  % \upkappa_\alpha \frac{\sharp(A_1)}{\sharp (A)}
 %( \sharp (A))^\upalpha   {\rm \dist}(\Omega_1, \partial \Omega).
%\mathfrak I_{\rm nter}^\upalpha (\Omega_1, \Omega_2). 
\end{equation}
{\it Proof of \eqref{antipasti}}.
 We invoke  inequality \eqref{spartacus}  of Lemma \ref{concavite}  with $\Nbre=\Nel$, $a= d_{2,2}(e)$ and $b=  d_{2,1} (e)$.  Since \eqref{xiasuite} yields \eqref{number} in the case considered, we obtain 
\begin{equation}
\label{lise}
\begin{aligned}
W_\upalpha ( G_{2,2} \curlyvee \ G_{2,1}) & \geq \underset {e \in E(G_2)} \sum 
 \left( d_{2,2}(e)^\upalpha  + \upkappa_\upalpha  d_{2,1}(e)\left(\Nel\right)^{\upalpha-1} \right)\mathcal H^1(e) \\
&\geq \underset {e \in E(G_2)} \sum d_{2,2}(e)^\upalpha \mathcal H^1(e)+
\upkappa_\upalpha \, \left(\Nel\right)^{\upalpha-1}  \,  \underset {e \in  E(G_2)} \sum
d_{2,1}(e) \mathcal H^1(e).
%&\geq W_\upalpha (G_{2,2}) +
\end{aligned}
\end{equation}
Since, by definition, we have
 $\displaystyle{
 W_\upalpha (G_{2,2})=\underset {e \in E( G_2)} \sum d_{2,2}(e)^\upalpha \mathcal H^1(e)
 }, $
 we obtain \eqref{antipasti}.

\medskip
\noindent
{\bf  Step 2}.  We have the lower bound
 \begin{equation}
  \label{rosso}
   \underset {e \in E(G_{2})} \sum d_{2,1}(e) \mathcal H^1(e)  \geq 
   \sharp (A_1) {\rm dist} (\Omega_1, \partial \Omega).
 \end{equation}
{\it Proof of \eqref{rosso}}. We take advantage of the linearity of the l.h.s with respect to  multiplicity.  Indeed, we notice that 
$$
\underset {  e \in E(G_{2})} \sum d_{2,1}(e) \mathcal H^1(e)  =
\underset {  a \in A_1} \sum \mathcal H^1(\mathcal C_a \cap \Omega_2), 
$$
where $\mathcal C_a$ denotes the polygonal curve related to the thread $T_a$. Since any thread   $T_a$, joins a point in $\Omega_1$ to the boundary $\partial \Omega$, we have
$$
H^1(\mathcal C_a \cap \Omega_2) \geq {\rm dist} (\Omega_1, \partial \Omega), 
$$
 so that the conclusion  \eqref{rosso}  follows combining the two previous relations. 

\medskip
\noindent
{\bf  Step 3}. {\it Proof of Lemma \ref{nardre} completed}.  Combining the lower-bound \eqref{antipasti}, \eqref{rosso}  with \eqref{voilavoila}, we derive the lower bound \eqref{poularde}, which completes the proof of Lemma \ref{nardre}.
 \end{proof}

 %%%%%%%%%%%%%%%%%%%%%%%%%%%%%%%%%%%%%%%%%%%%%%%%%%%%%%%%%%%%%%%%%%
 \subsubsection{Estimates for minimal  branched connections}
 \label{zertyu}
 %%%%%%%%%%%%%%%%%%%%%%%%%%%%%%%%%%%%%%%%%%%%%%%%%%%%%%%%%%%%%%%
  An important observation made\footnote{Here we refer to  Proposition 3.1 in \cite{xia}. Although the statement there is slightly different from ours, the reader may easily adapt the proof.} in \cite{xia} to obtain  the following: 
 
 \begin{proposition} 
 \label{qlxia}
 Assume that $\upalpha \in (\upalpha_m,  1]$, where $\upalpha_m=1-\frac 1 m$. Then we have, for some constant $C(\Omega, \upalpha)$ depending only on $\Omega$ and $\upalpha$, 
\begin{equation}
\label{fff}
 \Lbra (A, \partial \Omega) \leq C(\Omega, \upalpha) \left (\sharp (A)\right)^\upalpha. 
 \end{equation}
 \end{proposition} 
 
 The proof is obvious for  $\upalpha=1$. Indeed in this case, one   obtain an upper bound for 
 $\mathfrak L_{\rm brbd}^{1} (A, \partial \Omega)$ estimating $W_1(G_0)$,  where $G_0$ is constructed as  in subsection \ref{ukraine}  connecting each point in $A$ to its nearest point on the boundary. We obtain
 $\displaystyle{ W_1(G_0)\leq {\rm  diam }(\Omega)\left( \sharp (A)\right), }$
 yielding the result in the case considered.
 
  In the  case $\upalpha_m\leq \upalpha<1$, estimate \eqref{fff} yields an improvement  on the growth in terms of $\sharp A$.   This is achieved in \cite{xia} replacing the elementary comparison graph $G_0$ by graphs   having  branching points obtained through a dyadic decomposition.   
 \begin{remark}{\rm The result of Proposition \eqref{qlxia} is  optimal: One may find simple distributions of points for which the asymptotic behavior is of order $\left (\sharp (A)\right)^\upalpha$, for instance putting all points at the same location, far from the boundary.
 }
 \end{remark}
 
 %%%%%%%%%%%%%%%%%%%%%%%%%%%%%%%%%%%%%%%%%%%%%%%%%%%%%%%%%%%%%%%%%
 \subsection{The case of a uniform grid}
 \label{unigrid}
 %%%%%%%%%%%%%%%%%%%%%%%%%%%%%%%%%%%%%%%%%%%%%%%%%%%%%%%%%%%%%%%%
 
 We next focus on  behavior of $\Lbra$ in the special case  $\Omega$ is the $m$-dimensional unit cube that is 
 $\Omega=(0,1)^m$ and the points of $A$ are located on an uniform grid. We consider therefore for  an integer $\rk$ in $\N^*$  the distance  $h=\frac 1 \rk$ and  the set of points 
 $${\rm \bf A}^\rk_m\equiv \boxplus^\rk_m(h)=\left\{ a^k_\rI\equiv h\, I=h\, (i_1, i_2,\ldots,   i_m),{\rm \ for \ } I \in \left\{1, \ldots,\rk\right\}^m \right \}, 
 $$
so that  $\displaystyle {\sharp(\rbA^\rk_m )=\rk^m}$. Notice that  $\rbA^\rk_m \cap \partial \left((0,1)^m\right)\neq   \emptyset$ (see Remark
\ref{auchoix}). We set
\begin{equation*}
\Uplambda_m^\upalpha (\rk)= \Lbra (\rbA^\rk_m, \partial (0,1)^m) {\rm   \ and \ } 
\Uplambda^{m,\upalpha}_{\rm norm}(\rk)\equiv \rk^{-m\upalpha} \Lbda (\rk). 
\end{equation*}
 We  are interested in the asymptotic behavior  of the quantities  $\Uplambda_m^\upalpha(\rk)$  and $\Upxia(\rk)$ as $\rk \to + \infty$. It follows  from Proposition \ref{qlxia} that,  if $\upalpha >\upalpha_m$ then, we have the upper bound
\begin{equation}
\label{lawofthewest}
\Lbda (\rk) \leq C_\upalpha \rk^{m \upalpha} \ {\rm  \  i.e.\ }   \  \Upxia (\rk)\leq C_\upalpha, 
\end{equation} 
where the constant $C_\upalpha>0$ does not depend on $\rk$.   In the critical case $\upalpha=\upalpha_m$,  the upper bound \eqref{lawofthewest} no longer holds,  as our next result shows.

\begin{theorem}
\label{droppy}  There exists some constant $\rC_m>0$ such that we have the lower bound 
$$ \Lbdam (\rk) \geq  \rC_m  \rk^{m \upalpha_m}\log \rk=\rC_m  \rk^{m-1}\log \rk,   \ {\rm i. e. \ }  \Upxiam (\rk)\geq \rC_m \log \rk, {\rm \ for \ } k \in \N^*. $$
%that is,
% {\rm  \ that \ is \ }
%$\displaystyle{  \  \Upxiam (\rk)\geq \rC_m \log \rk}.
%$
\end{theorem}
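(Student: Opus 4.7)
The strategy is to apply Proposition A.1 (decompp) in a cascade indexed by the dyadic scales $r_j = 2^{-j}$ for $j = 0, 1, \ldots, J-c_m$, where $k = 2^J$ and $c_m$ depends only on $m$. I would begin by fixing such a $j$ and partitioning $[0,1]^m$ into the $2^{jm}$ congruent subcubes of side $r_j$ coming from the uniform dyadic grid. Within each such subcube $C$, I apply Proposition A.1 with $\Omega_1$ taken to be the concentric interior subcube of side $r_j/2$: the three quantities entering the contribution are $\mu_1 = 2^{-m}$, $N_{el} = (kr_j)^m$, and $\mathrm{dist}(\Omega_1, \partial C) = r_j/4$, which together produce the single-cube contribution
\begin{equation*}
\kappa_1 \cdot 2^{-m} \cdot (kr_j)^{m \alpha_m} \cdot \tfrac{r_j}{4} \;=\; \tfrac{\kappa_1}{2^{m+2}}\, k^{m-1} r_j^m,
\end{equation*}
where I have used the \emph{critical identity} $m\alpha_m = m-1$. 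Summed over the $2^{jm} = r_j^{-m}$ subcubes at scale $j$, the total scale-$j$ contribution is $(\kappa_1/2^{m+2})\, k^{m-1}$, which is independent of $j$. This scale invariance is the analytic reflection of the criticality $\upalpha = \upalpha_m$.

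The heart of the argument is to assemble these per-scale contributions additively. My plan is to iterate the decomposition: after applying Proposition A.1 inside each scale-$j$ subcube $C$, the leftover term $\Lbra(A \cap C_{\text{inner}}, \partial C_{\text{inner}})$ is a branched connection problem on a cube of side $r_j/2$ with $(kr_j/2)^m$ points, on which Proposition A.1 may be applied in turn with the next centrally placed half-size inner cube. Using the restriction-to-subdomains properties of directed graphs from Subsection A.1 and relation (A.5) (specifically the inequality $\Lbra(A, \partial\Omega) \geq \sum_{\fp} \Lbra(A_\fp, \partial\Omega_\fp)$), the outer terms $\Lbra(A \cap (C \setminus C_{\text{inner}}), \partial(C \setminus C_{\text{inner}}))$ are simply discarded as nonnegative. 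After $l$ such iterated applications, the accumulated lower bound takes the form
\begin{equation*}
\Lbra(A^k_m, \partial[0,1]^m) \;\geq\; \sum_{\text{current cubes } C^{(l)}} \Lbra(A \cap C^{(l)}, \partial C^{(l)}) \;+\; l \cdot \tfrac{\kappa_1}{2^{m+2}} k^{m-1},
\end{equation*}
where each $C^{(l)}$ is a cube of side $2^{-l}$ containing the scaled grid. The critical scale invariance keeps the per-iteration increment at exactly $\sim k^{m-1}$.

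I would run this iteration until the cubes $C^{(l)}$ contain only $O(1)$ grid points each, i.e., for $l$ up to $J - c_m = \log_2 k - O(1)$. This produces $\log_2 k - O(1)$ contributions of magnitude $(\kappa_1/2^{m+2}) k^{m-1}$, delivering the final estimate $\Lbdam(k) \geq C_m k^{m-1} \log k$. The stated corollary for $\Upxi^{\upalpha_m}_m(k) = k^{-m\upalpha_m} \Lbdam(k) = k^{-(m-1)} \Lbdam(k)$ then gives $\Upxi^{\upalpha_m}_m(k) \geq C_m \log k$.

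The main obstacle will be the careful bookkeeping of the iteration. The technical issue is that the centred half-size inner cubes at scale $j$ are \emph{not} aligned with the dyadic grid at scale $j+1$, so to iterate cleanly one must either rescale at each step (exploiting the scaling invariance $\Lbra(\lambda A, \partial \lambda\Omega) = \lambda\, \Lbra(A, \partial\Omega)$) or reset the partition inside each inner cube. In particular, one must verify that the restriction $\Lbra(A\cap C, \partial C) \geq \sum_{C' \subset C} \Lbra(A\cap C', \partial C')$ applied to the family of inner cubes produced after one iteration loses at most a constant factor compared to the grid-aligned partition at the next dyadic scale, so that the per-iteration gain of $\sim k^{m-1}$ survives. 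This delicate point is what makes the argument sharply tied to the critical exponent $\upalpha_m = 1 - 1/m$; at any supercritical $\upalpha > \upalpha_m$, the analogous per-scale contribution would decay geometrically and give only the upper bound of Proposition A.2, whereas subcriticality makes it grow geometrically and the iteration fails for a different reason.
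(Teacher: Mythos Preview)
Your scale-$j$ computation is correct as a \emph{one-shot} estimate: partitioning $[0,1]^m$ into $2^{jm}$ cubes of side $r_j$ and applying Proposition~\ref{decompp} in each does produce a total gain of $(\upkappa_1/2^{m+2})\,\rk^{m-1}$. The gap is in the iteration. Once you discard the outer annuli and pass to the $2^{jm}$ inner half-cubes, the number of active cubes stays fixed while their side halves. At the next application the $2^{jm}$ cubes have side $r_j/2$, and since the single-cube gain you computed scales like $(\text{side})^m$, the total gain becomes
\[
2^{jm}\cdot \frac{\upkappa_1}{2^{m+2}}\,\rk^{m-1}\Bigl(\frac{r_j}{2}\Bigr)^m
= \frac{\upkappa_1}{2^{m+2}}\,\rk^{m-1}\cdot 2^{-m},
\]
a factor $2^{-m}$ smaller than before. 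Each further step costs another factor $2^{-m}$, so the accumulated sum is a convergent geometric series bounded by $C\,\rk^{m-1}$, not $\rk^{m-1}\log\rk$. ``Resetting the partition'' inside the inner cubes does not help: after $\ell$ rounds of taking inner halves the surviving cubes jointly cover only a $2^{-\ell m}$ fraction of $[0,1]^m$, and no subdivision of that fraction can restore the missing volume. The scale invariance you observed requires $2^{jm}$ cubes \emph{tiling the whole unit cube} at scale $j$, and that is exactly what discarding the annuli destroys.

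The paper's remedy is to keep the $\Omega_2$ term rather than throw it away. One applies Proposition~\ref{decompp} \emph{once} to $[0,1]^m$ with $\Omega_1$ equal to a single central subcube of a $\mq\times\cdots\times\mq$ partition, then uses the subdivision inequality~\eqref{decomlbra} on $\Omega_2$ to break it into the remaining $\mq^m-1$ subcubes. By scaling, each of the $\mq^m$ subcubes contributes $\mq^{-1}\Lbdam(\rk)$ to the residual, so together they give $\mq^{m-1}\Lbdam(\rk)$, and one obtains the clean recursion
\[
\Lbdam(\mq\rk)\ \ge\ \mq^{m-1}\Lbdam(\rk)\ +\ c_\mq\,\rk^{m-1},
\qquad\text{i.e.}\qquad
\Upxiam(\mq\rk)\ \ge\ \Upxiam(\rk)+C_\mq.
\]
Iterating $\ell\sim\log_\mq\rk$ times yields the logarithm. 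The point you are missing is that the multiplicative factor $\mq^{m-1}$ on the residual---which comes precisely from retaining all $\mq^m$ subcubes, inner \emph{and} outer---is what balances the normalization $\rk\mapsto\mq\rk$ at the critical exponent and makes the additive gain survive undiminished through the iteration.
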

\begin{remark} {\rm The fact that the quantity $ \Upxiam (\rk)=\rk^{1-m} \Lbdam (\rk) $ does not remain bounded as $\rk \to + \infty$  is related  to and may  also presumably  be deduced from the fact that the Lebesgue measure \emph{is not irrigible} for the  critical value $\upalpha=\upalpha_m$, a result  proved in \cite{devisolo} (see also \cite{bermocas}).
}\end{remark}

  The proof of Theorem \ref{droppy} will  rely on several preliminary results we present first, starting with elementary scaling laws. Let $\rq \in \N^*$ be given, and consider for $\rk \in \N$ the set 
  $$
   \frac{1}{\rq}\rbA^{ k}_m =\rbA^{\rq k}_m\cap [0, \frac{1}{\rq}]^m=\boxplus^\rk_m(\frac h \rq)=\left\{ a^k_\rI\equiv \frac{1}{\rq \rk}\,  I, I  \in \left\{1, \ldots,\rk\right\}^m \right \}. 
  $$
  The set  $\frac{1}{\rq}\rbA^{k}_m$ hence contains $\rk^m$ distinct  elements.  The scaling law writes as 
  \begin{equation}
  \label{scalitude}
   \Lbra \left(  \frac{1}{\rq}\rbA^{ k}_m, \partial \left(  [0, \frac{1}{\rq}]^m\right)\right)=
  \rq^{-1}  \Lbra \left(  \rbA^{ k}_m, \partial \left(  [0, 1]^m\right)\right)=
    \rq^{-1} \Lbda (\rk). 
 \end{equation}
The main ingredient in the proof of Theorem \ref{droppy} is  a consequence of  Proposition \ref{decompp}:

  \begin{lemma}
  \label{crazy}  Let $\rq \in \N^*$ be given. There exists some constant  $\rC_\rq^\upalpha >0$ such that 
  $$
  \Upxia (\rq  \, \rk) \geq  \rq^{m(\upalpha_m-\upalpha)} \, \Upxia (\rk)+ \rC_\rq^\upalpha,  \ \ {\rm  \ for  \  any \ } \rk \in \N^* .
  $$
  \end{lemma}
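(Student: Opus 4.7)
The plan is to combine the simple sub-additivity inequality \eqref{decomlbra} with the strict improvement of Proposition \ref{decompp}, applied to a partition of $[0,1]^m$ into $\mq^m$ sub-cubes of side $1/\mq$. The former will produce the factor $\mq^{m-1}\Lbda(\rk)$ via self-similarity of the grid, and the latter will extract the positive additive constant $\rC_\mq^\upalpha$.

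First I would write $[0,1]^m=\underset{\mathfrak p\in\{1,\ldots,\mq\}^m}\cup \bar\Omega_\mathfrak{p}$, where each closed sub-cube $\Omega_\mathfrak{p}$ has side $1/\mq$, arranging the partition (through a small shift of the sub-cube faces by a quantity in $(0,1/(\mq\rk))$, or, equivalently, through a tie-breaking rule that assigns grid points lying on inter-face boundaries to a single $\Omega_\mathfrak{p}$) so that the hypothesis of \eqref{decomlbra} that $A_m^{\mq\rk}$ avoid the interior faces is satisfied. Each $\Omega_\mathfrak{p}$ then contains $\rk^m$ grid points which, up to a translation, form a copy of $A_m^\rk$ rescaled by $1/\mq$. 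Since $\rW_\upalpha$ is $1$-homogeneous under spatial dilations, this yields
\[
\Lbra(A_m^{\mq\rk}\cap\Omega_\mathfrak{p},\partial\Omega_\mathfrak{p})=\tfrac{1}{\mq}\,\Lbda(\rk)\qquad\text{for every }\mathfrak{p}.
\]

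Next I would single out one sub-cube $\Omega_\star$ whose closure lies strictly inside $[0,1]^m$, chosen so that $\dist(\Omega_\star,\partial[0,1]^m)\geq 1/\mq$ (which is possible for $\mq\geq 3$ by picking a central index; the case $\mq=2$ can be treated analogously using a central cube of side $1/2$), and apply Proposition \ref{decompp} with $\Omega_1=\Omega_\star$ and $\Omega_2=[0,1]^m\setminus\bar\Omega_\star$. With $\Nel=(\mq\rk)^m$ and $\mu_1=\rk^m/(\mq\rk)^m=\mq^{-m}$, Proposition \ref{decompp} gives
\[
\Lbda(\mq\rk)\;\geq\;\Lbra(A\cap\Omega_\star,\partial\Omega_\star)+\Lbra(A\cap\Omega_2,\partial\Omega_2)+\upkappa_1\,\mq^{-m-1}\,(\mq\rk)^{m\upalpha}.
\]
I would then bound the middle term from below by further partitioning $\Omega_2$ into the remaining $\mq^m-1$ sub-cubes and invoking \eqref{decomlbra}, which yields $\Lbra(A\cap\Omega_2,\partial\Omega_2)\geq(\mq^m-1)(1/\mq)\Lbda(\rk)$.

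Collecting all the sub-cube contributions produces
\[
\Lbda(\mq\rk)\;\geq\;\mq^{m-1}\Lbda(\rk)+\upkappa_1\,\mq^{-m-1}(\mq\rk)^{m\upalpha},
\]
and dividing through by $(\mq\rk)^{m\upalpha}$ gives, after simplification of the exponent $m-1-m\upalpha=m(1-\upalpha)-1$, precisely
\[
\Upxia(\mq\rk)\;\geq\;\mq^{m(1-\upalpha)-1}\,\Upxia(\rk)+\rC_\mq^\upalpha,\qquad\rC_\mq^\upalpha:=\upkappa_1\,\mq^{-m-1}>0,
\]
as required. The only genuinely delicate point is the bookkeeping for the grid points lying on inter-sub-cube faces, which is handled by the mild shift of the partition noted above; everything else is a direct combination of the two decomposition results already at our disposal.
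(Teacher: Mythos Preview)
Your proof is correct and follows essentially the same approach as the paper's: partition $[0,1]^m$ into $\mq^m$ sub-cubes, apply Proposition~\ref{decompp} with $\Omega_1$ a central sub-cube and $\Omega_2$ its complement, then further decompose $\Omega_2$ via \eqref{decomlbra} and use the scaling identity $\Lbra(A\cap\Omega_\fp,\partial\Omega_\fp)=\mq^{-1}\Lbda(\rk)$. The only cosmetic differences are that the paper takes $\dist(\Omega_1,\partial[0,1]^m)\geq 1/4$ (restricting implicitly to $\mq\geq 4$) rather than your $1/\mq$, and hence obtains a slightly different value of $\rC^\upalpha_\mq$; your extra care about grid points lying on inter-cube faces is a point the paper glosses over.
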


\begin{proof}    We consider the set $\rbA^{\rq k}_m$ and   decompose   the domain $\overline{\Omega} =[0,1]^m$ as an union of cubes $\overline{{\rm Q}_\rbJ}$,  with $\rbJ\equiv (j_1, j_2, \dots, j_m)\in \mathfrak J\equiv\{0,\ldots, \rq-1\}^m$,   and 
$$ 
{\rm Q}_\rbJ= \frac 1 \rq \rbJ+ (0, \frac{1}{\rq})^m =\frac 1 \rq (j_1, j_2, \dots, j_m) + (0, \frac{1}{\rq})^m, 
$$
 so that $ {\rm Q}_\rbJ \cap {\rm Q}_{\rbJ'} \not = \emptyset$ if $\rbJ\not = \rbJ'$ and 
 $\displaystyle{[0,1]^m=\underset{ \rbJ  \in \mathfrak J}\cup\bar {{\rm Q}}_\rbJ}.$ We  set 
  $$
  A_{\rbJ} \equiv A^{\rq \rk}_m \cap {\rm Q}_\rbJ, {\rm \  so \   that  \ } 
  A_{\rbJ}= \frac 1 \rq \rbJ+ \frac {1} {\rq} \rbA_m^k.
  $$ 
 It follows from the scaling law \eqref{scalitude} and translation invariance  that
 \begin{equation}
 \label{scalitude2}
  \Lbra (A_\rbJ, \partial{\rm Q}_{\rbJ})=\rq^{-1} \Lbda (\rk), {\rm \ for \ } 
  \rbJ \in \mathfrak J. 
\end{equation}
   We next single out a cube ${\rm Q}_{\rbJ_0}$ which is far from the boundary. For that purpose, we consider the integer $\displaystyle{
   \rq_0 \equiv \left [\frac {\rq} {2}\right]}$, the multi-index $\rbJ_0=(\rq_0, \rq _0, \ldots, \rq_0)$  and the sets
$$
\Omega_1 ={\rm Q}_{\rbJ_0}   {\rm  \  and \ }
\Omega_2=  \underset{ \rbJ \in \mathfrak J \setminus \{\rbJ_0\}}  \cup   {\rm Q}_\rbJ,
{\rm \ so \ that \ }
  {\rm  dist } (\Omega_1, \partial \Omega) \geq \frac 1 4  {\rm \ for \ } \rq \geq  3. 
  $$
Applying inequality \eqref{decomlbra2} of Proposition \ref{fff}, we are led to 
  \begin{equation}
  \label{colonne}
  \begin{aligned}
  \Uplambda_m^\upalpha (\rq\rk)= \Lbra (\rbA_m^{\rq\rk}, \partial (0,1)^m)  \geq &\Lbra (A_{\rbJ_0}, \partial {\rm Q}_{J_0})+ 
  \Lbra (\Omega_2 \cap  \rbA^{\rq \rk}_m, \partial  (0,1)^m) \\
  &+\frac 1 4\upkappa_\upalpha \rk^{m \upalpha}  
 \rq^{m(\upalpha-1)}.
\end{aligned}
  \end{equation}
We deduce from inequality \eqref{decomlbra} and \eqref{scalitude} that 
 \begin{equation}
 \label{almostf}
 \Lbra (\Omega_2 \cap  A^{\rq \rk}_m, \partial  (0,1)^m) \geq \underset{ \rbJ\in \mathfrak J \setminus \{\rbJ_0\}} \sum 
   \Lbra (A_\rbJ, \partial {\rm Q}_\rbJ)=\left[\rq^{m}-1\right] \rq^{-1} \Lbda (\rk).
\end{equation}
 Combining \eqref{colonne}, \eqref{almostf} with \eqref{scalitude2} for $\rbJ=\rbJ_0$, we are led to the lower bound
 \begin{equation*}
  \Uplambda_m^\upalpha (\rq\rk) \geq \rq^{m-1}  \Lbda (\rk)  + \frac 1 4\upkappa_\upalpha \rk^{m \upalpha}  
 \rq^{m(\upalpha-1)}.
 \end{equation*}
Multiplying both sides  by $(\rq\rk)^{-m\upalpha}$, we obtain the desired  result with $\rC^\upalpha_\rq=\frac 14 \upkappa_\upalpha \rq^{-1}.$ 
\end{proof}

\begin{lemma}
\label{inferieuritude}
 We have,  for  any integers $1 \leq \rk' \leq \rk$, 
 $$\Lbda (\rk')  \leq  \frac {\rk'}{\rk}  \Lbda (\rk) \  {\rm \ and \ hence \ \  } 
 \Upxia (\rk') \leq  \left(  \frac {\rk'}{\rk} \right)^{m\upalpha+1} \Upxia (\rk).
$$
\end{lemma}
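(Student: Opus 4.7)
The plan is to exploit two basic operations available on the class of admissible graphs $\mathcal{G}(A,\partial\Omega)$: the scaling identity for $W_\upalpha$ and the restriction operation $G \rest \Omega_1$ defined earlier in the Appendix. First I would record the elementary scaling law: for any $\lambda>0$, if $G\in\mathcal{G}(A,\partial\Omega)$ and $\lambda G$ denotes the graph with vertex set $\lambda V(G)$ and directed edges $\lambda E(G)$ (preserving multiplicities), then $\lambda G\in\mathcal{G}(\lambda A,\partial(\lambda\Omega))$ with $W_\upalpha(\lambda G)=\lambda W_\upalpha(G)$, and hence
\begin{equation*}
\Lbra(\lambda A,\partial(\lambda\Omega))=\lambda\,\Lbra(A,\partial\Omega).
\end{equation*}

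Next I would apply this with $\lambda=\rk/\rk'$ to the $\rk$-grid $A_m^\rk$ in $[0,1]^m$. A direct computation gives
\begin{equation*}
\tfrac{\rk}{\rk'}A_m^\rk = \Bigl\{\bigl(\tfrac{i_1}{\rk'},\ldots,\tfrac{i_m}{\rk'}\bigr):\, i_j\in\{1,\ldots,\rk\}\Bigr\}\subset\bigl[0,\tfrac{\rk}{\rk'}\bigr]^m,
\end{equation*}
and, since $\rk'\leq\rk$ so that $[0,1]^m\subset[0,\rk/\rk']^m$, the key observation is that
\begin{equation*}
\Bigl(\tfrac{\rk}{\rk'}A_m^\rk\Bigr)\cap[0,1]^m = \bigl\{(i_1/\rk',\ldots,i_m/\rk'):\,i_j\in\{1,\ldots,\rk'\}\bigr\}= A_m^{\rk'}.
\end{equation*}
By the scaling identity, the value $\Lbra(\tfrac{\rk}{\rk'}A_m^\rk,\partial[0,\rk/\rk']^m)$ equals $(\rk/\rk')\Lbda(\rk)$.

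The core step is then to select an optimal graph $G\in\mathcal{G}(\tfrac{\rk}{\rk'}A_m^\rk,\partial[0,\rk/\rk']^m)$ (it exists by Lemma \ref{xialemma}) and to take its restriction $G_1\equiv G\rest[0,1]^m$ as defined in subsection \ref{mathelem}. By the identity above together with the property recalled there that $G_1\in\mathcal{G}(A_m^{\rk'},\partial[0,1]^m)$, the restriction is admissible for the $\rk'$-grid problem. Moreover the truncation only removes mass from edges, so $W_\upalpha(G_1)\leq W_\upalpha(G)$, which gives
\begin{equation*}
\Lbda(\rk')\leq W_\upalpha(G_1)\leq W_\upalpha(G)=\tfrac{\rk}{\rk'}\,\Lbda(\rk).
\end{equation*}

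Finally, the inequality for $\Upxia$ is pure bookkeeping: multiplying the previous inequality by $(\rk')^{-m\upalpha}$ and rewriting yields
\begin{equation*}
\Upxia(\rk')=(\rk')^{-m\upalpha}\Lbda(\rk')\leq\tfrac{\rk}{\rk'}(\rk')^{-m\upalpha}\Lbda(\rk)=\bigl(\tfrac{\rk}{\rk'}\bigr)^{m\upalpha+1}\,\rk^{-m\upalpha}\Lbda(\rk)=\bigl(\tfrac{\rk}{\rk'}\bigr)^{m\upalpha+1}\Upxia(\rk).
\end{equation*}
There is no substantive obstacle here beyond verifying that $G\rest[0,1]^m$ really belongs to $\mathcal{G}(A_m^{\rk'},\partial[0,1]^m)$; this amounts to checking that the only vertices of $\tfrac{\rk}{\rk'}A_m^\rk$ landing inside $[0,1]^m$ are exactly those of $A_m^{\rk'}$, so that the Kirchhoff balance at interior vertices of $G_1$ is inherited from $G$, and that new vertices created on $\partial[0,1]^m$ by edge truncation are boundary vertices that require no balance.
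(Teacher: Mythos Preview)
Your argument is correct and is essentially the paper's proof viewed from the other side of the scaling: the paper restricts the $\rk$-grid in $(0,1)^m$ to the subcube $\mathcal Q_{\rk'}=(0,\rk'/\rk)^m$ (invoking the restriction inequality \eqref{decomlbra}) and then scales up by $\rk/\rk'$, whereas you first scale the $\rk$-grid up by $\rk/\rk'$ and then restrict back to $(0,1)^m$; after one application of the scaling law these two are literally the same computation. The final bookkeeping for $\Upxia$ is identical.
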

\begin{proof}   consider the cube $ \mathcal Q_\rk'=(0, \frac {\rk'}{ \rk} )^m \subset (0,1)^m $ and the  set $A'=\rbA^\rk_m \cap \mathcal Q_\rk'$. It follows from 
inequality \eqref{decomlbra} that 
$$ \Lbra (A', \partial \mathcal Q_\rk') \leq \Lbra (A^\rk_m, \partial (0,1)^m)=\Lbda (\rk), 
$$
 whereas the scaling property yields
 $\displaystyle{\Lbra (A', \partial \mathcal Q_\rk')=\frac {\rk'}{\rk} \Lbda (\rk'). }$
  The conclusion follows combining the previous inequalities. 
\end{proof}

\begin{proof} [Proof of Theorem \ref{droppy} completed]  In the special case $\upalpha=\upalpha_m$, the exponent of $\rq$ in the r.h.s of the  inequality of  Lemma \ref{crazy} vanishes, so that we obtain
$$
 \Upxiam (\rq  \, \rk) \geq \Upxiam (\rk)+ \rC_\rq^{\upalpha_m}, {\rm \ for \ any \ integer \ }
\rq \geq 3.
$$
 Iterating this lower bound, we obtain, for any any integer $\ell >0$,  to the lower bound
\begin{equation}
\label{loweritude}
\Upxiam (\rq^\ell)\geq \rC^{\upalpha_m}_\rq \ell.
 \end{equation}
On the other hand, it follows from Lemma \ref{inferieuritude} that for any $\rq^\ell \leq  \rk \leq \rq^{\ell+1}$ we have 
\begin{equation}
\label{facilitude}
\Upxiam (\rk) \geq   \rq^{-m} \  \Upxiam (\rq^\ell). 
\end{equation}
Combining \eqref{facilitude} with \eqref{loweritude} we deduce that, for any $\rk \in \N^*$, we obtain the inequality
$$
\Upxiam (\rk) \geq   \rq^{-m} \rC^{\upalpha_m}_\mq \left[ \frac {\log \rk}{\log \rq}\right], 
$$
which leads immediately to the conclusion,   fixing the value of $\rq$ for instance as $\rq=5$. 
\end{proof}

\begin{remark} 
{\rm
 For $\upalpha <\upalpha_m$ the same type of argument show that 
$\displaystyle{\Upxia(\rk) \to + \infty {\rm \ as \ } \rk \to + \infty.
}$
}
\end{remark}
%%%%%%%%%%%%%%%%%%%%%%%%%%%%%%%%%%%%%%%%%%%%%%%%%%%%%%%%%%%%%%%%%%
\subsection{Charges  with opposite  signs }
%%%%%%%%%%%%%%%%%%%%%%%%%%%%%%%%%%%%%%%%%%%%%%%%%%%%%%%%%%%%%%%%%%
We consider here, as in the introduction,  the possibility of having also points with negative charges. We consider therefore  a collection of  points $\rbP=\{P_i\}_{i \in I}$ in $\Omega$  with positive charge $+1$,   a collection of points  $\rbQ=\{Q_j\}_{j \in J}$   in $\Omega$,  with negative charge $-1$, and set  $A=\rbP \cup \rbQ$. We define the set 
% both having the same finite number of elements, 
%and  graphs $G$ in 
$\mathcal 
G (\rbP, \rbQ, \Omega)$ of graph  satisfying $A\subset V(G)\subset  \bar \Omega, $
%\begin{equation*}
%\label{asp20}
%\left(\underset {i \in J^+} \cup \{N_i \} \right) \cup  \left( \underset  {j \in J^-}  \cup \{N_j\} \right)
%\subset V(G) \subset  \bar \Omega.
%\end{equation*}
and the \emph{Kirchhoff law}  \eqref{prescharge}, setting  ${\rm M}(P_i)=+1$ and  ${\rm M}(Q_j)=-1$.  For $0\leq \upalpha\leq 1$, we set
\begin{equation}
\label{frument}
\Lbra( \rbP, \rbQ, \partial \Omega)=\Lbra(A,  \partial \Omega)=\inf \left\{ {\rm  W}_\upalpha(G), G \in \mathcal G\left (\rbP, \rbQ, \partial \Omega\right) \right \}.
\end{equation}
 % \begin{equation*}
 %\label{walpha0}
 %{\rm W}_\upalpha(G)= \underset { e\in E(G)}\sum  \left (d(e)\right)^\upalpha \mathcal H^1 (e)  \ {\rm \ for \ }  G \in  
 %\mathcal G\left (\{P_i\}_{i \in J}, \{N_i\}_{i \in J}, \Omega\right). 
 %\end{equation*}
Next assume that we are given a   collections of disjoint  subdomains $(\Omega_\fp)_{\fp \in \fP}$ of $\Omega$ such that 
\begin{equation}
\label{nopasaran}
\rbQ \cap \Omega_\fp =\emptyset 
{\rm \  and  \  set  \  } \  A_\fp= \rbP \cap  \Omega_\fp, 
\end{equation}  
so that the subdomains $\Omega_\fp$ contain only possibly \emph{positive} charges.

\begin{lemma}
\label{toufou}
If \eqref{nopasaran} is satisfied, then we have the  inequality
$$
\Lbra\left( \rbP, \rbQ,  \partial  \Omega\right)  \geq \underset {\fp \in \fP}\sum \Lbra ( A_\fp, \partial \Omega_\fp).
$$
\end{lemma}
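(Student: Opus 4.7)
The plan is to take an arbitrary graph $G$ in $\mathcal{G}(\{P_i\}_{i\in J^+},\{Q_j\}_{j\in J^-},\Omega)$ and restrict it separately to each subdomain $\Omega_\fp$, using the restriction operation introduced in subsection A.1.2. Concretely, one sets $G_\fp = G \rest \Omega_\fp$, whose vertex set is $(V(G)\cap \bar{\Omega}_\fp) \cup \bigl(\bigcup_{e\in E(G)} \bar{e} \cap \partial \Omega_\fp\bigr)$ and whose directed segments are the pieces $e \cap \bar{\Omega}_\fp$, with multiplicities inherited from $G$.

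The central step is to verify that $G_\fp \in \mathcal{G}(\mathfrak{A}_\fp, \partial \Omega_\fp)$. For every interior vertex $a \in V(G_\fp) \cap \Omega_\fp$, no incident edge has been truncated, so the local picture in $G_\fp$ is identical to that in $G$ and the balance \eqref{balance3} still applies verbatim at $a$. By hypothesis \eqref{nopasaran}, such an $a$ cannot be any $Q_j$, leaving two cases: either $a \in \mathfrak{A}_\fp$, in which case \eqref{balance3} yields net outflow $+1$, matching the first clause of \eqref{balance2}; or $a$ carries no charge and is therefore balanced, matching the second clause. Vertices falling on $\partial \Omega_\fp$ (either pre-existing vertices of $G$ located there, or new points produced by cutting an edge of $G$ with $\partial \Omega_\fp$) carry no balance constraint in the definition of $\mathcal{G}(\mathfrak{A}_\fp, \partial \Omega_\fp)$. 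The orientation condition \eqref{asp2} is inherited from $G$, after an optional cancellation of pairs of opposite-oriented parallel sub-segments which can only decrease $\rW_\upalpha$.

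Since the open sets $(\Omega_\fp)_{\fp\in\fP}$ are pairwise disjoint, the interior parts of the edges of distinct $G_\fp$'s do not overlap, yielding the additivity
\[
\rW_\upalpha(G) \;\geq\; \sum_{\fp \in \fP} \rW_\upalpha(G_\fp) \;\geq\; \sum_{\fp \in \fP} \Lbra(\mathfrak{A}_\fp, \partial \Omega_\fp),
\]
where the last inequality is the very definition of each $\Lbra(\mathfrak{A}_\fp, \partial \Omega_\fp)$. Taking the infimum over $G$ on the left yields the conclusion.

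The only mild technical nuisance I anticipate is the possibility of an edge of $G$ lying entirely on a shared boundary $\partial \Omega_\fp \cap \partial \Omega_{\fp'}$, which risks being double-counted or ambiguously assigned. This should be handled by a small generic perturbation of the partition (equivalently, of $G$) rendering every edge transverse to every $\partial \Omega_\fp$ at arbitrarily small extra cost in $\rW_\upalpha$, followed by a standard limiting argument; alternatively one can simply assign each such degenerate edge to a single $\fp$, since doing so preserves admissibility of each $G_\fp$ and only weakens the middle inequality above.
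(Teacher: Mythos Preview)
Your proof is correct and follows essentially the same approach as the paper: restrict an arbitrary competitor $G$ to each $\Omega_\fp$, observe that the absence of negative charges in $\Omega_\fp$ forces $G_\fp\in\mathcal G(\mathfrak A_\fp,\partial\Omega_\fp)$, use disjointness to get $\rW_\upalpha(G)\geq\sum_\fp \rW_\upalpha(G_\fp)$, and take the infimum. The paper's proof is considerably terser and does not pause over the boundary-edge technicality you flag, but the argument is the same.
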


\begin{proof}  Let $G$ be a graph in $\mathcal G \left(A,  \partial \Omega\right)$ and set 
$\displaystyle{G_\fp=G \cap\Omega_\fp}$, for $\fp\in \fP$.  Since there are no negative charges in $\Omega_\fp$,  it turns out that  $G_\fp \in \mathcal G(\mathfrak A_\fp, \partial \Omega_\fp)$, so that 
$\displaystyle{
{\rW}_\upalpha(G_\fp) \geq \Lbra  (\mathfrak A_\fp, \partial \Omega_\fp).
}$
On the other hand, we have $\displaystyle{ \rW_\upalpha(G) \geq  \underset {\fp \in \fP} \sum{\rW}_\upalpha(G_\fp)}$ so  that the conclusion follows.
\end{proof}

%%%%%%%%%%%%%%%%%%%%%%%%%%%%%%%%%%%%%%%%%%%%%%%%%%%%%%%%%%%%%%%%%%%%%%%%%%%%%%%%%%%%%%%%%%%%%%%%%%%
%%%%%%%%%%%%%%%%%%%%
%%%%%%%%%%%%%%%%%%%%%%%%%%%%%%%%%%%%%%%%%%%%%%%%%%%%%%%%%%%%%%%%%%%%%%%%%%%

\end{document}